%% file: 5_high_acc.tex
\documentclass[11pt]{article}

\usepackage[margin=1in]{geometry}
\usepackage[T1]{fontenc}
\usepackage{amsmath,amssymb,amsthm,mathtools,bm,mathrsfs,euscript}
\usepackage[Symbolsmallscale]{upgreek}
\usepackage{booktabs,array,enumitem,longtable,microtype}
\usepackage{aliascnt}
\usepackage{xcolor}
\usepackage{tikz}
\usepackage{placeins}
\usetikzlibrary{arrows.meta,positioning}
\usepackage[colorlinks=true,linkcolor=blue!55!black,citecolor=blue!55!black,
  urlcolor=blue!55!black]{hyperref}
\usepackage[nameinlink,capitalize,noabbrev]{cleveref}

\allowdisplaybreaks

\newtheorem{theorem}{Theorem}[section]

\newaliascnt{proposition}{theorem}
\newtheorem{proposition}[proposition]{Proposition}
\aliascntresetthe{proposition}

\newaliascnt{lemma}{theorem}
\newtheorem{lemma}[lemma]{Lemma}
\aliascntresetthe{lemma}

\newaliascnt{corollary}{theorem}
\newtheorem{corollary}[corollary]{Corollary}
\aliascntresetthe{corollary}

\newaliascnt{assumption}{theorem}

\aliascntresetthe{assumption}

\theoremstyle{definition}
\newaliascnt{definition}{theorem}

\aliascntresetthe{definition}

\newtheorem{gadget}{Gadget}

\newaliascnt{algorithm}{theorem}
\newtheorem{algorithm}[algorithm]{Algorithm}
\aliascntresetthe{algorithm}

\theoremstyle{remark}
\newaliascnt{remark}{theorem}
\newtheorem{remark}[remark]{Remark}
\aliascntresetthe{remark}

\crefname{theorem}{Theorem}{Theorems}
\Crefname{theorem}{Theorem}{Theorems}
\crefname{proposition}{Proposition}{Propositions}
\Crefname{proposition}{Proposition}{Propositions}
\crefname{lemma}{Lemma}{Lemmas}
\Crefname{lemma}{Lemma}{Lemmas}
\crefname{corollary}{Corollary}{Corollaries}
\Crefname{corollary}{Corollary}{Corollaries}
\crefname{assumption}{Assumption}{Assumptions}
\Crefname{assumption}{Assumption}{Assumptions}
\crefname{gadget}{Gadget}{Gadgets}
\Crefname{gadget}{Gadget}{Gadgets}
\crefname{algorithm}{Algorithm}{Algorithms}
\Crefname{algorithm}{Algorithm}{Algorithms}
\crefname{definition}{Definition}{Definitions}
\Crefname{definition}{Definition}{Definitions}
\crefname{remark}{Remark}{Remarks}
\Crefname{remark}{Remark}{Remarks}

\newcommand{\R}{\mathbb R}
\newcommand{\E}{\mathbb E}
\newcommand{\Pp}{\mathbb P}
\newcommand{\cN}{\mathsf N}
\newcommand{\Id}{I}
\newcommand{\T}{\mathsf T}
\newcommand{\ii}{\boldsymbol i}
\newcommand{\dd}{\mathrm d}
\newcommand{\e}{\mathrm e}
\newcommand{\eps}{\varepsilon}
\newcommand{\TV}{\mathsf{TV}}
\newcommand{\KL}{\mathsf{KL}}
\newcommand{\Law}{\operatorname{Law}}

\newcommand{\poly}{\operatorname{poly}}
\newcommand{\polylog}{\operatorname{polylog}}
\newcommand{\prox}{\operatorname{prox}}
\newcommand{\Lip}{\operatorname{Lip}}
\newcommand{\op}{\mathrm{op}}
\newcommand{\tr}{\operatorname{tr}}
\newcommand{\divg}{\operatorname{div}}
\newcommand{\wtO}{\widetilde O}
\newcommand{\wh}{\widehat}
\newcommand{\ol}{\overline}

\newcommand{\cF}{\mathcal F}
\newcommand{\comp}{\mathsf{c}}
\newcommand{\norm}[1]{\lVert #1\rVert}
\newcommand{\abs}[1]{\lvert #1\rvert}
\newcommand{\ip}[2]{\langle #1,#2\rangle}

\newcommand{\deq}{\coloneqq}
\newcommand{\clipB}{\mathtt{B}}
\newcommand{\cloud}{\mathscr C}

\newcommand{\HS}{\mathrm{HS}}
\newcommand{\mmid}{\mathbin{\|}}

\numberwithin{equation}{section}
\renewcommand{\le}{\leqslant}
\renewcommand{\leq}{\leqslant}
\renewcommand{\ge}{\geqslant}
\renewcommand{\geq}{\geqslant}

\title{High-accuracy simulation of Picard HMC, part I: \\ Gaussian cloud correction}
\author{
 Fan Chen\thanks{Department of Electrical Engineering and Computer Science,
 Massachusetts Institute of Technology.
 Email: \href{mailto:fanchen@mit.edu}{\texttt{fanchen@mit.edu}}.}
 \and
 Sinho Chewi\thanks{Department of Statistics and Data Science, Yale University.
 Email: \href{mailto:sinho.chewi@yale.edu}{\texttt{sinho.chewi@yale.edu}}.}
 \and
 Jianfeng Lu\thanks{Department of Mathematics, Duke University.
 Email: \href{mailto:jianfeng@math.duke.edu}{\texttt{jianfeng@math.duke.edu}}.}
 \and
 Matthew S. Zhang\thanks{Department of Mathematics, Massachusetts Institute of Technology.
 Email: \href{mailto:shuns436@mit.edu}{\texttt{shuns436@mit.edu}}.}
}
\date{September 2026}
\begin{document}
\maketitle
\begin{abstract}
We study the problem of sampling from a continuous density
$\pi\propto \exp(-V)$ on $\R^d$, where
$V\in C^2(\R^d)$ has a $\beta$-Lipschitz gradient and $\pi$ satisfies a
logarithmic Sobolev inequality with constant $\alpha^{-1}$, and write
$\kappa\deq\beta/\alpha$. We introduce the \emph{Gaussian cloud sampler}, which achieves total variation
accuracy $\varepsilon$ using $\widetilde O(\kappa d^{1/5}\polylog(1/\varepsilon))$ gradient
queries in expectation. The algorithm uses first-order rejection sampling (FORS) to correct the law of smoothed Picard HMC trajectories.
To do so, we represent the iterates of the ideal Picard iteration via \emph{Gaussian clouds},
whose centers are never evaluated, and we develop a suite of likelihood correction gadgets which only use samples from this indirect cloud representation.
\end{abstract}
\setcounter{tocdepth}{2}
\tableofcontents
\clearpage

\input{5_high_acc_sections/01_introduction}
\input{5_high_acc_sections/02_smoothing}
\input{5_high_acc_sections/03_cloud_sampler}
\input{5_high_acc_sections/04_gaussian_correction}
\input{5_high_acc_sections/05_marginal_corrections}
\input{5_high_acc_sections/06_recursive_assembly}

\newpage
\appendix
\input{5_high_acc_sections/appendix_a_rgo}
\input{5_high_acc_sections/appendix_b_gradient_realization}

\bibliographystyle{alpha}
\bibliography{ref}

\end{document}

%% file: 5_high_acc_sections/01_introduction.tex
\section{Introduction}

We study the problem of sampling from the probability measure
$\pi(\dd x)\propto\exp\{-V(x)\}\,\dd x$ on $\R^d$ using queries to
$\nabla V$. Our goal is \emph{high-accuracy sampling}: the number of
queries should depend only polylogarithmically on the inverse target
accuracy. The central question is how this cost grows with the geometry of the target $\pi$, and the ambient dimension $d$.
We introduce the \emph{Gaussian cloud sampler}, which achieves dimension dependence $d^{1/5}$ without higher-order smoothness assumptions.

For the main result, let $V\in C^2(\R^d)$ have a $1$-Lipschitz gradient,
and assume that $\pi$ satisfies a logarithmic Sobolev inequality (LSI)
with constant $\kappa\ge1$: for every probability measure $\mu$ with a
smooth density,
\begin{equation}\label{eq:LSI}
 \KL(\mu\mmid\pi)
 \leq\frac\kappa2\int_{\R^d}
 \Bigl\lVert\nabla\log\frac{\dd\mu}{\dd\pi}\Bigr\rVert^2\,\dd\mu\,.
\end{equation}
Also, suppose that we have access to a reference point $x_{\rm ref}$ such that
\begin{equation}\label{eq:reference-point}
 \norm{\nabla V(x_{\rm ref})}\leq\sqrt{d/\kappa}\,.
\end{equation}
In particular, we do not require a warm start initialization. Our main
result is as follows.

\begin{theorem}[Main result]\label{thm:fifth-root}
Suppose that $V\in C^2(\R^d)$ has a $1$-Lipschitz gradient, $\pi$
satisfies \eqref{eq:LSI}, and the supplied reference point satisfies
\eqref{eq:reference-point}. For every $0<\varepsilon\le1/4$, there exists
a sampler that returns a law $\widehat\pi$ with $\TV(\widehat\pi,\pi)\le\varepsilon$
using at most
\begin{align*}
    O\bigl(\kappa\,\{d+\log(\kappa d/\varepsilon)\}^{1/5}\log^{13/2}(\kappa d/\varepsilon)\bigr)
\end{align*}
gradient queries in expectation.
\end{theorem}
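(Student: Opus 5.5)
The proof will follow the standard high-accuracy template: a proximal sampler (restricted Gaussian oracle) outer loop, combined with an inner routine that implements each RGO step exactly using few gradient queries. The first step is a reduction. Using the reference point \eqref{eq:reference-point} together with the LSI \eqref{eq:LSI}, I localize $\pi$ to a ball of radius $O(\sqrt{\kappa d}+\sqrt{\kappa\log(1/\varepsilon)})$ around $x_{\rm ref}$ with TV error $\varepsilon/2$; the concentration here comes from Herbst. This produces a crude initialization whose $\KL$ or chi-square divergence to $\pi$ is at most $\poly(\kappa d)$ in log scale. I then run the proximal sampler with step size $h$. For $h\lesssim 1$, its contraction under LSI gives
\[
\KL(\mu_k\mmid\pi)\le (1+h/\kappa)^{-2k}\KL(\mu_0\mmid\pi),
\]
so $N=O((\kappa/h)\log(\kappa d/\varepsilon))$ outer iterations suffice, provided each RGO is implemented exactly, or to TV error $\varepsilon/(2N)$.

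The second step is to make each RGO call cheap. Each RGO target $\propto\exp(-V(x)-\|x-y\|^2/(2h))$ is strongly log-concave with condition number $O(1)$ once $h\le 1/2$. The appendix on the RGO (appendix A) together with the Gaussian cloud sampler of \cref{sec:cloud}–\cref{sec:recursive} supplies an exact rejection-based implementation in expected cost $\polylog$ gradient queries. The key point is that it achieves this at a larger step size $h\asymp d^{-1/5}$ (up to logs), rather than the $d^{-1/2}$ needed for naive FORS. The mechanism is as follows. Smoothed Picard HMC trajectories, represented as Gaussian clouds whose centers are never evaluated, give a proposal for the RGO target. Its log-likelihood ratio to the true target has fluctuations $O(1)$ once $h^{5/2}d^{1/2}\lesssim 1$ after the Picard corrections. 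The correction gadgets of \cref{sec:gaussian-correction,sec:marginal} then turn this into an exact FORS step with $O(1)$ expected acceptance cost per layer.

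The third step multiplies the two costs. The total is
\[
N\cdot\polylog = O\bigl(\kappa h^{-1}\log(\cdot)\bigr)\cdot\polylog = O\bigl(\kappa d^{1/5}\polylog\bigr).
\]
The $\{d+\log(\kappa d/\varepsilon)\}^{1/5}$ factor comes from choosing $h$ so that the localization radius squared, which is $\approx d+\log(1/\varepsilon)$, is what enters the fluctuation bound. The exponent $13/2$ is obtained by tallying the logarithms: outer iterations, the union bound over $N$ RGO calls, Picard depth, and the recursive assembly.

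I expect the main obstacle to be the second step. Concretely, one must verify that the recursive assembly of \cref{sec:recursive} delivers an exact sample with expected cost bounded \emph{uniformly} over the random centers $y$ visited by the outer chain. This matters because outer iterates can wander into regions where the local gradient is large. My plan is to condition on the high-probability event that all iterates remain in the localization ball. On that event the assembly's guarantee applies with the stated parameters. On the complement, which has probability at most $\varepsilon/4$, I truncate by outputting an arbitrary point, so that the expected-cost bound holds unconditionally and the TV budget absorbs the failure.
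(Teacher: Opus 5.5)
\textbf{Gap: the per-call RGO cost.} Your second step fails. You assert that an RGO of variance $h\asymp d^{-1/5}$ can be sampled with polylogarithmic expected cost, but nothing in the paper provides this, and the routines it does provide give something much weaker.
\begin{itemize}
\item The direct FORS sampler (\cref{lem:rgo-base}) has $O(1)$ cost only for $\eta\lesssim d^{-1/2}$.
\item The improved sampler (\cref{thm:fourth-RGO}) costs $\wtO(1+(\eta^2d)^{1/4})$, which is $\wtO(d^{3/20})$ at $\eta\asymp d^{-1/5}$.
\item \cref{thm:core} costs $\wtO(d^{1/5})$ per call, and that bound does not improve as the target becomes better conditioned.
\end{itemize}
Combined with your $\kappa/h$ outer iterations at $h\asymp d^{-1/5}$, the best of these gives $\wtO(\kappa d^{7/20})$. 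Even optimizing the proximal step against the cost of \cref{thm:fourth-RGO} gives $\kappa/\eta+\kappa d^{1/4}\eta^{-1/2}$, which is minimized at $\eta\asymp1$ and yields only $\wtO(\kappa d^{1/4})$. The condition $h^{5/2}d^{1/2}\lesssim1$ you quote is not derived from any estimate; it is reverse-engineered to produce $d^{-1/5}$. Your tally of the exponent $13/2$ therefore has nothing concrete behind it either.

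\textbf{What the scale $d^{-1/5}$ actually is.} In the paper, $h\asymp d^{-1/5}$ is the phase length of smoothed Picard HMC inside the Gaussian cloud sampler. There it balances the $h^{-1}$ phase count against the $dh^4$ expected cost of the explicit RGO calls triggered by mean corrections (\cref{ssec:roadmap}). It is not a proximal step size.

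\textbf{The correct decomposition reverses the roles.} In \cref{alg:proximal-composition} the proximal step is the constant $1/2$.
\begin{itemize}
\item Starting from $\cN(x_{\rm ref},\tfrac12\Id)$, whose KL divergence to $\pi$ is at most $6\kappa d$ directly by LSI, the proximal sampler's LSI convergence and Pinsker's inequality give $N_{\rm prox}=O(\kappa\mathfrak L)$ outer iterations.
\item Each target $R_{1/2,y}$ has curvature in $[1,3]$. After rescaling it falls under \cref{thm:core} with $\kappa_0\ge3$.
\item Each rescaled RGO target needs an admissible reference point in the sense of \eqref{eq:reference-point}. Gradient descent supplies one in $O(\log(2+\norm{\nabla V(y)}))$ queries.
\item Caps on $\norm{X_n-x_{\rm ref}}$ and $\norm{Y_n-x_{\rm ref}}$, which is your truncation idea, keep $\mathfrak L_{Y_n}\le C\mathfrak L$ at every visited center.
\end{itemize}
Each call then costs $C\mathfrak L^{11/2}(d+\mathfrak L)^{1/5}$ at TV budget $\varepsilon/(8N_{\rm prox})$. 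Multiplying by $N_{\rm prox}$ gives exactly $\kappa\,\mathfrak L^{13/2}(d+\mathfrak L)^{1/5}$. So the $d^{1/5}$ comes from inside each RGO call and the factor $\kappa$ from the outer loop, not the other way around.
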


Since a logarithmic Sobolev inequality is implied by strong log-concavity,
the theorem applies,
in particular, to strongly log-concave targets satisfying
$0 \prec \alpha\Id\preceq\nabla^2V\preceq\beta\Id$ with condition number $\kappa = \beta/\alpha$, after a suitable rescaling.

\paragraph{Comparison with prior works.}
To place the dimension dependence in context, consider the strongly
log-concave setting. The Metropolis-adjusted Langevin algorithm
(MALA) achieves $\widetilde O(\kappa d^{1/2})$ complexity from a warm
start~\cite{Chewi+21MALA,WuSchChe22MALA}; furthermore, a warm start can be generated with the same complexity~\cite{AltChe24Warm}. The proximal sampler
also achieves $\widetilde O(\kappa d^{1/2})$ complexity through approximate
rejection sampling~\cite{FanYuaChe23ImprovedProx,Chen+26HighAccDiffusion},
with the latter work requiring only gradient queries. More recently,
exact diffusion simulation via first-order rejection sampling (FORS)
gave the improved bound $\widetilde O(\kappa^{2/3}d^{1/3})$~\cite{Chen+26ExactDiffusion}.
Then, the proximal bouncy particle sampler
(or proximal BPS) achieved $\widetilde O(\kappa^{1/2}d^{1/4})$
complexity from a warm start~\cite{ProximalBPS}.
Smoothed Picard HMC can generate the warm start, giving the overall
high-accuracy bound
$\widetilde O(\kappa^{7/6}d^{1/6}+\kappa^{1/2}d^{1/4})$~\cite{PicardHMC}.
In this discussion, we have omitted results that rely on higher-order smoothness assumptions on $V$.
See \Cref{tab:high-acc} for a summary.

\begin{table}[t]
\centering
\small
\setlength{\tabcolsep}{4pt}
\renewcommand{\arraystretch}{1.18}
\caption{High-accuracy sampling bounds for
$0\prec \alpha\Id\preceq\nabla^2V\preceq\beta\Id$, with
$\kappa=\beta/\alpha$, without higher-order smoothness assumptions.
The bounds give total variation error at most $\varepsilon$;
$\widetilde O$ suppresses logarithmic factors in $d$, $\kappa$, and
$1/\varepsilon$. Here $W$ denotes the query cost of generating a warm start;
the current best bound on $W$ is $\widetilde O(\kappa^{7/6} d^{1/6})$, achieved by smoothed Picard HMC~\cite{PicardHMC}.}
\label{tab:high-acc}
\vspace{0.5em}
\begin{tabular*}{0.75\textwidth}{@{}p{0.5\textwidth}@{\extracolsep{\fill}}>{\centering\arraybackslash}p{0.25\textwidth}@{}}
Method & Expected query bound \\
\midrule
\raggedright MALA~\cite{Chewi+21MALA,WuSchChe22MALA}
 & $\widetilde O(\kappa d^{1/2}+W)$ \\
\raggedright Proximal sampler with FORS~\cite{Chen+26HighAccDiffusion}
 & $\widetilde O(\kappa d^{1/2})$ \\
\raggedright Windowed thinning BPS~\cite{LuLuo26Zigzag}
 & $\widetilde O(\kappa^{1/2}d+W)$ \\
\raggedright Windowed thinning zigzag${}^\dagger$~\cite{LuLuo26Zigzag}
 & $\widetilde O(\kappa d^{5/4}+W)$ \\
\raggedright Exact underdamped Langevin simulation~\cite{Chen+26ExactDiffusion}
 & $\widetilde O(\kappa^{2/3}d^{1/3})$ \\
\raggedright Proximal BPS~\cite{ProximalBPS}
 & $\widetilde O(\kappa^{1/2}d^{1/4}+W)$ \\
\midrule
\raggedright \textbf{Gaussian cloud sampler (\cref{thm:fifth-root})}
 & $\widetilde O(\kappa d^{1/5})$ \\
\bottomrule
\end{tabular*}
\smallskip

\begin{minipage}{0.75\textwidth}
\footnotesize
${}^\dagger$The zigzag sampler uses partial derivative queries
$\partial_jV$, $j\in\{1,\ldots,d\}$.
\end{minipage}
\end{table}

The present result therefore improves the dimension dependence of high-accuracy sampling from $d^{1/4}$ to $d^{1/5}$.
However, our result is still incomparable to the proximal BPS, which exhibits the fully accelerated square root dependence on $\kappa$ from a warm start.

We expect that the dimension dependence can still be improved further; in fact, even the current Gaussian cloud sampler could be pushed slightly beyond $d^{1/5}$ (see the discussion in \cref{ssec:roadmap}).
Beyond the rate, however, we believe that the Gaussian cloud sampler should be of broad interest to the community for its innovative algorithmic components, which we highlight in \cref{sec:fifth-root}.

\paragraph{Smoothed Picard HMC\@.}
The algorithm in this paper builds upon the smoothed Picard Hamiltonian Monte Carlo framework developed in our prior work~\cite{PicardHMC}, and that work should be considered a prerequisite for this one.

Prior to that work, the key difficulty in log-concave sampling was \emph{discretization}: how can we design discrete-time Markov chains which closely track an ideal stochastic process, and how can we control the errors?
The main insight in~\cite{PicardHMC} is that instead of sampling from $\pi$ directly, we can instead target the smoothed distribution $\pi_\eta = \pi * \cN(0, \eta I)$.
The additional smoothness allows us to apply a high-order integrator; in this case, Picard iteration with Chebyshev--Lobatto quadrature.
Then, we can go from the smoothed distribution $\pi_\eta$ back to $\pi$ using a call to a restricted Gaussian oracle (RGO)~\cite{LeeSheTia21RGO}.
This framework makes the discretization error negligible, but shifts the difficulty to a different problem, namely that of \emph{estimation}: how can we efficiently approximate the score of the smoothed distribution, given only queries to the original distribution?

In~\cite{PicardHMC}, the final sampling error was controlled by bounding the bias and variance incurred by a suitable score estimator.
In this work, we instead show how to correct the smoothed Picard HMC trajectories, thereby achieving a high-accuracy sampler.
The basic correction mechanism is based on first-order rejection sampling (FORS), introduced in~\cite{Chen+26HighAccDiffusion}.
This allows us to sample from a distribution $P$ which is a tilt of a proposal $Q$, in the sense that $\frac{\dd P}{\dd Q} \propto \exp w$, given only an unbiased estimator for $w$ (rather than evaluations of $w$ itself).

\paragraph{Gaussian cloud sampler.}
The standard correction for HMC is based on the Metropolis--Hastings
filter~\cite{Duane+1987HMC,neal11hmc}.
This leads to the Metropolized Hamiltonian Monte Carlo (MHMC) algorithm.
For the standard leapfrog integrator, the familiar $d^{1/4}$ scaling
comes from controlling the acceptance probability, even for regular
product targets such as Gaussians~\cite{Bes+13HMC}. Rigorous mixing
bounds with this dimension dependence beyond Gaussian targets use
additional Hessian regularity~\cite{CheGatJia26MHMC,ZhaAltChe26HMC};
we review these results below.

Regardless of whether we consider the Metropolis--Hastings filter, or FORS, it is unclear that we can apply these correction mechanisms directly to Picard HMC\@.
Indeed, the correction mechanisms require evaluations of the likelihood ratio between the target transition and proposal, or unbiased estimators thereof, but the Picard map is non-linear, and the resulting likelihood ratio is complicated.
This is a main reason why MHMC relies on the symplectic leapfrog integrator, as its volume preservation property is key to producing a tractable Metropolis--Hastings correction.

The \textbf{main idea} in this work is to not track the ideal Picard HMC iterates themselves, but instead to track Gaussians centered at these ideal iterates, which we call \emph{Gaussian clouds} (see \cref{fig:fifth-clouds}).
Surprisingly, this can be done using samples from the clouds, without ever knowing their means.
More specifically, given samples from the current cloud, we make a stochastic Picard update---stochastic, because we use a stochastic estimator for the score of the smoothed distribution $\pi_\eta$---and add fresh Gaussian noise. We then develop correction mechanisms that tilt the law of this proposal to the law of the next cloud.

These correction mechanisms must draw samples from Gaussians with unknown means, given only stochastic approximations of those means.
The stochastic approximations are known non-linear functions of the randomness in the Picard update.
However, we cannot use these approximations to estimate the means themselves, as that would be too costly; it would not lead to a high-accuracy algorithm. (Even worse, it would mean that we need many cloud samples at one level to draw a cloud sample at the next level, leading to the number of cloud samples growing exponentially with the Picard depth.)
Instead, we combine these stochastic approximations with FORS to develop a suite of algorithmic \emph{gadgets}, which we isolate out in \cref{sec:reusable-correction}.
We believe that these gadgets are algorithmically interesting in their own right.

A detailed overview of the algorithm is given in \cref{sec:fifth-root}.

\paragraph{Related work.}
For an exposition of the complexity of log-concave sampling, see
\cite{Chewi26Book}.

Since our algorithm can be viewed as a more efficient correction mechanism for HMC\@, we give an overview of that literature.
HMC originates in lattice field theory~\cite{Duane+1987HMC} and has
become a standard tool in statistics~\cite{neal11hmc}. Its usual
implementation refreshes a Gaussian momentum, takes several leapfrog
steps, and accepts or rejects the resulting proposal according to its
Hamiltonian error. Reversibility and volume preservation make this
correction tractable and ensure exact invariance of the target; see
\cite{BouSan18HMC} for the principles underlying this
construction.

For sufficiently regular i.i.d.\ product targets at
stationarity, Beskos et al.~\cite{Bes+13HMC} show that a leapfrog step
size of order $d^{-1/4}$ maintains a non-vanishing acceptance
probability over a fixed integration time. This gives $d^{1/4}$
gradient evaluations per trajectory, but does not by itself establish
a mixing bound from a prescribed initialization. Qualitative
convergence theory, including irreducibility and geometric ergodicity,
was developed in~\cite{Liv+19HMC,DurMouSak20HMC}. Quantitative coupling
bounds were obtained in~\cite{BouEbeZim20HMC,BouObe24Metropolis}.

Non-asymptotic analyses have progressively narrowed the gap between
these predictions and implementable high-accuracy algorithms.
Chen, Dwivedi, Wainwright, and Yu~\cite{chenetal2020hmc} established
conductance-based bounds for multi-step leapfrog MHMC\@.
Then, Chen, Gatmiry, and Jiang~\cite{CheGatJia26MHMC}
obtained the $d^{1/4}$ dimension dependence for general targets from a
warm start under isoperimetry and a Frobenius-Lipschitz Hessian
assumption.

Initialization is a separate source of difficulty. For Gaussian targets,
Apers, Gribling, and Szil\'agyi~\cite{ApeGriSzi24HMCGaussian}
combine unadjusted HMC initialization with long, randomized
integration times to obtain an overall
$\widetilde O(\kappa^{1/2}d^{1/4})$ bound in total variation.
Zhang, Altschuler, and Chewi~\cite{ZhaAltChe26HMC}
extend the algorithmic warm start approach beyond Gaussians under
strong convexity, smoothness, and the Frobenius-Lipschitz
Hessian condition. A variant of unadjusted HMC produces a warm start in
R\'enyi divergence, after which MHMC achieves high accuracy. Both stages have $d^{1/4}$ dimension
dependence when the condition number and third derivative bound are
fixed.

High-order geometric integrators and polynomial approximation can further
improve dimension dependence under additional regularity or
structure~\cite{MS17, BouSan18HMC,LeeSonVem18ODE}.

Lee, Shen, and
Tian~\cite{leeshetia21malalower} proved lower bounds for the relaxation
time of MHMC over well-conditioned targets in terms of the number of leapfrog steps.

We extensively make use of the proximal sampler framework and the RGO developed
in~\cite{LeeSheTia21RGO}, and the convergence guarantees established under a logarithmic Sobolev inequality in~\cite{Chen+22ProxSampler}.
Our likelihood corrections build on
FORS~\cite{Chen+26HighAccDiffusion}, which implements rejection sampling
through unbiased estimators rather than exact potential evaluations.
Exact diffusion simulation~\cite{Chen+26ExactDiffusion} applies this
principle to path laws using Girsanov's theorem. We instead apply it to Gaussian
cloud laws, with separate corrections for fluctuations and means. This
allows us to propagate a high-order computation whose intermediate
values are accessible only through samples.

\paragraph{AI usage.}
The main ideas underlying the algorithm and its analysis emerged through extensive interactions with GPT-5.6 Sol.
Specifically, after obtaining the proximal BPS~\cite{ProximalBPS}, we prompted GPT to develop a high-accuracy sampler based on applying FORS correction to smoothed Picard HMC~\cite{PicardHMC}.
We subsequently simplified the algorithm, checked the arguments, and prepared the manuscript (with further assistance from GPT), and we take full responsibility for the contents of this work.

%% file: 5_high_acc_sections/02_smoothing.tex
\section{Preliminaries}\label{sec:preliminaries}

We recall the following algorithmic ingredients from~\cite{PicardHMC}.
Throughout, $c, C > 0$ denote universal constants that may change from line to line.
Except in \cref{sec:proximal-composition}, the construction and
analysis use the stronger assumption
\begin{equation}\label{eq:HVP-regularity}
 \kappa^{-1}\Id\preceq\nabla^2V(x)\preceq\Id
 \quad\text{for all }x\in\R^d\,.
\end{equation}

\subsection{Gaussian smoothing}

Write $\pi_\eta\deq\pi*\cN(0,\eta\Id)$. Let $V_\eta\deq-\log\pi_\eta$, up to an additive constant, and define the smoothed gradient $g_\eta\deq\nabla V_\eta$. The restricted Gaussian oracle (RGO) is the distribution
\begin{equation*}
 R_{\eta,y}(\dd x)\propto
 \exp\Bigl\{-V(x)-\frac{\norm{x-y}^2}{2\eta}\Bigr\}\,\dd x\,.
\end{equation*}
We also write $R_\eta$ for the Markov kernel with $R_\eta(y,\cdot) \deq R_{\eta,y}$.
Then, we have the identities $\pi_\eta R_\eta = \pi$ and $g_\eta(y) = \E_{X\sim R_{\eta,y}} \nabla V(X)$.
Moreover, the smoothed potential satisfies the bounds
\begin{align*}
 \frac1{\kappa+\eta}\,\Id&\preceq\nabla^2V_\eta
 \preceq\frac1{1+\eta}\,\Id\,.
\end{align*}
These facts are taken from \cite[Section 3.1 and Lemma 4.1]{PicardHMC}. We also use the moment estimate
\begin{equation}\label{eq:score-moment}
 \norm{g_\eta(Y)}_{L^p}\leq C\sqrt{d+p}\,,
 \qquad Y\sim\pi_\eta\,,\quad p\geq2\,,
\end{equation}
which is the $k=0$ case of \cite[Lemma 5.8]{PicardHMC}.

Since the smoothed score $g_\eta$ is not directly computable from gradient queries, we consider the stochastic estimator
\begin{equation*}
 \widehat g_\eta(y;G)\deq
 \nabla V\bigl(\prox_{\eta V}(y)+\sqrt\eta\,G\bigr)\,,
 \qquad G\sim\cN(0,\Id)\,.
\end{equation*}
By \cite[Lemma 4.2]{PicardHMC}, $\widehat g_\eta(y;G)$ is $1$-Lipschitz in $y$, $\sqrt\eta$-Lipschitz in $G$, and its mean $\overline g_\eta(y)\deq\E\widehat g_\eta(y;G)$ satisfies
\begin{equation*}
 \norm{g_\eta(y)-\overline g_\eta(y)}\leq\eta^{3/2}\sqrt d\,.
\end{equation*}

\subsection{Picard iteration and Chebyshev--Lobatto quadrature}

As in \cite{PicardHMC}, we use $J$ Chebyshev--Lobatto nodes on $[0,h]$, defined via
\begin{equation*}
 t_j\deq\frac h2\,\Bigl(1-\cos\frac{(j-1)\uppi}{J-1}\Bigr)\,,
 \qquad j\in[J]\,.
\end{equation*}
Thus, $t_1=0$ and $t_J=h$. Let $\ell_j$ be the cardinal polynomials,
such that $\ell_j(t_i)=\mathbf 1_{\{i=j\}}$, $i,j\in[J]$. Write
$\mathcal I_{J,h}$ for interpolation at these nodes, and let
\begin{equation*}
\Lambda_J\deq \sup_{t\in[0,h]}\sum_{j\in[J]}\abs{\ell_j(t)}\,, \qquad
 \omega_{i,j}\deq\int_0^{t_i}(t_i-s)\,\ell_j(s)\,\dd s\,,
 \qquad
 \omega_j\deq\int_0^h\ell_j(s)\,\dd s\,,\qquad i,j\in[J]\,.
\end{equation*}
By \cite[Proposition~B.1]{PicardHMC},
\begin{equation}\label{eq:Cheb-bounds}
 \omega_j\geq0\,,\qquad\sum_{j\in[J]}\omega_j=h\,,\qquad
 \Lambda_J\leq C\log J\,.
\end{equation}
We also use the square estimates of \cite[Lemma~B.2]{PicardHMC}:
\begin{equation}\label{eq:integrated-weights-l2}
 \max_{i\in[J]}\sum_{j\in[J]}\omega_{i,j}^2\leq\frac{5\uppi^2h^4}{128\,(J-1)}\,,
 \qquad
 \sum_{j\in[J]}\omega_j^2\leq\frac{\uppi^2h^2}{8\,(J-1)}\,.
\end{equation}
In particular it also implies the bound stated in \cite[Proposition~B.1]{PicardHMC}:
\begin{equation}\label{eq:integrated-weights-l1}
 \max_{i\in[J]}\sum_{j\in[J]}\abs{\omega_{i,j}}
 \leq h^2\,.
\end{equation}

In~\cite{PicardHMC}, we applied Picard iteration to the Hamiltonian flow associated with $\pi_\eta$, using Chebyshev{--}Lobatto quadrature to approximate the integration in time.
In that work, the Picard error was already not dominant at Picard depth $2$.
In the present work, we seek high-accuracy guarantees, which necessitates considering an arbitrary depth $K$.
The required estimates for deep Picard iteration are given in \cite[Theorem~E.2]{PicardHMC}.
In particular, we write $K_{\eta,h}^{[J,K]}$ for the Picard kernel with $J$ Chebyshev--Lobatto nodes and depth $K$, using the exact smoothed score $g_\eta$, and including a Gaussian momentum half-refresh before and after the Hamiltonian dynamics. Its reference phase-space law and associated contraction metric are
\begin{equation*}
 \Pi_\eta\deq\pi_\eta\otimes\cN(0,\Id)\,,\qquad
 \norm{(u,v)}_{M_\kappa}^2
 \deq\norm{v+u/2}^2+\Bigl(\frac1{2\kappa}+\frac14\Bigr)\,\norm u^2\,.
\end{equation*}
We denote by $W_{p,M_\kappa}$ the $p$-Wasserstein distance induced by this norm. It is uniformly equivalent to the usual Euclidean norm.

\begin{proposition}[Picard reference estimates]\label{prop:reference-Picard}
Let $0<\eta\leq1$ and let $J\geq2$, $K\geq1$ be integers. If $h^2\leq1/4$ and $h\leq c/\kappa$, then, for every $p\geq2$,
\begin{align}
 W_{p,M_\kappa}(\delta_zK_{\eta,h}^{[J,K]},\delta_{z'}K_{\eta,h}^{[J,K]})
 &\leq(1-ch/\kappa)\,\norm{z-z'}_{M_\kappa}\,,\notag
\\
 W_{p,M_\kappa}(\Pi_\eta K_{\eta,h}^{[J,K]},\Pi_\eta)
 &\leq C\sqrt{d+p}\,
 \Bigl\{B_{J,p}h\,\Bigl(\frac h{\sqrt\eta}\Bigr)^J
 +h^{2K+1}\Bigr\}\,,\label{eq:reference-defect}
\end{align}
where $B_{J,p}=C_0^{J+1}p^J\sqrt{J!}$ and $C_0$ is universal.
\end{proposition}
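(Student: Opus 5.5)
This proposition is essentially a restatement of \cite[Theorem~E.2]{PicardHMC}, adapted to the normalization used here. The plan is therefore to verify that the hypotheses of that theorem hold and translate its conclusions. The smoothed potential $V_\eta$ satisfies $\frac1{\kappa+\eta}\Id\preceq\nabla^2V_\eta\preceq\frac1{1+\eta}\Id$. With $\eta\le1$, the condition number of $V_\eta$ is therefore at most $\kappa+1\le2\kappa$, and its smoothness is at most $1$. Consequently the step-size conditions $h^2\le1/4$, $h\le c/\kappa$ are the ones required there, up to adjusting the universal constant $c$.

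\textbf{Contraction bound.} Couple the two chains synchronously: use the same Gaussian for both momentum half-refreshes. The half-refresh $v\mapsto\sqrt{1-\gamma}\,v+\sqrt\gamma\,\xi$ is then a deterministic affine map on the difference $z-z'$, and the Picard map with exact score $g_\eta$ is deterministic. Under this coupling the output difference is deterministic, so every $W_p$ distance reduces to a pointwise bound.

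For the pointwise bound, compare the depth-$K$ Picard/Chebyshev map with the exact Hamiltonian flow linearized along the segment between $z$ and $z'$. The exact flow with a half-refresh contracts $\norm{\cdot}_{M_\kappa}$ by $1-ch/\kappa$; this is the standard strongly-convex underdamped computation with the twisted metric $M_\kappa$. For the Picard map, write its Jacobian as the Jacobian of the flow plus a perturbation. Using \eqref{eq:integrated-weights-l1} and $\Lambda_J\le C\log J$, each Picard sweep is a contraction in sup-norm over the nodes with factor at most $h^2\le1/4$. The discrepancy from the collocation fixed point is then $O(h^{2K})$ relative to $\norm{z-z'}$, and is absorbed for $h\le c/\kappa$.

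\textbf{Stationarity defect \eqref{eq:reference-defect}.} Start from $Z\sim\Pi_\eta$ and couple the Picard kernel with the exact kernel, which leaves $\Pi_\eta$ invariant, using the same refresh noise. This gives $W_p(\Pi_\eta K^{[J,K]},\Pi_\eta)\le\norm{\Phi^{[J,K]}_h(Z)-\Phi_h(Z)}_{L^p}$. Split the difference into two pieces.
\begin{enumerate}
\item[(i)] Picard truncation: the depth-$K$ iterate versus the collocation fixed point. This contracts geometrically by $h^2$ per sweep. Starting from an initial error of order $h\norm{v}+h^2\norm{g_\eta(x)}$, it yields $h^{2K+1}\sqrt{d+p}$ by \eqref{eq:score-moment} and Gaussian moments.
\item[(ii)] Quadrature and interpolation error of the collocation scheme. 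This is bounded by $h\,h^J\,\sup_t\norm{\partial_t^J g_\eta(x_t)}/J!$ times Lebesgue constants.
\end{enumerate}

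\textbf{Main obstacle.} The hard step is (ii): bounding $L^p$ moments of high time-derivatives of $g_\eta$ along the flow. Writing $\partial_t^Jg_\eta(x_t)$ via Faà di Bruno produces derivatives $\nabla^{k+1}V_\eta$ contracted against velocities. Gaussian smoothing gives $\norm{\nabla^{k+1}V_\eta}\lesssim$ Hermite-type bounds scaling like $\eta^{-k/2}\sqrt{k!}\,p^{k}$ in $L^p$ under $\pi_\eta$; this is the general-$k$ case of \cite[Lemma~5.8]{PicardHMC}. Counting terms then gives the factor $C_0^{J+1}p^J\sqrt{J!}$, and $h^J\eta^{-J/2}$ produces $(h/\sqrt\eta)^J$. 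Stationarity of the exact flow under $\Pi_\eta$ lets the moment bound be applied at each time $t$.

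Since the paper explicitly defers these estimates to \cite[Theorem~E.2]{PicardHMC}, the proof I would write is the verification above, citing that theorem. The combinatorial counting is the part I would not redo.
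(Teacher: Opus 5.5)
Your proposal is correct and matches the paper, whose proof consists solely of citing \cite[Theorem~E.2]{PicardHMC}; your hypothesis check and heuristic sketch go beyond what is needed. One small slip in the sketch: the free flight already contains $t_jv$, so the initial gap between $X^{[0]}$ and the collocation fixed point is $O\bigl(h^2(\norm{g_\eta(x)}+h\norm v)\bigr)$, not $O(h\norm v)$, and it is this $h^2$ starting size that yields the $h^{2K+1}$ term.
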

\begin{proof}
This is \cite[Theorem~E.2]{PicardHMC}.
\end{proof}

%% file: 5_high_acc_sections/03_cloud_sampler.tex
\section{The Gaussian cloud sampler}\label{sec:fifth-root}

The Picard kernel $K_{\eta,h}^{[J,K]}$ introduced in
\cref{sec:preliminaries} uses the exact smoothed gradient $g_\eta$.
Our task is then to construct a high-accuracy implementation of its Picard updates. Gaussian smoothing gives
the derivative bounds needed for accurate quadrature, and increasing the
Picard depth reduces the iteration error. The remaining difficulty is that
the smoothed gradient is an RGO expectation,
\begin{equation*}
 g_\eta(y)=\E_{X\sim R_{\eta,y}}\nabla V(X)\,.
\end{equation*}
Recall the stochastic estimator introduced in \cref{sec:preliminaries},
$\widehat g_\eta(y;G)\deq
 \nabla V\bigl(\prox_{\eta V}(y)+\sqrt\eta\,G\bigr)$ with $
 \qquad G\sim\cN(0,\Id)$.
It is generally biased for $g_\eta(y)$ and has random fluctuations.
The key algorithmic idea introduced here is to correct the law of the stochastic Picard updates
\emph{without}
explicitly computing the corresponding ideal Picard iterates.

\subsection{Main ideas}

\paragraph{Representing an iterate by a Gaussian cloud.}
The key idea is to associate each ideal Picard iterate with a Gaussian having
that iterate as its mean, and to maintain a procedure for sampling this law.
Fix a phase $n\in\{0,\ldots,N-1\}$, where $N$ is the total number
of phases. Its input is the position--momentum pair $(X_{nh},P_{nh})$ at
smoothing variance $\eta_n$. Position and momentum subscripts denote physical
time: phase $n$ runs from $nh$ to $(n+1)h$. Momentum refreshes
are included in the phase update and do not advance this time index.

In a phase, first draw $\zeta_0\sim\cN(0,\Id)$ and refresh the momentum:
\begin{equation*}
 P_{nh}^-\deq a_hP_{nh}+\sigma_h\zeta_0\,,\qquad
 a_h\deq\e^{-h/2}\,,\qquad \sigma_h\deq\sqrt{1-\e^{-h}}\,.
\end{equation*}
Throughout the description of a cloud, we condition on this phase input and
first refresh. For a total Picard depth $K\ge2$, define the ideal node
positions by
\begin{equation}\label{eq:fifth-Picard}
\begin{aligned}
 X_{nh+t_j}^{[0]}&\deq X_{nh}+t_jP_{nh}^-\,,&&j\in[J]\,,\\
 X_{nh+t_i}^{[\ell+1]}&\deq X_{nh+t_i}^{[0]}
 -\sum_{j\in[J]}\omega_{i,j} g_{\eta_n}(X_{nh+t_j}^{[\ell]})\,,
 &&i\in[J]\,,\quad 0\le\ell\le K-2\,.
\end{aligned}
\end{equation}
Write
$X^{[\ell]}\deq(X_{nh+t_j}^{[\ell]})_{j\in[J]}\in\R^{Jd}$ for the stacked
ideal node vector. These vectors are deterministic under the conditioning above.

Choose an auxiliary cloud variance $0<\theta_n<\eta_n/2$. A \emph{Gaussian cloud} is the law
of a random vector $\cloud^{[\ell]}\in\R^{Jd}$ defined by
\begin{equation*}
 \cloud^{[\ell]}\deq(\cloud_{nh+t_j}^{[\ell]})_{j\in[J]}
 \sim\cN\bigl(X^{[\ell]},\theta_n\Id_{Jd}\bigr)\,,
 \qquad 0\le\ell\le K-1\,.
\end{equation*}
Thus, $\cloud_{nh+t_j}^{[\ell]}\in\R^d$ is the cloud component at time $nh+t_j$;
conditional on $X^[\ell]$, these components are independent with laws
$\cN(X_{nh+t_j}^{[\ell]},\theta_n\Id)$.
The variance $\theta_n$ is distinct from the target smoothing variance
$\eta_n$. The operation we need is to draw fresh samples from
this law, conditional on the phase input, without evaluating its mean
$X^{[\ell]}$.
The initial node vector $X^{[0]}$ is explicit, so its cloud is easy to sample.
The task at each later level is to turn access to the current cloud into
access to the next one.

To see the purpose of the added Gaussian, consider one Picard update. Given
the inaccessible ideal iterate $X^{[\ell]}$, the desired next Picard iterate
has components
\begin{equation*}
 X_{nh+t_i}^{[0]}-\sum_{j\in[J]}\omega_{i,j}
 g_{\eta_n}(X_{nh+t_j}^{[\ell]})\,,
 \qquad i\in[J]\,.
\end{equation*}
Given a cloud sample $\cloud^{[\ell]}\sim
\cN(X^{[\ell]},\theta_n\Id_{Jd})$, draw
$G,G'\sim\cN(0,\Id_{Jd})$ independently and form
\begin{equation*}
 \widetilde\cloud_{nh+t_i}^{[\ell+1]}
 \deq X_{nh+t_i}^{[0]}
 -\sum_{j\in[J]}\omega_{i,j}
 \widehat g_{\eta_n}(\cloud_{nh+t_j}^{[\ell]};G_j)
 +\sqrt{\theta_n}\,G_i'\,,
 \qquad i\in[J]\,.
\end{equation*}
This proposal has an incorrect mean and extra fluctuations, but it is a
small random perturbation of a Gaussian. That
structure allows a likelihood correction to target the entire desired
Gaussian law. Thus an inaccessible deterministic update is replaced by a
transition between cloud laws. Repeating this construction gives the
clouds depicted in \cref{fig:fifth-clouds}.

We next describe the corrections.

\begin{figure}[!ht]
\centering
\begin{tikzpicture}[font=\small,
  flow/.style={-{Stealth[length=2mm]},draw=black!65,thick},
  hidden/.style={black!75,thick},
  stage/.style={align=center,font=\small\bfseries}]
\node[stage] at (0,2.25) {Current cloud};
\node[stage] at (4.8,2.25) {Proposed next cloud};
\node[stage] at (9.6,2.25) {Corrected next cloud};

\fill[blue!5] (0,0) circle (1.05);
\fill[blue!8] (0,0) circle (.65);
\draw[blue!35] (0,0) circle (1.05);
\draw[blue!25] (0,0) circle (.65);
\draw[hidden] (-.09,0)--(.09,0) (0,-.09)--(0,.09);
\draw[black!35,thin] (0,-.12)--(0,-1.16);
\node[below,inner sep=3pt] at (0,-1.16) {$X^{[\ell]}$};
\fill[blue!70!black] (.35,.46) circle (2.3pt);
\draw[blue!50,thin] (.35,.55)--(.35,1.22);
\node[above,inner sep=3pt] at (.35,1.22) {$\cloud^{[\ell]}$};

\begin{scope}[shift={(4.8,0)}]
\path[fill=orange!8,draw=orange!55!black]
 (-.85,-.35) .. controls (-1.12,.35) and (-.5,1.15) .. (.22,1.12)
 .. controls (.72,1.1) and (1.3,.7) .. (1.16,.16)
 .. controls (1.07,-.35) and (.8,-.77) .. (.2,-.73)
 .. controls (-.3,-.73) and (-.63,-.76) .. cycle;
\path[draw=orange!35]
 (-.48,-.17) .. controls (-.65,.22) and (-.24,.75) .. (.24,.72)
 .. controls (.68,.7) and (.93,.42) .. (.78,.07)
 .. controls (.58,-.4) and (-.17,-.45) .. cycle;
\draw[hidden] (-.09,0)--(.09,0) (0,-.09)--(0,.09);
\draw[black!35,thin] (0,-.12)--(0,-1.16);
\node[below,inner sep=3pt] at (0,-1.16) {$X^{[\ell+1]}$};
\fill[orange!85!black] (.52,.52) circle (2.3pt);
\draw[orange!60!black,thin] (.52,.61)--(.52,1.22);
\node[above,inner sep=3pt] at (.52,1.22) {$\widetilde\cloud^{[\ell+1]}$};
\end{scope}

\begin{scope}[shift={(9.6,0)}]
\fill[teal!5] (0,0) circle (1.05);
\fill[teal!8] (0,0) circle (.65);
\draw[teal!45] (0,0) circle (1.05);
\draw[teal!30] (0,0) circle (.65);
\draw[hidden] (-.09,0)--(.09,0) (0,-.09)--(0,.09);
\draw[black!35,thin] (0,-.12)--(0,-1.16);
\node[below,inner sep=3pt] at (0,-1.16) {$X^{[\ell+1]}$};
\fill[teal!75!black] (.28,.48) circle (2.3pt);
\draw[teal!60,thin] (.28,.57)--(.28,1.22);
\node[above,inner sep=3pt] at (.28,1.22) {$\cloud^{[\ell+1]}$};
\end{scope}

\draw[flow] (1.5,.12)--(3.3,.12);
\node[align=center,font=\footnotesize] at (2.4,.65)
 {Noisy Picard\\update};
\draw[flow] (6.35,.12)--(8.15,.12);
\node[align=center,font=\footnotesize] at (7.25,.65)
 {Likelihood\\correction};

\node[align=center,font=\footnotesize] at (0,-2.05)
 {$\cN(X^{[\ell]},\theta_n\Id_{Jd})$};
\node[align=center,font=\footnotesize] at (4.8,-2.05)
 {Perturbed law; generally biased\\and non-Gaussian};
\node[align=center,font=\footnotesize] at (9.6,-2.05)
 {$\cN(X^{[\ell+1]},\theta_n\Id_{Jd})$};
\end{tikzpicture}
\caption{One internal cloud transition. Each dot is one sampled vector; contours depict laws, and
crosses indicate hidden ideal iterates. The noisy Picard update turns
$\cloud^{[\ell]}$ into a proposal $\widetilde\cloud^{[\ell+1]}$. Likelihood
correction yields a draw $\cloud^{[\ell+1]}$ from the next Gaussian cloud.
Correction uses additional independent cloud calls and may retry proposals;
the arrow denotes a correction of the law, not a deterministic motion of
the dot.}
\label{fig:fifth-clouds}
\end{figure}

\paragraph{Correcting fluctuations and correcting the mean.}
As before, condition on $(X_{nh}, P_{nh}^-)$. The current ideal node positions are
$X_{nh+t_j}^{[\ell]}$, $j\in[J]$. The desired output is a Gaussian with mean
$X^{[\ell+1]}$. The naive proposal $\widetilde\cloud^{[\ell+1]}$ instead has a random mean:
\begin{equation*}
 \widetilde\cloud^{[\ell+1]}=M+\sqrt{\theta_n}\,G'\,,\qquad
 M_i\deq X_{nh+t_i}^{[0]}
 -\sum_{j\in[J]}\omega_{i,j}
       \widehat g_{\eta_n}(\cloud_{nh+t_j}^{[\ell]};G_j)\,,\quad i\in[J]\,.
\end{equation*}
There are two sources of error: $M$ fluctuates from one proposal to the next, and
its average $\E M$ need not equal $X^{[\ell+1]}$.
We correct these errors via first-order rejection sampling (FORS)~\cite{Chen+26HighAccDiffusion}; the
constructions below supply the requisite computable tilt
estimators.
We extract these two correction operations as reusable gadgets in
\cref{sec:reusable-correction}; here we explain how they act on one
cloud transition.

The first correction removes the extra randomness due to $M$, while
retaining the Gaussian noise of variance $\theta_n$ that defines the
cloud; this is \cref{gadget:fluctuation} (hidden fluctuation correction). It uses independent cloud draws $\cloud_{{\rm fluc},r}^{[\ell]}$ and yields the output law
\begin{equation*}
 \cN\bigl(\E M,\theta_n\Id_{Jd}\bigr)\,.
\end{equation*}
This is a correction of the entire distribution: its ideal output is
Gaussian, not just a law with the right covariance. It is implemented
through a likelihood correction using $\widetilde O(1)$ evaluations of $M$, without evaluating the
unknown vector $\E M$.
Note that direct estimation of $\E M$ would require $\poly(1/\delta)$ evaluations of $M$ to achieve $\delta$ error, which would not yield a high-accuracy sampler.

The second correction moves the Gaussian mean from $\E M$ to
$X^{[\ell+1]}$. The required displacement in component $i\in[J]$ is
\begin{equation*}
 X_{nh+t_i}^{[\ell+1]}-\E M_i
 =-\sum_{j\in[J]}\omega_{i,j}\,
 \bigl\{g_{\eta_n}(X_{nh+t_j}^{[\ell]})
       -\E\widehat g_{\eta_n}(\cloud_{{\rm mean},nh+t_j}^{[\ell]};G_j)\bigr\}\,.
\end{equation*}
Here, $\cloud_{\rm mean}^{[\ell]}$ is another fresh independent copy of the current cloud.
Then, \cref{gadget:mean} (hidden mean correction) shows how to perform the mean correction using only an unbiased estimator of the difference in braces. Combining the two corrections gives the desired cloud law
$\cN(X^{[\ell+1]},\theta_n\Id_{Jd})$.

Why can this gradient difference be estimated cheaply when
$X_{nh+t_j}^{[\ell]}$ is unavailable?
In \cref{thm:marginal-cloud}, we develop a Gaussian Stein identity that furnishes the requisite unbiased estimator.
This representation, which uses proximal and Hessian queries, is given for the sake of exposition;
\cref{sec:gradient-marginal} provides a gradient-only implementation.

That identity also requires an independent RGO draw from
$R_{\eta_n}(X_{nh+t_j}^{[\ell]},\cdot)$. Since $X_{nh+t_j}^{[\ell]}$ is
unavailable, we cannot call an RGO at that iterate directly. The realization
of \cref{gadget:hidden-RGO} (hidden RGO sampler), analyzed in \cref{lem:hidden-RGO},
uses another independent cloud draw $\cloud_{\rm RGO}^{[\ell]}$ to sample from
this RGO without knowing the hidden center $X_{nh+t_j}^{[\ell]}$.

\paragraph{Do we need exponentially many cloud samples?}
Starting with the cloud sample $\cloud^{[\ell]}$ at level $\ell$, the correction mechanisms require additional cloud samples $\cloud_{{\rm fluc},r}^{[\ell]}$, $\cloud_{\rm mean}^{[\ell]}$, and $\cloud_{\rm RGO}^{[\ell]}$.
Na\"{\i}vely, generating a fixed number of independent predecessor clouds at every
level would produce an exponentially large recursion tree. This issue is solved via careful accounting.
Each level requires just one primary predecessor sample $\cloud^{[\ell]}$; auxiliary
samples are generated only when a tilt estimator or a rejection
requests them. For example, a second cloud copy used to correct
the mean in \cref{gadget:mean} is drawn only when the corresponding Poisson count is non-zero.
For a clipping parameter $\clipB$, this event and rejection each have
probability $O(\clipB)$. Our eventual parameter choices make these probabilities small
enough to offset both the number of levels and the number of auxiliary
calls that one tilt estimator can request. We show that this leads to
polylogarithmic overhead in the expected cloud count, rather than
an exponential dependence on depth; see
\cref{lem:adaptive-tree,sec:work}.

\paragraph{From clouds to a phase-space update.}
The position and momentum outputs of the Picard iteration also depend on the same gradient evaluations.
We therefore use the cloud at the last level and correct their joint law.
Write the uncorrected endpoint proposals as
$(\widetilde X_{(n+1)h}^{[K]},\widetilde P_{(n+1)h}^{[K]})$.

To apply the Gaussian correction gadgets, we need independent Gaussian
noise with non-zero variance in every direction in phase space. The final OU half-refresh supplies this noise in momentum, but
not for the position component. We
therefore introduce a variance $\tau>0$ and add an independent
$\cN(0,\tau\Id)$ draw to the proposed position. Denote the resulting
noisy position and refreshed momentum by
$(X_{(n+1)h}^{\circ},P_{(n+1)h}^{\circ})$ (see 
\eqref{eq:fifth-noisy-endpoint}). The circles distinguish these proposals from the accepted state
$(X_{(n+1)h},P_{(n+1)h})$. Their independent Gaussian noises have
variances $\tau$ and $\sigma_h^2$, respectively. Rescaling by their
standard deviations gives a full-rank standard Gaussian perturbation,
so the hidden fluctuation and mean corrections, \cref{gadget:fluctuation,gadget:mean}, apply to the entire pair. The
accepted pair becomes the next phase-space state, including the added
position noise.

\paragraph{Retaining the smoothing until the terminal step.}
Adding
an independent Gaussian increases the smoothing variance, while a
RGO draw with the matching variance removes the smoothing entirely.
For exact input laws, these identities read
\begin{equation*}
 \pi_\eta*\cN(0,\tau\Id)=\pi_{\eta+\tau}\,,
 \qquad
 \pi_\eta R_\eta=\pi\,.
\end{equation*}

RGO calls are expensive, so we retain the position noise added
at the endpoint and defer its removal. The reference smoothing level
then advances as $\eta_{n+1}=\eta_n+\tau$, without an RGO call
after each phase.
We shall choose the constants so that $\eta_n \in [\eta_0,5\eta_0/4]$ for the entire
run; the smoothing is removed only at the terminal step.

Finally, the chain is close to a smoothed target in Wasserstein distance,
whereas the requested output accuracy is in total variation. At the final
smoothing variance $\eta_{\rm f}$, one more Gaussian convolution converts this Wasserstein
control into KL control. Specifically, joint convexity of the KL divergence gives,
for probability laws $P,Q$ with finite second moments,
\begin{equation}\label{eq:W2-KL}
 \KL\bigl(P*\cN(0,\tau\Id)\mathbin\Vert Q*\cN(0,\tau\Id)\bigr)
 \leq\frac{W_2^2(P,Q)}{2\tau}\,.
\end{equation}
Applying this estimate with the final smoothing level $\tau=\eta_{\rm f}$ produces the reference location law
$\pi_{2\eta_{\rm f}}$, which explains the variance $2\eta_{\rm f}$ in the terminal
RGO draw. RGO disintegration then returns the original target
$\pi$.

\Cref{fig:fifth-sampler} summarizes the phase and the terminal RGO draw
in \cref{alg:fifth-phase}. In the terminal box, $Y_{\rm f}$ is the Gaussian-smoothed final
position and $X_{\rm out}$ the returned sample. The vector $G_{\rm f}$
is a fresh independent standard Gaussian.

\begin{figure}[!ht]
\centering
\begin{tikzpicture}[
  font=\footnotesize,
  box/.style={draw=black!55,rounded corners=2pt,align=center,
    inner sep=6pt,text width=5.1cm,minimum height=1.25cm},
  phase/.style={box,fill=blue!4},
  RGO/.style={box,fill=orange!7},
  flow/.style={-{Stealth[length=2mm]},thick,draw=black!65}]
\node[phase] (start) at (0,0)
  {\textbf{Start phase $n$}\\
   $(X_{nh},P_{nh})$ at level $\eta_n$\\
   OU half-step; initialize $X^{[0]}$};
\node[phase] (cloud) at (7.1,0)
  {\textbf{Internal Picard updates}\\
   $\cloud^{[0]}\longrightarrow\cdots\longrightarrow \cloud^{[K-1]}$\\
   Correct each cloud transition};
\node[box,fill=black!2,text width=5.1cm,minimum height=1.7cm]
  (detail) at (7.1,-2.6)
  {\textbf{Auxiliary clouds}\\[2pt]
   Fluctuation correction\\
   Mean correction};
\node[phase,minimum height=1.7cm] (endpoint) at (0,-2.6)
  {\textbf{Joint endpoint correction}\\
   Shared cloud for $(\widetilde X_{(n+1)h}^{[K]},\widetilde P_{(n+1)h}^{[K]})$\\
   OU half-step; add position variance $\tau$\\
   Accept $(X_{(n+1)h}^{\circ},P_{(n+1)h}^{\circ})$};
\node[phase] (retain) at (0,-5.0)
  {\textbf{Retain position variance}\\
   $(X_{(n+1)h},P_{(n+1)h})$\\
   $=(X_{(n+1)h}^{\circ},P_{(n+1)h}^{\circ})$\\
   $\eta_{n+1}=\eta_n+\tau$};
\node[RGO,minimum height=1.25cm]
  (terminal) at (7.1,-5.0)
  {\textbf{After $N$ phases:}\\
   \textbf{Terminal RGO draw}\\[2pt]
   \mbox{$Y_{\rm f}=X_{Nh}+\sqrt{\eta_{\rm f}}\,G_{\rm f}$}\\[1pt]
   \mbox{$X_{\rm out}\sim R_{2\eta_{\rm f},Y_{\rm f}}$}};
\draw[flow] (start) -- (cloud);
\draw[densely dashed,draw=black!55] (cloud) -- (detail);
\draw[flow] (cloud.south west) -- (4.35,-1.2)
  -- (0,-1.2) -- (endpoint.north);
\draw[flow] (endpoint) -- (retain);
\draw[flow] (retain) -- node[above,font=\footnotesize] {finish} (terminal);
\draw[flow] (retain.west) -- (-3.4,-5.0)
  -- node[above,sloped,font=\footnotesize] {next phase} (-3.4,0)
  -- (start.west);
\end{tikzpicture}
\caption{The Gaussian cloud sampler. Blue boxes describe one phase; the orange
box is the terminal RGO draw. The Gaussian means are the ideal Picard iterates: the implementation
samples corrected clouds without evaluating those hidden iterates or their exact
smoothed gradients.}
\label{fig:fifth-sampler}
\end{figure}

\subsection{Algorithm description}

\FloatBarrier

The following algorithm assembles these operations. Its displayed exact
cloud and RGO laws specify the targets of the correction routines;
their implementations are given in
\cref{thm:robust-fluctuation,prop:marginal-score,sec:recursive}.

\begin{algorithm}[Gaussian cloud sampler]\label{alg:fifth-phase}
For prescribed sampler parameters, including the initial smoothing variance
$\eta_0$, initialize $X_0\deq x_{\rm ref}$ and
$P_0\sim\cN(0,\Id)$. For $n=0,\ldots,N-1$, given $(X_{nh},P_{nh})$ at smoothing level $\eta_n=\eta_0+n\tau$, do the
following.
\begin{enumerate}[label=\textup{(\roman*)},leftmargin=2.4em]
\item \emph{Refresh momentum and initialize the node positions.}
Put $a_h\deq \e^{-h/2}$ and $\sigma_h\deq \sqrt{1-\e^{-h}}$.  Draw
$\zeta_0\sim\cN(0,\Id)$, set $P_{nh}^-\deq a_hP_{nh}+\sigma_h\zeta_0$, and put
\[
 X_{nh+t_j}^{[0]}=X_{nh}+t_jP_{nh}^-\,,\qquad j\in[J]\,.
\]
\item \emph{Propagate the clouds through the internal Picard updates.}
For $\ell=0,\ldots,K-2$, use samples from the level-$\ell$ cloud
$\cN(X^{[\ell]},\theta_n \Id_{Jd})$ to target the next cloud
$\cN(X^{[\ell+1]},\theta_n \Id_{Jd})$, with the ideal iterates defined by
\eqref{eq:fifth-Picard}. The implementation samples these laws without
evaluating the iterates. The proposal uses independent
Gaussian blocks
\[
 \cloud^{[\ell]}\sim\cN(X^{[\ell]},\theta_n \Id_{Jd})\,,
 \qquad
 G\sim\cN(0,\Id_{Jd})\,,
 \qquad
 G'\sim\cN(0,\Id_{Jd})\,,
\]
whose $d$-dimensional components are denoted by $\cloud_{nh+t_j}^{[\ell]}$,
$G_j$, and $G_j'$, $j\in[J]$. For each $i\in[J]$, propose
\[
 \widetilde\cloud_{nh+t_i}^{[\ell+1]}
 \deq X_{nh+t_i}^{[0]}-\sum_{j\in[J]} \omega_{i,j}
   \wh g_{\eta_n}(\cloud_{nh+t_j}^{[\ell]};G_j)
   +\sqrt{\theta_n}\,G_i'\,.
\]
The fluctuation correction treats $(\cloud^{[\ell]},G)$ as one Gaussian input.
If the tilt estimators in one fluctuation FORS attempt request $k>0$ additional cloud
inputs in total, generate lazily
\[
 \cloud_{{\rm fluc},r}^{[\ell]}
 \sim\cN(X^{[\ell]},\theta_n\Id_{Jd})\,,
 \qquad r\in[k]\,,
\]
independently of each other and of the observed $\cloud^{[\ell]}$, conditional on
$X^{[\ell]}$. Use these samples to jointly generate the derivative inputs for the entire attempt, as described in \cref{sec:joint-derivatives}. For every component $j\in[J]$, the independent mean
estimator uses a fresh mean cloud
$\cloud_{{\rm mean},nh+t_j}^{[\ell]}\sim
\cN(X_{nh+t_j}^{[\ell]},\theta_n\Id)$, and its RGO sample is generated
from another cloud
$\cloud_{{\rm RGO},nh+t_j}^{[\ell]}\sim
\cN(X_{nh+t_j}^{[\ell]},\theta_n\Id)$. Conditional on $X^{[\ell]}$, the
current, fluctuation, mean, and RGO cloud draws are mutually independent.
\item \emph{Correct the joint endpoint and retain its position noise.}
Draw fresh independent Gaussian labels $G_1,\ldots,G_J\sim\cN(0,\Id)$ and, using the same
level-$(K-1)$ cloud in both components, form the endpoint
\begin{align}
 \widetilde X_{(n+1)h}^{[K]}
 &\deq X_{nh}+hP_{nh}^-
 -\sum_{j\in[J]} \omega_{J,j}
   \wh g_{\eta_n}(\cloud_{nh+t_j}^{[K-1]};G_j)\,,\label{eq:fifth-endpoint-Z}\\
 \widetilde P_{(n+1)h}^{[K]}
 &\deq P_{nh}^-
 -\sum_{j\in[J]}\omega_j
   \wh g_{\eta_n}(\cloud_{nh+t_j}^{[K-1]};G_j)\,.\notag
\end{align}
Draw independent $\zeta_1,G_x\sim\cN(0,\Id)$ and put
\begin{equation}\label{eq:fifth-noisy-endpoint}
\begin{aligned}
 X_{(n+1)h}^{\circ}&\deq \widetilde X_{(n+1)h}^{[K]}+\sqrt{\tau}\,G_x\,,
 \qquad P_{(n+1)h}^{\circ}\deq a_h\widetilde P_{(n+1)h}^{[K]}+\sigma_h\zeta_1\,.
\end{aligned}
\end{equation}
At the last internal level, apply \cref{cor:block-correction} jointly to the full-rank
whitened pair
\begin{equation}\label{eq:fifth-whitened-endpoint}
 \begin{bmatrix}
     (X_{(n+1)h}^{\circ}-(X_{nh} + hP_{nh}^-))/\sqrt{\tau}\\[1mm]
     (P_{(n+1)h}^{\circ}-a_h P_{nh}^-)/\sigma_h
 \end{bmatrix}
\end{equation}
and target \eqref{eq:fifth-endpoint-Z}--\eqref{eq:fifth-noisy-endpoint} with every occurrence of
$\widehat g_{\eta_n}$ replaced by $g_{\eta_n}(X_{nh+t_j}^{[K-1]})$. Only after this joint proposal is accepted, set
\begin{align*}
 (X_{(n+1)h},P_{(n+1)h})&\deq(X_{(n+1)h}^{\circ},P_{(n+1)h}^{\circ})\,,
 \qquad \eta_{n+1}\deq \eta_n+\tau\,.
\end{align*}
Thus the position Gaussian is retained and there is no RGO update at this step.
\end{enumerate}
\emph{Convert to the original target.}
After $N$ phases, write $\eta_{\rm f}\deq\eta_N=\eta_0+N\tau$ for the final smoothing variance.
Draw a fresh independent $G_{\rm f}\sim\cN(0,\Id)$ and then set
\[
 Y_{\rm f}\deq X_{Nh}+\sqrt{\eta_{\rm f}}\,G_{\rm f}\,,
 \qquad
 X_{\rm out}\sim R_{2\eta_{\rm f},Y_{\rm f}}\,.
\]
\end{algorithm}

The proof of \cref{thm:core} specifies the sampler parameters and the
accuracy budgets of its implemented RGO calls.

\paragraph{Proximal sampler wrapper.}
The construction just described is the core sampler, which we apply to targets with
bounded condition number. To obtain the main
bound, we compose it with the proximal sampler, which handles arbitrary
$\kappa$ by calling the Gaussian cloud sampler only on targets of condition number at most three.

\FloatBarrier

\subsection{Algorithmic components}\label{sec:reusable-correction}
We isolate out reusable algorithmic components in the form of gadgets.
First, we record the first-order rejection sampling (FORS) routine, which is the basic correction mechanism.

\par\medskip
\begingroup
\setlength{\fboxsep}{7pt}
\noindent\fbox{\begin{minipage}{\dimexpr\linewidth-2\fboxsep-2\fboxrule\relax}
        \begin{gadget}[First-order rejection sampling (FORS)]\label{gadget:FORS}
\leavevmode\par\noindent
\textbf{Access.} A sampler for a proposal law $Q$ and, for each
proposal $\omega$, a sampler for an estimator $W_\omega\in[-\clipB,\clipB]$,
where $\clipB>0$ is known.
\par\smallskip
\textbf{Output.} An exact draw from the probability law
\begin{equation*}
 Q_w(\dd\omega)\deq
 \frac{\exp\{w(\omega)\}\,Q(\dd\omega)}
 {\E_{\omega'\sim Q}\exp\{w(\omega')\}}\,,\qquad
 w(\omega)\deq\E[W_\omega\mid\omega]\,.
\end{equation*}
The algorithm cannot evaluate $w$ and its normalizing constant.
\end{gadget}
\end{minipage}}\par
\endgroup
\medskip

\begin{algorithm}[FORS]\label{alg:reusable-FORS}
To realize \cref{gadget:FORS}, repeat the following steps until acceptance.
\begin{enumerate}[label=\textup{(\roman*)},leftmargin=2.4em]
\item Draw $\omega\sim Q$ and independently
$M\sim\mathsf{Poisson}(2\clipB)$.
\item Conditional on $\omega$, draw independent
$W_\omega^{(k)}$, $k\in\{1,\ldots,M\}$.
\item Accept and return $\omega$ with probability $\prod_{k=1}^M\frac{\clipB+W_\omega^{(k)}}{2\clipB}$.
\end{enumerate}
\end{algorithm}

\begin{lemma}[FORS call counts]\label{lem:reusable-FORS}
\Cref{alg:reusable-FORS} realizes \cref{gadget:FORS} and terminates
almost surely. Its expected number of calls to the proposal and estimator samplers
 is at most $\e^{2\clipB}$ and
$2\clipB\e^{2\clipB}$, respectively. In particular, when $\clipB\le1$, these
bounds are $O(1)$ and $O(\clipB)$.
\end{lemma}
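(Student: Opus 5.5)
Within a single attempt, I will compute the probability that the proposal $\omega$ is accepted and lies in a set $A$, and check that it is proportional to $Q_w(A)$. Conditional on $\omega$, the estimates $W_\omega^{(k)}$ are i.i.d. with mean $w(\omega)$, and $M\sim\mathsf{Poisson}(2\clipB)$ is independent of them. Each factor $(\clipB+W_\omega^{(k)})/(2\clipB)$ lies in $[0,1]$ because $W_\omega\in[-\clipB,\clipB]$, so the acceptance step is well defined. Conditioning first on $M$ and using independence gives
\[
 \E\Bigl[\prod_{k=1}^M\frac{\clipB+W_\omega^{(k)}}{2\clipB}\Bigm|\omega\Bigr]
 =\E\Bigl[\Bigl(\frac{\clipB+w(\omega)}{2\clipB}\Bigr)^M\Bigr]
 =\exp\Bigl\{2\clipB\Bigl(\frac{\clipB+w(\omega)}{2\clipB}-1\Bigr)\Bigr\}
 =\e^{-\clipB}\,\e^{w(\omega)}\,,
\]
using the Poisson probability generating function $\E s^M=\e^{2\clipB(s-1)}$. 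Hence the acceptance probability of one attempt, jointly with $\omega\in A$, is $\e^{-\clipB}\int_A \e^{w}\,\dd Q$. Since attempts are i.i.d. and a rejection restarts the procedure, the output law conditional on acceptance is $\e^{w}\,\dd Q/\int \e^{w}\,\dd Q=Q_w$. This shows that \cref{alg:reusable-FORS} realizes \cref{gadget:FORS}.

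Next come termination and the counts. Since $w\ge-\clipB$, the per-attempt acceptance probability is
\[
 p\deq\e^{-\clipB}\,\E_Q \e^{w}\ge \e^{-2\clipB}>0\,.
\]
The number of attempts is therefore geometric with mean $1/p\le \e^{2\clipB}$, and the algorithm terminates almost surely. Each attempt makes one proposal call, which gives the first bound. Each attempt makes $M$ estimator calls, with $\E M=2\clipB$, independently of the past. By Wald's identity (the attempt count is a stopping time for the i.i.d. sequence of attempts), the expected total number of estimator calls is $2\clipB\cdot\E[\#\text{attempts}]\le 2\clipB\,\e^{2\clipB}$. When $\clipB\le1$, these bounds are at most $\e^2=O(1)$ and $2\e^2\clipB=O(\clipB)$.

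The only step needing care is this Wald argument. The number of estimator calls in the final, accepted attempt is correlated with acceptance, so one cannot simply multiply by $\E M$ for that attempt alone. I would handle this by applying Wald's identity to the full sequence of i.i.d. attempts: the number of attempts is a stopping time, and each attempt's call count $M$ is measurable at the end of that attempt. This argument is routine once stated this way.
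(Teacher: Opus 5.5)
Your proof is correct and is the standard FORS analysis. The paper's proof just cites \cite{Chen+26HighAccDiffusion}, where the same Poisson generating-function computation gives acceptance probability $\e^{-\clipB}\e^{w(\omega)}\ge\e^{-2\clipB}$ and a geometric number of attempts. Your Wald step for the estimator count matches the conditioning argument the paper itself uses in \cref{sec:recursive-cloud} to obtain $\E\sum_{a\le A_{\clipB}}M_a=2\clipB\,\E A_{\clipB}$: the event $\{A_{\clipB}\ge a\}$ is determined by earlier attempts and is therefore independent of $M_a$.
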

\begin{proof}
See~\cite{Chen+26HighAccDiffusion}.
\end{proof}

\par\medskip
\begingroup
\setlength{\fboxsep}{7pt}
\noindent\fbox{\begin{minipage}{\dimexpr\linewidth-2\fboxsep-2\fboxrule\relax}
\begin{gadget}[Hidden fluctuation correction]\label{gadget:fluctuation}
\leavevmode\par\noindent
\textbf{Access.} A globally $L$-Lipschitz map $\Phi:\R^m\to\R^n$,
with $m,n\ge1$, through evaluations $\Phi(g)$ and Jacobian-vector products
$D\Phi(g)v$, $D\Phi(g)^\T w$, for $g,v\in\R^m$ and $w\in\R^n$.
The Jacobian-vector products at points of non-differentiability may be defined arbitrarily.
\par\smallskip
\textbf{Output.} A draw with law $\widehat Q$ satisfying
\begin{equation*}
 \TV\bigl(\widehat Q,\cN(\E\Phi(G),\Id_n)\bigr)\le\delta\,,
 \qquad G\sim\cN(0,\Id_m)\,.
\end{equation*}
The expectation $\E\Phi(G)$ is not supplied.
\end{gadget}
\end{minipage}}\par
\endgroup
\medskip

We remark that simply estimating $\E \Phi(G)$ itself with accuracy $\delta$ would require $\poly(1/\delta)$ evaluations, which would not be compatible with our high-accuracy goal. We refer to this gadget and the following ones as ``hidden'' because the mean (or center, for the hidden RGO below) is not available.

The raw proposal $\Phi(G)+Z$, for independent standard Gaussians
$G\in\R^m$ and $Z\in\R^n$, has the desired mean but its law is not Gaussian, and it has excess fluctuations. We first define the covariance that the correction uses.
Let $(B_t)_{0\le t\le1}$ be standard Brownian motion in $\R^m$, with
$B_0=0$ and $B_1=G$, and let $\mathcal F_t$ denote the $\sigma$-algebra generated by 
$(B_s)_{0\le s\le t}$. The Clark--Ocone formula
\cite[Proposition~1.3.14]{Nua06Malliavin} gives
\begin{equation*}
    \Phi(G) - \E \Phi(G) = \int_0^1 A_t\,\dd B_t\,, \qquad
 A_t\deq\E[D\Phi(B_1)\mid\mathcal F_t]
 =\E_{G'} D\Phi(B_t+\sqrt{1-t}\,G')\,,
\end{equation*}
where $G' \sim \cN(0, \Id_m)$ is independent.
Define the random covariance matrix
\begin{equation*}
 \Sigma\deq\int_0^1A_tA_t^\T\,\dd t\,.
\end{equation*}
The following algorithm
constructs unbiased estimators of powers of $\Sigma$.

\begin{algorithm}[Hidden fluctuation correction]
\label{alg:reusable-fluctuation}
To realize \cref{gadget:fluctuation}, choose an integer $K\ge1$
and $\clipB_{\rm fluc}\in(0,1]$, and invoke
\cref{gadget:FORS} with bound $\clipB_{\rm fluc}$ and the following
samplers.
\begin{enumerate}[label=\textup{(\roman*)},leftmargin=2.4em]
\item \emph{Proposal.} Draw independent standard Gaussians
$G\in\R^m$ and $Z\in\R^n$, and set $Y\deq \Phi(G)+Z$.
Include in the proposal a Brownian path ${(B_t)}_{0\le t \le 1}$ conditioned on $B_1=G$, sampling its values as needed.
Specifically, between adjacent revealed times
$t_-<t<t_+$, draw
\begin{equation*}
 B_t\sim\cN\Bigl(
 \frac{t_+-t}{t_+-t_-}B_{t_-}
 +\frac{t-t_-}{t_+-t_-}B_{t_+},\,
 \frac{(t-t_-)\,(t_+-t)}{t_+-t_-}\Id_m\Bigr)\,.
\end{equation*}
Initially the revealed times are $0$ and $1$.
\item \emph{Tilt estimator.} Draw a fresh independent
$Z'\sim\cN(0,\Id_n)$. Generate
$K$ random matrices $\widehat\Sigma_r$, $r\in\{1,\ldots,K\}$,
independently conditional on the Brownian path. Each one is constructed as follows. Draw
$T\sim\mathsf{Unif}[0,1]$ and independent
$G_1,G_2\sim\cN(0,\Id_m)$, reveal $B_T$,
and set
\begin{equation*}
 \widehat\Sigma_r\deq
 D\Phi(B_T+\sqrt{1-T}\,G_1)\,
 D\Phi(B_T+\sqrt{1-T}\,G_2)^\T \,.
\end{equation*}
The variables $G_1$, $G_2$, $T$ are freshly drawn each time. Set
$\widehat{\Sigma^0}\deq\Id_n$ and
$\widehat{\Sigma^k}\deq\widehat\Sigma_k\widehat{\Sigma^{k-1}}$
for $k\in\{1,\ldots,K\}$; thus $\widehat{\Sigma^k}$ is an unbiased estimator of
$\Sigma^k$ conditional on the full Brownian path $B=(B_t)_{0\le t\le1}$.
Compute their actions on $Z,Z'$ successively,
reusing the two vectors from the previous level. Return
$\operatorname{clip}_{\clipB_{\rm fluc}}(W) \deq (-\clipB_{\rm fluc}) \vee W \wedge \clipB_{\rm fluc}$, where
\begin{equation*}
 W\deq\frac12\sum_{k=1}^K
 \Bigl\{\frac1k\,\langle Z',\widehat{\Sigma^k}Z'\rangle
       -\langle Z,\widehat{\Sigma^k}Z\rangle\Bigr\}\,.
\end{equation*}
\end{enumerate}
Return the accepted FORS proposal $Y$.
\end{algorithm}

The calculation has two steps. Conditional on the Brownian path,
independence gives for $k \geq 1$
\begin{equation*}
 \E[\widehat\Sigma_r\mid B]=\Sigma\,,\qquad
 \E[\widehat{\Sigma^k}\mid B]=\Sigma^k\,.
\end{equation*}
Averaging also over $Z'$ therefore gives
\begin{equation*}
 \E[W\mid B,Z]
 =\frac12\sum_{k=1}^K
 \Bigl\{\frac1k\tr(\Sigma^k)
        -Z^\T \Sigma^kZ\Bigr\}\,.
\end{equation*}
As $K\to\infty$, this is the log-density ratio
of $\cN(0,\Id_n-\Sigma)$ to $\cN(0,\Id_n)$ at $Z$, and FORS produces this tilt up to the errors incurred by clipping and truncation.
To see why the output is Gaussian, note that under the ideal tilt, for $u\in\R^n$, $\ii \deq \sqrt{-1}$, and for the FORS output $Y$,
\begin{align*}
    \E\exp(\ii\,\langle Y -\E\Phi(G), u\rangle)
    &= \E_B \E_{Z \sim \cN(0, \Id_n-\Sigma)}\exp(\ii\,\langle \Phi(G) - \E \Phi(G) + Z, u \rangle) \\[0.25em]
    &= \E_B\exp\Bigl(\ii\,\Bigl\langle \int_0^1 A_t\,\dd B_t, u \Bigr\rangle- \frac{1}{2}\,\langle u, (I_n-\Sigma)\,u\rangle\Bigr) \\[0.25em]
    &= \exp\bigl( - \frac{1}{2}\,\norm u^2\bigr)\,,
\end{align*}
where the last line uses the exponential martingale identity, see \cite[Proposition~5.11]{LeG16StocCalc}.
Thus, the ideal
output is indeed $\cN(\E \Phi(G),\Id_n)$.

The next lemma controls the clipping and truncation errors.
In \cref{sec:fixed-node}, we prove a significantly strengthened version
of this fluctuation gadget, allowing for approximate value and derivative evaluations. Then, in
\cref{sec:gradient-only}, we give a construction which removes the need for Jacobian-vector products altogether.

\begin{lemma}[Hidden fluctuation correction]
\label{thm:reusable-correction}
There are universal constants $c,C>0$ with the following property.
Let $\Phi:\R^m\to\R^n$ be a globally $L$-Lipschitz map, where
$m,n\ge1$ and $0<L^2\le1/8$. Suppose that the following operations are
available:
\begin{enumerate}[label=\textup{(\alph*)},leftmargin=2.4em]
\item Evaluate $\Phi(g)$ at any supplied $g\in\R^m$.
\item At any differentiability point $g$, evaluate either
$D\Phi(g)v$, $v\in\R^m$, or
$D\Phi(g)^\T w$, $w\in\R^n$.
\end{enumerate}
Fix $0<\delta\le1/2$, and define $\mathfrak L \deq \log(1/\delta)$.
Then, for $\clipB_{\rm fluc} \in (0,1]$, if
\begin{equation*}
    L^2\,\bigl(\sqrt{n\mathfrak L}+\mathfrak L\bigr)+L^4 n \le c\clipB_{\rm fluc}\,,
\end{equation*}
then \cref{alg:reusable-fluctuation}
with $K=\lceil C\mathfrak L\rceil$ realizes \cref{gadget:fluctuation}.
It uses at most $C$ evaluations of $\Phi$ and
$C \clipB_{\rm fluc}\mathfrak L$ Jacobian-vector products in expectation, without evaluating $\E \Phi(G)$.
\end{lemma}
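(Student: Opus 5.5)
Our plan is to compare the FORS output with the ideal tilted law and bound the total variation error through the log-weight error; the cost bounds then follow from \cref{lem:reusable-FORS}. Write $w_\infty(B,Z)\deq\frac12\sum_{k\ge1}\{\frac1k\tr(\Sigma^k)-Z^\T\Sigma^kZ\}$, which equals $\log\frac{\dd\cN(0,\Id_n-\Sigma)}{\dd\cN(0,\Id_n)}(Z)$. This identity makes sense because $\norm{A_t}_{\op}\le L$, so $\norm\Sigma_{\op}\le L^2\le1/8$. Conditional on $B$, the density ratio $\exp(w_\infty)$ integrates to $1$ in $Z$. The ideal tilt of the proposal $(B,Z)$ by $\exp(w_\infty)$ therefore changes only the law of $Z$, to $\cN(0,\Id_n-\Sigma)$. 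The characteristic-function computation in the text, via the exponential martingale identity, then shows that the $Y$-marginal is exactly $\cN(\E\Phi(G),\Id_n)$.

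The actual output is the tilt by $w\deq\E[\operatorname{clip}(W)\mid B,Z]$ rather than by $w_\infty$. For two tilts $Q_w$ and $Q_{w_\infty}$ of the same proposal, a standard estimate gives
\[
\TV(Q_w,Q_{w_\infty})\le C\,\E_{Q}\bigl[\e^{w_\infty}\abs{\e^{w-w_\infty}-1}\bigr]\,.
\]
We therefore need to show that $w-w_\infty$ is small except on an event of probability at most $\delta$ (under both $Q$ and $Q_{w_\infty}$, using Cauchy--Schwarz with $\E_Q\e^{2w_\infty}\le C$, since $\Sigma\preceq\Id/8$), while $\abs{w}\le\clipB_{\rm fluc}\le1$ everywhere. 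We split the error into three parts:
\begin{enumerate}[label=\textup{(\roman*)},leftmargin=2.4em]
\item \emph{Truncation.} The tail $\sum_{k>K}$ is bounded by $\sum_{k>K}\norm\Sigma_{\op}^k\,(n/k+\norm Z^2)\lesssim 8^{-K}(n+\norm Z^2)$. This is at most $\delta$ with high probability once $K=\lceil C\mathfrak L\rceil$.
\item \emph{Clipping bias.} The clip changes the mean only on the event $\abs W>\clipB_{\rm fluc}$, so we need a tail bound for $W$.
\item \emph{Size of the tail.} We write $W$ as its $k=1$ term plus higher-order terms. The $k=1$ term is $\frac12\{\langle Z',\widehat\Sigma_1Z'\rangle-\langle Z,\widehat\Sigma_1Z\rangle\}$. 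Conditional on $\widehat\Sigma_1$, with $\norm{\widehat\Sigma_1}_{\op}\le L^2$ and Frobenius norm at most $L^2\sqrt n$, this is a difference of Gaussian chaoses. Hanson--Wright gives deviation $\lesssim L^2(\sqrt{n\mathfrak L}+\mathfrak L)$ with probability $1-\delta$. The centering $\tr(\widehat\Sigma_1)$ cancels between the $Z$ and $Z'$ terms in distribution, since $Z$ and $Z'$ are i.i.d.\ given $B$ under $Q$. For $k\ge2$, $\norm{\widehat{\Sigma^k}}_{\op}\le L^{2k}$ and $\tr\lesssim nL^{2k}$, so the $k\ge2$ terms contribute at most $L^4n+L^4(\sqrt{n\mathfrak L}+\mathfrak L)$, with geometric summation in $k$.
\end{enumerate}
Under the hypothesis $L^2(\sqrt{n\mathfrak L}+\mathfrak L)+L^4n\le c\clipB_{\rm fluc}$, we get $\Pr(\abs W>\clipB_{\rm fluc})\le\delta$ under $Q$. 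The same holds under the tilted law, since the tilt only rescales $Z$ by $(\Id-\Sigma)^{1/2}$, which is harmless here.

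For the cost, \cref{lem:reusable-FORS} with $\clipB_{\rm fluc}\le1$ gives $O(1)$ proposals, hence $O(1)$ evaluations of $\Phi$, and $O(\clipB_{\rm fluc})$ tilt-estimator calls. Each estimator call uses $K$ factors $\widehat\Sigma_r$. Applying a factor to a vector costs four Jacobian-vector products, two each for $Z$ and $Z'$: one $D\Phi^\T$ and one $D\Phi$. This gives $O(\clipB_{\rm fluc}\mathfrak L)$ Jacobian-vector products in expectation. Revealing Brownian bridge values needs no queries.

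The main obstacle is step (iii). Hanson--Wright is not enough by itself, because the random matrices $\widehat\Sigma_r$ are not symmetric and not positive semidefinite. The products $\widehat{\Sigma^k}$ also depend on $B$, which is correlated with $Y$ but not with $Z$, so we must condition on $B$ and the matrix draws first and only then apply chaos bounds in $Z$ and $Z'$. A second point needs care: the clipping bias must enter the TV bound only through $\Pr(\abs W>\clipB_{\rm fluc})$. Since $\abs{w}\le1$, the bias is pointwise bounded, so an $O(\delta)$ probability suffices, after which we rescale $\delta$ by a constant.
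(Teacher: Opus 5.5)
Your architecture matches the paper's: compare with the exact tilt, split the error into truncation and clipping, control $W$ by Gaussian chaos bounds, and count calls via \cref{lem:reusable-FORS}. However, two steps fail as written.

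\textbf{Truncation and the role of $L^4n$.} Your bound $\sum_{k>K}\norm\Sigma_{\op}^k(n/k+\norm Z^2)\lesssim 8^{-K}(n+\norm Z^2)$ discards the factor $L$. Since $K=\lceil C\mathfrak L\rceil$ does not depend on $n$, the claim $8^{-K}n\le\delta$ fails as soon as $n\gtrsim\delta\,8^{K}$. You must keep $L^{2K+2}(n+\norm Z^2)$ and use the hypothesis: $nL^{2K+2}=(nL^4)(L^2)^{K-1}\le c\clipB_{\rm fluc}8^{-(K-1)}$. This is exactly how the paper closes this step. The same issue affects your claim that $\E_Q\e^{2w_\infty}\le C$ ``since $\Sigma\preceq\Id_n/8$''. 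In fact $\E_Q[\e^{2w_\infty}\mid B]=\det(\Id_n-\Sigma^2)^{-1/2}$, which is $O(1)$ only because $\tr\Sigma^2\le nL^4\le c$. Cauchy--Schwarz is not needed there anyway, since $\E_Q[\e^{w_\infty}\mathbf 1_A]=Q_{w_\infty}(A)$.

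\textbf{The clipping bias is not controlled by a tail probability alone.} The bias $w_K-w=\E[W-\operatorname{clip}(W)\mid B,Z]$ is an average over the auxiliary variables $(Z',T,G_1,G_2)$. So $\{\abs W>\clipB_{\rm fluc}\}$ is not an event in the proposal variables $(B,Z)$ that you can excise. Moreover, $w_K$ is unbounded, so $\abs w\le1$ does not make the bias pointwise bounded. The repair is to work with integrated quantities:
\begin{itemize}
\item First, $\TV\le\E_Q\abs{\e^w-\e^{w_\infty}}\le\e\,\E_Q\abs{w-w_\infty}+\E_{Q_{w_\infty}}\abs{w-w_\infty}$.
\item Second, by Jensen, $\E_P\abs{w-w_K}\le\E_{\ol P}[\abs W\mathbf 1_{\{\abs W>\clipB_{\rm fluc}\}}]\le\clipB_{\rm fluc}(\norm W_{L^p(\ol P)}/\clipB_{\rm fluc})^p$ for $P\in\{Q,Q_{w_\infty}\}$ and $p\asymp\mathfrak L$.
\item Third, your Hanson--Wright argument does supply the needed moment bound $\norm W_{L^p}\lesssim L^2(\sqrt{np}+p)+L^4n$ under both laws.
\end{itemize}
This is \cref{lem:clipping}. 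Applied with target $Q_{w_\infty}$ (normalizer exactly one) and bias $b=w_K-w_\infty$, it absorbs the truncation error too. That is a slightly more direct route than the paper's: \cref{thm:robust-fluctuation} first passes through the truncated tilt $P_K$ and controls its $B$-dependent normalizer via \cref{lem:fixed-truncated-normalizer}.

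Finally, nonsymmetry is not an obstacle. Hanson--Wright holds for any fixed matrix, and conditionally on $B$ and the $\widehat\Sigma_r$ the traces cancel exactly, not merely in distribution. Hanson--Wright suffices here because exact Jacobian-vector products make $\widehat{\Sigma^k}$ linear. The paper instead uses the odd-Lipschitz Poincar\'e bound of \cref{lem:gradient-odd}, so that the same proof also covers nonlinear secant actions.
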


\par\medskip
\begingroup
\setlength{\fboxsep}{7pt}
\noindent\fbox{\begin{minipage}{\dimexpr\linewidth-2\fboxsep-2\fboxrule\relax}
\begin{gadget}[Hidden mean correction]\label{gadget:mean}
\leavevmode\par\noindent
\textbf{Access.} Independent draws from a law $\widehat Q$ with
$\TV(\widehat Q,\cN(\mu,\Id_m))\le\varepsilon$, where $\mu\in\R^m$
is unknown, and an independent sampler for a random vector
$\Delta\in\R^m$.
\par\smallskip
\textbf{Output.} A draw with law $\widehat P$ satisfying
\begin{equation*}
 \TV\bigl(\widehat P,\cN(\mu+\E\Delta,\Id_m)\bigr)
 \le\delta+C\varepsilon\,,
\end{equation*}
where $C$ is a universal constant. Neither $\mu$ nor $\E\Delta$ is supplied.
\end{gadget}
\end{minipage}}\par
\endgroup
\medskip

Again, we want to avoid estimation of $\E \Delta$ as this requires $\poly(1/\delta)$ draws.

\begin{algorithm}[Hidden mean correction]
\label{alg:reusable-mean}
To realize \cref{gadget:mean}, choose $\clipB_{\rm mean}\in(0,1]$ and
invoke \cref{gadget:FORS} with bound $\clipB_{\rm mean}$ and the following
samplers.
\begin{enumerate}[label=\textup{(\roman*)},leftmargin=2.4em]
\item \emph{Proposal.} Draw $Y \sim \widehat Q$.
\item \emph{Tilt estimator.} Independently draw
$Y'\sim\widehat Q$ and a copy of $\Delta$, and return $\operatorname{clip}_{\clipB_{\rm mean}}
 \langle \Delta,Y-Y'\rangle$.
\end{enumerate}
Return the accepted FORS proposal $Y$.
\end{algorithm}

To get some intuition, suppose that $\widehat Q = \cN(\mu, I_m)$, i.e., $\varepsilon = 0$.
Since $\E[\langle \Delta,Y-Y'\rangle\mid Y] =\langle \E\Delta,Y-\mu\rangle$,
completing squares in the Gaussian density shows that tilting the law of
$Y$ by $\exp{\langle \E\Delta,Y-\mu\rangle}$ gives
\begin{equation*}
 Y\sim\cN(\mu+\E\Delta,\Id_m)\,.
\end{equation*}
The next lemma
accounts for clipping and the error in $\widehat Q$.

\begin{lemma}[Hidden mean correction]
\label{prop:reusable-mean}
There are universal constants $c,C>0$ with the following property. Let
$\mu\in\R^m$ be unknown. Suppose that a sampler returns independent draws
from a law $\widehat Q$ satisfying $\TV(\widehat Q,\cN(\mu,\Id_m))\le\varepsilon$,
and suppose that an independent sampler returns a random vector $\Delta\in\R^m$.
Fix $0<\delta\le1/2$, put $p\deq\lceil C\log(2/\delta)\rceil$, and
assume that $\norm{\Delta}_{L^p} \le b$.
Provided that
$\clipB_{\rm mean}\in(0,1]$ satisfies
\begin{equation*}
 \sqrt p\,b+b^2\le c\clipB_{\rm mean}\,,
\end{equation*}
\Cref{alg:reusable-mean} realizes \cref{gadget:mean}.
It uses at most $1+C\clipB_{\rm mean}$ calls to $\widehat Q$ and $C \clipB_{\rm mean}$ calls to the sampler for $\Delta$ in expectation.
\end{lemma}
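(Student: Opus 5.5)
We compare the FORS output with the ideal tilted law in three stages: first with the exact proposal $\cN(\mu,\Id_m)$ and the unclipped tilt, then accounting for clipping, and finally swapping $\widehat Q$ for the Gaussian. Correctness of the ideal step is the completing-squares computation already given: with $w_\star(y)\deq\langle\E\Delta,y-\mu\rangle$, tilting $\cN(\mu,\Id_m)$ by $\e^{w_\star}$ gives exactly $\cN(\mu+\E\Delta,\Id_m)$. The work is to control how far the actual tilt $w(y)\deq\E[\operatorname{clip}_{\clipB_{\rm mean}}\langle\Delta,y-Y'\rangle\mid y]$ is from $w_\star$, and how the $\varepsilon$ error in $\widehat Q$ propagates.

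\emph{Step 1 (exact Gaussian proposal, clipping error).} Set $Y,Y'\sim\cN(\mu,\Id_m)$ independent and $S\deq\langle\Delta,Y-Y'\rangle$. Conditional on $\Delta$, $S\sim\cN(\langle\Delta,Y-\mu\rangle,\norm\Delta^2)$ given $Y$. Unconditionally, $\norm S_{L^q}\lesssim\sqrt q\,\norm{\Delta}_{L^{q}}$, by Gaussian moments and independence. Clipping changes the conditional mean by at most $\E[\abs S\mathbf 1_{\abs S>\clipB}\mid Y]$. I would bound this via Markov with the $p$-th moment: $\E|S|^p\le(C\sqrt p\,b)^p\le(c\clipB)^p$, so $\Pp(\abs S>\clipB)\le (Cc)^p\le\delta^{2}$ for $p=\lceil C\log(2/\delta)\rceil$ and $c$ small. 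This controls $\E_Y\abs{w(Y)-w_\star(Y)}$ and its higher moments. The term $b^2$ in the hypothesis handles the normalizing constant: $\E\e^{w_\star(Y)}=\e^{\norm{\E\Delta}^2/2}\le\e^{b^2/2}$, which keeps the tilt weights bounded in $L^2$.

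\emph{Step 2 (tilt perturbation to TV).} For tilts $w,w_\star$ of a common base $Q$, I would use
\[
\TV(Q_w,Q_{w_\star})\le\frac{\E_Q\abs{\e^{w}-\e^{w_\star}}}{\E_Q\e^{w_\star}}\,.
\]
Since $\abs w\le\clipB$, and since with high probability $\abs{w_\star}\le C\sqrt p\,b\le\clipB$ (here $w_\star(Y)$ is Gaussian with variance $\norm{\E\Delta}^2\le b^2$), the numerator is at most $\e^{C}\E\abs{w-w_\star}$ plus a tail term. Cauchy--Schwarz handles the tail: $\norm{\e^{w_\star}}_{L^2}\le\e^{b^2}$ and $\Pp(\abs{w_\star}>\clipB)^{1/2}\le\delta$. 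This gives $\TV\le\delta$ in the exact-Gaussian case.

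\emph{Step 3 (replacing $\cN(\mu,\Id_m)$ by $\widehat Q$).} Couple $\widehat Q$ with $\cN(\mu,\Id_m)$ maximally, for both the proposal $Y$ and the auxiliary draws $Y'$. The FORS output is a measurable function of an i.i.d.\ stream of proposals and auxiliary draws. The acceptance probability is at least $\e^{-2\clipB}\ge\e^{-2}$, and the expected number of $\widehat Q$ calls is at most $1+C\clipB$. So by a union bound over the expected number of draws, the outputs of the two runs differ with probability at most $C\varepsilon$.

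The call counts follow from \cref{lem:reusable-FORS}: proposals $\le\e^{2\clipB}\le1+C\clipB$, and estimator calls $\le 2\clipB\e^{2\clipB}$, each of which uses one $\widehat Q$ draw and one $\Delta$ draw.

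The main obstacle is Step 2, because $w_\star$ is unbounded while FORS only sees clipped estimators. The key is making the $p$-th moment tail bound and the $\e^{b^2}$ normalizer estimate interact cleanly to give exactly $\delta$.
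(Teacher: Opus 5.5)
Your proof is correct. Step~3 (maximally coupling each requested draw and taking a union bound over an $O(1)$ expected number of draws) and the call counts via \cref{lem:reusable-FORS} are the same as in the paper. The difference is in Steps~1--2.

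\textbf{How the paper handles clipping.} The paper does not analyze the unbounded weight $\e^{w_\star}$ by hand. It invokes the general clipping lemma, \cref{lem:clipping}, with $\eps_{\rm bias}=0$. That lemma needs $L^p$ bounds on the raw estimator $W=\langle\Delta,Y-Y'\rangle$ under both the proposal $Q=\cN(\mu,\Id_m)$ and the tilted law $Q_w$. Under $Q_w$, $Y-Y'$ has mean $\E\Delta$, covariance $2\Id_m$, and is still independent of $\Delta$. This is where the paper's $b^2$ term appears: $\norm W_{L^p}\le C\sqrt p\,b+b^2$ under both laws. The normalizer $\e^{\norm{\E\Delta}^2/2}\ge1$ is used only to check $Z\ge1/2$.

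\textbf{How your route differs.} You use moments under $Q$ only. You control $\e^{w_\star}$ by splitting on $\{\abs{w_\star}\le\clipB\}$, using the explicit law $w_\star(Y)\sim\cN(0,\norm{\E\Delta}^2)$ for the tail and $\E_Q\e^{2w_\star}=\e^{2\norm{\E\Delta}^2}$ for Cauchy--Schwarz. This relies on the ideal log-likelihood being an explicit Gaussian linear functional. It also makes plain that only $\sqrt p\,b\lesssim\clipB$ matters; indeed $b\le c\clipB/\sqrt p\le1$ already gives $b^2\le\sqrt p\,b$. The paper's route is more modular: it reuses the clipping lemma that also drives the fluctuation and hidden RGO gadgets, where no such explicit Gaussian structure of the ideal tilt is available.

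\textbf{A small bookkeeping point.} In Step~1, the bound on $\E_Q\abs{w-w_\star}$ needs $\E[\abs S\mathbf 1_{\{\abs S>\clipB\}}]$, not just $\Pp(\abs S>\clipB)$. The same $p$-th moment bound gives it:
\[
\E[\abs S\mathbf 1_{\{\abs S>\clipB\}}]\le\E\abs S^p/\clipB^{p-1}\le\clipB\,(Cc)^p .
\]
Your final bound is $O(\delta)$ rather than exactly $\delta$, which is fixed by adjusting the constants $c$ and $C$.
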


\par\medskip
\begingroup
\setlength{\fboxsep}{7pt}
\noindent\fbox{\begin{minipage}{\dimexpr\linewidth-2\fboxsep-2\fboxrule\relax}
\begin{gadget}[Hidden RGO sampler]\label{gadget:hidden-RGO}
\leavevmode\par\noindent
\textbf{Access.} Independent draws from $\cN(y,\theta\Id_d)$,
where $y\in\R^d$ is unknown; a supplied anchor $z\in\R^d$;
gradient evaluations of a convex potential $V:\R^d\to\R$ with
$1$-Lipschitz gradient; and exact draws, at any explicit center $u$,
from the RGO $R_{\eta-2\theta,u}$.

The variances $\eta,\theta$ with $0<2\theta<\eta$ and an accuracy $0<\delta\le1/2$ are supplied.
\par\smallskip
\textbf{Output.} A draw with law $\widehat R$ satisfying
$\TV(\widehat R,R_{\eta,y})\le\delta$, without access to the center
$y$ itself.
\end{gadget}
\end{minipage}}\par
\endgroup
\medskip

\begin{algorithm}[Hidden RGO sampler]\label{alg:hidden-RGO}
To realize \cref{gadget:hidden-RGO}, take its inputs $\eta,\theta,z$
and choose $\clipB_{\rm RGO}\in(0,1]$.
Draw $X_0\sim R_{\eta-2\theta,z}$, set $g\deq\nabla V(X_0)$, and keep $g$
fixed throughout the following invocation of \cref{gadget:FORS},
with bound $\clipB_{\rm RGO}$.
\begin{enumerate}[label=\textup{(\roman*)},leftmargin=2.4em]
\item \emph{Proposal.} Draw a fresh $Y\sim\cN(y,\theta\Id)$ and
independent $G\sim\cN(0,\Id)$, and return the center
$U\deq Y-2\theta g+\sqrt\theta\,G$.
\item \emph{Tilt estimator.} Independently draw
$Y'\sim\cN(y,\theta\Id)$, $G'\sim\cN(0,\Id)$, and
$S\sim\mathsf{Unif}[-1,1]$. Put
\begin{equation*}
 U'\deq Y'-2\theta g+\sqrt\theta\,G'\,,\qquad
 M_U\deq\frac{U+U'}2\,,\qquad
 \Delta_U\deq\frac{U'-U}2\,.
\end{equation*}
Draw conditionally independent
$X_\pm\sim R_{\eta-2\theta,M_U\pm S\Delta_U}$, and
return $\operatorname{clip}_{\clipB_{\rm RGO}}(W)$, where
\begin{equation*}
 W\deq\langle\nabla V(X_+)+\nabla V(X_-)-2g,\Delta_U\rangle\,.
\end{equation*}
\end{enumerate}
At the center $U$ accepted by FORS, draw and return
$X\sim R_{\eta-2\theta,U}$.
\end{algorithm}

Let us explain why the center correction gives the desired RGO. Write
$W_g\deq V_{\eta-2\theta}-\langle g,\cdot\rangle$. The identity
$\nabla V_{\eta-2\theta}(u)
=\E_{X\sim R_{\eta-2\theta,u}}\nabla V(X)$ and integration along
the segment from $U$ to $U'$ give
\begin{equation}\label{eq:hidden-RGO-centered-likelihood}
 \E[W\mid U,U',g]
 =\int_{-1}^1
 \langle\nabla W_g(M_U+s\Delta_U),\Delta_U\rangle\,\dd s
 =W_g(U')-W_g(U)\,.
\end{equation}
The reference $U'$ is independent of $U$ given $g$, so we can average out $U'$ as well. The proposal center law
is $\cN(y-2\theta g,2\theta\Id)$. Tilting it by $\exp\{-W_g(U)\}$
cancels the linear term involving $g$, giving the center density
\begin{equation}\label{eq:hidden-RGO-center-law}
 q_y(\dd u)\propto\exp\Bigl\{-V_{\eta-2\theta}(u)
 -\frac{\norm{u-y}^2}{4\theta}\Bigr\}\,\dd u\,.
\end{equation}
Combining this density with the final RGO $R_{\eta-2\theta,u}$ cancels the
normalizer $\exp\{-V_{\eta-2\theta}\}$ up to a constant.
Integration yields
\begin{equation*}
 \int_{\R^d}\exp\Bigl\{-V(x)-\frac{\norm{x-u}^2}{2\,(\eta-2\theta)}
                 -\frac{\norm{u-y}^2}{4\theta}\Bigr\}\,\dd u
 \propto\exp\Bigl\{-V(x)-\frac{\norm{x-y}^2}{2\eta}\Bigr\}\,.
\end{equation*}
This is $R_{\eta,y}(x)$. The next lemma bounds the effect of clipping and
approximate RGO calls.

\begin{lemma}[Hidden RGO sampler]\label{lem:hidden-RGO}
Let $V$ be convex with $1$-Lipschitz gradient, and let
$0<\eta-2\theta\le1$ and $0<\theta\le(\eta-2\theta)/4$.
Fix $y,z\in\R^d$, with $y$ unknown and $z$ supplied. Assume access to
independent cloud draws and exact RGO draws as in \cref{gadget:hidden-RGO}.
Suppose that supplied bounds $\varepsilon_0,L\ge0$ satisfy
\begin{equation*}
 \norm{y-z}\le\varepsilon_0\,,\qquad
 \norm{g_{\eta-2\theta}(y)}+\norm{g_{\eta-2\theta}(z)}
 \le L\,.
\end{equation*}
For $0<\delta\le1/2$, set $p\deq \lceil C\log(2/\delta)\rceil$.
There are universal constants $c,C>0$ such that, if
\begin{equation*}
 \sqrt{\theta p}\,(\varepsilon_0+\theta L)
 +\sqrt{\eta\theta}\,(\sqrt{dp}+p) \le c\clipB_{\rm RGO}
\end{equation*}
for $\clipB_{\rm RGO} \in (0,1]$, then
\cref{alg:hidden-RGO} realizes \cref{gadget:hidden-RGO}.
Its expected numbers of cloud sampler calls, RGO
calls, and gradient evaluations are at most $C$, $2+C\clipB_{\rm RGO}$,
and $1+C\clipB_{\rm RGO}$, respectively.

If the RGO calls instead have
uniform TV error at most $\varepsilon$ at the requested explicit
centers, the output bound becomes $\delta+C\varepsilon$, with the
same expected call counts.
\end{lemma}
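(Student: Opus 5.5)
The proof has two parts: exactness of the ideal (unclipped) tilt, and control of the clipping error and call counts through FORS.

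\emph{Exactness of the ideal tilt.} We condition on $g=\nabla V(X_0)$, which is fixed for the whole FORS invocation, and work with the unclipped estimator $W$. Given $(U,U',S)$, the two draws $X_\pm$ are exact RGO samples, so $\E[\nabla V(X_\pm)\mid U,U',S]=g_{\eta-2\theta}(M_U\pm S\Delta_U)$. Averaging over $S\sim\mathsf{Unif}[-1,1]$ and applying the fundamental theorem of calculus gives \eqref{eq:hidden-RGO-centered-likelihood}. Since $U'$ is independent of $U$ given $g$, we get
\[
w(U)=\E[W\mid U,g]=-W_g(U)+\mathrm{const}(g).
\]
The constant is absorbed into the FORS normalization. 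Tilting $\cN(y-2\theta g,2\theta\Id)$ by $\exp\{-W_g\}$ yields \eqref{eq:hidden-RGO-center-law}. The Gaussian convolution computation displayed before the lemma then shows that the center law composed with $R_{\eta-2\theta,\cdot}$ equals $R_{\eta,y}$. That computation uses $\pi_{\eta-2\theta}(u)\propto\int \e^{-V(x)-\norm{x-u}^2/(2(\eta-2\theta))}\,\dd x$ and the variances $(\eta-2\theta)+2\theta=\eta$. Averaging over $g$ preserves exactness, because every conditional output law is $R_{\eta,y}$.

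\emph{Clipping error.} The output is $\TV$-close to the ideal one by a standard FORS comparison. If we replace $W$ by its clipped version, the tilt changes from $w$ to $w^{\rm clip}$. We bound
\[
\abs{w-w^{\rm clip}}\le \E\bigl[\abs W\mathbf 1_{\abs W>\clipB_{\rm RGO}}\mid U\bigr],
\]
and we need this to be at most $\delta$ outside an event of probability $\delta$. This reduces to showing $\norm{W}_{L^p}\le c'\clipB_{\rm RGO}$ with $p\asymp\log(2/\delta)$, since Markov's inequality at order $p$ then makes the tail mass $\exp(-p)$-small. Bounding $\norm W_{L^p}$ is the main step. By Cauchy--Schwarz,
\[
\abs W\le\bigl(\norm{\nabla V(X_+)-g}+\norm{\nabla V(X_-)-g}\bigr)\norm{\Delta_U}.
\]
For the factor $\Delta_U$ we use $\Delta_U=(Y'-Y+\sqrt\theta(G'-G))/2$, whose coordinates are Gaussian with variance $O(\theta)$. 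This gives $\norm{\Delta_U}_{L^p}\lesssim\sqrt{\theta(d+p)}$. A cruder bound is not enough, so we decompose more finely:
\[
\nabla V(X_\pm)-g=\bigl[\nabla V(X_\pm)-g_{\eta-2\theta}(M_U\pm S\Delta_U)\bigr]+\bigl[g_{\eta-2\theta}(\cdot)-g_{\eta-2\theta}(z)\bigr]+\bigl[g_{\eta-2\theta}(z)-g\bigr].
\]
\begin{itemize}[leftmargin=2.4em]
\item The first and third brackets are RGO fluctuations. They are centered given the center, and the Lipschitz gradient together with RGO log-concavity gives sub-Gaussian concentration at scale $\sqrt{\eta}$. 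Paired with the independent Gaussian-like $\Delta_U$, their inner products contribute $\sqrt{\eta\theta}(\sqrt{dp}+p)$, which is a Hanson--Wright-type chaos bound rather than a product of norms.
\item For the middle bracket, $g_{\eta-2\theta}$ is $1$-Lipschitz, so it is controlled by $\norm{U-z}+\norm{\Delta_U}$. Here $\norm{U-z}\le\varepsilon_0+2\theta\norm g+O(\sqrt{\theta(d+p)})$, with $\norm g\lesssim L+\sqrt{\eta(d+p)}$. Pairing this with $\langle\cdot,\Delta_U\rangle$ projects onto a fixed direction and gives $\sqrt{\theta p}(\varepsilon_0+\theta L)$ plus terms of the already-accounted form.
\end{itemize}
The $g$-dependence is handled by conditioning. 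The hypothesis $\theta\le(\eta-2\theta)/4$ absorbs the cross terms such as $\theta\sqrt d\cdot\sqrt{\theta d}$ into $\sqrt{\eta\theta}\sqrt{dp}$.

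\emph{Call counts and approximate RGOs.} By \cref{lem:reusable-FORS} with $\clipB_{\rm RGO}\le1$, FORS makes $O(1)$ proposals, hence $O(1)$ cloud draws, and $O(\clipB_{\rm RGO})$ estimator calls. Each estimator call uses one cloud draw, two RGO calls and two gradients. The algorithm adds fixed costs of two RGO calls ($X_0$ and the final draw) and one gradient, which gives the stated counts $C$, $2+C\clipB_{\rm RGO}$ and $1+C\clipB_{\rm RGO}$. For inexact RGOs we couple each call with an exact one. The expected number of calls is $O(1)$, so a union bound over the realized calls yields the additional $C\varepsilon$ in $\TV$.

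The main obstacle is the $L^p$ bound on $W$ with the correct $\sqrt{\eta\theta}\sqrt{dp}$ scaling, rather than the naive $\sqrt{\eta\theta}\,d$. I would first try the conditional sub-Gaussian/Hanson--Wright route sketched above, using the independence of $\Delta_U$ from the RGO noise given the centers.
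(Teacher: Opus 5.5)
There is a genuine gap in your $L^p$ bound on $W$. It lies in the middle bracket, not in the RGO fluctuations you flagged as the obstacle.

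The vector $g_{\eta-2\theta}(M_U\pm S\Delta_U)-g_{\eta-2\theta}(z)$ depends on $\Delta_U$, and so does $U=M_U-\Delta_U$. Pairing it with $\Delta_U$ is therefore not a projection onto a fixed direction. Bounding each sign separately through the Lipschitz property also loses an essential cancellation. Already for $V=\norm x^2/2$, where $g_{\eta-2\theta}(u)=u/(1+\eta-2\theta)$, each signed term contains exactly $\pm S\norm{\Delta_U}^2/(1+\eta-2\theta)$. Its $L^p$ norm is at least of order $\theta d$. This is not dominated by $\sqrt{\eta\theta}\,(\sqrt{dp}+p)$ once $\theta d\gg\eta p$, for instance when $\theta\asymp\eta$ and $d\gg p$, which the lemma's hypothesis permits. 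So $\norm W_{L^p}\le c\clipB_{\rm RGO}$ does not follow.

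The missing idea is to expand around the midpoint. Conditionally on $g$, the centers $U$ and $U'$ are i.i.d.\ $\cN(y-2\theta g,2\theta\Id)$. Hence $M_U\sim\cN(y-2\theta g,\theta\Id)$ and $\Delta_U\sim\cN(0,\theta\Id)$ are independent. With $\xi_\pm\deq\nabla V(X_\pm)-g_{\eta-2\theta}(M_U\pm S\Delta_U)$, write
\[
 W=2\,\ip{g_{\eta-2\theta}(M_U)-g}{\Delta_U}+\mathcal R+\ip{\xi_++\xi_-}{\Delta_U}\,,
\]
where
\[
 \mathcal R\deq\ip{g_{\eta-2\theta}(M_U+S\Delta_U)+g_{\eta-2\theta}(M_U-S\Delta_U)-2g_{\eta-2\theta}(M_U)}{\Delta_U}\,.
\]
The first term now genuinely pairs $\Delta_U$ with a vector independent of it. This gives $\sqrt{\theta p}\,(\varepsilon_0+\theta L)$ plus absorbable terms. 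For fixed $(M_U,S)$, $\mathcal R$ is odd in $\Delta_U$ with weak gradient at most $4\norm{\Delta_U}$. The Gaussian $L^p$ Poincar\'e inequality then gives $\norm{\mathcal R}_{L^p}\le C\theta\,(\sqrt{dp}+p)\le C\sqrt{\eta\theta}\,(\sqrt{dp}+p)$. Your treatment of the $\xi_\pm$ terms is fine: Lipschitz concentration conditionally on $(M_U,\Delta_U,S)$ suffices, with no chaos inequality needed.

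A second, smaller gap is in the clipping step. The TV distance between the ideal and clipped tilts integrates $\abs{w-\widetilde w}$ against $\e^{w}\,\dd Q$, and $\e^{w}$ is unbounded. So controlling $W$ only under the proposal, outside a set of small proposal probability, is not enough. \Cref{lem:clipping} also needs $L^p$ bounds under the tilted law, together with $Z\ge1/2$.

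The paper handles this as follows:
\begin{itemize}
\item It fixes the additive constant as $\E[W_g(U')\mid g]$, so Jensen gives $Z\ge1$.
\item It deduces exponential integrability of $\E[W_g(U')\mid g]-W_g(U)$ from $\mathfrak b_2\le c_0$.
\item It applies H\"older to obtain $\norm W_{L^p(q_y)}\le C\mathfrak b_{2p}$.
\end{itemize}
This is why the hypothesis is effectively checked at order $2p$. Your exactness argument, call counts, and coupling for inexact RGO calls match the paper.
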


\subsection{Proof roadmap and complexity}\label{ssec:roadmap}
The rest of the paper separates the reusable corrections from their
application to Picard HMC and the analysis of the resulting sampler.
\begin{enumerate}[leftmargin=2em]
\item \Cref{sec:fixed-correction} proves the fluctuation, mean, and hidden
RGO gadget guarantees, including their robustness to approximate inputs
and their composition across Gaussian blocks.
\item \Cref{sec:marginal} develops the application of the gadgets to Picard HMC, including a Gaussian Stein estimator for mean correction.
\item \Cref{sec:recursive} assembles the recursive cloud computation and
bounds its calls and errors, 
proves the main guarantee for the Gaussian cloud sampler (\cref{thm:core}), and applies the proximal sampler wrapper to
obtain \cref{thm:fifth-root}.
\item \Cref{sec:rgo} uses the Gaussian cloud machinery to construct an improved RGO sampler. This is used to implement the hidden RGO sampler.
\item \Cref{sec:gradient-only} develops the implementation using gradient queries alone.
\end{enumerate}

\paragraph{Complexity.}
We now explain the dimension dependence, suppressing all logarithmic
factors and all dependence on $\kappa$ for clarity. Write $\eta$ and $\theta$ for the smoothing
and cloud variances within a phase, which remain comparable to their
baseline values throughout the run. 
The quadrature error in \cref{prop:reference-Picard} involves powers of $h/\sqrt\eta$; thus, we take the natural scale $\eta\asymp h^2$.
We focus on the interior Picard updates (i.e., depth $< K$), since they dominate the complexity.
The step size choice is determined by the following conditions.
\begin{itemize}
\item \textbf{Fluctuation correction.}
    This determines the cloud variance. As a function of its standard
    Gaussian inputs, the gradient estimator has Lipschitz
    coefficient $O(\sqrt{\eta+\theta})$ by
    \cref{lem:joint-estimator-derivatives}. An interior Picard update multiplies the
    gradient by weights of scale $h^2$, and whitening the output cloud
    divides by $\sqrt\theta$. In the regime $\theta\le\eta$, the 
    Lipschitz coefficient of the resulting map is therefore
    $L_{\rm fluc} \lesssim \sqrt{\eta/\theta}\, h^2 \asymp h^3/\sqrt \theta$. By \cref{thm:reusable-correction}, the fluctuation correction succeeds provided $L_{\rm fluc} d^{1/4} \ll 1$.
    This leads to the cloud variance $\theta \asymp \sqrt d\,h^6$.
\item \textbf{Mean correction.}
    The mean estimator of \cref{prop:marginal-score} is of size
    $(\eta^{3/2}+\theta/\sqrt\eta)\sqrt d \asymp (h^3 + \theta/h)\sqrt d$. As before, we multiply by $h^2$
    and divide by $\sqrt\theta$.
    By \cref{prop:reusable-mean}, the mean correction succeeds provided $(h^2/\sqrt \theta)\,(h^3 + \theta/h)\,\sqrt d \ll \clipB_{\rm mean} \le 1$.
    Substituting in $\theta \asymp \sqrt d\,h^6$, this leads to $\clipB_{\rm mean} \asymp d^{1/4} h^2 + d^{3/4} h^4$.
    For $h \gtrsim d^{-1/4}$, the second term dominates, $\clipB_{\rm mean} \asymp d^{3/4} h^4$,
    which leads to the condition $h \ll d^{-3/16}$.
\item \textbf{Explicit RGO calls.}
    The hidden RGO sampler makes a constant number of RGO calls
    at explicit centers, where each such call has variance comparable to
    $\eta$. Then, \cref{thm:fourth-RGO} produces an RGO sampler with query complexity $Q_{\rm RGO} \lesssim 1+(\eta^2d)^{1/4} \asymp d^{1/4} h$.
    This is expensive since $h \gtrsim d^{-1/4}$, but the key is that it does not happen too frequently.
    If we take into account the request frequency from mean correction, this
    costs $O(\clipB_{\rm mean}Q_{\rm RGO})$ expected queries per interior
    update.

\item \textbf{Retained smoothing and the terminal step.}
    The choice $\tau\asymp h^3\asymp\eta h$ also ensures that the
    total added position variance over $N\asymp h^{-1}$ phases
    is $N\tau\asymp h^2\asymp\eta$.
    The constants keep this increase below $\eta_0/4$.
    The single terminal RGO call contributes
    $O(Q_{\rm RGO})$ expected queries.
\end{itemize}
Combining the contributions over the $N\asymp h^{-1}$ phases
leads to the query complexity
\begin{equation*}
    N\,(1+\clipB_{\rm mean}Q_{\rm RGO}) + Q_{\rm RGO}
 \lesssim
 h^{-1}
 + (1 + d^{3/4} h^3)\,d^{1/4} h
 \asymp h^{-1} + dh^4\,.
\end{equation*}
Balancing these terms gives $h\asymp d^{-1/5}$ and $N\asymp d^{1/5}$.

In this balance, part of the dominant cost arises from $Q_{\rm RGO}$.
Generally speaking, if one has a high-accuracy log-concave sampler with complexity $d^p$, then one can hope that by adapting the sampler to the RGO structure---namely, by explicitly making use of the quadratic part of the RGO potential---it leads to an RGO sampler with complexity $1 + (\eta^2 d)^p$.
At our scale, this has complexity $d^p h^{4p}$.
For example, since our Gaussian cloud sampler achieves $d^{1/5}$ complexity, one could expect that this would lead to a better RGO sampler, with $p = 1/5$.
For a general $p$, the cost balance above becomes $h^{-1} + d^{p+3/4} h^{4p+3}$.
After balancing terms, this leads to complexity $d^{(p+3/4)/(4p+4)}$.
Even with an arbitrarily cheap RGO, $p \searrow 0$, this yields complexity no better than $d^{3/16}$, which is also the bottleneck produced by the mean correction.

Therefore, we anticipate that the Gaussian cloud sampler could be pushed slightly beyond $d^{1/5}$, but with a natural bottleneck at $d^{3/16}$.
However, we do not develop this further here.

%% file: 5_high_acc_sections/04_gaussian_correction.tex
\section{Gaussian cloud corrections}\label{sec:fixed-correction}

In this section, we develop (strengthened versions of) \cref{gadget:fluctuation}, \cref{gadget:mean}, and \cref{gadget:hidden-RGO}.

\subsection{A clipping lemma}

We use the notation $\operatorname{clip}_{\clipB}(u)\deq(-\clipB)\vee u\wedge\clipB$.
The following clipping lemma allows the likelihood estimator to be biased.

\begin{lemma}[Clipping with a biased likelihood estimator]\label{lem:clipping}
Let $Q$ be a probability law, let $w$ be a measurable function, and put
\[
 Z\deq\E_Q \exp w\,,\qquad
 Q_w(\dd\omega)\deq Z^{-1}\exp(w(\omega))\,Q(\dd\omega)\,.
\]
Assume $1/2\leq Z<\infty$. Given $\omega$, let $W_\omega$ be a
random estimator and define
\[
 b(\omega)\deq\E[W_\omega\mid\omega]-w(\omega)\,,\qquad
 \widetilde w(\omega)\deq
 \E[\operatorname{clip}_{\clipB}(W_\omega)\mid\omega]\,.
\]
Write $\ol Q$ and $\ol Q_w$ for the joint laws of $(\omega, W_\omega)$ under $\omega \sim Q$ and $\omega \sim Q_w$ respectively.
If $0<\clipB\leq1$, $p\geq2$, and
\[
 \E_Q\abs b+\E_{Q_w}\abs b\leq\eps_{\rm bias}\,,\qquad
 \norm{W_\omega}_{L^p(\ol Q)}
 +\norm{W_\omega}_{L^p(\ol Q_w)}\leq C_0\,,
\]
then the FORS output $Q_{\widetilde w}$ satisfies
\begin{equation*}
 \TV(Q_w,Q_{\widetilde w})
 \leq C\,\Bigl\{\eps_{\rm bias}
       +\clipB\,\Bigl(\frac{C_0}{\clipB}\Bigr)^p\Bigr\}\,.
\end{equation*}
\end{lemma}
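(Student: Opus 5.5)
We compare the two tilted laws directly through their log-likelihood ratio. Put $\widetilde Z\deq\E_Q\exp\widetilde w$, so that
\[
 \frac{\dd Q_{\widetilde w}}{\dd Q_w}=\frac{Z}{\widetilde Z}\,\exp\{\widetilde w-w\}\,.
\]
We use the elementary bound $\TV(P,P')\le\E_{P}\abs{\dd P'/\dd P-1}$. We also use $\abs{\e^x-1}\le\abs x\,\e^{\abs x}$, and we note that $\widetilde w\in[-\clipB,\clipB]$ while $w$ is unrestricted.

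We decompose $\widetilde w-w=b+e$, where
\[
 e(\omega)\deq\E[\operatorname{clip}_{\clipB}(W_\omega)-W_\omega\mid\omega]
\]
is the clipping error. The bias term $b$ is controlled by hypothesis. For $e$, we have the pointwise bound
\[
 \abs{\operatorname{clip}_{\clipB}(u)-u}\le\abs u\,\mathbf 1_{\{\abs u>\clipB\}}\le \abs u^p\,\clipB^{1-p}\,.
\]
Hence $\E_Q\abs e+\E_{Q_w}\abs e\le \clipB^{1-p}\,(\norm{W}_{L^p(\ol Q)}^p+\norm{W}_{L^p(\ol Q_w)}^p)\le \clipB\,(C_0/\clipB)^p$. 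Write $\eps\deq\eps_{\rm bias}+\clipB\,(C_0/\clipB)^p$. Then
\[
 \E_Q\abs{\widetilde w-w}+\E_{Q_w}\abs{\widetilde w-w}\le\eps\,.
\]

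The main obstacle is that $\widetilde w-w$ is not bounded, because $w$ is not, so the Lipschitz estimate on the exponential cannot be applied pointwise. We avoid this by using the monotone comparison $\abs{\e^a-\e^b}\le\abs{a-b}\,(\e^a+\e^b)$. This gives
\[
 \int\abs{\e^{\widetilde w}-\e^{w}}\,\dd Q\le \int\abs{\widetilde w-w}\,\e^{\widetilde w}\,\dd Q+\int\abs{\widetilde w-w}\,\e^{w}\,\dd Q\,.
\]
The first integral is at most $\e^{\clipB}\,\E_Q\abs{\widetilde w-w}\le \e\,\eps$, since $\widetilde w\le\clipB\le1$. The second equals $Z\,\E_{Q_w}\abs{\widetilde w-w}$. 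So the unnormalized measures satisfy
\[
 \norm{\e^{\widetilde w}Q-\e^{w}Q}_{\TV}\le C\,(1+Z)\,\eps\,.
\]

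For normalization, note that $\abs{\widetilde Z-Z}$ is bounded by the same quantity. Since $Z\ge1/2$, the standard estimate
\[
 \TV\Bigl(\frac{\mu}{\mu(1)},\frac{\nu}{\nu(1)}\Bigr)\le \frac{2\,\norm{\mu-\nu}}{\mu(1)}\,,
\]
applied with $\mu=\e^{w}Q$, gives $\TV(Q_w,Q_{\widetilde w})\le C\,(1+Z)\,\eps/Z$. To finish, we need $(1+Z)/Z\le 3$, which follows from $Z\ge1/2$. This yields $\TV(Q_w,Q_{\widetilde w})\le C\eps$, as claimed. The point to get right is that every factor of $\e^{w}$ is absorbed into $Q_w$ and then divided by $Z$, rather than bounded by a sup. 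Without that step, the unboundedness of $w$ would break the argument.
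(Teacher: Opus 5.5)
Your argument is correct and is essentially the paper's proof: you split $\widetilde w-w$ into the bias $b$ and the clipping error, and you control the clipping error by $\abs u\,\mathbf 1_{\{\abs u>\clipB\}}\le\abs u^p\clipB^{1-p}$ together with the $L^p$ bounds under $\ol Q$ and $\ol Q_w$. You then use $\abs{\e^a-\e^b}\le\abs{a-b}\,(\e^a+\e^b)$ to absorb the unbounded factor $\e^{w}$ as $Z\,\E_{Q_w}$, and handle normalization via $\abs{Z-\widetilde Z}\le\int\abs{\e^{w}-\e^{\widetilde w}}\,\dd Q$ and $Z\ge1/2$, exactly as the paper does (the likelihood-ratio formula you state at the outset is never actually needed).
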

\begin{proof}
For $P\in\{Q,Q_w\}$ and its corresponding joint law $\ol P$,
conditional Jensen gives
\[
 \E_P\abs{w-\widetilde w}
 \leq\E_P\abs b+
 \E_{\ol P}\abs{W_\omega-\operatorname{clip}_{\clipB}(W_\omega)}
 \leq\E_P\abs b+\clipB\,\Bigl(\frac{C_0}{\clipB}\Bigr)^p\,.
\]
Since $\abs{\widetilde w}\leq\clipB$, the inequality
$\abs{\exp u-\exp v}\leq\abs{u-v}\,(\exp u+\exp v)$ yields
\[
 \Delta\deq\int\abs{\exp w-\exp\widetilde w}\,\dd Q
 \leq Z\,\E_{Q_w}\abs{w-\widetilde w}
      +\e^{\clipB}\,\E_Q\abs{w-\widetilde w}\,.
\]
Moreover, for $\widetilde Z \deq \E_Q \exp\widetilde w$,
\[
 \abs{Z-\widetilde Z}
 =\Bigl\lvert\int(\exp w-\exp\widetilde w)\,\dd Q\Bigr\rvert
 \leq \Delta\,.
\]
Comparing the normalized densities with respect to $Q$ therefore gives
\begin{align*}
 \TV(Q_w,Q_{\widetilde w})
 &=\frac12\int
   \Bigl\lvert \frac{\exp w}{Z}-\frac{\exp\widetilde w}{\widetilde Z}\Bigr\rvert\,\dd Q
 \leq\frac{\Delta}{2Z}
 +\frac12\int (\exp\widetilde w)\,
       \bigl\lvert \frac1Z-\frac1{\widetilde Z}\bigr\rvert\,\dd Q
 \leq\frac \Delta Z\,.
\end{align*}
Finally, the bound on $\E_P\abs{w-\widetilde w}$ yields
\begin{align*}
 \frac \Delta Z
 &\leq \E_{Q_w}\abs b+\frac{\e^{\clipB}}{Z}\,\E_Q\abs b
 +\Bigl(1+\frac{\e^{\clipB}}{Z}\Bigr)\,
 \clipB\,\Bigl(\frac{C_0}{\clipB}\Bigr)^p
 \leq (1+2\e)\,\Bigl\{\eps_{\rm bias}
 +\clipB\,\Bigl(\frac{C_0}{\clipB}\Bigr)^p\Bigr\}\,,
\end{align*}
where the last inequality uses $\clipB\leq1$ and $Z\geq1/2$.
This proves the desired bound.
\end{proof}

\subsection{Gadget~\ref{gadget:fluctuation}: Fluctuation correction}
\label{sec:fixed-node}

The target of \cref{gadget:fluctuation} is
$\cN(\E\Phi(G),\Id_n)$, for a standard Gaussian $G\in\R^m$.
Here, we describe a significant extension to facilitate implementation.
Toward that end, let $\Phi^\circ$ be a comparison map
with $\Lip(\Phi^\circ)\le L$, and let $\widehat\Phi$ be the actual implemented map.
The comparison map is only for analysis; it is not evaluated. Define the two value
errors
\begin{equation}\label{eq:robust-value-errors}
 e_{\rm val}\deq\E\norm{\widehat\Phi(G)-\Phi^\circ(G)}\,,\qquad
 e_{\rm mean}\deq\norm{\E\Phi^\circ(G)-\E\Phi(G)}\,.
\end{equation}
All three maps are assumed integrable. Taking
$\widehat\Phi=\Phi^\circ=\Phi$ recovers the exact case.
Let $B$ be a standard Brownian
path with $B_1=G$, and put
\begin{align*}
 A_t&\deq\E[D\Phi^\circ(B_1)\mid \mathcal F_t]\,,
 \qquad \Sigma\deq\int_0^1 A_tA_t^\T\,\dd t\,, \qquad
 0\preceq\Sigma\preceq L^2\Id_n\,.
\end{align*}

The algorithm requires a
randomized map
$\widehat\Sigma:\R^n\to\R^n$ with the following
properties, conditional on the Brownian path, for some
$\varepsilon\ge0$:
\begin{enumerate}[label=\textup{(\alph*)},leftmargin=2.4em]
\item Almost surely, $\widehat\Sigma$ is odd and
$L^2$-Lipschitz.
\item For each supplied direction $v\in\R^n$,
\begin{equation}\label{eq:robust-action-bias}
 \norm{\E[\widehat\Sigma(v)\mid B]-\Sigma v}
 \le\varepsilon L^2\,\norm v\,.
\end{equation}
\end{enumerate}
Here, we allow $\widehat \Sigma(\cdot)$ to be a non-linear function (despite the fact that it approximates a linear action).
\Cref{lem:approximate-derivatives} below gives a construction of this
routine from Jacobian-vector products.

Use \cref{alg:reusable-fluctuation}, drawing for each tilt estimator $K$ fresh copies
$\widehat\Sigma_1,\ldots,\widehat\Sigma_K$ of $\widehat\Sigma$,
independently conditional on $B$. Define the successive compositions
\[
 \widehat{\Sigma^0}\deq\Id_n\,,\qquad
 \widehat{\Sigma^k}\deq
 \widehat\Sigma_k\circ\widehat{\Sigma^{k-1}}\,,
 \qquad k\in\{1,\ldots,K\}\,.
\]
For independent standard Gaussians $Z,Z'\in\R^n$, we set
\begin{equation}\label{eq:robust-fluctuation-tilt-estimator}
 \widetilde W_K
 \deq\frac12\sum_{k=1}^K
 \Bigl\{\frac1k\,\ip{Z'}{\widehat{\Sigma^k}(Z')}
 -\ip Z{\widehat{\Sigma^k}(Z)}\Bigr\}\,.
\end{equation}
Conditional on $B$, all of the randomness used in the $\widehat \Sigma$ map is independent of $Z,Z'$, and we use the same realization of $\widehat{\Sigma^k}$ in both terms.
Evaluate these compositions successively on $Z$ and $Z'$, reusing the
two vectors from level $k-1$ at level $k$. Thus, the entire computation of $\widetilde W_K$ uses
$2K$ evaluations of $\widehat\Sigma(\cdot)$.
After FORS acceptance, return
\begin{equation}\label{eq:gradient-unused-guard}
 \widehat Y\deq\widehat\Phi(G)+Z\,.
\end{equation}

\begin{theorem}[Robust fluctuation correction]\label{thm:robust-fluctuation}
Assume the preceding oracle access models and bounds, and
$K\ge1$. For $p\ge2$, put
\[
 e_p\deq C\,\bigl\{
 L^2\,(\sqrt{np}+p)
 +L^4 n\bigr\}\,.
\]
Suppose that
\[
    L^2 \le \frac{1}{4} \qquad\text{and}\qquad
 CL^{2K+2}n \le c_{\rm norm}\,,
\]
where
$c_{\rm norm}>0$ is a sufficiently small universal constant.
For $0<\clipB_{\rm fluc}\le1$, the output law $\widehat\nu$ satisfies
\begin{align}\label{eq:robust-fluctuation-error}
 \TV\bigl(\widehat\nu,\cN(\E\Phi(G),\Id_n)\bigr)
 \le{}&C\,\Bigl\{
 L^{2K+2} n
 +L^2 n\varepsilon
 +\clipB_{\rm fluc}\,\Bigl(\frac{e_p}{\clipB_{\rm fluc}}\Bigr)^p
 +e_{\rm val}\Bigr\}+\frac{e_{\rm mean}}2\,.
\end{align}
The expected numbers
of proposal evaluations and $\widehat \Sigma(\cdot)$ evaluations are at most
$1+C\clipB_{\rm fluc}$ and $C\clipB_{\rm fluc}K$, respectively.
\end{theorem}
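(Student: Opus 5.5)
We compare the actual FORS output with a chain of intermediate laws and bound each link by the triangle inequality. The chain is: (i) the ideal tilt built from $\Phi^\circ$ with $K=\infty$; (ii) the truncated ideal tilt; (iii) the tilt built from the conditional mean of the nonlinear estimator $\widetilde W_K$; (iv) its clipped version, which is what FORS realizes by \cref{lem:reusable-FORS}; and finally (v) the swap of $\Phi^\circ(G)$ for $\widehat\Phi(G)$ in the output.

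\textbf{Stages (i) and (v).} Work on the augmented space $\omega=(B,Z)$ with proposal $Q$ equal to the law of (Brownian path, $Z$). The ideal log-tilt is
\[
w_\infty(B,Z)=-\tfrac12\log\det(\Id_n-\Sigma)-\tfrac12 Z^\T\Sigma Z,
\]
which is well defined since $\Sigma\preceq L^2\Id_n\preceq\tfrac14\Id_n$. Under this tilt $Z\mid B\sim\cN(0,\Id_n-\Sigma)$. The Clark--Ocone/exponential martingale computation displayed after \cref{alg:reusable-fluctuation}, applied to $\Phi^\circ$, shows that $\Phi^\circ(G)+Z$ is exactly $\cN(\E\Phi^\circ(G),\Id_n)$. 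Two Gaussians with identity covariance whose means differ by $e_{\rm mean}$ are within TV $\tfrac12 e_{\rm mean}$, which gives the last term of \eqref{eq:robust-fluctuation-error}. Replacing $\Phi^\circ(G)$ by $\widehat\Phi(G)$ in the output changes the law by at most the tilted expectation of $\norm{\widehat\Phi(G)-\Phi^\circ(G)}$, after smoothing by the Gaussian $Z$. The tilt density is bounded by a constant on good events, so I would control this by $Ce_{\rm val}$ via Cauchy--Schwarz with a second moment of the density ratio; this is a minor point.

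\textbf{Stage (ii): truncation.} The truncated tilt is $w_K=\tfrac12\sum_{k\le K}\{\tr(\Sigma^k)/k-Z^\T\Sigma^kZ\}$. Its difference from $w_\infty$ is a tail bounded by
\[
|w_\infty-w_K|\le C\,L^{2K+2}\,(n+\norm Z^2).
\]
Integrating under $Q$ and $Q_w$ gives $CL^{2K+2}n$. The hypothesis $CL^{2K+2}n\le c_{\rm norm}$ keeps both normalizers in $[1/2,2]$, as required by \cref{lem:clipping}.

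\textbf{Stage (iii): bias of the nonlinear estimator.} The estimator is $\widetilde W_K$ from \eqref{eq:robust-fluctuation-tilt-estimator}, and I would feed it into \cref{lem:clipping} with $w=w_K$. The bias is the crux. Because $\widehat\Sigma$ is nonlinear, $\E[\widehat{\Sigma^k}(v)\mid B]$ is not $\Sigma^k v$. The plan is to telescope, using the independence of $\widehat\Sigma_1,\dots,\widehat\Sigma_k$ given $B$:
\[
\widehat{\Sigma^k}(v)-\Sigma^kv=\sum_{j}\Bigl(\widehat\Sigma_k\circ\cdots\circ\widehat\Sigma_{j+1}\Bigr)\bigl(\widehat\Sigma_j(u_{j-1})\bigr)-\Bigl(\widehat\Sigma_k\circ\cdots\circ\widehat\Sigma_{j+1}\Bigr)\bigl(\Sigma u_{j-1}\bigr),
\]
where $u_{j-1}=\Sigma^{j-1}v$. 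Each later map is $L^2$-Lipschitz, so the $j$-th term is at most $L^{2(k-j)}\norm{\widehat\Sigma_j(u_{j-1})-\Sigma u_{j-1}}$.

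The difficulty is that the conditional expectation of a Lipschitz function of a fluctuating input is not controlled by the bias of its input. I would therefore reorganize the telescope so that each step conditions on the outer maps: average the innermost fresh map first, and treat outer maps as fixed Lipschitz functions only after taking expectations. That ordering is available because the maps are drawn independently. The aim is a bias of order $kL^{2k}\varepsilon\norm v$ per level. Pairing with $Z$, whose squared norm has expectation of order $n$, and summing with the $1/k$ weights gives a total bias of $CL^2n\varepsilon$ (using $L^2\le1/4$). Oddness of $\widehat\Sigma$ is what makes the $Z'$ trace term unbiased up to this same error, since it removes even-order contributions.

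\textbf{Stage (iv): moments and call counts.} For the moments in \cref{lem:clipping}, Hanson--Wright for the Gaussian quadratic forms gives $\norm{\widetilde W_K}_{L^p}\le e_p$. The $L^2(\sqrt{np}+p)$ part comes from the $k=1$ fluctuation. The $L^4n$ part comes from the $k\ge2$ terms, which do not cancel in mean between $Z$ and $Z'$. These bounds hold under both $\ol Q$ and $\ol Q_w$, since the tilt changes the covariance of $Z$ only by a factor of $O(1)$. \Cref{lem:clipping} then yields the clipping term $\clipB_{\rm fluc}(e_p/\clipB_{\rm fluc})^p$. The call counts follow from \cref{lem:reusable-FORS}: each estimator call costs $2K$ evaluations of $\widehat\Sigma$, and there are $O(\clipB_{\rm fluc})$ estimator calls in expectation.

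The main obstacle is the nonlinear-composition bias in stage (iii).
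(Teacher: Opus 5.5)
\textbf{The gap is in stage (iii), the step you flagged as the main obstacle, and your proposed fix runs the averaging in the wrong order.}

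Your telescope is
\[
\widehat{\Sigma^k}(v)-\Sigma^kv=\sum_j\bigl\{F_j(\widehat\Sigma_j(u_{j-1}))-F_j(\Sigma u_{j-1})\bigr\}\,,\qquad F_j\deq\widehat\Sigma_k\circ\cdots\circ\widehat\Sigma_{j+1}\,.
\]
It places the random \emph{nonlinear} maps on the outside. If you average the innermost map first with $F_j$ held fixed, you are left with the Jensen gap $\E F_j(X)-F_j(\E X)$, where $X=\widehat\Sigma_j(u_{j-1})$. Its size is governed by the pathwise fluctuation of $X$ (of order $L^2\norm{u_{j-1}}$) and by the nonlinearity of $F_j$. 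It is not governed by $\varepsilon$. The Lipschitz bound therefore only gives $O(kL^{2k}\norm v)$, with no factor $\varepsilon$, and independence of the seeds does not change this.

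The missing idea is to exploit the \emph{linearity of the target} $\Sigma$ by putting the exact maps outside:
\[
\widehat{\Sigma^k}(v)-\Sigma^kv=\sum_{j=1}^k\Sigma^{k-j}\bigl(\widehat\Sigma_j-\Sigma\bigr)\bigl(\widehat{\Sigma^{j-1}}(v)\bigr)\,.
\]
Given $B$, the map $\Sigma^{k-j}$ is deterministic and linear, so it commutes with expectation. The input $U_{j-1}\deq\widehat{\Sigma^{j-1}}(v)$ is independent of the seed of $\widehat\Sigma_j$. So condition on $U_{j-1}$ and apply \eqref{eq:robust-action-bias} to the \emph{outer} map:
\[
\norm{\E[(\widehat\Sigma_j-\Sigma)(U_{j-1})\mid B]}\le\varepsilon L^2\,\E[\norm{U_{j-1}}\mid B]\le\varepsilon L^{2j}\norm v\,.
\]
The last step uses $\norm{U_{j-1}}\le L^{2(j-1)}\norm v$, which holds because $\widehat{\Sigma^{j-1}}$ is odd (hence vanishes at $0$) and $L^{2(j-1)}$-Lipschitz. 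Multiplying by $\norm{\Sigma^{k-j}}_{\op}\le L^{2(k-j)}$ and summing over $j$ gives $k\varepsilon L^{2k}\norm v$; this is exactly the paper's induction. It also shows the actual role of oddness in the bias step: it makes the maps vanish at zero, rather than ``removing even-order contributions.'' The $Z'$ term is unbiased because $\E\ip{Z'}{\Sigma^kZ'}=\tr(\Sigma^k)$ holds exactly.

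\textbf{Secondary corrections in stage (iv) and elsewhere.}
\begin{itemize}
\item Hanson--Wright does not apply to $\ip Z{\widehat{\Sigma^k}(Z)}$, because $\widehat{\Sigma^k}$ is nonlinear. Use the Gaussian $L^p$ Poincar\'e inequality for $x\mapsto\ip x{T(x)}$, whose gradient is at most $2\norm{T}_{\rm Lip}\norm x$ (again using $T(0)=0$).
\item Under the tilted law, it is not enough that the covariance of $Z$ changes by an $O(1)$ factor. That would shift the mean of $\ip{Z'}{T(Z')}-\ip Z{T(Z)}$ by order $L^2n$, which can far exceed $e_p$. You need $\norm{C_K-\Id_n}_{\op}\le CL^2$, which holds and costs only an extra $CL^4(n+p)$.
\item For the $\widehat\Phi$ swap, apply data processing first and make the swap under the exact tilt. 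Its path marginal is the untilted one, so the cost is exactly $e_{\rm val}/(2\sqrt{1-L^2})$. Cauchy--Schwarz is not needed, and it would require a second moment that is not assumed.
\item Your displayed $w_\infty$ should be $-\frac12\log\det(\Id_n-\Sigma)-\frac12Z^\T\{(\Id_n-\Sigma)^{-1}-\Id_n\}Z$.
\end{itemize}

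With these repairs, your chain matches the paper's proof. Your direct truncation estimate is a valid substitute for the paper's disintegration argument: integrate $|w_\infty-w_K|\le CL^{2K+2}(n+\norm Z^2)$ under both tilted laws.
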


We first record the two estimates used in the proof.

\begin{lemma}[Cancellation for odd Lipschitz maps]\label{lem:gradient-odd}
Let $T:\R^n\to\R^n$ be odd and Lipschitz, and let $Z',Z$ be
independent standard Gaussians. For $p\ge2$,
\begin{equation*}
 \bigl\|\ip{Z'}{T(Z')}-\ip Z{T(Z)}\bigr\|_{L^p}
 \le C\,\norm{T}_{\rm Lip}\,(\sqrt{np}+p)\,.
\end{equation*}
For a fixed $u\in\R^n$,
$\|\ip u{T(Z)}\|_{L^p}
\le C\sqrt p\,\norm{T}_{\rm Lip}\,\norm u$.
\end{lemma}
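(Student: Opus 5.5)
The plan is to decompose the quadratic-form difference into a mean-zero part and a trace part, and to bound each separately with Gaussian concentration. Throughout write $\ell\deq\norm T_{\rm Lip}$. Since $T$ is odd, $T(0)=0$, so $\norm{T(x)}\le\ell\norm x$ and $T$ has at most linear growth; all moments below are therefore finite.

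Put $f(x)\deq\ip x{T(x)}$ and $m\deq\E f(Z)=\E f(Z')$. Then
\[
\ip{Z'}{T(Z')}-\ip Z{T(Z)}=\bigl(f(Z')-m\bigr)-\bigl(f(Z)-m\bigr),
\]
so it suffices to prove $\norm{f(Z)-m}_{L^p}\le C\ell(\sqrt{np}+p)$ for a single standard Gaussian $Z$. Note that the mean $m$, which can be as large as $\ell n$, cancels exactly; this cancellation is why the bound involves only $\sqrt{np}$ rather than $n$.

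For the concentration of $f$, use the fact that $T$ is differentiable almost everywhere with $\norm{DT}_{\op}\le\ell$. This gives
\[
\nabla f(x)=T(x)+DT(x)^\T x\,,\qquad \norm{\nabla f(x)}\le2\ell\norm x\,,
\]
so $f$ is locally Lipschitz with a gradient that grows linearly. I will invoke the Gaussian $L^p$ Poincaré inequality, for instance via Pisier's interpolation argument or the moment form of the Gaussian Sobolev inequality:
\[
\norm{f(Z)-\E f(Z)}_{L^p}\le C\sqrt p\,\bigl\lVert\norm{\nabla f(Z)}\bigr\rVert_{L^p}.
\]
This reduces the problem to bounding $\bigl\lVert\norm{\nabla f(Z)}\bigr\rVert_{L^p}\le 2\ell\,\norm{\norm Z}_{L^p}\le C\ell(\sqrt n+\sqrt p)$. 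Multiplying by $C\sqrt p$ yields $C\ell(\sqrt{np}+p)$, which is the claimed bound. The key technical step is justifying the $L^p$ Poincaré inequality for a function $f$ that is only locally Lipschitz, and this is also the main obstacle. I will handle it by approximation: either truncate $f$, or mollify $T$ while preserving oddness and the Lipschitz constant (convolution with a symmetric mollifier does both), and then pass to the limit using dominated convergence and the linear growth bound.

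For the second claim, $g(x)\deq\ip u{T(x)}$ is $\ell\norm u$-Lipschitz. Oddness of $T$ and the symmetry of $Z$ give $\E g(Z)=0$. Gaussian concentration for Lipschitz functions then gives sub-Gaussian tails, so
\[
\norm{g(Z)}_{L^p}\le C\sqrt p\,\ell\norm u\,.
\]
Here oddness is essential: without it, the nonzero mean $\ip u{T(0)}$ and the mean of $g$ would have to be added to the bound.
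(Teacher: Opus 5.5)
Your proposal is correct and follows the paper's proof essentially line for line. For the first claim, both use the weak-gradient bound $\norm{\nabla\ip x{T(x)}}\le 2\norm{T}_{\rm Lip}\norm x$ (via $T(0)=0$), the Gaussian $L^p$ Poincar\'e inequality and $\norm{\norm Z}_{L^p}\le C(\sqrt n+\sqrt p)$; for the second, both use Lipschitz concentration with mean zero by oddness. Your triangle inequality through the common mean $m$ and the symmetric-mollifier approximation only make explicit details the paper leaves implicit.
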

\begin{proof}
The weak gradient of $x\mapsto\ip x{T(x)}$ has norm at most
$2\,\norm{T}_{\rm Lip}\,\norm x$. The Gaussian $L^p$ Poincar\'e inequality bounds its
centered $L^p$ norm by
$C\sqrt p\,\norm{T}_{\rm Lip}\,(\sqrt n+\sqrt p)$, proving the first assertion.
Also, $x\mapsto\ip u{T(x)}$ is odd and
$\norm{T}_{\rm Lip}\,\norm u$-Lipschitz, so Gaussian concentration proves the second.
\end{proof}

\begin{lemma}[Normalization constant for the truncated series]
\label{lem:fixed-truncated-normalizer}
Let $0\preceq\Sigma_0\preceq\Id_n/4$, and put
\begin{equation}\label{eq:Gaussian-likelihood-series}
 w_K(Z)\deq\frac12\sum_{k=1}^K
 \Bigl\{\frac1k\tr(\Sigma_0^k)-Z^\T\Sigma_0^kZ\Bigr\}\,.
\end{equation}
For $Z\sim\cN(0,\Id_n)$, let $\mathcal Z_K\deq\E\exp w_K(Z)$.
Then,
\begin{equation}\label{eq:fixed-normalizer-log-bound}
 \abs{\log\mathcal Z_K}\le Cn\,\norm{\Sigma_0}_{\op}^{K+1}\,.
\end{equation}
The tilted law is $\cN(0,C_K)$, where
\begin{equation*}
 C_K\deq\Bigl(\Id_n+\sum_{k=1}^K\Sigma_0^k\Bigr)^{-1}\,,\qquad
 \frac34\,\Id_n\preceq C_K\preceq\Id_n\,,\qquad
 \norm{C_K-(\Id_n-\Sigma_0)}_{\op}
 \le C\,\norm{\Sigma_0}_{\op}^{K+1}\,.
\end{equation*}
Consequently,
\begin{equation}\label{eq:fixed-truncated-Gaussian-TV}
 \TV\bigl(\cN(0,C_K),\cN(0,\Id_n-\Sigma_0)\bigr)
 \le C\sqrt n\,\norm{\Sigma_0}_{\op}^{K+1}\,.
\end{equation}
\end{lemma}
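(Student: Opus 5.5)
Since $\Sigma_0$ is symmetric positive semidefinite with $\norm{\Sigma_0}_{\op}\le 1/4$, I will diagonalize it, $\Sigma_0=\sum_i\lambda_i u_iu_i^\T$ with $0\le\lambda_i\le1/4$. This reduces everything to scalar computations in each eigendirection. Set $s_K(\lambda)\deq\sum_{k=1}^K\lambda^k$ and $S_K\deq\sum_{k=1}^K\Sigma_0^k$. Then $w_K(Z)=\frac12\tr(\sum_k\Sigma_0^k/k)-\frac12 Z^\T S_KZ$. Multiplying the standard Gaussian density $\exp(-\norm z^2/2)$ by $\exp(-\frac12 z^\T S_K z)$ gives a Gaussian density with precision $\Id_n+S_K$, so the tilted law is $\cN(0,C_K)$ with $C_K=(\Id_n+S_K)^{-1}$. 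The eigenvalues of $C_K$ are $1/(1+s_K(\lambda_i))$. Since $0\le s_K(\lambda)\le\lambda/(1-\lambda)\le1/3$, they lie in $[3/4,1]$.

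\emph{Covariance comparison.} The untruncated sum is $s_\infty(\lambda)=\lambda/(1-\lambda)$, and $1/(1+s_\infty(\lambda))=1-\lambda$. The function $x\mapsto1/(1+x)$ is $1$-Lipschitz on $x\ge0$, so
\[
\abs{1/(1+s_K(\lambda))-(1-\lambda)}\le s_\infty(\lambda)-s_K(\lambda)=\frac{\lambda^{K+1}}{1-\lambda}\le\tfrac43\lambda^{K+1}.
\]
Taking the maximum over eigenvalues gives the operator-norm bound $\norm{C_K-(\Id_n-\Sigma_0)}_{\op}\le C\norm{\Sigma_0}_{\op}^{K+1}$.

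\emph{Normalizer.} The Gaussian integral gives
\[
\mathcal Z_K=\det(\Id_n+S_K)^{-1/2}\exp\Bigl(\tfrac12\sum_{k\le K}\tr\Sigma_0^k/k\Bigr).
\]
Hence $\log\mathcal Z_K=\frac12\sum_i\{\sum_{k\le K}\lambda_i^k/k-\log(1+s_K(\lambda_i))\}$. In the untruncated case this bracket vanishes, because $\sum_{k\ge1}\lambda^k/k=-\log(1-\lambda)=\log(1+s_\infty(\lambda))$. It therefore suffices to bound the two truncation tails.
\begin{itemize}
\item The first tail is $\sum_{k>K}\lambda^k/k\le\frac43\lambda^{K+1}$.
\item Since $\log$ is $1$-Lipschitz on $[1,\infty)$, the second satisfies $\abs{\log(1+s_\infty)-\log(1+s_K)}\le s_\infty-s_K\le\frac43\lambda^{K+1}$.
\end{itemize}
Summing over the $n$ eigenvalues yields $\abs{\log\mathcal Z_K}\le Cn\norm{\Sigma_0}_{\op}^{K+1}$.

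\emph{TV bound.} Both covariances $C_K$ and $\Id_n-\Sigma_0$ have eigenvalues in $[3/4,1]$ and share eigenvectors. I will use Pinsker together with the Gaussian KL formula, applied direction by direction. For nearby scalar variances $a,b\in[3/4,1]$, $\KL(\cN(0,a)\mmid\cN(0,b))\le C(a-b)^2$. Summing over directions,
\[
\KL\le C\sum_i\lambda_i^{2K+2}\le Cn\norm{\Sigma_0}_{\op}^{2K+2},
\]
and Pinsker gives $\TV\le C\sqrt n\,\norm{\Sigma_0}_{\op}^{K+1}$. Every step is an explicit scalar estimate. The only point needing care is keeping all the constants uniform, which uses $\lambda\le1/4$ to bound $1/(1-\lambda)$ and to keep the variances bounded away from zero.
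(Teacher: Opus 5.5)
Your proposal is correct and follows essentially the same route as the paper. Both arguments compute $\mathcal Z_K$ by Gaussian integration, compare with the untruncated identity $\Id_n+\sum_{k\ge1}\Sigma_0^k=(\Id_n-\Sigma_0)^{-1}$ by bounding the two truncation tails, and obtain the TV bound from the Gaussian KL formula followed by Pinsker's inequality. The only difference is that you diagonalize $\Sigma_0$ and argue eigenvalue by eigenvalue, where the paper states the corresponding operator-norm tail bounds directly. Your version also spells out the bounds $\frac34\Id_n\preceq C_K\preceq\Id_n$ and the bound on $\norm{C_K-(\Id_n-\Sigma_0)}_{\op}$, which the paper asserts without proof.
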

\begin{proof}
Write $H_K\deq \sum_{k=1}^K\Sigma_0^k$. Gaussian integration gives
\[
 \log\mathcal Z_K=\frac12\sum_{k=1}^K\frac1k\tr(\Sigma_0^k)
             -\frac12\log\det(\Id_n+H_K)\,.
\]
This tends to zero as $K\to\infty$, since
$\Id_n+H_\infty=(\Id_n-\Sigma_0)^{-1}$. The bounds
$\norm{H_K-H_\infty}_{\op}
 \le C\,\norm{\Sigma_0}_{\op}^{K+1}$ and
$\sum_{k>K}\tr(\Sigma_0^k)/k
 \le Cn\,\norm{\Sigma_0}_{\op}^{K+1}$ prove
\eqref{eq:fixed-normalizer-log-bound}. The
Gaussian KL formula followed by Pinsker's inequality gives
\eqref{eq:fixed-truncated-Gaussian-TV}.
\end{proof}

\begin{proof}[Proof of \cref{thm:robust-fluctuation}]
    First, we handle the truncation error.
    As in \cref{sec:reusable-correction},
Clark--Ocone gives
$\Phi^\circ(G)-\E\Phi^\circ(G)=\int_0^1A_t\,\dd B_t$.
Under the exact tilt, $Z\mid B\sim\cN(0,\Id_n-\Sigma)$,
and the exponential martingale identity gives
\begin{align*}
 &\E\exp\bigl\{\ii\,\ip u{\Phi^\circ(G)-\E\Phi^\circ(G)
                              +Z}\bigr\}
                              =\exp\bigl(- \frac{1}{2}\,\norm u^2\bigr)\,.
\end{align*}
Hence, $\Phi^\circ(G) + Z \sim \cN(\E \Phi^\circ(G), I_n)$.
Denote this exact joint law of $(B,Z)$ by
$P_\infty$.

Let $Q$ be the law of $(B,Z)$ before correction. Use
\eqref{eq:Gaussian-likelihood-series} with $\Sigma_0=\Sigma$, and
let $P_K\propto\e^{w_K}Q$ be the tilted law. By
\cref{lem:fixed-truncated-normalizer}, the normalizing constant
$\mathcal Z_K\deq\E[\exp w_K(Z)\mid B]$
satisfies
\[
 \abs{\log\mathcal Z_K}
 \le \alpha_K
 \deq CL^{2K+2} n
\]
almost surely. Write $Q_B$ for the original Brownian path law. Integrating
the tilted density over $Z$ shows that the Brownian path marginal of $P_K$ is
\begin{equation*}
 \frac{\dd (P_K)_B}{\dd Q_B}(B)
 =\frac{\mathcal Z_K}{\E_{Q_B}\mathcal Z_K}\,.
\end{equation*}
If $c_{\rm norm}$ is sufficiently small, then
$\mathcal Z_K\in[2/3,3/2]$, so this density takes values in
$[4/9,9/4]$: more quantitatively,
$\abs{\log(\mathcal Z_K/\E_{Q_B}\mathcal Z_K)}\le2\alpha_K$; hence
\begin{equation*}
 \TV\bigl((P_K)_B,Q_B\bigr)
 =\frac12\,\E_{Q_B}
 \Bigl\lvert\frac{\mathcal Z_K}{\E_{Q_B}\mathcal Z_K}-1\Bigr\rvert
 \le C\alpha_K\,.
\end{equation*}
Conditional on $B$, the law of $Z$ under $P_K$ has covariance
$C_K$, whereas under $P_\infty$ it has covariance $\Id_n-\Sigma$.
Thus, $\mathcal Z_\infty = 1$, and the
Brownian path marginal of $P_\infty$ is precisely $Q_B$. Therefore,
disintegration and \eqref{eq:fixed-truncated-Gaussian-TV} give
\begin{align*}
 \TV(P_K,P_\infty)
 &\le \TV\bigl((P_K)_B,Q_B\bigr)
 +\E_{(P_K)_B}\TV\bigl(
       \cN(0,C_K),\cN(0,\Id_n-\Sigma)\bigr)
 \le CL^{2K+2} n\,.
\end{align*}

Next, we handle the clipping error. Each $\widehat{\Sigma^k}$ is odd and
$L^{2k}$-Lipschitz.
Fix $v$ and note that the randomness of $\widehat\Sigma_k$ is independent of
$\widehat{\Sigma^{k-1}}(v)$ conditional on $B$. Hence
\eqref{eq:robust-action-bias} gives
\[
 \norm{\E[\widehat{\Sigma^k}(v)\mid B]
 -\Sigma\,\E[\widehat{\Sigma^{k-1}}(v)\mid B]}
 \le\varepsilon L^2\,
 \E[\norm{\widehat{\Sigma^{k-1}}(v)}\mid B]
 \le\varepsilon
 L^{2k}\,\norm v\,.
\]
Since $\norm{\Sigma}_{\op}\le L^2$, it follows that
\[
 \norm{\E[\widehat{\Sigma^k}(v)\mid B]-\Sigma^kv}
 \le \varepsilon
 L^{2k}\,\norm v
 +L^2\,
 \norm{\E[\widehat{\Sigma^{k-1}}(v)\mid B]-\Sigma^{k-1}v}\,.
\]
Iterating yields
\begin{equation}\label{eq:robust-power-bias}
 \norm{\E[\widehat{\Sigma^k}(v)\mid B]-\Sigma^kv}
 \le k\varepsilon
 L^{2k}\,\norm v\,,
 \qquad k\ge1\,.
\end{equation}
In particular, conditioning first on $Z'$ and using
$\E\norm{Z'}^2=n$ gives
\[
 \abs{\E[
  \ip{Z'}{\widehat{\Sigma^k}(Z')-\Sigma^kZ'}\mid B]}
 \le k\varepsilon
 L^{2k}\, n\,.
\]
The corresponding conditional bound for the $Z$ term is
\[
 \abs{\E[
  \ip Z{\widehat{\Sigma^k}(Z)-\Sigma^kZ}\mid B,Z]}
 \le k\varepsilon
 L^{2k}\,\norm Z^2\,.
\]
Substitution in \eqref{eq:robust-fluctuation-tilt-estimator}, followed by summation
over $k\in\{1,\ldots,K\}$, consequently yields the following bound; here
we use that both the geometric sum and its first moment are bounded by
$CL^2$ because $L^2\le1/4$:
\[
 \abs{\E[\widetilde W_K\mid B,Z]-w_K(Z)}
 \le C\varepsilon L^2\,\{n+\norm Z^2\}\,.
\]
Since $\E_Q\norm Z^2 \vee \E_{P_K}\norm Z^2\le n$, this yields
$\eps_{\rm bias} \le CL^2 n\varepsilon$ in \cref{lem:clipping}.

For the moment bound, first condition on $B$ and all randomness used in the $\widehat \Sigma$ map. Under
$Q$, the $k$-th summand in
\eqref{eq:robust-fluctuation-tilt-estimator} is $\frac1k\,\ip{Z'}{\widehat{\Sigma^k}(Z')}
 -\ip Z{\widehat{\Sigma^k}(Z)}$.
For $k=1$, \cref{lem:gradient-odd} bounds its
$L^p$ norm by $CL^2\,(\sqrt{np}+p)$.
For $k\ge2$, use the identity
\begin{align*}
 &\frac1k\,\ip{Z'}{\widehat{\Sigma^k}(Z')}
 -\ip Z{\widehat{\Sigma^k}(Z)}
 =\ip{Z'}{\widehat{\Sigma^k}(Z')}
  -\ip Z{\widehat{\Sigma^k}(Z)}
  -\Bigl(1-\frac1k\Bigr)\,
   \ip{Z'}{\widehat{\Sigma^k}(Z')}\,.
\end{align*}
Then, \cref{lem:gradient-odd} bounds the $L^p$ norm of the first two terms by $CL^{2k}\,(\sqrt{np}+p)$.
Since $\widehat{\Sigma^k}$ is odd, it vanishes at zero, and hence $\norm{\ip{Z'}{\widehat{\Sigma^k}(Z')}}_{L^p}
 \le L^{2k}\,
 \norm{\norm{Z'}^2}_{L^p}
 \le CL^{2k}\,(n+p)$.
Summing over
$k\in\{1,\ldots,K\}$ gives
\begin{align*}
 \norm{\widetilde W_K}_{L^p(Q)}
 &\le C\,\bigl\{
 L^2\,(\sqrt{np}+p)
 +L^4\,(n+p)\bigr\}
 \le e_p/2\,.
\end{align*}
In the last inequality, the term
$pL^4$ is absorbed by
$pL^2$.

Under $P_K$, conditional on $B$, write
$Z=C_K^{1/2}\widetilde Z$, where $\widetilde Z$ is standard Gaussian and
independent of $Z'$ and the random seeds. Moreover,
$\norm{C_K^{1/2}}_{\op}\le1$ and
$\norm{C_K^{1/2}-\Id_n}_{\op}\le CL^2$. Adding and
subtracting
$\ip{C_K^{1/2}\widetilde Z}{\widehat{\Sigma^k}(\widetilde Z)}$
therefore gives $\abs{\ip Z{\widehat{\Sigma^k}(Z)}
 -\ip{\widetilde Z}{\widehat{\Sigma^k}(\widetilde Z)}}
 \le
 CL^{2k+2}\,\norm{\widetilde Z}^2$.
Summing over $k\in\{1,\ldots,K\}$ and using
$\norm{\norm{\widetilde Z}^2}_{L^p}\le C\,(n+p)$ bounds the additional
term by
$CL^4\,(n+p)$. After increasing the constant in
$e_p$, this gives $\norm{\widetilde W_K}_{L^p(P_K)}
 \le e_p/2$ as well.

All summations used only the triangle inequality, so reusing the earlier
evaluation of $\widehat{\Sigma^k}$ to compute $\widehat{\Sigma^{k+1}}$ is valid, despite the failure of independence.

\Cref{lem:clipping}
now bounds the clipping error by
$C\,\{L^2 n\varepsilon
 +\clipB_{\rm fluc}\,(e_p/\clipB_{\rm fluc})^p\}$.

The preceding estimates control the TV distance between the accepted
joint law of $(B,Z)$ and $P_\infty$. Applying the same post-processing \eqref{eq:gradient-unused-guard} to both laws preserves this
bound by data processing. Under $P_\infty$, conditional on $B$, the
additive Gaussian noise has covariance
\[
 \Id_n-\Sigma
 =\Id_n-\int_0^1 A_tA_t^\T\,\dd t
 \succeq(1-L^2)\,\Id_n\,.
\]
The implemented and comparison outputs differ only by
$\widehat\Phi(G)-\Phi^\circ(G)$. Applying the Gaussian KL formula and Pinsker's inequality conditionally on $B$, then averaging using joint convexity of TV, bounds the TV distance between these outputs by
\[
 \frac{\E\norm{\widehat\Phi(G)-\Phi^\circ(G)}}{2\sqrt{1-L^2}}
 =\frac{e_{\rm val}}{2\sqrt{1-L^2}}\le Ce_{\rm val}\,.
\]
Here, $L^2\le1/4$.
Changing the target mean from $\E\Phi^\circ(G)$ to $\E\Phi(G)$
costs at most $e_{\rm mean}/2$. This proves
\eqref{eq:robust-fluctuation-error}.

Each attempt accepts with probability at least $\exp(-2\clipB_{\rm fluc})$.
It uses one proposal evaluation and a Poisson number of tilt estimators of mean
$2\clipB_{\rm fluc}$. Each tilt estimator uses $2K$ $\widehat \Sigma(\cdot)$ evaluations. Summing over attempts gives the stated
expected costs, since $\exp(2\clipB_{\rm fluc})\le1+C\clipB_{\rm fluc}$.
\end{proof}

\begin{remark}[Sharper truncation error]
Under the same hypotheses, the
truncation contribution in \eqref{eq:robust-fluctuation-error} can be
improved to
$C\min\{n,(K+1)\sqrt n\}\,L^{2K+2}$ by controlling the
fluctuations of $\log\mathcal Z_K$. Since we do not need this more refined bound, we only sketch the details here. Write
$\log\mathcal Z_K=\tr f_K(\Sigma)$, where $f_K(x)\deq\frac12\,\{
 \sum_{k=1}^K\frac{x^k}{k}
 -\log\sum_{k=0}^Kx^k\}$.
 Then,
 $\sup_{x\in[0,L^2]}\abs{f'_K(x)}\le CL^{2K} K$.
For $\Psi_t(x)\deq\E\Phi^\circ(x+\sqrt{1-t}\,G')$, with
$G'\sim\cN(0,\Id_m)$, we have $A_t=D\Psi_t(B_t)$.
Gaussian integration by parts gives, for every fixed matrix
$M\in\R^{n\times m}$,
\[
 \norm{\nabla_x\ip{M}{D\Psi_t(x)}_{\HS}}
 \le\frac{L\,\norm M_{\HS}}{\sqrt{1-t}}\,,
 \qquad 0\le t<1\,.
\]
Consequently, the Malliavin derivative of $\log\mathcal Z_K$ satisfies
\[
    \norm{D_s(\log\mathcal Z_K)}
 \le4L^2\sqrt n\,\norm{f'_K(\Sigma)}_{\op}\sqrt{1-s}\,,
 \qquad 0\le s<1\,.
\]
Gaussian log-Sobolev then gives
\[
 \KL\bigl((P_K)_B\bigm\Vert Q_B\bigr)
 \le\frac12\,\E_{P_K}\int_0^1\norm{D_s (\log\mathcal Z_K)}^2\,\dd s
 \le C L^{4K+4} K^2 n\,.
\]
Pinsker and \eqref{eq:fixed-truncated-Gaussian-TV} yield the claimed
improvement.
\end{remark}

The following lemma shows that the routine $\widehat \Sigma(\cdot)$ can be constructed by composing routines for the Jacobian-vector products.

\begin{lemma}[Approximate Jacobian-vector products]
\label{lem:approximate-derivatives}
Suppose $\Phi^\circ:\R^m\to\R^n$ is $L$-Lipschitz. At almost every $g$, suppose that the randomized routines $\mathfrak J_g$ and
$\mathfrak J_g^\T$ are odd and $L$-Lipschitz almost surely, and satisfy
\begin{align*}
 \norm{\E\mathfrak J_g(v)-D\Phi^\circ(g)v}
 &\le\varepsilon L\,\norm v\,,
 \qquad \norm{\E\mathfrak J_g^\T(w)-D\Phi^\circ(g)^\T w}
 \le\varepsilon L\,\norm w\,.
\end{align*}
These inequalities hold for all $v\in\R^m$ and $w\in\R^n$.
Draw
$T\sim\mathsf{Unif}[0,1]$ independently of $B$. Conditional on $(T,B_T)$, draw $G_1,G_2$ independently from $\cN(B_T,(1-T)\Id_m)$, with the pair $(G_1,G_2)$ conditionally independent of the full path $B$. Set
\[
 \widehat\Sigma(w)\deq
 \mathfrak J_{G_1}\bigl(\mathfrak J_{G_2}^\T(w)\bigr)\,.
\]
Use independent random seeds for the two maps. Then, the hypotheses on
$\widehat\Sigma$ hold with $2\varepsilon$ in
\eqref{eq:robust-action-bias}.
\end{lemma}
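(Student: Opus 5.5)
The plan is to verify the two hypotheses on $\widehat\Sigma$ separately: (a) is structural, and (b) comes from a conditioning argument. Throughout we condition on the Brownian path $B$. Since $G_1,G_2$ have nondegenerate Gaussian laws given $(T,B_T)$ (for $T<1$, which holds a.s.), they almost surely land at points where $\Phi^\circ$ is differentiable and where the routines $\mathfrak J_{G_1}$, $\mathfrak J^\T_{G_2}$ have the stated properties. At such points we also have $\norm{D\Phi^\circ(g)}_{\op}\le L$, by the Lipschitz bound.

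For (a), a composition of odd maps is odd, and a composition of an $L$-Lipschitz map with an $L$-Lipschitz map is $L^2$-Lipschitz. So $\widehat\Sigma$ is almost surely odd and $L^2$-Lipschitz. Oddness also gives $\mathfrak J^\T_{G_2}(0)=0$, and hence $\norm{\mathfrak J^\T_{G_2}(w)}\le L\norm w$, which we will need below.

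For the unbiasedness part, the first step is to identify the ideal quantity. Write $G_i=B_T+\sqrt{1-T}\,G_i'$ with $G_1',G_2'$ i.i.d.\ standard Gaussians, independent of $B$ and $T$. By the formula $A_t=\E_{G'}D\Phi^\circ(B_t+\sqrt{1-t}\,G')$ recalled in \cref{sec:reusable-correction}, we get $\E[D\Phi^\circ(G_i)\mid T,B]=A_T$. The pair $(G_1,G_2)$ is conditionally independent given $(T,B)$, so $\E[D\Phi^\circ(G_1)D\Phi^\circ(G_2)^\T\mid T,B]=A_TA_T^\T$. Averaging over $T\sim\mathsf{Unif}[0,1]$ then gives $\Sigma$.

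Next we bound the error from the nonlinear routines. Fix $w$ and condition on $(T,B,G_1,G_2)$ together with the seed of $\mathfrak J^\T_{G_2}$, and set $u\deq\mathfrak J^\T_{G_2}(w)$. The seed of $\mathfrak J_{G_1}$ is independent of $u$, so averaging over it gives $D\Phi^\circ(G_1)u+r_1$ with $\norm{r_1}\le\varepsilon L\norm u\le\varepsilon L^2\norm w$. Now average over the seed of $\mathfrak J^\T_{G_2}$. Since $D\Phi^\circ(G_1)$ is a fixed linear map at this stage, the expectation passes inside, giving
\[
D\Phi^\circ(G_1)\bigl(D\Phi^\circ(G_2)^\T w+r_2\bigr)+\E r_1,\qquad \norm{r_2}\le\varepsilon L\norm w .
\]
Combining with $\norm{D\Phi^\circ(G_1)}_{\op}\le L$, the conditional error relative to $D\Phi^\circ(G_1)D\Phi^\circ(G_2)^\T w$ is at most $2\varepsilon L^2\norm w$. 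Finally, we average over $(G_1,G_2,T)$ given $B$. By Jensen (convexity of the norm) and the identification in the previous paragraph, this yields $\norm{\E[\widehat\Sigma(w)\mid B]-\Sigma w}\le2\varepsilon L^2\norm w$, which is \eqref{eq:robust-action-bias} with $2\varepsilon$.

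The main delicate point is the order of conditioning in the error step. The routines are nonlinear, so we cannot simply take expectations of both factors independently. Instead we first integrate the outer routine's seed, with the input $u$ frozen, and only then use the linearity of $D\Phi^\circ(G_1)$ to integrate the inner seed. We also need the independence statements to hold conditionally on the whole path $B$, and not only on $B_T$. This is guaranteed because $(G_1,G_2)$ is drawn conditionally independent of $B$ given $(T,B_T)$, and the seeds are fresh.
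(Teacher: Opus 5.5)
Your proposal is correct and follows the same route as the paper. The paper's proof is just the composition argument for oddness and the $L^2$-Lipschitz bound, plus a triangle inequality for the bias. You spell out the conditioning that the paper leaves implicit: you integrate the outer seed with $u=\mathfrak J^\T_{G_2}(w)$ frozen and use $\norm u\le L\norm w$ from oddness, then pass the inner seed's expectation through the linear map $D\Phi^\circ(G_1)$. You also identify $\E[D\Phi^\circ(G_1)D\Phi^\circ(G_2)^\T\mid B]=\Sigma$ via the conditional independence of $G_1,G_2$ given $(T,B)$.
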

\begin{proof}
Clearly, $\widehat \Sigma$ is odd and $L^2$-Lipschitz.
To verify~\eqref{eq:robust-action-bias}, use the triangle inequality.
\end{proof}

In particular, we can now verify the guarantee for the simpler gadget in \cref{thm:reusable-correction}.

\begin{proof}[Proof of \cref{thm:reusable-correction}]
Apply the bounds in
\cref{thm:robust-fluctuation} with
$\widehat\Phi=\Phi^\circ=\Phi$, $\varepsilon=0$,
and the exact routines from \cref{lem:approximate-derivatives}.
The algorithm is then precisely
\cref{alg:reusable-fluctuation}. Choose $p=\lceil C\mathfrak L\rceil$
and $K=\lceil C\mathfrak L\rceil$. Since $nL^4 \le c\clipB_{\rm fluc}$
and $L^2\le1/8$,
\begin{equation*}
 nL^{2K+2}=(nL^4)\,(L^2)^{K-1}
 \le c\clipB_{\rm fluc}\,8^{-(K-1)}\le c\delta\,.
\end{equation*}
This controls the truncation error. The error terms sum to at most $\delta$ after adjusting the
constants.
\end{proof}

In \cref{sec:gradient-only}, we develop an alternative construction of
the fluctuation correction that avoids the use of Jacobian-vector products.

\subsection{Gadget~\ref{gadget:mean}: Mean correction}

Here, we prove the guarantee for \cref{gadget:mean}.

\begin{proof}[Proof of \cref{prop:reusable-mean}]
    Let $Q=\cN(\mu,\Id_m)$, $W \deq \langle \Delta, Y-Y'\rangle$, and $w(Y) \deq \E[W \mid Y] = \langle \E \Delta, Y-\mu\rangle$, where $Y'\sim Q$ and $\Delta$ are independent of $Y$.
    As shown in \cref{sec:reusable-correction}, the normalized tilt $Q_w$ is the desired law
$\cN(\mu+\E\Delta,\Id_m)$.

Under $Q$, the difference
$Y-Y'$ is a centered Gaussian of covariance $2\Id_m$. Under $Q_w$, it
has mean $\E\Delta$ and covariance $2\Id_m$. Under both laws,
$\Delta$ is independent of $Y-Y'$. Conditioning on $\Delta$ and using
Gaussian moments therefore give
\begin{equation*}
 \norm W_{L^p}
 \le C\sqrt p\,\norm{\Delta}_{L^p}
 +\norm{\E\Delta}\,\norm{\Delta}_{L^p}
 \le C\,(\sqrt p\,b+b^2)\,.
\end{equation*}
Gaussian integration gives
\[
 1\le Z\deq\E_Q\exp w(Y)
 =\exp\Bigl(\frac12\,\norm{\E\Delta}^2\Bigr)<\infty\,,
\]
which verifies the normalizing constant hypothesis of \cref{lem:clipping}.
Applying \cref{lem:clipping} with $\eps_{\rm bias}=0$ shows that
clipping changes the output law in TV by at most
\begin{equation*}
 C \clipB_{\rm mean}\,
 \Bigl(\frac{C\,(\sqrt p\,b+b^2)}{\clipB_{\rm mean}}\Bigr)^p
 \le\delta\,,
\end{equation*}
with the stated choices of constants.

By \cref{lem:reusable-FORS}, the expected numbers of proposal and tilt estimator
calls are at most $\exp(2\clipB_{\rm mean})$ and
$2\clipB_{\rm mean}\exp(2\clipB_{\rm mean})$, respectively. Each proposal call uses
one draw from $\widehat Q$, while each tilt estimator call uses one further
draw from $\widehat Q$ and one draw of $\Delta$. Since
$\clipB_{\rm mean}\le1$, the stated call bounds follow.

For approximate proposals, maximally couple every requested draw from
$\widehat Q$ to a draw from $Q$. Until the first coupling failure, the
exact and approximate algorithms make the same requests and the same
acceptance decisions. The expected number of requested Gaussian draws
is bounded by a universal constant, so a union bound gives coupling
failure probability at most $C\varepsilon$. This proves the claimed
output TV bound.
\end{proof}

\subsection{Composition and block evaluation}\label{sec:reusable-proofs}

The map $\Phi$ may represent an entire computation driven by a Gaussian
input. After correcting its fluctuations, we use independent copies of
the output as the Gaussian draws requested by the mean gadget. The
estimator of the missing mean is supplied separately.

\begin{corollary}[Composition of fluctuation and mean correction]
\label{cor:block-correction}
Fix $0<\delta\le1/2$, $p\ge C\log(2/\delta)$, and
$\clipB_{\rm fluc},\clipB_{\rm mean}\in(0,1]$. Assume the hypotheses of
\cref{thm:robust-fluctuation} at series order $K$, with the right-hand
side of \eqref{eq:robust-fluctuation-error} at most $c\delta$.
Suppose a sampler returns independent copies of $\Delta\in\R^n$,
independently of all fluctuation corrector randomness, and $\norm{\Delta}_{L^p}\le b$,
 $\sqrt p\,b+b^2\le c\clipB_{\rm mean}$.
Take the fluctuation corrector to be $\widehat Q$ in
\cref{alg:reusable-mean}. The output law $\widehat P$ then satisfies
\[
 \TV\bigl(\widehat P,
 \cN(\E\Phi(G)+\E\Delta,\Id_n)\bigr)\le\delta\,.
\]
The expected counts of primary proposals, $\widehat\Sigma(\cdot)$ evaluations, and
$\Delta$ samples are at most
\[
 1+C\,(\clipB_{\rm fluc}+\clipB_{\rm mean})\,,\qquad
 C\clipB_{\rm fluc}K\,,\qquad C\clipB_{\rm mean}\,,
\]
respectively.
If $\widehat\Sigma$ is implemented using \cref{lem:approximate-derivatives}, the expected number of evaluations of the forward and transpose Jacobian-vector routines is also at most $C\clipB_{\rm fluc}K$.
\end{corollary}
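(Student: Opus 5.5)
The plan is to treat the output of the fluctuation corrector as the approximate Gaussian sampler $\widehat Q$ required by \cref{prop:reusable-mean}, with unknown mean $\mu\deq\E\Phi(G)$, and then add up the two TV errors and the call counts.

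\emph{Step 1: the fluctuation stage.} By \cref{thm:robust-fluctuation}, each independent invocation of the fluctuation corrector returns a draw whose law $\widehat\nu$ satisfies $\TV(\widehat\nu,\cN(\E\Phi(G),\Id_n))\le c\delta$. The right-hand side of \eqref{eq:robust-fluctuation-error} is at most $c\delta$ by hypothesis. Every invocation uses fresh randomness, namely a new Gaussian input, Brownian path, $Z,Z'$ and seeds for $\widehat\Sigma$. Hence repeated calls are i.i.d.\ draws from $\widehat\nu$. They are also independent of the $\Delta$ sampler, as the statement assumes. This is exactly the access model of \cref{gadget:mean} with $m=n$ and $\varepsilon=c\delta$.

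\emph{Step 2: the mean stage.} I would apply \cref{prop:reusable-mean} with accuracy $\delta/2$. This requires $p\ge C\log(4/\delta)$. The hypothesis $p\ge C\log(2/\delta)$ covers this after enlarging $C$, since $\delta\le1/2$. The lemma is stated with $p$ equal to a ceiling, but its proof only uses $\norm\Delta_{L^p}\le b$ together with Gaussian moments at order $p$. Larger $p$ only makes the clipping term $\clipB_{\rm mean}(C(\sqrt p\,b+b^2)/\clipB_{\rm mean})^p$ smaller once $\sqrt p\,b+b^2\le c\clipB_{\rm mean}$ with $c$ small. So I would either rerun that argument at the given $p$, or pass to a smaller $p'$ via $\norm\Delta_{L^{p'}}\le\norm\Delta_{L^p}$, noting that $\sqrt{p'}\,b\le\sqrt p\,b$. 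Either route gives
\[
 \TV\bigl(\widehat P,\cN(\E\Phi(G)+\E\Delta,\Id_n)\bigr)\le\delta/2+C'c\delta\le\delta,
\]
after choosing the universal $c$ in the hypothesis small compared with the constant $C'$ of the mean gadget. This step is the main subtlety: the constants must be nested so that the TV error $c\delta$ of the fluctuation corrector, amplified by the coupling union bound in the proof of \cref{prop:reusable-mean}, stays below $\delta/2$. The maximal-coupling argument applies as is, because the expected number of $\widehat Q$ requests is a universal constant.

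\emph{Step 3: counting.} The mean corrector makes at most $1+C\clipB_{\rm mean}$ calls to $\widehat Q$ and $C\clipB_{\rm mean}$ calls to the $\Delta$ sampler in expectation. Each $\widehat Q$ call is one fluctuation-correction run. By \cref{thm:robust-fluctuation}, such a run costs at most $1+C\clipB_{\rm fluc}$ primary proposals and $C\clipB_{\rm fluc}K$ evaluations of $\widehat\Sigma$ in expectation. The internal randomness of a run is independent of how many runs are requested, so Wald's identity multiplies these expectations. This gives $(1+C\clipB_{\rm mean})(1+C\clipB_{\rm fluc})\le1+C(\clipB_{\rm fluc}+\clipB_{\rm mean})$ primary proposals, using $\clipB\le1$. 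It also gives $(1+C\clipB_{\rm mean})\,C\clipB_{\rm fluc}K\le C\clipB_{\rm fluc}K$ evaluations of $\widehat\Sigma$. Finally, under \cref{lem:approximate-derivatives} each $\widehat\Sigma$ evaluation uses one forward and one transpose routine, so the Jacobian-vector count carries the same bound.
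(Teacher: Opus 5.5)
Your proposal is correct and follows the paper's proof. As in the paper, you feed the fluctuation corrector's $c\delta$ TV error into \cref{prop:reusable-mean} at a smaller constant multiple of $\delta$ (its required moment order is at most $p$), absorb $C'c\delta$ by taking $c$ small, and multiply the expected call counts, absorbing the product of clipping levels since both are at most one. One correction to your justification of that multiplication: the number of fluctuation runs is \emph{not} independent of their internal randomness, because the mean-FORS acceptance depends on the returned outputs. What holds is that the decision to launch each run depends only on the history before it, which is independent of that run's fresh randomness, so the stopping-time form of Wald's identity gives the same product bound.
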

\begin{proof}
By \cref{thm:robust-fluctuation}, the fluctuation corrector has
TV error at most $c\delta$. Apply \cref{prop:reusable-mean} at a
sufficiently small constant multiple of $\delta$. Its required moment
order is at most $p$, so the stated assumptions give total TV error at
most $\delta$ after adjusting the universal constants.

The mean gadget uses at most $1+C\clipB_{\rm mean}$ expected calls to the
fluctuation gadget. Multiplying this count by the
$1+C\clipB_{\rm fluc}$ primary calls per fluctuation output gives the first
bound; the product term is absorbed since both clipping levels are at
most one. The same argument gives the $\widehat\Sigma(\cdot)$ evaluation count,
and \cref{prop:reusable-mean} gives the mean replica count.
Under the implementation of \cref{lem:approximate-derivatives}, each $\widehat\Sigma$ evaluation uses one forward and one transpose routine evaluation, giving the same bound up to a universal constant.
\end{proof}

\paragraph{Independent Gaussian blocks.}
Let $G\deq (G_1,\ldots,G_J)$ have independent standard Gaussian blocks,
let $\Phi_j$ be $L_j$-Lipschitz, and let $A_j$ be known linear maps
with common output space $\R^n$. Put
\[
 \Phi(G)\deq\sum_{j=1}^JA_j\Phi_j(G_j)\,,\qquad
 L^2\deq\sum_{j=1}^JL_j^2\,\norm{A_j}_{\op}^2\,.
\]
Cauchy--Schwarz gives $\Lip(\Phi)^2\le L^2$.
Suppose that a joint draw of $(\Delta_1,\ldots,\Delta_J)$ is independent
of the fluctuation sampler, with
$\norm{\Delta_j}_{L^p}\le b_{j,p}$ for $j\in\{1,\ldots,J\}$.
The $\Delta_j$ may be coupled. Then,
Minkowski's inequality gives
\[
 \Delta\deq\sum_{j=1}^JA_j\Delta_j\,,\qquad
 \norm{\Delta}_{L^p}\le
 b_p\deq\sum_{j=1}^J b_{j,p}\,\norm{A_j}_{\op}\,.
\]
The corrected target distribution is
\[
 \cN\biggl(\sum_{j=1}^JA_j\,\{\E\Phi_j(G_j)+\E\Delta_j\},\,\Id_n\biggr)\,.
\]
Then, \cref{cor:block-correction} applies, provided that $L^2 \le 1/8$ and
\begin{equation}\label{eq:fixed-node-scale}
    CL^{2K+2}n\le \delta\,, \qquad
 C\,\{L^2\,(\sqrt{np}+p)+L^4 n\}\leq \clipB_{\rm fluc}\,,\qquad
 C\,\{\sqrt p\,b_p+b_p^2\}\leq \clipB_{\rm mean}\,.
\end{equation}

\paragraph{Evaluating one block at a time.}
The block structure also reduces the number of block subroutine calls made by
the fluctuation correction, by subsampling the blocks to correct.
If $L=0$, then $\Phi$ is constant and the raw proposal $\Phi(G)+Z$ already has law $\cN(\E\Phi(G),\Id_n)$, so omit the fluctuation correction and apply the mean correction as needed. Thus, in the following construction, assume $L>0$.
Suppose first that we have exact evaluations of the $D\Phi_j$.
For each $i\in[J]$, write $G_i=B_{i,1}$ for an independent standard
Brownian motion $(B_{i,t})_{0\leq t\leq1}$, let
$\mathcal F_{i,t}\deq \sigma(B_{i,s}:0\leq s\leq t)$, and define
\begin{equation*}
 J_{i,t}\deq\E[D\Phi_i(G_i)\mid\mathcal F_{i,t}]
 =\E_{U_i}D\Phi_i\bigl(B_{i,t}+\sqrt{1-t}\,U_i\bigr)\,,
\end{equation*}
where $U_i$ is an independent standard Gaussian of the same dimension as
$G_i$. Let $B$ denote the collection of these Brownian paths.
Omit indices $j$ for which
$L_j\,\norm{A_j}_{\op}=0$, draw $\mathcal J$ with probabilities $\Pp\{\mathcal J=j\}=L_j^2\,\norm{A_j}_{\op}^2/L^2$,
and draw
$T\sim\mathsf{Unif}[0,1]$ and
independent standard Gaussians $U_{\mathcal J}^{(1)}$, $U_{\mathcal J}^{(2)}$.
Then, we use
\begin{equation*}
 \widehat\Sigma(v)\deq\frac{L^2}{L_{\mathcal J}^2\,\norm{A_{\mathcal J}}_{\op}^2}\,
 A_{\mathcal J}\,D\Phi_{\mathcal J}(B_{\mathcal J,T} + \sqrt{1-T}\,U_{\mathcal J}^{(1)})\,D\Phi_{\mathcal J}(B_{\mathcal J,T} + \sqrt{1-T}\,U_{\mathcal J}^{(2)})^\T\, A_{\mathcal J}^\T\, v\,.
\end{equation*}
Then, we have
\begin{equation}\label{eq:fixed-random-S-properties}
 \E[\widehat\Sigma(v)\mid B]=\Sigma v\,,\qquad
 \Lip(\widehat\Sigma)\le L^2\,,\qquad
 \Sigma\deq\sum_{j=1}^J\int_0^1
 A_jJ_{j,t}J_{j,t}^\T A_j^\T\,\dd t\,.
\end{equation}
When exact Jacobian-vector products are unavailable,
we can again apply \cref{lem:approximate-derivatives}.
The key point is that each derivative action calls only the selected block.

Each tilt estimator for the fluctuation correction makes at most $CK$ Jacobian-vector product calls.
Consequently,
the expected number of such calls is at most $C\clipB_{\rm fluc}K$.

Poisson counts and rejected attempts have uniform exponential tails
for clipping levels at most one. For example, if $M$ is the number of tilt estimators used
in one attempt at level $\clipB\le1$, then for a sufficiently small fixed
$t>0$,
\[
    \E[\exp(tM)\,\mathbf1_{\{\text{rejection}\}}]
 \le1-\e^{-2}+\exp\{2\,(\e^t-1)\}-1<1\,.
\]
This bounds the total number
of tilt estimators by $C\,(u+1)$ with probability at least $1-\exp(-u)$.
Hence, the total number of Jacobian-vector products is at most
$CK\,(u+1)$ on the same event. For $K=O(\mathfrak L)$, this is at most
$C\mathfrak L^2$ when $u=O(\mathfrak L)$, while its expectation is at most
$C\clipB_{\rm fluc}\mathfrak L$.

\subsection{Gadget~\ref{gadget:hidden-RGO}: Hidden RGO sampler}\label{sec:hidden-RGO-proof}
We use the notation of \cref{alg:hidden-RGO}. Section~3.3 already
identifies the ideal center law \eqref{eq:hidden-RGO-center-law} and shows
that the final RGO draw has marginal $R_{\eta,y}$. It remains to control
the likelihood moments, clipping error, and oracle calls.

Fix $y,z\in\R^d$ and deterministic bounds $\varepsilon_0,L\ge0$ with
\begin{equation*}
 \norm{y-z}\le\varepsilon_0\,,
 \qquad
 \norm{g_{\eta-2\theta}(y)}+\norm{g_{\eta-2\theta}(z)}\le L\,.
\end{equation*}
All $L^p$ norms below average over the randomness generated by the
gadget, including the control gradient $g$. For $p\ge2$, put
\begin{equation*}
 \mathfrak b_p\deq C\,\bigl\{
 \sqrt{\theta p}\,(\varepsilon_0+\theta L)
 +\sqrt{\eta\theta}\,(\sqrt{dp}+p)\bigr\}\,.
\end{equation*}

\begin{proposition}[Hidden RGO likelihood moments]\label{prop:symmetric-moments}
Assume $0<\eta-2\theta\le1$ and $0<\theta\leq(\eta-2\theta)/4$.
For every $p\ge2$, under the proposal law,
\begin{equation}\label{eq:symmetric-proposal-moments}
 \norm W_{L^p}
 \leq\mathfrak b_p\,.
\end{equation}
If $\mathfrak b_2\leq c_0$, retain the original distribution of $g$ and,
conditionally on $g$, tilt the center proposal
$\cN(y-2\theta g,2\theta\Id)$ by $\exp\{-W_g\}$.
This gives the center law $q_y$ in \eqref{eq:hidden-RGO-center-law}.
Keeping the conditional law of the auxiliary draws given $(g,U)$ unchanged,
\begin{equation}\label{eq:symmetric-target-moments}
 \norm W_{L^p(q_y)}
 \leq C\mathfrak b_{2p}\,.
\end{equation}
\end{proposition}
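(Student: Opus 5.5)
We split $W=\langle\nabla V(X_+)+\nabla V(X_-)-2g,\Delta_U\rangle$ into a fluctuation part and a drift part, and bound each by Gaussian concentration conditionally on the inputs. Write $\eta'\deq\eta-2\theta\le1$. Conditional on $(g,U,U',S)$, the draws $X_\pm\sim R_{\eta',M_U\pm S\Delta_U}$ are laws with potential $V+\norm{\cdot-u}^2/(2\eta')$, which is $(1/\eta')$-strongly convex. By the log-Sobolev inequality for strongly log-concave measures, the $1$-Lipschitz map $x\mapsto\langle\nabla V(x),\Delta_U\rangle$ satisfies
\[
\norm{\langle\nabla V(X_\pm)-g_{\eta'}(M_U\pm S\Delta_U),\Delta_U\rangle}_{L^p}\le C\sqrt{\eta' p}\,\norm{\Delta_U}\,,
\]
using $\E_{R_{\eta',u}}\nabla V=g_{\eta'}(u)$. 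The same argument applies to $g=\nabla V(X_0)$ with $X_0\sim R_{\eta',z}$: we have $\norm{g-g_{\eta'}(z)}\le C\sqrt{\eta'}(\sqrt d+\sqrt p)$ in $L^p$. Thus
\[
W\approx\langle g_{\eta'}(M_U+S\Delta_U)+g_{\eta'}(M_U-S\Delta_U)-2g_{\eta'}(z),\Delta_U\rangle
\]
plus errors of size $\sqrt{\eta p}\,\norm{\Delta_U}$ and $\sqrt\eta(\sqrt d+\sqrt p)\norm{\Delta_U}$ in $L^p$.

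Next we control the deterministic drift. Since $g_{\eta'}$ is $1$-Lipschitz, the bracket is at most $2\norm{M_U-z}+2\norm{\Delta_U}$ in norm. Under the proposal, $U,U'$ are i.i.d.\ $\cN(y-2\theta g,2\theta\Id)$ given $g$. So $\Delta_U\sim\cN(0,\theta\Id)$ is independent of $M_U$, and $M_U-z=(y-z)-2\theta g+\cN(0,\theta\Id)$. Crucially, $\langle v,\Delta_U\rangle$ for $v$ independent of $\Delta_U$ has $L^p$ norm $\sqrt{\theta p}\norm v$, not $\sqrt{\theta d}\norm v$. Hence the terms $\langle 2(M_U-z),\Delta_U\rangle$ contribute $\sqrt{\theta p}\,\bigl(\varepsilon_0+\theta\norm g+\sqrt{\theta(d+p)}\bigr)$. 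Here $\norm g\le L+\sqrt\eta(\sqrt d+\sqrt p)$ in $L^p$. The remaining pieces are quadratic in $\Delta_U$ and of order $\theta(d+p)$. For the conditional fluctuation pieces, $\norm{\Delta_U}\lesssim\sqrt{\theta}(\sqrt d+\sqrt p)$, giving $\sqrt{\eta\theta}(\sqrt{dp}+p)$. Here I would refine rather than bound by $\norm{\Delta_U}$: the dependence of $g$ enters only through $\langle g_{\eta'}(z)-g,\Delta_U\rangle$, which again gains from independence of $\Delta_U$. Collecting terms, and using $\theta\le\eta$ and $\theta^2 L\ge0$ absorption, yields $\mathfrak b_p$, which proves \eqref{eq:symmetric-proposal-moments}.

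For \eqref{eq:symmetric-target-moments}, we pass to the tilt by a change of measure. The law $q_y$ times auxiliaries is the proposal reweighted by $\e^{-W_g(U)}/Z_g$, where $W_g(U)=\E[W\mid U,g]-W_g(U')$ up to averaging. Cauchy--Schwarz gives
\[
\norm W_{L^p(q_y)}\le\norm W_{L^{2p}}\,\bigl(\E\,\e^{-2W_g(U)}/Z_g^2\bigr)^{1/(2p)}\,.
\]
It therefore suffices to show that the density ratio has bounded second moment. By Jensen, $\e^{-2(W_g(U)-W_g(U'))}\le\E[\e^{-2W}\mid U,U',g]$. Its expectation is bounded by a constant once $\mathfrak b_2\le c_0$, using the sub-Gaussian/sub-exponential tails established above, i.e., the moment bounds for all $p$ give an exponential moment at small scale. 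A similar bound applies to $Z_g^{-1}$. \textbf{The main obstacle} is this exponential moment step: the bound $\mathfrak b_p$ grows like $p$ (sub-exponential), so I would verify $\E\e^{2\abs{W}}\le C$ via $\sum_p(2\mathfrak b_p)^p/p!$ with $\mathfrak b_p\le p\,\mathfrak b_2$-type scaling and smallness $c_0$. I would also handle the randomness of $g$ (whose law is retained) by conditioning on $g$ throughout.
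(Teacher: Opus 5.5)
\paragraph{The gap is in the proposal moment bound.}
Several of your terms are handled correctly, each using that $\Delta_U\sim\cN(0,\theta\Id)$ is independent of $(M_U,g)$:
\begin{itemize}
\item the two conditional fluctuation terms $\ip{\nabla V(X_\pm)-g_{\eta-2\theta}(M_U\pm S\Delta_U)}{\Delta_U}$;
\item the control error $2\ip{g_{\eta-2\theta}(z)-g}{\Delta_U}$;
\item the term $2\ip{g_{\eta-2\theta}(M_U)-g_{\eta-2\theta}(z)}{\Delta_U}$, which you wrote loosely as $\ip{2(M_U-z)}{\Delta_U}$.
\end{itemize}
The problem is the remaining piece
\[
 \mathcal R\deq\ip{g_{\eta-2\theta}(M_U+S\Delta_U)+g_{\eta-2\theta}(M_U-S\Delta_U)-2g_{\eta-2\theta}(M_U)}{\Delta_U}\,.
\]
You bound it by $2\norm{\Delta_U}^2$, i.e.\ by $\theta\,(d+p)$ in $L^p$, and this is not dominated by $\mathfrak b_p$.

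Since $\varepsilon_0$ and $L$ may be arbitrarily small, the only $d$-dependence left in $\mathfrak b_p$ is $\sqrt{\eta\theta}\,\sqrt{dp}$. The ratio $\theta d/(\sqrt{\eta\theta}\sqrt{dp})=\sqrt{\theta d/(\eta p)}$ is unbounded under the hypotheses (take $\theta=\eta/6$ and $d\gg p$). Invoking $\theta\le\eta$ only turns $\theta d$ into $\sqrt{\eta\theta}\,d$.

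This is not a constant loss. In the paper's regime $\theta_n/\eta_n\asymp\mathfrak L^{-3/2}(d+\mathfrak L)^{-3/10}$, so the extra term grows like a positive power of $d$. Your argument therefore could not deliver the requirement $\mathfrak b_p\le c\,\clipB_{\rm RGO}\le c$.

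\paragraph{The missing idea is a cancellation.}
For fixed $(M_U,S)$, the symmetric second difference is even in $\Delta_U$, so $\mathcal R$ is odd in $\Delta_U$ and hence centered. Its weak gradient in $\Delta_U$ has norm at most $4\norm{\Delta_U}$, because $g_{\eta-2\theta}$ is $1$-Lipschitz. The Gaussian $L^p$ Poincar\'e inequality then gives
\[
\norm{\mathcal R}_{L^p}\le C\sqrt{\theta p}\,\sqrt\theta\,(\sqrt d+\sqrt p)=C\theta\,(\sqrt{dp}+p)\le C\sqrt{\eta\theta}\,(\sqrt{dp}+p)\,.
\]
This is exactly why the estimator evaluates at the symmetric pair $M_U\pm S\Delta_U$.

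\paragraph{Your tilted-moment argument is a valid alternative route, once the first part is repaired.}
Write $\rho\deq\e^{-W_g(U)}/Z_g$ with $Z_g\deq\E[\e^{-W_g(U)}\mid g]$. The paper proceeds as follows:
\begin{itemize}
\item it bounds $\E[W_g(U')\mid g]-W_g(U)$ in $L^p$ separately, via Gaussian Poincar\'e with $\nabla W_g=g_{\eta-2\theta}-g$;
\item it uses $Z_g\ge\exp\{-\E[W_g(U')\mid g]\}$ to dominate $\rho$ pointwise.
\end{itemize}
Your route instead uses $Z_g^{-1}\le\E[\e^{W_g(U')}\mid g]$ and conditional Jensen twice to get
\[
\E\rho^2\le\E\exp\{2\,(W_g(U')-W_g(U))\}\le\E\,\e^{2W}\,.
\]
This reuses the proposal moments of $W$ and skips the separate estimate for $W_g$.

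Two points need fixing in that route:
\begin{itemize}
\item Mind the sign. Since $\E[W\mid U,U',g]=W_g(U')-W_g(U)$, Jensen gives $\e^{+2W}$, not $\e^{-2W}$.
\item The $Z_g^{-1}$ step, which you only gestured at, must be written out as above.
\end{itemize}

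This route also needs $\E\,\e^{2\abs W}\le C$, obtained from $\norm W_{L^k}\le\mathfrak b_k\le(k/2)\,\mathfrak b_2$ for all $k$. With your crude $\theta(d+p)$ term it would additionally require $\theta d\lesssim1$, which $\mathfrak b_2\le c_0$ does not provide. So the gap in the first part propagates to the second.
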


\begin{proof}
Conditionally on $g$,
\[
 M_U\sim\cN(y-2\theta g,\theta\Id)\,,
 \qquad
 \Delta_U\sim\cN(0,\theta\Id)\,,
 \qquad
 \Delta_U\perp(M_U,g)\,.
\]
Write
$\xi_\pm\deq\nabla V(X_\pm)
-g_{\eta-2\theta}(M_U\pm S\Delta_U)$. Then
\begin{align*}
 W
 ={}&2\,\ip{g_{\eta-2\theta}(M_U)-g}{\Delta_U}
 +\mathcal R+\ip{\xi_-+\xi_+}{\Delta_U}\,,\\
 \mathcal R
 \deq {}&\ip{g_{\eta-2\theta}(M_U-S\Delta_U)
 +g_{\eta-2\theta}(M_U+S\Delta_U)-2g_{\eta-2\theta}(M_U)}{\Delta_U}\,.
\end{align*}
For fixed $(M_U,S)$, the last map is odd in $\Delta_U$ and its Gaussian weak gradient is bounded by
$4\,\norm{\Delta_U}$ because $g_{\eta-2\theta}$ is $1$-Lipschitz. Gaussian $L^p$ Poincar\'e and
$\theta\leq\sqrt{\eta\theta}$ therefore give
\[
 \norm{\mathcal R}_{L^p}
 \leq C\sqrt{\eta\theta}\,(\sqrt{dp}+p)\,.
\]
Conditioning on $(M_U,g)$, $1$-Lipschitzness of $g_{\eta-2\theta}$ and $\nabla V$ gives
\begin{align*}
 \norm{\ip{g_{\eta-2\theta}(M_U)-g}{\Delta_U}}_{L^p}
 & \leq C\sqrt{\theta p}\,
 \norm{g_{\eta-2\theta}(M_U)-g}_{L^p} \\
 &\le C\sqrt{\theta p}\,\bigl\{\norm{g_{\eta-2\theta}(M_U) - g_{\eta-2\theta}(z)}_{L^p} + \norm{g_{\eta-2\theta}(z) - g}_{L^p}\bigr\} \\
 &\le C\sqrt{\theta p}\,\bigl\{\norm{M_U - z}_{L^p} + \norm{g_{\eta-2\theta}(z) - g}_{L^p}\bigr\}\,.
\end{align*}
Note that $g_{\eta-2\theta}(z)=\E\nabla V(X_0)=\E g$, since $X_0 \sim R_{\eta-2\theta,z}$. The Poincar\'e inequality and Lipschitz concentration give $\norm{g_{\eta-2\theta}(z)-g}_{L^p} = \norm{\nabla V(X_0) - \E\nabla V(X_0)}_{L^p} \le C\sqrt{\eta-2\theta}\,(\sqrt d + \sqrt p)$.
Thus,
\begin{align*}
 \norm{M_U-z}_{L^p}
 &\leq \norm{y-z}+2\theta\,\norm g_{L^p}
      +C\sqrt\theta\,(\sqrt d+\sqrt p)\\
 &\leq C\,\bigl\{
 \varepsilon_0+\theta L
 +\bigl(\sqrt\theta+\theta\sqrt{\eta-2\theta}\bigr)\,
   (\sqrt d+\sqrt p)\bigr\}\,.
\end{align*}
Combining these estimates gives
\begin{align*}
 \norm{\ip{g_{\eta-2\theta}(M_U)-g}{\Delta_U}}_{L^p}
 \leq \mathfrak b_p\,.
\end{align*}
Finally, conditionally on $(M_U,\Delta_U,S)$, the variables
$\xi_-$ and $\xi_+$ are independent and centered. The function $(x_-,x_+)\mapsto
 \ip{\nabla V(x_-)+\nabla V(x_+)}{\Delta_U}$
is $\sqrt2\,\norm{\Delta_U}$-Lipschitz on the product space. The conditional
law of $(X_-,X_+)$ is the product of
$R_{\eta-2\theta,M_U-S\Delta_U}$ and
$R_{\eta-2\theta,M_U+S\Delta_U}$, so Bakry--\'Emery concentration gives
\begin{equation*}
 \bigl(
 \E[
  \abs{\ip{\xi_-+\xi_+}{\Delta_U}}^p
  \mid M_U,\Delta_U,S
 ]
 \bigr)^{1/p}
 \leq C\sqrt{(\eta-2\theta)p}\,\norm{\Delta_U}\,.
\end{equation*}
Taking the $L^p$ norm over $(M_U,\Delta_U,S)$ therefore yields
\begin{equation*}
 \norm{\ip{\xi_++\xi_-}{\Delta_U}}_{L^p}
 \leq C\sqrt{\eta\theta}\,(\sqrt{dp}+p)\,.
\end{equation*}
These estimates prove \eqref{eq:symmetric-proposal-moments}.

Next, note that, conditionally on $g$, the variables
$U$ and $U'$ are independent with common law
$\cN(y-2\theta g,2\theta\Id)$. Therefore $\E[W_g(U')\mid g]=\E[W_g(U)\mid g]$.
Since $\nabla W_g=g_{\eta-2\theta}-g$, the Gaussian $L^p$ Poincar\'e
inequality, conditionally on $g$, gives
\begin{equation*}
 \bigl(
 \E\bigl[
  \abs{W_g(U)-\E[W_g(U)\mid g]}^p
  \bigm|g
 \bigr]
 \bigr)^{1/p}
 \leq C\sqrt{\theta p}\,
 \bigl(
 \E[
  \norm{g_{\eta-2\theta}(U)-g}^p
  \mid g
 ]
 \bigr)^{1/p}\,.
\end{equation*}
The same estimate as above, but with $U$ in place of $M_U$, yields
\begin{align*}
 \norm{\E[W_g(U')\mid g]-W_g(U)}_{L^p}
 &\leq  \mathfrak b_p\,.
\end{align*}
Since $\mathfrak b_p$ grows at most linearly with $p$, the condition $\mathfrak b_2 \le c_0$ implies $\E\exp(2\,\abs{\E[W_g(U')\mid g] - W_g(U)}) \le C$, provided that $c_0$ is sufficiently small.
Jensen's inequality yields
\begin{equation*}
 \E\bigl[
  \exp\{\E[W_g(U')\mid g]-W_g(U)\}
  \bigm|g
 \bigr]\geq1\,.
\end{equation*}
The Radon--Nikodym derivative of the
tilted joint law relative to the proposal joint law is consequently at
most $\exp\{\E[W_g(U')\mid g]-W_g(U)\}$.
So,
H\"older's inequality gives
\begin{align*}
 \norm W_{L^p(q_y)}
 &\leq
 \bigl(
  \E\bigl[
   |W|^p
   \exp\{|\E[W_g(U')\mid g]-W_g(U)|\}
  \bigr]
 \bigr)^{1/p}\\
 &\leq
 \norm W_{L^{2p}}\,
 \bigl(
  \E\exp\bigl(
   2\,|\E[W_g(U')\mid g]-W_g(U)|
  \bigr)
 \bigr)^{1/(2p)}
 \leq C\mathfrak b_{2p}\,.
\end{align*}
All expectations on the right-hand side are under the proposal law.
This proves \eqref{eq:symmetric-target-moments}.
\end{proof}

\begin{proof}[Proof of \cref{lem:hidden-RGO}]
After decreasing $c$, the condition in \cref{lem:hidden-RGO} ensures
that $\mathfrak b_2\le c_0$ and
$\mathfrak b_{2p}\le c_1 \clipB_{\rm RGO}$ for a sufficiently small universal
constant $c_1>0$.

Condition on the retained control gradient $g$. By
\eqref{eq:hidden-RGO-centered-likelihood}, after averaging over $U'$, the
conditional mean of the untruncated tilt estimator is
$\E[W_g(U')\mid g]-W_g(U)$, and its normalizing constant is at least one.
Apply the clipping comparison of \cref{lem:clipping} conditionally on
$g$ and integrate over its original law. The moment bounds
in \cref{prop:symmetric-moments} then bound
the clipping error by
\begin{equation*}
 C \clipB_{\rm RGO}\,\Bigl(\frac{C\mathfrak b_{2p}}{\clipB_{\rm RGO}}\Bigr)^p
 \le\delta\,.
\end{equation*}
\Cref{gadget:FORS} samples this clipped center tilt, and applying the same final RGO kernel cannot
increase TV distance.
This proves the finite accuracy claim.

By \cref{lem:reusable-FORS}, the expected numbers of center proposals and
tilt estimator calls are at most $C$ and $C\clipB_{\rm RGO}$, respectively.
Each proposal uses one cloud draw, while each tilt estimator uses one cloud
draw, two RGO calls, and two gradient evaluations. Including the initial
control RGO and gradient calls and the final RGO call gives the stated
counts.

For approximate RGO calls, couple each requested draw to an exact one
until the first mismatch. The probability of a mismatch is at most
$\varepsilon$ times the expected number of exact RGO calls, hence at
most $C\varepsilon$. Clipping still gives the same acceptance lower bound on every
implemented history, so the expected call bounds remain valid.
\end{proof}

%% file: 5_high_acc_sections/05_marginal_corrections.tex
\section{Applying the cloud corrections to Picard HMC}\label{sec:marginal}

We now apply \cref{gadget:fluctuation,gadget:mean} to the stochastic gradient
evaluations in Picard HMC. To focus on the main ideas, we assume access
to proximal and Hessian oracles for $V$ and defer the gradient-only
implementation to \cref{sec:gradient-only}.

\subsection{Fluctuation estimator}\label{sec:joint-derivatives}

To apply \cref{gadget:fluctuation}, we first verify the derivative bounds for
the gradient estimator.
The next cloud proposal in \cref{alg:fifth-phase} treats
$(\cloud^{[\ell]},G)$ as one standard Gaussian block after whitening. Since
\begin{equation*}
 \norm{\wh g_\eta(c;g)-\wh g_\eta(c';g')}
 \leq\norm{c-c'}+\sqrt\eta\,\norm{g-g'}\,,
\end{equation*}
the required derivative bounds readily follow.

\begin{lemma}[Jacobian-vector products for the gradient estimator]
\label{lem:joint-estimator-derivatives}
For fixed $y$, whiten $c\deq y+\sqrt\theta\,u$ and put
\[
 \Psi_y(u,g)\deq\wh g_\eta(c;g)\,,\qquad
 B_c\deq D\prox_{\eta V}(c)
 =\bigl(\Id+\eta\,\nabla^2V(\prox_{\eta V}(c))\bigr)^{-1}\,.
\]
At every differentiability point,
\begin{align}
 D\Psi_y(u,g)[r,s]
 &=\nabla^2V\bigl(\prox_{\eta V}(c)+\sqrt\eta\,g\bigr)\,
 \bigl(\sqrt\theta\,B_cr+\sqrt\eta\,s\bigr)\,,
 \label{eq:joint-forward}\\
 D\Psi_y(u,g)^\T w
 &=\bigl(
 \sqrt\theta\,B_c\nabla^2V\bigl(\prox_{\eta V}(c)+\sqrt\eta\,g\bigr)w,
 \sqrt\eta\,\nabla^2V\bigl(\prox_{\eta V}(c)+\sqrt\eta\,g\bigr)w
 \bigr)\,.
 \label{eq:joint-adjoint}
\end{align}
Both actions have operator norm at most $\sqrt{\theta+\eta}$ and use
$O(1)$ exact Hessian and proximal queries. They satisfy
\cref{lem:approximate-derivatives} with zero bias, uniformly in the hidden
iterate $y$.
\end{lemma}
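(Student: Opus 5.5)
The plan is a direct chain-rule computation followed by norm bounds. First, I would record the derivative of the proximal map. Since $\nabla^2 V\succeq 0$ under \eqref{eq:HVP-regularity}, $\prox_{\eta V}(c)$ is the unique solution $x$ of $x+\eta\nabla V(x)=c$. Because $V\in C^2$, the implicit function theorem gives $D\prox_{\eta V}(c)=(\Id+\eta\nabla^2V(x))^{-1}=B_c$. This matrix is symmetric with $0\preceq B_c\preceq\Id$.

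Next, I would differentiate the composite map. The estimator is $\Psi_y(u,g)=\nabla V(\prox_{\eta V}(y+\sqrt\theta u)+\sqrt\eta g)$. Its derivative in direction $(r,s)$ is $\nabla^2V(\cdot)$ evaluated at the shifted proximal point, applied to $\sqrt\theta B_c r+\sqrt\eta s$; this is \eqref{eq:joint-forward}. At points where $\nabla V$ is merely Lipschitz, the formula holds wherever the Hessian exists. At the remaining points the derivative is arbitrary, which is allowed by the gadget. The adjoint \eqref{eq:joint-adjoint} follows by transposing blockwise, using the symmetry of $B_c$ and of $\nabla^2V$.

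Then I would bound the operator norms. Write $H\deq\nabla^2V(\prox_{\eta V}(c)+\sqrt\eta g)$, which satisfies $\norm{H}_{\op}\le1$, and recall $\norm{B_c}_{\op}\le1$. By Cauchy--Schwarz,
\[
\norm{\sqrt\theta B_c r+\sqrt\eta s}\le\sqrt{\theta+\eta}\,\sqrt{\norm r^2+\norm s^2}\,.
\]
Hence the forward action has norm at most $\sqrt{\theta+\eta}$. The adjoint has the same norm, as can also be checked directly from \eqref{eq:joint-adjoint}.

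Finally, I would verify the query cost and the hypotheses of \cref{lem:approximate-derivatives}. Each action needs one proximal query to get $x=\prox_{\eta V}(c)$. It also needs Hessian-vector products at $x$ and at $x+\sqrt\eta g$. Applying $B_c$ requires solving the linear system $(\Id+\eta\nabla^2V(x))z=r$. If only Hessian-vector products at $x$ are available, this solve is where the $O(1)$ claim needs care. In that case I would read ``exact Hessian query'' as access to the matrix $\nabla^2V(x)$, or treat the solve as part of the proximal oracle. A Neumann series does not help here, since $\eta\le1$ only gives $\norm{\eta\nabla^2V}_{\op}\le 1$, which is not a contraction. I expect this bookkeeping to be the only real obstacle, and the gradient-only implementation is deferred to \cref{sec:gradient-only} anyway.

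The maps $\mathfrak J_g=D\Psi_y(g)$ and $\mathfrak J_g^\T$ are deterministic and linear, so they are odd. Their Lipschitz constant is $\sqrt{\theta+\eta}$, which serves as the $L$ of \cref{lem:approximate-derivatives}. They have zero bias with respect to $\Phi^\circ=\Psi_y$. Moreover, $y$ enters only through the evaluation point $c$, which the implementation receives as the cloud sample, so every bound is uniform in the hidden iterate $y$.
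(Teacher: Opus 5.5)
Your proposal is correct and follows the paper's proof. The paper likewise uses the chain rule, the symmetry of $B_c$ and of the Hessian for the adjoint, and the contraction bounds with Cauchy--Schwarz for the $\sqrt{\theta+\eta}$ operator norm. Your resolution of the $O(1)$ cost also matches the paper's convention: an exact Hessian query returns the full matrix. The paper explicitly inverts $\Id+\eta\nabla^2V(\prox_{\eta V}(\mathsf c))$ and later counts ``full Hessian queries,'' so the Neumann-series aside is not needed.
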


\begin{proof}
The chain rule and symmetry of
$\nabla^2V(\prox_{\eta V}(c)+\sqrt\eta\,g)$ and $B_c$ give
\eqref{eq:joint-forward}--\eqref{eq:joint-adjoint}. Their contraction
bounds and Cauchy--Schwarz give the operator norm bound.
\end{proof}

\paragraph{Fluctuation correction with an unknown endpoint.}
Here, we address a subtle algorithmic point.
The derivative evaluations in \cref{alg:reusable-fluctuation} take place
at correlated Gaussian points. Item~\textup{(i)} of that algorithm gives
a Brownian bridge sampling rule when the standardized endpoint $B_1$
is observed. To apply this to Picard HMC\@, we should take $B_1$ to be the standardized version of $(\cloud_0, G)$, where $\cloud_0$ is a cloud draw and $G$ is the Gaussian in the stochastic gradient estimator.
However,
$\cloud_0\sim\cN(y,\theta\Id)$, with $y$ \emph{hidden}. For $\theta>0$, the
corresponding standardized endpoint would include $(\cloud_0-y)/\sqrt\theta$, which cannot be evaluated.

We can instead sample the cloud points needed for the derivative evaluations
directly. For one evaluation at time $t\in[0,1]$, the desired point is
\[
 \widetilde\cloud_t
 \deq y+\sqrt\theta\,\bigl(B_t+\sqrt{1-t}\,G'\bigr)\,,
\]
where $(B_t)_{0\le t\le1}$ is a standard Brownian motion conditioned on
$B_1=(\cloud_0-y)/\sqrt \theta$, and $G'\sim\cN(0,\Id)$ is independent. For fixed $y$,
\[
 \Law(\widetilde\cloud_t\mid \cloud_0)
 =\cN\bigl(t\cloud_0+(1-t)y,\,\theta(1-t^2)\Id\bigr)\,.
\]
Although the mean contains $y$, an observable realization is
\[
 \cloud_t=t\cloud_0+(1-t)\cloud'+\sqrt{2\theta t(1-t)}\,Z\,,
 \qquad \cloud'\sim\cN(y,\theta\Id)\,,\quad Z\sim\cN(0,\Id)\,,
\]
where $\cloud'$ and $Z$ are independent of each other and of $\cloud_0$.
Indeed, its mean conditional on $\cloud_0$ is $t\cloud_0+(1-t)y$ and its conditional
covariance is $\theta\,\{(1-t)^2+2t\,(1-t)\}\,\Id=\theta\,(1-t^2)\,\Id$.

For several evaluations, the shared Brownian path also creates correlations
that must be preserved. The following construction samples their joint law
using additional cloud draws, without evaluating $y$ or the endpoint $(\cloud_0-y)/\sqrt \theta$.

\begin{lemma}[Hidden mean Gaussian batch]\label{lem:hidden-batch}
Let $k\geq1$, $\theta>0$, and $\cloud_0\sim\cN(y,\theta\Id)$, with $y$ fixed.
Fix the requested evaluation times
$t_1,\ldots,t_k\in[0,1]$ on a common
conditioned Brownian path. The target joint law is that of
\[
 \widetilde\cloud_i
 \deq y+\sqrt\theta\,\bigl(B_{t_i}+\sqrt{1-t_i}\,G_i'\bigr)\,,
 \qquad i\in[k]\,,
\]
conditional on $\cloud_0$, where $B_1=(\cloud_0-y)/\sqrt\theta$ and the
$G_i'\sim\cN(0,\Id)$, $i\in[k]$, are independent of each other and of
the path.
Put
\begin{equation*}
 K_{i,j}\deq 
 \begin{cases}
  1-t_i^2\,,&i=j\,,\\
  t_i \wedge t_j-t_it_j\,,&i\ne j\,,
 \end{cases}
 \qquad i,j\in[k]\,.
\end{equation*}
Draw
$\cloud_{{\rm fluc},1},\ldots,\cloud_{{\rm fluc},k}\sim\cN(y,\theta\Id)$ independently
of each other and of $\cloud_0$. Define their average by
$\overline\cloud_{\rm fluc}\deq k^{-1}\sum_{j\in[k]}\cloud_{{\rm fluc},j}$, and draw $Z\sim\cN(0,(K-(\mathbf 1-t)(\mathbf 1-t)^\T/k)\otimes\Id)$
independently. Then
\begin{equation}\label{eq:hidden-batch-sampler}
    \cloud_i=t_i\,\cloud_0+(1-t_i)\,\overline\cloud_{\rm fluc}+\sqrt\theta\,Z_i\,,
 \qquad i\in[k]\,,
\end{equation}
has the same joint law as $(\widetilde\cloud_i)_{i\in[k]}$ conditional on $\cloud_0$.
\end{lemma}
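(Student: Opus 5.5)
Both vectors are Gaussian conditional on $\cloud_0$, so I will show that their conditional means and covariances agree. Everything is a tensor product with $\Id_d$, so I work coordinatewise and suppress the $\Id$ factor. One preliminary point: the covariance in the statement must be positive semidefinite for $Z$ to exist. I will get this from the computation below, since the matrix $K-(\mathbf 1-t)(\mathbf 1-t)^\T/k$ will turn out to be the residual covariance of a genuine Gaussian vector.

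\emph{Target law.} Conditional on $B_1=b\deq(\cloud_0-y)/\sqrt\theta$, the bridge satisfies $B_{t_i}=t_ib+\beta_{t_i}$, where $\beta$ is a standard Brownian bridge with $\operatorname{Cov}(\beta_s,\beta_u)=s\wedge u-su$. The $G_i'$ are independent of $\beta$, so
\[
 \widetilde\cloud_i=y+t_i(\cloud_0-y)+\sqrt\theta\,\bigl(\beta_{t_i}+\sqrt{1-t_i}\,G_i'\bigr)\,.
\]
The conditional mean is $t_i\cloud_0+(1-t_i)y$. The conditional covariance is $\theta$ times the matrix with off-diagonal entries $t_i\wedge t_j-t_it_j$ and diagonal entries $t_i-t_i^2+(1-t_i)=1-t_i^2$. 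This matrix is exactly $K$. It is also PSD, being a sum of a bridge covariance and a diagonal matrix with nonnegative entries.

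\emph{Proposed law.} Conditional on $\cloud_0$, the average $\overline\cloud_{\rm fluc}$ is independent of $\cloud_0$ and of $Z$, with law $\cN(y,(\theta/k)\Id)$. Hence $\cloud_i$ has mean $t_i\cloud_0+(1-t_i)y$. Its covariance is
\[
 \theta\,(\mathbf 1-t)(\mathbf 1-t)^\T/k+\theta\,\bigl(K-(\mathbf 1-t)(\mathbf 1-t)^\T/k\bigr)=\theta K\,,
\]
because the shared term $(1-t_i)\overline\cloud_{\rm fluc}$ contributes rank one and $Z$ contributes the rest. Two Gaussians with equal means and covariances coincide, which would prove the claim.

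\emph{Main obstacle.} The step that needs care is showing that $K-(\mathbf 1-t)(\mathbf 1-t)^\T/k\succeq0$. My first attempt is to write $K=\Sigma_\beta+D$ with $D=\operatorname{diag}(1-t_i)$. I would then use $(1-t_i)(1-t_j)\le$ a Cauchy--Schwarz-type bound, via $(\mathbf 1-t)(\mathbf 1-t)^\T/k\preceq\operatorname{diag}((1-t_i)^2)$. This holds because for a rank-one $vv^\T$ we have $v^\T x\,x^\T v\le k\sum_i v_i^2x_i^2$. Since $(1-t_i)^2\le1-t_i$, it follows that $(\mathbf 1-t)(\mathbf 1-t)^\T/k\preceq D\preceq K$. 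This settles positive semidefiniteness and completes the argument.
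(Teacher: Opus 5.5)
Your proof is correct and follows the paper's argument: you match the conditional means and covariances of two jointly Gaussian vectors, and you obtain $K-(\mathbf 1-t)(\mathbf 1-t)^\T/k\succeq0$ from Cauchy--Schwarz combined with $K\succeq\operatorname{diag}(\mathbf 1-t)$. The one difference is cosmetic. You pass through $\operatorname{diag}((1-t_i)^2)$ and then use $(1-t_i)^2\le1-t_i$, whereas the paper applies a weighted Cauchy--Schwarz inequality to reach $\operatorname{diag}(\mathbf 1-t)$ directly. Also, your preliminary remark that positivity follows because the matrix is a residual covariance is not an argument on its own, but your final paragraph supplies the actual proof.
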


\begin{proof}
Let $\Sigma_{i,j}\deq t_i \wedge t_j -t_it_j$ for $i,j\in[k]$. Then, $\Sigma$ is the covariance matrix of the Brownian bridge
at the requested times. The independent $G_i'$ contribute
$\operatorname{diag}(\mathbf 1-t)$, so the target conditional covariance is $\theta K\otimes\Id$,
where $K=\Sigma+\operatorname{diag}(\mathbf 1-t)$. Weighted Cauchy--Schwarz gives
\[
    (x^\T (\mathbf 1-t))^2
    \leq k\sum_{i\in[k]} (1-t_i)\,x_i^2\,,
 \qquad x\in\R^k\,,
\]
so $K-(\mathbf 1-t)(\mathbf 1-t)^\T/k\succeq0$. Conditional on $\cloud_0$, the constructed vector in
\eqref{eq:hidden-batch-sampler} has mean
$(t_i\cloud_0+(1-t_i)y)_{i\in[k]}$. The cloud average contributes covariance
$\theta\,((\mathbf 1-t)(\mathbf 1-t)^\T/k)\otimes\Id$, and $Z$ supplies the remainder, giving
$\theta K\otimes\Id$. Joint
Gaussianity completes the proof.
\end{proof}

\paragraph{Batching an entire FORS attempt.}
For one fluctuation FORS attempt, first draw
$M\sim\mathsf{Poisson}(2\clipB_{\rm fluc})$ and all block indices and Brownian
evaluation times for its $M$ tilt estimators. These choices are independent
of the Brownian paths and can be sampled in advance. Collect all $k$
requested cloud evaluation points across the $M$ estimators into one list,
listing the two points of each sampled covariance action separately even
though their times coincide. If $k>0$, use the joint sampling construction
in \cref{lem:hidden-batch} for this entire list; for Picard HMC, use the stacked cloud in $\R^{Jd}$.
For the Gaussian label coordinates, use one common Brownian bridge
conditioned on the observed $G$, independently of the cloud construction,
and fresh independent Gaussians at each evaluation point.
Reuse the same sampled evaluation points when applying a given covariance
map to both $Z$ and $Z'$.

The whole collection then has the joint law prescribed by
\cref{alg:reusable-fluctuation}, including the correlations between
different tilt estimators. In the representation with the common Brownian
path, these estimators are independent conditional on the full proposal,
as required by \cref{alg:reusable-FORS}. The batch uses one fresh cloud per
requested point, so this joint construction preserves the cloud request
count. When $M=0$, no additional clouds are needed. Each new attempt uses
fresh randomness for its proposal and its entire batch.

\subsection{Mean estimator}\label{sec:gaussian-stein}

In our application of \cref{gadget:mean} to Picard HMC\@, we must produce an unbiased estimator of
\begin{align*}
    g_\eta(y) - \E \widehat g_\eta(\cloud; G)
    &= g_\eta(y) - \E \overline g_\eta(\cloud)\,,
\end{align*}
where $\cloud \sim \cN(y, \theta\Id)$ and $G \sim \cN(0, \Id)$ are independent, $y$ is crucially \emph{unknown}, and $\overline g_\eta(\mathsf c) \deq \E \widehat g_\eta(\mathsf c; G)$ for $\mathsf c\in\R^d$.
However, recall that $g_\eta$ itself is a conditional mean: $g_\eta(y) = \E_{X\sim R_{\eta,y}} \nabla V(X)$.
From the definition of $\widehat g_\eta$, we can therefore write this difference as
\begin{align*}
    &\E_{\cloud\sim \cN(y, \theta\Id)}\bigl[\E_{X\sim R_{\eta,y}} \nabla V(X) - \E_{G\sim \cN(0, \Id)}\nabla V(\prox_{\eta V}(\cloud) +\sqrt \eta\,G)\bigr] \\
    &\qquad = \E_{\cloud\sim \cN(y, \theta\Id)}\E_{G\sim \cN(0,\Id)}\bigl[\nabla V(\prox_{\eta V}(\cloud) + \sqrt\eta\, G)\, \{\rho_{\cloud}(G)-1\} \bigr]
\end{align*}
where, conditionally on $\cloud$,
\begin{align*}
    \log \rho_{\cloud}(G)
    ={}&-V\bigl(\prox_{\eta V}(\cloud)+\sqrt\eta\,G\bigr)
    +\sqrt\eta\,\ip{\nabla V(\prox_{\eta V}(\cloud))}{G}
    -\frac{\ip{\cloud-y}{G}}{\sqrt\eta} + \text{constant}\,,
\end{align*}
is the log-density ratio between the laws of $(X-\prox_{\eta V}(\cloud))/\sqrt \eta$ and $G$, and the constant can depend on $\cloud$.

This identity by itself does not furnish a suitable unbiased estimator, because the expression for $\log \rho_{\cloud}(G)$ depends on the unknown $y$, and also includes the log-normalizing constant which cannot be evaluated.
To deal with the second issue, it would be convenient to instead have the relative score $\nabla \log \rho_{\cloud}(G)$.
To make the relative score appear, we apply Gaussian integration by parts by writing the centered version of $\nabla V$ as a Gaussian divergence.

More precisely, for $\mathsf c\in\R^d$, put
\begin{equation*}
 F_{\mathsf c}(u)\deq \nabla V\bigl(\prox_{\eta V}(\mathsf c)+\sqrt\eta\,u\bigr)\,.
\end{equation*}
We seek $A_{\mathsf c}$ such that
$F_{\mathsf c}-\E_\gamma F_{\mathsf c}=\delta_\gamma A_{\mathsf c}$, where $\delta_\gamma$ is the
Gaussian divergence operator acting row-wise on matrix-valued maps:
\begin{equation*}
 (\delta_\gamma A)_i(u)
 \deq\sum_{j\in[d]}\bigl\{u_jA_{i,j}(u)-\partial_jA_{i,j}(u)\bigr\}\,,
 \qquad i\in[d]\,.
\end{equation*}
The solution can be written in terms of the Ornstein--Uhlenbeck (OU) generator
$\mathcal L_{\mathsf{OU}}\deq\Delta-u\cdot\nabla$, or via the corresponding semigroup ${(P_t)}_{t\ge 0}$, as
\begin{equation*}
    A_{\mathsf c}\deq D(-\mathcal L_{\mathsf{OU}})^{-1}(F_{\mathsf c}-\E_\gamma F_{\mathsf c}) =\int_0^\infty \exp(-t)\,P_tDF_{\mathsf c}\,\dd t\,.
\end{equation*}
In the following proposition, we develop an identity using this reasoning, but we perform an additional integration by parts to remove the dependence on the hidden $y$.

\begin{proposition}[Gaussian Stein identity]\label{thm:marginal-cloud}
Let $\cloud\sim\cN(y,\theta\Id)$ and
$X\sim R_{\eta,y}$ be independent, and put
$Z_{\cloud}\deq (X-\prox_{\eta V}(\cloud))/\sqrt\eta$.  Then
\begin{align}
 g_\eta(y)-\E\overline g_\eta(\cloud)
 =-\E\Bigl[\sqrt\eta\,
 A_{\cloud}(Z_{\cloud})\,
 \bigl\{\nabla V(X)-\nabla V(\prox_{\eta V}(\cloud))\bigr\}
 +\frac{\theta}{\sqrt\eta}
 \divg_{\cloud}A_{\cloud}(Z_{\cloud})
 \Bigr]\,,\label{eq:marginal-cloud-identity}
\end{align}
where $(\divg_{\cloud}A)_i\deq \sum_{j\in[d]}\partial_{\cloud_j}A_{i,j}$.  For $C^{1,1}$ potentials, the identity holds in
the weak Sobolev sense.
\end{proposition}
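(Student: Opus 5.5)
The plan is to prove \eqref{eq:marginal-cloud-identity} conditionally on $\cloud$. Fix $\mathsf c$ and write $p\deq\prox_{\eta V}(\mathsf c)$, so that $p+\eta\nabla V(p)=\mathsf c$. Both sides reduce to expectations of $F_{\mathsf c}$:
\begin{itemize}[leftmargin=2em]
\item since $X\sim R_{\eta,y}$ is independent of $\cloud$, we have $g_\eta(y)=\E\nabla V(X)=\E F_{\cloud}(Z_{\cloud})$;
\item by definition, $\overline g_\eta(\mathsf c)=\E_\gamma F_{\mathsf c}$.
\end{itemize}
Hence the left-hand side equals $\E\bigl[F_{\cloud}(Z_{\cloud})-\E_\gamma F_{\cloud}\bigr]$. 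The first step is the Malliavin/Stein representation $F_{\mathsf c}-\E_\gamma F_{\mathsf c}=\delta_\gamma A_{\mathsf c}$ with $A_{\mathsf c}=D(-\mathcal L_{\mathsf{OU}})^{-1}(F_{\mathsf c}-\E_\gamma F_{\mathsf c})$. This is the standard identity $\delta_\gamma D=-\mathcal L_{\mathsf{OU}}$, where $\delta_\gamma$ is the $L^2(\gamma)$ adjoint of $D$. The semigroup form $A_{\mathsf c}=\int_0^\infty\e^{-t}P_tDF_{\mathsf c}\,\dd t$ follows from $DP_t=\e^{-t}P_tD$. Since $DF_{\mathsf c}=\sqrt\eta\,\nabla^2V(p+\sqrt\eta\,\cdot)$, this formula also shows $\norm{A_{\mathsf c}}_{\op}\le\sqrt\eta$ uniformly.

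Next, write the law of $Z_{\mathsf c}=(X-p)/\sqrt\eta$ as $\rho_{\mathsf c}\,\gamma$. Its density is proportional to $\exp\{-V(p+\sqrt\eta u)-\norm{p+\sqrt\eta u-y}^2/(2\eta)\}$. Gaussian integration by parts (duality of $\delta_\gamma$ and $D$) gives
\[
\E\bigl[(\delta_\gamma A_{\mathsf c})(Z_{\mathsf c})\bigr]=\E_\gamma[\rho_{\mathsf c}\,\delta_\gamma A_{\mathsf c}]=\E\bigl[A_{\mathsf c}(Z_{\mathsf c})\,\nabla\log\rho_{\mathsf c}(Z_{\mathsf c})\bigr].
\]
The unknown normalizing constant disappears under the gradient. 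Computing the relative score and substituting $p-y=\mathsf c-y-\eta\nabla V(p)$ yields
\[
\nabla\log\rho_{\mathsf c}(u)=-\sqrt\eta\,\bigl\{\nabla V(p+\sqrt\eta u)-\nabla V(p)\bigr\}-\frac{\mathsf c-y}{\sqrt\eta}.
\]
Evaluated at $u=Z_{\mathsf c}$, the first term is $-\sqrt\eta\{\nabla V(X)-\nabla V(p)\}$. After averaging over $\cloud$, this produces exactly the first term of \eqref{eq:marginal-cloud-identity}.

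The remaining term, $-\eta^{-1/2}\E[A_{\cloud}(Z_{\cloud})(\cloud-y)]$, still involves the hidden $y$. It is removed by a second Gaussian integration by parts, this time in $\cloud\sim\cN(y,\theta\Id)$ with $X$ held fixed, using the fact that $X$ and $\cloud$ are independent. Stein's identity $\E[\varphi(\cloud)(\cloud-y)]=\theta\,\E[\divg\varphi(\cloud)]$, applied row-wise to $\varphi(\mathsf c)\deq A_{\mathsf c}((X-\prox_{\eta V}(\mathsf c))/\sqrt\eta)$, gives the term $-(\theta/\sqrt\eta)\,\E\,\divg_{\cloud}A_{\cloud}(Z_{\cloud})$. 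Here the divergence is the total derivative in $\mathsf c$, including the dependence through $Z_{\mathsf c}$ and the prox.

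The main obstacle is justification rather than algebra, and the two integration-by-parts steps carry all of it. Three things must be checked:
\begin{itemize}[leftmargin=2em]
\item $\rho_{\mathsf c}$ has Gaussian tails, since $V$ is convex and quadratically bounded and the $y$-term is quadratic, so $\rho_{\mathsf c}\in W^{1,2}(\gamma)$ and the boundary terms vanish;
\item $A_{\mathsf c}$ is bounded;
\item the map $\varphi$ is Lipschitz in $\mathsf c$, because $\prox_{\eta V}$ is $1$-Lipschitz and $A_{\mathsf c}$ is Lipschitz in its argument when $V\in C^3$, so Stein's lemma applies.
\end{itemize}
For a merely $C^{1,1}$ potential, I would first prove the identity for mollified potentials $V*\varphi_\epsilon$. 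I would then pass to the limit using the uniform Hessian bounds, dominated convergence for the first term, and weak convergence of $\divg_{\mathsf c}A$ for the second, which gives the identity in the weak Sobolev sense.
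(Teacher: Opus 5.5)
Your proposal is correct and follows the same route as the paper. Conditionally on $\cloud$, you write the left side as $\E[(\delta_\gamma A_{\cloud})(Z_{\cloud})]$ and integrate by parts against the relative density $\rho_{\cloud}$ to get $\E[A_{\cloud}(Z_{\cloud})\,\nabla\log\rho_{\cloud}(Z_{\cloud})]$. You then remove the hidden term $-(\cloud-y)/\sqrt\eta$ by a Stein integration by parts in $\cloud\sim\cN(y,\theta\Id)$ with $X$ fixed, and you handle the $C^{1,1}$ case by mollification.

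Your justification is more explicit than the paper's one-line remark, with one cosmetic slip: it is the law of $Z_{\mathsf c}$ that has Gaussian tails, whereas $\rho_{\mathsf c}$ itself is only bounded by $C\e^{c\norm u}$ (with $\mathsf c$-dependent constants). That bound still gives $\rho_{\mathsf c}\in W^{1,2}(\gamma)$, as you need.
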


\begin{proof}
    By the preceding argument and Gaussian integration by parts,
    \begin{align*}
        g_\eta(y) - \E\overline g_\eta(\cloud)
        &= \E[A_{\cloud}(G)\,\nabla \rho_{\cloud}(G)]
        = \E[A_{\cloud}(Z_{\cloud})\,\nabla \log \rho_{\cloud}(Z_{\cloud})] \\
        &=-\E\Bigl[
 A_{\cloud}(Z_{\cloud})\,\Bigl\{
 \sqrt\eta\,\bigl\{\nabla V(X)-\nabla V(\prox_{\eta V}(\cloud))\bigr\}
 +\frac{\cloud-y}{\sqrt\eta}
 \Bigr\}\Bigr]\,.
    \end{align*}
    For the last term, we now use Gaussian integration
by parts in $\cloud$, which turns the last term into
$\theta\,\E[\divg_{\cloud}A_{\cloud}(Z_{\cloud})]/\sqrt\eta$.  Mollification and Gaussian moments justify the
weak formulation.
\end{proof}

It remains to produce an unbiased estimator of the right-hand side of~\eqref{eq:marginal-cloud-identity}.
We first explain how to do so making use of exact Hessian and proximal queries.
Note that the semigroup representation shows that $\norm{A_{\mathsf c}(u)}_{\op}\leq\sqrt\eta$ for all $u$.

If
$T\sim\mathsf{Exp}(1)$ and $G\sim\cN(0, I)$ are independent, then
\begin{equation}\label{eq:OU-HVP}
 \widehat A_{\mathsf c}(u,v)
 \deq\sqrt\eta\,\nabla^2V\bigl(
 \prox_{\eta V}(\mathsf c)
 +\sqrt\eta\,\{\exp(-T)\,u+\sqrt{1-\exp(-2T)}\,G\}\bigr)\,v
\end{equation}
satisfies
\begin{equation}\label{eq:action-op-bound}
 \E\widehat A_{\mathsf c}(u,v)=A_{\mathsf c}(u)\,v\,,\qquad
 \norm{\widehat A_{\mathsf c}(u,v)}\leq\sqrt\eta\,\norm v\,.
\end{equation}
For the divergence term, put
\[
 B_{\mathsf c}\deq D\prox_{\eta V}(\mathsf c)
 =\bigl(\Id+\eta\,\nabla^2V(\prox_{\eta V}(\mathsf c))\bigr)^{-1}\,.
\]
This matrix is symmetric with \(0\preceq B_{\mathsf c}\preceq\Id\).  Put
\begin{equation*}
 q(t)\deq \frac{\e^{-t}\,(1-\e^{-t})}{\sqrt{1-\e^{-2t}}}\,,
 \qquad
 W_t\deq \exp(-t)\,x+(1-\exp(-t))\prox_{\eta V}(\mathsf c)
 +\sqrt\eta\sqrt{1-\exp(-2t)}\,G\,.
\end{equation*}
For a smooth mollification of \(V\), differentiating the OU representation
and then integrating by parts in \(G\) give
\begin{align}
 \divg_{\mathsf c}A_{\mathsf c}\Bigl(\frac{x-\prox_{\eta V}(\mathsf c)}{\sqrt\eta}\Bigr)
 &=\sqrt\eta\int_0^\infty \e^{-t}\,(1-\e^{-t})\,
 \E\bigl[
 \divg_w\{\nabla^2V(w)\,B_{\mathsf c}\}\big|_{w=W_t}
 \bigr]\,\dd t\notag\\
 &=\int_0^\infty q(t)\,
 \E[\nabla^2V(W_t)\,B_{\mathsf c}G]\,\dd t\,.
 \label{eq:divergence-formula}
\end{align}
The formula for \(V\in C^2\) follows by letting the mollification scale vanish: the Hessians are uniformly
bounded, the proximal points converge, and dominated convergence applies to the Gaussian
integral.

Therefore, an unbiased estimator for the divergence term can be produced as follows.
Let \(Z_q\deq \int_0^\infty q(t)\,\dd t = \tfrac{\pi}{2} - 1\), draw \(T\) with density \(q/Z_q\), and draw
\(G\sim\cN(0,\Id)\). Then, let
\begin{align*}
    \widehat{\mathcal D}_{\mathsf c}
    &\deq Z_q\,\nabla^2 V(W_T)\,B_{\mathsf c} G\,,
\end{align*}
where in the definition of $W_T$, we take $x = X \sim R_{\eta,y}$.

This estimator makes use of an exact draw from $R_{\eta,y}$, which is unavailable because $y$ is unknown; we address this issue via \cref{gadget:hidden-RGO}, developed in \cref{sec:hidden-RGO-proof}.
It also calls $\prox_{\eta V}$ and inverts the matrix $\Id+\eta\nabla^2 V(\prox_{\eta V}(\mathsf c))$ to obtain $B_{\mathsf c}$.
In \cref{sec:gradient-only}, we develop an alternative estimator which does not require these stronger oracles.

The following proposition records the moment bound for this estimator.

\begin{proposition}[Mean estimator]\label{prop:marginal-score}
Let $0\leq\theta\leq\eta\leq1$, with $\eta>0$. Given independent exact draws
$\cloud\sim\cN(y,\theta\Id)$ and $X\sim R_{\eta,y}$, the preceding estimators
produce a vector $\widehat\Delta(y)$ satisfying
\begin{equation*}
 \E\widehat\Delta(y)
 =g_\eta(y)-\E\overline g_\eta(\cloud)\,,
\end{equation*}
and, for $p\geq2$,
\begin{equation}\label{eq:marginal-score-size}
 \norm{\widehat\Delta(y)}_{L^p}
 \leq C\,\Bigl(\eta^{3/2}+\frac{\theta}{\sqrt\eta}\Bigr)\,
 (\sqrt d+\sqrt p)\,.
\end{equation}
Besides the two supplied draws, the estimator uses $O(1)$ gradient,
proximal, and Hessian queries.
\end{proposition}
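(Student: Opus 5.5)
We define the estimator by splitting the right-hand side of \eqref{eq:marginal-cloud-identity} into two terms and treating each separately. With $G,G'\sim\cN(0,\Id)$, $T\sim\mathsf{Exp}(1)$ and $T'\sim q/Z_q$ drawn independently of $(\cloud,X)$, we set
\[
 \widehat\Delta(y)\deq-\sqrt\eta\,\widehat A_{\cloud}\bigl(Z_{\cloud},\nabla V(X)-\nabla V(\prox_{\eta V}(\cloud))\bigr)-\frac{\theta}{\sqrt\eta}\,Z_q\,\nabla^2V(W_{T'})\,B_{\cloud}G'\,,
\]
where $W_{T'}$ is built with $x=X$ and $\mathsf c=\cloud$. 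Unbiasedness follows by conditioning. Given $(\cloud,X)$, the first property in \eqref{eq:action-op-bound} makes the first term average to $A_{\cloud}(Z_{\cloud})\{\nabla V(X)-\nabla V(\prox_{\eta V}(\cloud))\}$. By \eqref{eq:divergence-formula}, the second term averages to $\divg_{\cloud}A_{\cloud}(Z_{\cloud})$. Taking the outer expectation and applying \cref{thm:marginal-cloud} gives $\E\widehat\Delta(y)=g_\eta(y)-\E\overline g_\eta(\cloud)$. For the query count: the prox is computed once and reused, the two gradients and the two Hessians are one query each, and $B_{\cloud}$ needs one Hessian at the prox point. This is $O(1)$ queries in total.

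For the first term, the norm bound in \eqref{eq:action-op-bound} gives a bound of $\eta\,\norm{\nabla V(X)-\nabla V(\prox_{\eta V}(\cloud))}\le\eta\,\norm{X-\prox_{\eta V}(\cloud)}$. So we need $\norm{X-\prox_{\eta V}(\cloud)}_{L^p}\lesssim\sqrt\eta\,(\sqrt d+\sqrt p)+\eta^{-1/2}\theta\,(\ldots)$, and we split it as
\[
 X-\prox_{\eta V}(\cloud)=\bigl(X-\E X\bigr)+\bigl(\E_{R_{\eta,y}}X-\prox_{\eta V}(y)\bigr)+\bigl(\prox_{\eta V}(y)-\prox_{\eta V}(\cloud)\bigr)\,.
\]
The RGO $R_{\eta,y}$ is $\eta^{-1}$-strongly log-concave, so Bakry--\'Emery concentration bounds the first piece by $C\sqrt\eta\,(\sqrt d+\sqrt p)$. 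The prox is $1$-Lipschitz, so the third piece is at most $\norm{\cloud-y}$, which is $C\sqrt\theta\,(\sqrt d+\sqrt p)$ in $L^p$.

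The second piece is the distance between the mean and the mode of $R_{\eta,y}$. Strong log-concavity with parameter $\eta^{-1}$ bounds it by $C\sqrt{\eta d}$, and this is already enough. The resulting first-term bound is $\eta\{\sqrt\eta+\sqrt\theta\}(\sqrt d+\sqrt p)$. Since $\theta\le\eta$, this is $\lesssim\eta^{3/2}(\sqrt d+\sqrt p)$, which matches the target \eqref{eq:marginal-score-size}.

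For the second term, $\norm{\nabla^2V}_{\op}\le1$ and $\norm{B_{\cloud}}_{\op}\le1$, so conditionally it is bounded by $(\theta/\sqrt\eta)\,Z_q\,\norm{G'}$. Gaussian norm moments give $\norm{\norm{G'}}_{L^p}\le C(\sqrt d+\sqrt p)$, so this term is at most $C\,(\theta/\sqrt\eta)(\sqrt d+\sqrt p)$. Adding the two bounds by Minkowski's inequality gives \eqref{eq:marginal-score-size}.

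The main obstacle is rigor at two points rather than the estimates themselves. First, the identities \eqref{eq:OU-HVP} and \eqref{eq:divergence-formula} must be justified when $V$ is only $C^2$, with Hessians evaluated at random points. Second, we must check that the conditional unbiasedness survives exchanging the integrals over $T$, $G$ and $(\cloud,X)$. The plan is to work with a mollified potential, where every integrand is bounded by a polynomial in Gaussians, and then pass to the limit by dominated convergence, as indicated after \eqref{eq:divergence-formula}. Fubini is justified by the same integrability.
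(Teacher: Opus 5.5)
Your proposal is correct and follows the paper's proof: the same estimator (the OU Hessian action \eqref{eq:OU-HVP} for the first term and $\widehat{\mathcal D}_{\cloud}$ for the divergence term), unbiasedness via \cref{thm:marginal-cloud}, and the same contraction-plus-Gaussian-moment bounds. The only difference is cosmetic. You route $X-\prox_{\eta V}(\cloud)$ through $\E X$ and a mean-to-mode bound, whereas the paper bounds $\norm{X-\prox_{\eta V}(y)}_{L^p}$ directly by strong log-concavity about the mode; also state explicitly that the Gaussian $G'$ multiplying $B_{\cloud}$ is the same one inside $W_{T'}$, since \eqref{eq:divergence-formula} requires this and an independent copy would give mean zero.
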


\begin{proof}
Use the estimator \eqref{eq:OU-HVP} for the first term in
\eqref{eq:marginal-cloud-identity} and the estimator
$\widehat{\mathcal D}_{\cloud}$ above for the second, with fresh auxiliary seeds.
The preceding discussion and \cref{thm:marginal-cloud} give unbiasedness.
Lipschitzness of $\nabla V$, strong log-concavity of $\cN(y,\theta I)$ and $R_{\eta,y}$, and $\theta\leq\eta$ give
\begin{align*}
    \norm{\nabla V(X)-\nabla V(\prox_{\eta V}(\cloud))}_{L^p}
    &\le \norm{X-\prox_{\eta V}(y)}_{L^p} + \norm{\prox_{\eta V}(y) - \prox_{\eta V}(\cloud)}_{L^p} \\
    &\le C\sqrt \eta\,(\sqrt d + \sqrt p) + \norm{\cloud-y}_{L^p}
    \leq C\sqrt\eta\,(\sqrt d+\sqrt p)\,.
\end{align*}
By~\eqref{eq:action-op-bound}, this gives the first term in
\eqref{eq:marginal-score-size}.
Since $B_{\cloud}$ and $\nabla^2V(W_T)$ are contractions,
$\norm{\widehat{\mathcal D}}\leq Z_q\,\norm G$; the Gaussian moment bound
and the factor $\theta/\sqrt\eta$ give the second term.
\end{proof}

%% file: 5_high_acc_sections/06_recursive_assembly.tex
\section{Recursive assembly, accuracy, and complexity}\label{sec:recursive}

The correction laws, gradient estimator, and hidden mean sampling components
are now available. We assemble them into a cloud
transition, control its adaptive calls and implementation error, and
then propagate accuracy through the smoothing levels.

\subsection{Recursive cloud implementation and adaptive calls}
\label{sec:recursive-cloud}

In \cref{alg:fifth-phase}, drawing cloud samples at the next level and correcting them requires cloud samples at an earlier level, each of which requires another recursive computation.
This subsection describes those computations, bounds their total
query count, and controls their accumulated error.

\paragraph{How to read this subsection.}
We begin by explaining how the correction gadgets are applied to Picard HMC, and then we carefully count the number of recursive calls.
The \textbf{main idea} of this subsection is to track these calls via tree structures which consist of a mandatory predecessor chain $\cloud^{[\ell]} \to \cloud^{[\ell-1]} \to \cloud^{[\ell-2]} \to \cdots \to \cloud^{[0]}$, for which the recursive cloud calls cannot be avoided, together with side branches.
The side branches are only generated when a FORS Poisson count returns a non-zero value, and this occurs with probability at most $O(\clipB)$.
By ensuring that $\clipB$ is sufficiently small, we control the size of the tree and ensure that only polylog recursive cloud calls are necessary.
Formalizing this entails detailed bookkeeping, and the details below were largely written by GPT\@.
We include them for completeness, but they should be skipped on a first read.

\paragraph{One cloud sample generation.}
Fix a phase $n$ and condition on its starting position
$X_{nh}$ and refreshed momentum $P_{nh}^-$. Without loss of generality, let $n = 0$ in the stacked vectors
$X^{[\ell]}$ and $\cloud^{[\ell]}$, and note that we can obtain the same results by translating all times by $nh$, and set $\eta=\eta_n$ and
$\theta=\theta_n>0$. Thus, in this local argument, $X_{t_j}^{[\ell]}$ and
$\cloud_{t_j}^{[\ell]}$ denote the components at physical time $nh+t_j$.
A \emph{level-$\ell$ cloud request} asks for one
fresh sample with target law
\[
 \cN(X^{[\ell]},\theta\Id_{Jd})\,,\qquad
 X^{[\ell]}=(X_{t_j}^{[\ell]})_{j\in[J]}\,,\qquad
 \ell\in\{0,\ldots,K-1\}\,.
\]
A routine that needs a single-node draw from
$\cN(X_{t_j}^{[\ell]},\theta\Id_d)$, such as a mean cloud
$\cloud_{{\rm mean},t_j}^{[\ell]}$ or a cloud draw of the hidden RGO
sampler at node $j$, makes one level-$\ell$ request and uses the
$j$-th block of the returned sample. Under the exact cloud laws,
requests made with fresh seeds are independent conditional on the
fixed phase input $(X_{nh},P_{nh}^-)$, so this realizes the
independent per-node draws of \cref{alg:fifth-phase}.
At level zero, the center
$X^{[0]}=(X_{0}+t_jP_{0}^-)_{j\in[J]}$ is known, so the request is
answered by adding an independent $\cN(0,\theta\Id_{Jd})$ vector.
For $\ell\in\{0,\ldots,K-2\}$, a request at level $\ell+1$ is
answered as follows.
\begin{enumerate}[label=\textup{(\roman*)},leftmargin=2.4em]
\item Request one level-$\ell$ cloud $\cloud^{[\ell]}$. Use it in the noisy
Picard proposal of \cref{alg:fifth-phase}, with fresh gradient labels
and independent output Gaussian noise.
\item Apply the fluctuation and mean corrections of
\cref{cor:block-correction}. Whenever a correction needs another
level-$\ell$ cloud, make a new request using independent randomness.
Complete that recursive request before using its returned sample.
\item Return the accepted proposal. If a correction rejects a
proposal, generate the new samples required by its next attempt.
\end{enumerate}

Recall how these additional requests arise. First-order rejection
sampling (FORS), \cref{alg:reusable-FORS}, uses a \emph{clipping level}
$\clipB\in(0,1]$ to bound the absolute value of each random likelihood
estimate. An \emph{attempt} consists of a proposal, a count
$M\sim\mathsf{Poisson}(2\clipB)$, and $M$ such estimates used in the
acceptance product. Each estimate contributes one \emph{likelihood
factor}. When $M=0$, the acceptance
product is empty and equals one. In particular, no likelihood
estimates or additional samples are needed. We write
$\clipB_{\rm fluc},\clipB_{\rm mean},\clipB_{\rm RGO}\in(0,1]$ for the clipping
levels in the fluctuation, mean, and hidden RGO corrections.
The first two refer to interior clouds; write
$\clipB_{{\rm fluc},{\rm end}}$ and $\clipB_{{\rm mean},{\rm end}}$ for the clipping parameter in the
endpoint correction, which occurs only once per phase.

A mean factor uses a fresh
cloud $\cloud_{\rm mean}^{[\ell]}$ for the estimator in
\cref{prop:marginal-score}, and clouds $\cloud_{\rm RGO}^{[\ell]}$ for
the hidden RGO sampler in \cref{alg:hidden-RGO}. That sampler targets
$R_{\eta,y}$ at an unknown Picard center $y$, using cloud samples
around $y$ and RGO calls at centers which it can compute. We refer to
the latter as \emph{explicit RGO requests}.

\paragraph{Computation tree.}
Represent the original cloud request by a vertex, called the
\emph{root}. Whenever answering a request requires another cloud
sample, create a new vertex for that request and draw an arrow from
the requesting vertex to the new one. The requesting vertex is the
\emph{parent}, and the new vertex is its \emph{child}. Repeating this
rule produces a \emph{call tree}. Its arrows record requests; returned
samples travel in the opposite direction. Cloud levels decrease
along every arrow, and level-$0$ requests have no children. A
vertex together with all the requests below it is a \emph{subtree}.

The first level-$\ell$ request used to propose a level-$(\ell+1)$
sample is its \emph{mandatory predecessor}. Following these first
requests down to level $0$ gives a path, which we call a
\emph{mandatory chain}. All other cloud requests are additional
branches. For counting, replace each maximal mandatory chain by one
vertex and keep the arrows corresponding to additional requests.
This operation is called \emph{contraction}. A vertex of the resulting
\emph{contracted tree} represents the queries along a whole mandatory
chain; its children represent the additional chains it requests.
For example, with no additional requests, sampling a level-two cloud
uses the chain $\cloud^{[2]}\to \cloud^{[1]}\to \cloud^{[0]}$, which becomes one
vertex. A fresh level-$1$ cloud requested by its fluctuation
correction starts a second chain and hence a second contracted vertex.
\Cref{fig:adaptive-cloud-forest} illustrates this construction.

A collection of these trees is a \emph{forest}, with one root for
each original output request. For a complete phase, place
the endpoint correction request at the root and attach its requests
for level-$(K-1)$ clouds. Its mandatory chain contains at most $K$
correction levels, including the endpoint correction. Over $N$ phases
there are $N$ such roots. Attach each explicit RGO request to
the contracted vertex that makes it, without expanding its internal
computation into this forest. We call the forest \emph{adaptive}
because the actual requests and their inputs depend on earlier
random outcomes and acceptance decisions. Different roots need not
be independent.

\begin{figure}[!ht]
\centering
\begin{tikzpicture}[font=\small,
  call/.style={draw=blue!55!black,fill=blue!4,rounded corners=2pt,
    minimum width=16mm,minimum height=8mm,align=center},
  request/.style={draw=orange!70!black,fill=orange!7,rounded corners=2pt,
    minimum height=8mm,align=center},
  edge/.style={-{Stealth[length=1.8mm]},thick,draw=black!65},
  side/.style={edge,dashed,draw=blue!65!black}]
\node[font=\small\bfseries] at (2.2,1.1) {Cloud requests};
\node[call] (top) at (0,0) {$\cloud^{[\ell+1]}$};
\node[call] (pre) at (0,-1.25) {$\cloud^{[\ell]}$};
\node (dots) at (0,-2.25) {$\vdots$};
\node[call] (base) at (0,-3.15) {$\cloud^{[0]}$};
\draw[edge] (top)--(pre);
\draw[edge] (pre)--(dots);
\draw[edge] (dots)--(base);
\node[align=left,font=\footnotesize] at (-1.05,-1.95)
  {mandatory\\chain};
\node[call] (fluc) at (2.2,0) {$\cloud_{\rm fluc}^{[\ell]}$};
\node[call] (mean) at (2.2,-1.25) {$\cloud_{\rm mean}^{[\ell]}$};
\node[call] (hidden) at (4.3,-1.25) {$\cloud_{\rm RGO}^{[\ell]}$};
\draw[side] (top)--(fluc);
\draw[side] (top.south east)--(mean.north west);
\draw[side] (top.north)--(0,.66)--(4.3,.66)--(hidden.north);
\node[align=center,font=\footnotesize] at (3.55,-2.55)
  {Additional calls, only when requested;\\each has its own predecessor chain.};
\draw[edge] (5.25,-.85)--(6.25,-.85);
\node[font=\footnotesize,align=center] at (5.75,-.25)
  {contract\\chains};
\node[font=\small\bfseries] at (8.65,1.1) {One contracted tree};
\node[call] (root) at (8.4,0) {one root};
\node[call] (child1) at (7.1,-1.5) {child};
\node[call] (child2) at (9.2,-1.5) {child};
\node[call] (child3) at (7.1,-3.05) {child};
\draw[side] (root)--(child1);
\draw[side] (root)--(child2);
\draw[side] (child1)--(child3);
\node[request] (rgo) at (10.2,-3.05) {explicit\\RGO calls};
\draw[edge,draw=orange!70!black] (child2)--(rgo);
\end{tikzpicture}
\caption{Arrows point from a sampling request to the calls it needs.
Solid arrows on the left form the mandatory predecessor chain; dashed
arrows are additional recursive cloud requests. After contraction, every
blue vertex represents a whole mandatory chain. Orange boxes record RGO
requests, whose query counts are charged separately. Repeating the construction
for all output requests gives the adaptive forest.}
\label{fig:adaptive-cloud-forest}
\end{figure}

\paragraph{Parameters for recursive accounting.}
Fix $0<\varepsilon\leq1/4$ and put
$\mathfrak L\deq\log(\kappa d/\varepsilon)$, where $\varepsilon$ is the
final TV accuracy. We use $J=\lceil C_J\mathfrak L\rceil$ quadrature nodes and
$K=1+\lceil C_K\mathfrak L/\log\mathfrak L\rceil$ Picard layers.
Set
$K_{\rm ser}=\lceil C_{\rm ser}\mathfrak L\rceil$ for the number of
terms retained in the fluctuation likelihood series of
\cref{alg:reusable-fluctuation}. The constants $C_J,C_K,C_{\rm ser}$ are fixed
universal constants chosen sufficiently large for the accuracy
estimates below.

\paragraph{Query count.}
A \emph{reference query} is an evaluation
of $\nabla V$, an exact Hessian evaluation, or an exact
proximal evaluation.
The \emph{local query count} at a contracted vertex is the number of
reference queries along its mandatory chain, excluding queries made
inside additional cloud subtrees and explicit RGO samplers. Additional cloud samplers
are accounted for at their own vertices; RGO query counts are attached
separately in \cref{lem:RGO-forest}.

For stopping and error accounting, we also record the occurrences of
operations. A \emph{slot} is one place in this record allocated to a
reference query, a cloud or RGO request, a FORS attempt, or the
evaluation of a likelihood factor. An RGO request occupies
one slot, but its sampler can make many reference queries.

To count calls without conditioning on acceptance decisions, we use
an enlarged record. For each FORS invocation, keep drawing Poisson
counts until the first zero count, and list all operations that would
be needed if every earlier attempt were rejected. This list is a
\emph{potential schedule}. It includes every attempt that the actual
sampler can use, because a zero count always accepts. Apply the same
rule to nested FORS invocations and to the additional cloud requests.
The resulting tree contains every actual request and may contain
unused ones; this produces a \emph{dominating}
call tree.

A \emph{cap} is a deterministic upper limit on such a count. The
\emph{local cap} bounds the total potential slots at each contracted
vertex by $Q_{\rm loc}=C\mathfrak L^2$, excluding slots inside its additional
cloud subtrees and its RGO samplers. The probability that this local cap bound fails is bounded in
\eqref{eq:cloud-local-cap} below. A \emph{global cap} bounds the sum
of these slot counts over the forest by a deterministic number
$T_{\max}$. We impose the caps on the potential schedules: if a cap
would be exceeded, stop the entire sampler and return a fixed point
chosen in advance in its output space. This stopping rule also
bounds every executed slot. The term \emph{capped adaptive tree}
refers to the request tree with these local stopping rules. The next
two lemmas count it before the additional global stopping rule is
imposed; \cref{lem:adaptive-tv-comparison} bounds the contribution of stopping
to the TV error.

To count vertices, keep an ordered list of pending requests, initially
containing the roots. Remove the first vertex, reveal its children,
and append those children to the end of the list. This is
\emph{breadth-first exploration}. It is used only for counting.

\begin{lemma}[Size and query count of a capped adaptive tree]\label{lem:adaptive-tree}
Let $q\ge1$ be the number of root requests in the breadth-first
exploration described above. Let $\cF_v$ be the complete exploration history
after $v$ vertices have been processed, and let $A_v$ be the number of
pending vertices. When $A_v>0$, denote by $\zeta_{v+1}$ the number of new
child requests created by the next vertex, including potential
requests that may not be executed. Thus,
\[
 A_0=q\,,\qquad A_{v+1}=A_v-1+\zeta_{v+1}\,.
\]
Suppose that deterministic parameters
$m\ge1$ and $\lambda\ge0$ satisfy, on $\{A_v>0\}$,
\begin{equation}\label{eq:tree-exponential}
\E[\exp(s\zeta_{v+1})\mid\cF_v]
 \leq\exp\bigl\{\lambda\,(\e^{ms}-1)\bigr\}\,,
 \qquad 0\leq s\leq m^{-1}\,,\qquad m\lambda\leq\frac14\,.
\end{equation}
Let
$T\deq\inf\{v\ge0:A_v=0\}$ be the total number of explored vertices
until the pending list is empty, before any global cap is imposed. Then,
\begin{equation}\label{eq:tree-tail}
 \E T\leq\frac{4q}{3}\,,\qquad
 \Pp\{T>C\,(q+mu)\}\leq\exp(-u)\,,\qquad u\geq1\,.
\end{equation}
If the local query count at each vertex is at most $Q_{\rm loc}$, then
the total number of reference queries is at most
$Q_{\rm loc}T$. In particular, it exceeds
$CQ_{\rm loc}\,(q+mu)$ with probability at most $\exp(-u)$.
\end{lemma}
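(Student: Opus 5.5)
This is a branching-process (Galton--Watson with adapted offspring) argument, which I will run through an exponential supermartingale. The walk $A_v$ has increments $\zeta_{v+1}-1$. By \eqref{eq:tree-exponential} with $s\le m^{-1}$, the conditional mean of $\zeta_{v+1}$ is at most $m\lambda\le 1/4$. This gives a drift of at most $-3/4$ per step.

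For the expectation, consider the stopped process $M_v\deq A_{v\wedge T}+\tfrac34\,(v\wedge T)$. It is a nonnegative supermartingale, since $\E[\zeta_{v+1}\mid\cF_v]\le 1/4$ on $\{A_v>0\}$. Hence $\tfrac34\,\E[v\wedge T]\le \E M_v\le M_0=q$. Letting $v\to\infty$ and using monotone convergence gives $\E T\le 4q/3$, and in particular $T<\infty$ almost surely.

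For the tail, I fix $s=c_1/m$ with a small universal $c_1\le1$. Then $\lambda(\e^{ms}-1)\le \lambda\,ms\,\e^{c_1}\le \tfrac14\e^{c_1}s$. So on $\{A_v>0\}$,
\[
 \E[\exp\{s(\zeta_{v+1}-1)\}\mid\cF_v]\le\exp\{-s(1-\tfrac14\e^{c_1})\}\le\exp(-s/2)
\]
for $c_1$ small. Therefore $N_v\deq\exp\{s A_{v\wedge T}+\tfrac s2\,(v\wedge T)\}$ is a positive supermartingale with $N_0=\e^{sq}$. Since $A\ge0$, Fatou's lemma gives $\E\exp(sT/2)\le \e^{sq}$. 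Markov's inequality then yields
\[
 \Pp\{T>t\}\le\exp\{sq-st/2\}.
\]
Taking $t=2q+2mu/c_1$ makes the exponent equal to $-u$, which gives $\Pp\{T>C(q+mu)\}\le\e^{-u}$.

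The step needing the most care is that the moment generating function bound is only available for $s\le m^{-1}$, with drift control coming from $m\lambda\le1/4$. So the choice of $s$ must scale like $1/m$, and this is exactly what produces the $mu$ term in the tail. I also need \eqref{eq:tree-exponential} only on $\{A_v>0\}$. This is handled by freezing the process at $T$, since after $T$ the supermartingale increments are trivially zero.

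The query statement is immediate. Every reference query outside additional subtrees and RGO samplers is charged to exactly one contracted vertex, by the definition of the local query count. So the total is at most $Q_{\rm loc}T$, and the tail bound transfers directly from the one for $T$.
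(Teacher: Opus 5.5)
Your proof is correct and follows essentially the same route as the paper. The paper gets $\E[\zeta_{v+1}\mid\cF_v]\le m\lambda\le 1/4$ by differentiating at zero and then applies optional stopping (written out by hand via $T\wedge b$) to obtain $\E T\le 4q/3$. For the tail it uses an exponential supermartingale at $s\asymp 1/m$. Your version packages this as an MGF bound on $T$ through $\exp\{sA_{v\wedge T}+\tfrac s2(v\wedge T)\}$, whereas the paper iterates a bound on $\E[\exp(sA_v)\mathbf 1_{\{T>v\}}]$ with $s=1/(2m)$; this difference is only cosmetic.
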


\begin{proof}
Differentiating \eqref{eq:tree-exponential} at zero gives
$\E[\zeta_{v+1}\mid\cF_v]\le m\lambda\le1/4$ on $\{T>v\}$.
After the queue empties, set $A_v\deq 0$ and $\zeta_{v+1}\deq 0$ for $v\ge T$.
For each integer $b\ge1$, sum the queue increments until time
$T\wedge b$. Each processed vertex removes one pending request and
adds $\zeta_{v+1}$ children, so
\[
 A_{T\wedge b}
 =q-(T\wedge b)
   +\sum_{v=0}^{b-1}\mathbf1_{\{T>v\}}\zeta_{v+1}\,.
\]
The event $\{T>v\}$ is
$\cF_v$-measurable: the exploration history determines whether the
queue is still non-empty. Consequently,
\begin{align*}
 \E\sum_{v=0}^{b-1}\mathbf1_{\{T>v\}}\zeta_{v+1}
 &=\sum_{v=0}^{b-1}\E\bigl[
    \mathbf1_{\{T>v\}}\E[\zeta_{v+1}\mid\cF_v]\bigr]
 \leq\frac14\sum_{v=0}^{b-1}\Pp\{T>v\}
 =\frac14\,\E(T\wedge b)\,.
\end{align*}
The last equality follows from
$T\wedge b=\sum_{v=0}^{b-1}\mathbf1_{\{T>v\}}$.
Taking expectations and using
$A_{T\wedge b}\ge0$ therefore yields
\[
 0\leq\E A_{T\wedge b}
 \leq q-\frac34\,\E(T\wedge b)\,.
\]
Thus, $\E(T\wedge b)\leq4q/3$ for every $b$. Monotone convergence as
$b\to\infty$ gives $\E T\leq4q/3<\infty$, which also implies
$T<\infty$ almost surely.

For the tail, take $s=(2m)^{-1}$. Then,
\[
 s-\lambda\,(\e^{ms}-1)
 \geq\frac1m\,\Bigl\{\frac12-\frac{\e^{1/2}-1}{4}\Bigr\}
 \geq\frac1{3m}\,.
\]
Consequently,
\[
\E\bigl[\exp(sA_{v+1})\,\mathbf1_{\{T>v+1\}}\mid\cF_v\bigr]
\leq\exp\bigl(-\frac{1}{3m} + sA_v\bigr)\,\mathbf1_{\{T>v\}}\,.
\]
Iteration gives
$\Pp\{T>v\}\le\exp\{q/(2m)-v/(3m)\}$. This implies
\eqref{eq:tree-tail}.
\end{proof}

The next lemma counts the explicit RGO requests attached to
the vertices of this same forest. A mean correction likelihood factor invokes
the hidden RGO sampler. In
\cref{alg:hidden-RGO}, that sampler first calls an RGO at the supplied
reference center $z$, then uses two RGO draws to evaluate each of its
own likelihood factors, and finally calls an RGO at the accepted
center. These are the requests counted below. Their centers may
depend on all previous cloud and RGO outputs, and a single such
request can require many reference queries.

\begin{lemma}[RGO requests in an adaptive forest]\label{lem:RGO-forest}
Consider the $q$-root forest of \cref{lem:adaptive-tree}, writing
$m_{\rm child}$ for its parameter $m$ and
$\lambda_{\rm child}$ for its parameter $\lambda$. Suppose that the
local query count at each vertex is at most $Q_{\rm loc}$.
Let $M_v$ be the number of potential explicit RGO requests
attached to the $v$-th explored vertex, and set $M_v\deq 0$ after the
pending list is empty. Suppose that parameters $m_{\rm RGO}\ge1$ and
$\lambda_{\rm RGO}\ge0$ satisfy
\begin{equation}\label{eq:RGO-request-mgf}
\E[\exp(sM_v)\mid\cF_{v-1}]
 \leq\exp\bigl\{\lambda_{\rm RGO}\,(\e^{m_{\rm RGO}s}-1)\bigr\}\,,
 \qquad 0\leq s\leq m_{\rm RGO}^{-1}\,.
\end{equation}
At every center where an RGO call is actually made,
suppose that the conditional expected number of reference queries used by the
RGO sampler, given the history immediately before that call, is at
most $Q_{\rm RGO}$.
Then the total number $N_{\rm query}$ of executed reference queries,
including those inside all RGO calls,
and $M_{\rm tot}\deq\sum_{v=1}^T M_v$ satisfy
\begin{align}
 &\E N_{\rm query}\leq\frac{4q}{3}\,
 \{Q_{\rm loc}+Q_{\rm RGO}m_{\rm RGO}\lambda_{\rm RGO}\}\,,
 \label{eq:RGO-request-mean}\\
 &\Pp\bigl\{M_{\rm tot}>Cm_{\rm RGO}\,
       \{\lambda_{\rm RGO}\,(q+m_{\rm child}u)+u\}\bigr\}
       \leq2\exp(-u)\,,\qquad u\geq1\,.\notag
\end{align}
\end{lemma}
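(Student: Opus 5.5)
The proof has two parts: the mean bound \eqref{eq:RGO-request-mean} comes from an optional-stopping argument over the breadth-first exploration of \cref{lem:adaptive-tree}, and the tail bound for $M_{\rm tot}$ comes from an exponential supermartingale combined with the tail of $T$.

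\textbf{Mean bound.} We split $N_{\rm query}$ into local queries and queries made inside RGO samplers. The local part is at most $Q_{\rm loc}T$, and \cref{lem:adaptive-tree} gives $\E T\le 4q/3$, so its expectation is at most $4qQ_{\rm loc}/3$.

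For the RGO part, we order the executed RGO calls by exploration time. Each executed call corresponds to a potential request attached to some explored vertex. Conditioning on the history just before the call gives conditional expected cost at most $Q_{\rm RGO}$. By the tower property, summed through the same truncation at $T\wedge b$ used in \cref{lem:adaptive-tree}, the RGO part has expectation at most $Q_{\rm RGO}\,\E M_{\rm tot}$. Here we use that executed requests are a subset of potential ones, and that the cost is nonnegative.

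Next, differentiating \eqref{eq:RGO-request-mgf} at $s=0$ gives $\E[M_v\mid\cF_{v-1}]\le m_{\rm RGO}\lambda_{\rm RGO}$. Since $\{T\ge v\}$ is $\cF_{v-1}$-measurable,
\[
\E M_{\rm tot}=\sum_{v\ge1}\E\bigl[\mathbf 1_{\{T\ge v\}}\E[M_v\mid\cF_{v-1}]\bigr]\le m_{\rm RGO}\lambda_{\rm RGO}\,\E T\,,
\]
and combining with $\E T\le 4q/3$ yields \eqref{eq:RGO-request-mean}. One subtlety needs care: the RGO cost must be conditioned on the history before the call, while $M_v$ is conditioned on $\cF_{v-1}$. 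We handle this by refining the filtration within each vertex, revealing requests one at a time. Monotone convergence then removes the truncation at $b$.

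\textbf{Tail bound.} We fix $s=m_{\rm RGO}^{-1}$ and form
\[
S_n\deq\exp\Bigl\{s\sum_{v\le n\wedge T}M_v-\lambda_{\rm RGO}(\e-1)\,(n\wedge T)\Bigr\}\,.
\]
By \eqref{eq:RGO-request-mgf}, and again because $\{T\ge v\}\in\cF_{v-1}$, this is a nonnegative supermartingale with $S_0=1$. Let $n_u\deq C(q+m_{\rm child}u)$.

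On the event $\{T\le n_u\}$, Fatou's lemma and Markov's inequality applied to $S$ give
\[
\Pp\Bigl\{\sum_{v\le T}M_v>m_{\rm RGO}\bigl\{\lambda_{\rm RGO}(\e-1)\,n_u+u\bigr\}\text{ and }T\le n_u\Bigr\}\le\e^{-u}\,.
\]
The event $\{T>n_u\}$ has probability at most $\e^{-u}$ by \eqref{eq:tree-tail}. A union bound gives the stated $2\e^{-u}$ after adjusting $C$.

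\textbf{Main obstacle.} The delicate step is the adaptivity. RGO centers, and therefore the costs of RGO calls, depend on earlier outputs, and the forest is explored in an order different from execution order. I would address this by working throughout with the dominating potential schedule. Its vertex-level counts are measurable with respect to the exploration filtration, while actual costs enter only through conditional expectations bounded by $Q_{\rm RGO}$. This keeps every step a tower-property argument and avoids any independence claim.
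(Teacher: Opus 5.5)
Your proposal is correct and takes essentially the same route as the paper. For the mean bound, you sum conditional expectations using $\{T\ge v\}\in\cF_{v-1}$ and charge each executed RGO call its conditional cost $Q_{\rm RGO}$, exactly as the paper does. For the tail bound, your stopped exponential supermartingale evaluated at the deterministic time $n_u$ is the martingale phrasing of the paper's argument: iterate the MGF bound over a deterministic horizon $t_0$, apply Chernoff, and take a union bound with $\Pp\{T>t_0\}$. The appeal to Fatou's lemma is unnecessary, because you only evaluate $S$ at the deterministic time $n_u$.
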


\begin{proof}
Differentiation at zero gives
$\E[M_v\mid\cF_{v-1}]
\le m_{\rm RGO}\lambda_{\rm RGO}$ while vertices remain pending.
Since $\{T\ge v\}\in\cF_{v-1}$, summing conditional expectations
gives $\E M_{\rm tot}\le
m_{\rm RGO}\lambda_{\rm RGO}\,\E T$.
There are at most $M_{\rm tot}$ executed RGO calls. Charge each one's
query count when the request is made and use its conditional expectation
bound $Q_{\rm RGO}$. The local query count is at most $Q_{\rm loc}T$, proving
\eqref{eq:RGO-request-mean}.

For the count bound, \cref{lem:adaptive-tree} gives a deterministic
$t_0=\lceil C\,(q+m_{\rm child}u)\rceil$ with
$\Pp\{T>t_0\}\le\exp(-u)$. Iterating
\eqref{eq:RGO-request-mgf} at $s=m_{\rm RGO}^{-1}$ gives
\[
 \E\exp\Bigl\{m_{\rm RGO}^{-1}\sum_{v=1}^{t_0}M_v\Bigr\}
 \leq\exp\{(\e-1)\,\lambda_{\rm RGO}t_0\}\,.
\]
Chernoff's inequality bounds $\sum_{v=1}^{t_0} M_v$ by
$m_{\rm RGO}\,\{(\e-1)\,\lambda_{\rm RGO}t_0+u\}$ outside an event of probability
$\exp(-u)$. A union bound proves the claim.
\end{proof}

\paragraph{Verifying the forest bounds for the cloud sampler.}
Put
\begin{equation*}
 \mathcal B_{\rm fluc}\deq \clipB_{\rm fluc} K+\clipB_{{\rm fluc},{\rm end}}\,,\qquad
 \mathcal B_{\rm mean}\deq \clipB_{\rm mean}K+\clipB_{{\rm mean},{\rm end}}\,.
\end{equation*}
These bound the sums of the clipping levels along one mandatory chain. We will take
\begin{equation}\label{eq:cloud-forest-caps}
 \begin{gathered}
 Q_{\rm loc}=m_{\rm child}=m_{\rm RGO}=C\mathfrak L^2\,,\\
 \lambda_{\rm child}
 =\frac{C}{m_{\rm child}}\,
 (\mathcal B_{\rm fluc} K_{\rm ser}+\mathcal B_{\rm mean} J)\,,\qquad
 \lambda_{\rm RGO}
 =\frac{CJ}{m_{\rm RGO}}\,\mathcal B_{\rm mean}\,.
 \end{gathered}
\end{equation}

First consider the potential schedule, defined above, for one FORS
invocation with clipping level $\clipB$. Recall that it continues until
the first zero Poisson count, even if the actual sampler accepts
earlier. Let $(M_a)_{a\geq1}$ be its independent
$\mathsf{Poisson}(2\clipB)$ counts, and define
\[
 A_{\clipB}\deq\inf\{a\geq1:M_a=0\}\,,\qquad
 N_{\rm FORS}\deq A_{\clipB}+\sum_{a=1}^{A_{\clipB}}M_a\,.
\]
Thus, $N_{\rm FORS}$ counts the number of attempts and the sum of the counts.
Put $p_0\deq\Pp\{M_1=0\}=\exp(-2\clipB)$. On the event $\{A_{\clipB}=a\}$,
the first $a-1$ counts are positive and the last count is zero.
Consequently, independence and a geometric series summation give
\begin{align*}
 \E\exp(tN_{\rm FORS})
 &=\sum_{a\geq1}p_0 \exp(ta)\,
 \bigl(\E[\exp(tM_1)\,\mathbf1_{\{M_1>0\}}]\bigr)^{a-1}
 =\frac{\e^t\, p_0}
 {1-\e^t\,\E[\exp(tM_1)\,\mathbf1_{\{M_1>0\}}]}\,.
\end{align*}
The Poisson MGF yields $\E[\exp(tM_1)\,\mathbf1_{\{M_1>0\}}]
 =\exp(2\clipB\,(\e^t-1))-p_0$,
and hence
\[
 \E\exp(tN_{\rm FORS})
 =\frac{\e^t\, p_0}
 {1-\e^t\,\{\exp(2\clipB\,(\e^t-1))-p_0\}}
 \leq C\,,\qquad 0\le\clipB\le1\,,
\]
for all sufficiently
small universal $t>0$.

We will also use the expected number of extra attempts and likelihood
factors. Since $A_{\clipB}$ is geometric with success probability $p_0$,
\[
 \E(A_{\clipB}-1)=\e^{2\clipB}-1\leq C\clipB\,,\qquad
 \E\sum_{a=1}^{A_{\clipB}}M_a=2\clipB\,\E A_{\clipB}=2\clipB\e^{2\clipB}\leq C\clipB\,.
\]
For the second identity, write the sum as
$\sum_{a\geq1}\mathbf1_{\{A_{\clipB}\geq a\}}M_a$ and condition on the
counts before attempt $a$; they determine $\{A_{\clipB}\geq a\}$ and are
independent of $M_a$. These bounds hold conditionally on the history
before an invocation, since its Poisson counts are fresh.

For one fluctuation invocation, let $A_{\rm fluc}$ be the number of
attempts in its potential schedule and let $M_{\rm fluc}$ be the sum
of their Poisson counts, that is, the total number of likelihood
factors. A proposal evaluates $J$ gradient blocks, and a likelihood
factor makes $O(K_{\rm ser})$ derivative actions. Its local query count and
request count are therefore at most
$C\,\{A_{\rm fluc} J +K_{\rm ser}M_{\rm fluc}\}$.
A hidden RGO invocation has at most $CN_{\rm FORS}$ local queries and
cloud or RGO request slots.
Each mean factor uses one more fluctuation output and at most $J$
component estimators and hidden RGO invocations.

We now pass from these individual schedules to one complete cloud
correction. Let $N_{\rm fluc}$ be the local slot count of one
fluctuation invocation. Since
$A_{\rm fluc}+M_{\rm fluc}=N_{\rm FORS}$ for its potential schedule,
the preceding bounds give $N_{\rm fluc}
 \leq C\,(J+K_{\rm ser})\,N_{\rm FORS}$.
Consequently, for a universal constant $c>0$,
\[
 \E\bigl[
  \exp\bigl(\frac{ctN_{\rm fluc}}{J+K_{\rm ser}}\bigr)
  \,\bigm|\,\text{past}\bigr]
 \leq\exp(Ct)\,,\qquad 0\leq t\leq1\,.
\]
Here and below, \emph{past} denotes all randomness revealed before the
invocation. The same bound holds for the local slot count of one
hidden RGO invocation because that count is at most
$CN_{\rm FORS}$.

Next consider one mean likelihood factor. Apart from one fluctuation
output, it uses at most $J$ component routines, each consisting of a
bounded number of local operations and one hidden RGO invocation.
If their local slot counts are $Y_1,\ldots,Y_J$, conditional
H\"older's inequality gives
\[
 \E\bigl[
  \exp\bigl(\frac{ct}{J}\sum_{j\in[J]}Y_j\bigr)
  \,\bigm|\,\text{past}\bigr]
 \leq
 \prod_{j\in[J]}
 \E[\exp(ctY_j)\mid\text{past}]^{1/J}
 \leq\exp(Ct)\,.
\]
Combining this estimate with the
fluctuation output estimate shows that the local slot count
$N_{\rm mean}$ of one mean factor satisfies
\[
 \E\bigl[
  \exp\bigl(\frac{ctN_{\rm mean}}{J+K_{\rm ser}}\bigr)
  \,\bigm|\,\text{past}\bigr]
 \leq\exp(Ct)\,,\qquad 0\leq t\leq1\,.
\]

Finally, condition on the potential schedule of the outer mean FORS
invocation. It contains $N_{\rm FORS}$ attempts and likelihood
factors. Each corresponding subcall is either a fluctuation output or
a mean factor, so successive conditioning on the subcalls bounds the
conditional exponential moment of the complete local slot count by
$\exp(CtN_{\rm FORS})$. Choose $t>0$ sufficiently small and average
over the outer schedule. The exponential moment estimate for
$N_{\rm FORS}$ then gives a universal bound.

Index the possible corrections along one mandatory chain by
$\ell\in\{0,\ldots,K-1\}$, with $\ell=K-1$ reserved for the endpoint
correction. Let $N_\ell$ be the total local slot count of the
level-$\ell$ correction, and set $N_\ell=0$ when a shorter chain does
not contain that correction. Each counter includes reference queries, cloud and
RGO request slots, and FORS attempts and factors; it excludes queries
inside the requested cloud and RGO samplers.
The preceding argument gives, conditional on the history before
this correction,
\[
 \E\exp\Bigl(\frac{cN_\ell}{J+K_{\rm ser}}\Bigr)\le C\,,
 \qquad \ell\in\{0,\ldots,K-1\}\,.
\]
Define the total counter for this contracted vertex by
$N_{\rm loc}\deq\sum_{\ell=0}^{K-1}N_\ell$. Iterating the conditional bound
and applying Chernoff's inequality gives
\begin{equation}\label{eq:cloud-local-cap}
 \Pp\bigl\{N_{\rm loc}>C\,(J+K_{\rm ser})\,(K+u)\bigr\}
 \leq\exp(-u)\,,\qquad u\ge1\,.
\end{equation}
Since $J+K_{\rm ser}\le C\mathfrak L$ and $K\le C\mathfrak L$,
taking $u=C_{\rm cap}\mathfrak L$, with a sufficiently large fixed
universal constant $C_{\rm cap}$, justifies the local cap
$Q_{\rm loc}=C\mathfrak L^2$ in \eqref{eq:cloud-forest-caps}. Both the number
of additional cloud requests and the number of attached RGO requests
are bounded by this same local counter.

To bound the expected number of children, count cloud requests
separately from local queries. A fluctuation proposal uses one joint
cloud sample to evaluate all $J$ blocks. A fluctuation attempt with
$M$ likelihood factors uses at most $CMK_{\rm ser}$ additional clouds in
one joint batch, by the block derivative construction and
\cref{lem:hidden-batch}. Thus, after
reserving the first proposal cloud for the mandatory chain, one
fluctuation invocation makes at most $A_{\rm fluc}-1+CK_{\rm ser}M_{\rm fluc}$
additional cloud requests. Their conditional expectation is at most
$CK_{\rm ser}\clipB_{\rm fluc}$ by the preceding FORS identities.

Let $A_{\rm mean}$ and $M_{\rm mean}$ be the attempt and likelihood
factor counts in the outer mean potential schedule. It calls the
fluctuation routine $A_{\rm mean}+M_{\rm mean}$ times: once for each
proposal and once for the independent output in each mean factor.
Exactly one of their first proposal clouds belongs to the mandatory
chain. The conditional expected number of additional clouds from
these fluctuation invocations is therefore at most
\begin{align*}
 &\E[A_{\rm mean}+M_{\rm mean}-1\mid\text{past}]
    +C\clipB_{\rm fluc} K_{\rm ser}\,
        \E[A_{\rm mean}+M_{\rm mean}\mid\text{past}]
        \leq C\,(\clipB_{\rm mean}+ \clipB_{\rm fluc} K_{\rm ser})\,.
\end{align*}
Each mean factor also invokes at most $J$ component routines and
hidden RGO samplers. Each of these uses a bounded expected number
of clouds and explicit RGO requests, conditionally on its incoming
history, because $\clipB_{\rm RGO}\leq1$. These calls contribute at most
$C\clipB_{\rm mean} J$ in expectation. Consequently, one correction level
creates at most $C\,(\clipB_{\rm fluc} K_{\rm ser}+\clipB_{\rm mean} J)$
additional cloud requests and $C\clipB_{\rm mean} J$ explicit RGO requests
in conditional expectation. Imposing the local cap can only decrease
these counts.

Use these potential schedules to explore the dominating forest,
before generating the Gaussian cloud samples. The
actual sampler evaluates only the slots it needs, and unused slots
do not contribute to the executed count. Summing the conditional
expectation bounds over the interior levels and the single endpoint gives
\[
 \E[\zeta_{v+1}\mid\cF_v]
 \leq C\,(\mathcal B_{\rm fluc} K_{\rm ser}+\mathcal B_{\rm mean} J)\,,\qquad
 \E[M_v\mid\cF_{v-1}]\leq C\mathcal B_{\rm mean} J\,.
\]
To pass to exponential moments, use the deterministic local cap and
the convexity inequality
\[
 \exp(sx)\leq1+\frac{x}{m}\,(\e^{ms}-1)\,,
 \qquad 0\leq x\leq m\,,\quad s\geq0\,.
\]
Applying it to $\zeta_{v+1}\leq m_{\rm child}$ and then taking
conditional expectations gives
\[
 \E[\exp(s\zeta_{v+1})\mid\cF_v]
 \leq1+\frac{\E[\zeta_{v+1}\mid\cF_v]}{m_{\rm child}}\,
              (\e^{m_{\rm child}s}-1)
 \leq\exp\{\lambda_{\rm child}\,(\e^{m_{\rm child}s}-1)\}\,.
\]
The same argument for $M_v\leq m_{\rm RGO}$ proves
\eqref{eq:RGO-request-mgf}, with the parameters in
\eqref{eq:cloud-forest-caps}.

The branching condition $m_{\rm child}\lambda_{\rm child}\leq1/4$
therefore follows whenever
\[
    \mathcal B_{\rm fluc} K_{\rm ser}+\mathcal B_{\rm mean} J\leq c\,.
\]
Since $K\leq C\mathfrak L$,
$K_{\rm ser}\leq C\mathfrak L$, and $J\leq C\mathfrak L$,
a sufficient bound is
\begin{equation}\label{eq:cloud-branching-smallness}
 \mathfrak L^2\,(\clipB_{\rm fluc}+\clipB_{\rm mean})
 +\mathfrak L\,(\clipB_{{\rm fluc},{\rm end}}+\clipB_{{\rm mean},{\rm end}})\le c\,.
\end{equation}
The parameter choices in \cref{sec:work} enforce
\eqref{eq:cloud-branching-smallness}. The conditional expected number
of children is then at most $1/4$, as required by
\cref{lem:adaptive-tree}.

\paragraph{Global caps.}
Consider the forest of $N$ phase roots with the local stopping rules
above, and assume \eqref{eq:cloud-branching-smallness}. Applying
\cref{lem:adaptive-tree} with child cap $C\mathfrak L^2$ and
$u=C_{\rm cap}\mathfrak L$ bounds its number of contracted vertices by
$C\,(N+\mathfrak L^3)$ outside probability
$\exp(-C_{\rm cap}\mathfrak L)$. Here $C_{\rm cap}$ is a fixed universal
constant that can be chosen as large as needed. Moreover,
$m_{\rm RGO}\lambda_{\rm RGO}=C\mathcal B_{\rm mean}J\leq C$,
so \cref{lem:RGO-forest} gives the same bound for the number of explicit
RGO requests outside an additional probability
$2\exp(-C_{\rm cap}\mathfrak L)$. Each vertex has at most
$C\mathfrak L^2$ local slots. Thus a deterministic global slot cap is
\begin{equation}\label{eq:cloud-global-cap}
 T_{\max}\deq\bigl\lceil C\,(N+\mathfrak L^3)\,\mathfrak L^2\bigr\rceil\,,
\end{equation}
with probability of exceeding it at most
$3\exp(-C_{\rm cap}\mathfrak L)$. Enlarging $C$ also accommodates
the single terminal RGO call and $O(N)$ additional Gaussian
draws. The expected numbers of vertices and RGO requests remain
$O(N)$ by \cref{lem:adaptive-tree,lem:RGO-forest}.

\paragraph{Accuracy of the finite recursion.}
The likelihood series cutoff and the local stopping event in
\eqref{eq:cloud-local-cap} have error exponentially small in
$\mathfrak L$ after increasing their fixed constants. The required
local accuracy logarithms are $O(\mathfrak L)$: a depth-$K$ forest
with at most $C\mathfrak L^2$ children per vertex has at most
$N\,(C\mathfrak L^2)^{K+1}$ potential correction and RGO request slots,
whose logarithm is
$O(\mathfrak L)$ for the displayed $K$ and the phase count below.
At any such slot, compare the exact conditional law of the requested
sample with the law returned by its finite routine, at the same
history of earlier requests and outputs. The TV distance between
these laws is that slot's \emph{replacement error}. We sum these
errors by changing one conditional sampling rule at a time. The following lemma
justifies this comparison when later requests depend on earlier outputs and also
includes the probability of stopping at a cap.

\begin{lemma}[TV error bound]\label{lem:adaptive-tv-comparison}
For an ideal adaptive algorithm, let $K_t(H,\cdot)$ be the conditional
law of the output at call slot $t$ given the preceding history $H$.
Let $\widehat K_t(H,\cdot)$ be the corresponding conditional law for
the implemented algorithm.
Stop both algorithms at the same deterministic cap $T_{\max}$, and
suppose the ideal algorithm without this global cap exceeds it with
probability at most $\varepsilon_{\rm cap}$. After either algorithm
terminates, fill its remaining slots with a dummy symbol. Let $H_{t-1}$ be the ideal capped history
before slot $t$, and define the mean replacement error by
\[
 \delta_t\deq\E
 \TV\bigl(K_t(H_{t-1},\cdot),\widehat K_t(H_{t-1},\cdot)\bigr)
\,,\qquad t\in\{1,\ldots,T_{\max}\}\,.
\]
For the implemented capped output $\widehat X$ and ideal uncapped
output $X$,
\[
 \TV(\Law(\widehat X),\Law(X))
 \leq\sum_{t=1}^{T_{\max}}\delta_t+\varepsilon_{\rm cap}\,.
\]
In particular, the bound is $T_{\max}\varepsilon+\varepsilon_{\rm cap}$
if every conditional kernel is uniformly within $\varepsilon$ in TV.
\end{lemma}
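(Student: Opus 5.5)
The plan is a hybrid (telescoping) argument combined with a coupling that absorbs the cap. First, reduce to the capped ideal algorithm. Let $X^{\rm cap}$ be the output of the ideal algorithm stopped at $T_{\max}$, which returns the fixed point when the cap is exceeded. The ideal and capped ideal algorithms can be run on the same randomness and agree unless the cap is exceeded. Hence
\[
\TV(\Law(X^{\rm cap}),\Law(X))\le\varepsilon_{\rm cap}\,,
\]
and by the triangle inequality it suffices to show $\TV(\Law(\widehat X),\Law(X^{\rm cap}))\le\sum_t\delta_t$.

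For the main step, define hybrid algorithms $\mathcal A_s$ for $s\in\{0,\ldots,T_{\max}\}$. Algorithm $\mathcal A_s$ uses the implemented kernels $\widehat K_t$ at slots $t\le s$ and the ideal kernels $K_t$ at slots $t>s$. Both use the same deterministic cap, the same stopping rule, and the same dummy-symbol padding. Then $\mathcal A_0$ is the capped ideal algorithm and $\mathcal A_{T_{\max}}$ is the capped implemented one. The output is a measurable function of the full padded history $(H_{T_{\max}})$, so data processing reduces the claim to comparing history laws, and the triangle inequality gives
\[
\TV(\Law(\widehat X),\Law(X^{\rm cap}))\le\sum_{s=1}^{T_{\max}}\TV\bigl(\Law(H^{(s)}),\Law(H^{(s-1)})\bigr)\,.
\]
The histories of $\mathcal A_{s-1}$ and $\mathcal A_s$ are built identically up to slot $s-1$, so the history before slot $s$ has the ideal capped law $\Law(H_{s-1})$ in both. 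At slot $s$ they use $K_s(H_{s-1},\cdot)$ versus $\widehat K_s(H_{s-1},\cdot)$. After slot $s$ both apply the same ideal kernels and the same stopping rule. Composing with a common Markov kernel does not increase TV, so conditioning on $H_{s-1}$ and using the disintegration bound for TV gives
\[
\TV\le\E\,\TV\bigl(K_s(H_{s-1},\cdot),\widehat K_s(H_{s-1},\cdot)\bigr)=\delta_s\,.
\]
If the algorithm has already terminated before slot $s$, both kernels output the dummy symbol and the contribution is zero. The uniform corollary is then immediate: each $\delta_t\le\varepsilon$.

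The main obstacle is measure-theoretic bookkeeping rather than estimates. The slot index must refer to a deterministic position in the padded record, so that the hybrids are well defined even when the identity of the request at slot $t$ (an RGO call, a cloud request, or a FORS factor) depends adaptively on the history. The deterministic cap $T_{\max}$ ensures the record has fixed finite length, and the stopping decision must be a function of the history so that both algorithms share it. I would formalize each slot kernel as acting on $(\text{history},\text{slot type})$, with the slot type determined by the history. This places everything on a finite product space where the telescoping argument is rigorous.
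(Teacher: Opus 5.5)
\textbf{Gap: the hybrids are ordered the wrong way round.} Your overall strategy is the paper's: telescope over hybrid algorithms, use data processing through a common continuation kernel, and compare the capped and uncapped ideal runs separately. As written, however, the key step fails.

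With $\mathcal A_s$ using $\widehat K_t$ for $t\le s$ and $K_t$ for $t>s$, the algorithms $\mathcal A_{s-1}$ and $\mathcal A_s$ agree through slot $s-1$ because both use the \emph{implemented} kernels there. So the history entering slot $s$ has the law of the implemented capped history $\widehat H_{s-1}$, not of the ideal history $H_{s-1}$, and your claim that it has law $\Law(H_{s-1})$ is false. Your telescoping therefore bounds the $s$-th term by $\E\,\TV\bigl(K_s(\widehat H_{s-1},\cdot),\widehat K_s(\widehat H_{s-1},\cdot)\bigr)$. That is not the $\delta_s$ of the statement. Converting it to an average over ideal histories costs an extra $\TV(\Law(\widehat H_{s-1}),\Law(H_{s-1}))$ at every slot, and these errors compound.

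The distinction matters downstream. In the proof of \cref{thm:core}, the clipping errors are integrated along the ideal history. The hidden RGO guarantee holds only on the events $\mathcal G_n$, whose probability is controlled under the ideal law. In the uniform case both averages are at most $\varepsilon$, so your final corollary is unaffected.

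\textbf{Fix.} Reverse the hybrids, as the paper does. Let the $s$-th intermediate algorithm use the ideal kernels $K_t$ for $t\le s$ and the implemented kernels $\widehat K_t$ for $t>s$, with the same cap, stopping rule, and dummy padding. Then algorithm $0$ is the capped implementation and algorithm $T_{\max}$ is the capped ideal run. Consecutive algorithms $s-1$ and $s$ share the ideal capped history $H_{s-1}$. They differ only at slot $s$, using $\widehat K_s(H_{s-1},\cdot)$ versus $K_s(H_{s-1},\cdot)$. Afterwards both apply the same implemented continuation kernel, so data processing gives exactly $\delta_s$. The rest of your argument then goes through unchanged, including the cap coupling and treating slots as deterministic positions in the padded record.
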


\begin{proof}
For $t\in\{0,\ldots,T_{\max}\}$, let the $t$-th intermediate algorithm use
ideal kernels through slot $t$ and implemented kernels thereafter.
Consecutive intermediate algorithms have the same ideal history through
slot $t-1$; data processing bounds the TV distance between their outputs
by $\delta_t$. Sum over the
slots, then compare the capped and uncapped ideal algorithms.
\end{proof}

\FloatBarrier

\subsection{Analysis of the Gaussian cloud sampler}\label{sec:work}

In this subsection, we establish the following theorem.

\begin{theorem}[Gaussian cloud sampler]\label{thm:core}
Assume \eqref{eq:HVP-regularity} with $1\le\kappa\le\kappa_0$ for a fixed
universal constant $\kappa_0\ge3$, and that the supplied reference point satisfies
\eqref{eq:reference-point}. For every $0<\varepsilon\leq1/4$,
\cref{alg:fifth-phase}, with all RGO calls implemented by
\cref{prop:gradient-RGO} and all local corrections realized as in
\cref{sec:gradient-only}, returns a law $\widehat\pi=\Law(X_{\rm out})$ such that
\begin{equation}\label{eq:fifth-core-TV}
 \TV(\widehat\pi,\pi)\leq\varepsilon\,.
\end{equation}
Its expected total number of gradient queries is at most
\begin{equation}\label{eq:fifth-core-complexity}
    C\mathfrak L^{11/2}\,(d+\mathfrak L)^{1/5}\,.
\end{equation}
\end{theorem}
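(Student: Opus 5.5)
The plan has three parts. First compare the implemented sampler with an ideal sampler that runs the exact Picard kernel $K_{\eta_n,h}^{[J,K]}$ on exact smoothed targets. Then bound that ideal sampler's distance to $\pi$. Finally count queries through the adaptive forest.

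\textbf{Parameters.} Take $h\asymp c\,(d+\mathfrak L)^{-1/5}\mathfrak L^{-a}$ for a suitable power $a$, with $\eta_0\asymp h^2$ so that $h/\sqrt{\eta_0}\le c$. Set $\theta\asymp\sqrt{d+\mathfrak L}\,h^6\,\mathfrak L^{b}$ and $\tau\asymp h^3$. Take $N\asymp h^{-1}\mathfrak L$ phases, which is $\kappa\,h^{-1}\mathfrak L$ with $\kappa\le\kappa_0$. Choose the constants so that $N\tau\le\eta_0/4$, which keeps $\eta_n\in[\eta_0,5\eta_0/4]$. Fix $J,K,K_{\rm ser}$ as in \cref{sec:recursive-cloud}. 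Choose the clipping levels as the smallest values allowed by \eqref{eq:fixed-node-scale}, \cref{prop:reusable-mean} and \cref{lem:hidden-RGO}, with $p\asymp\mathfrak L$:
\begin{itemize}
\item $\clipB_{\rm fluc}$ from $L_{\rm fluc}\lesssim h^3/\sqrt\theta$, using \cref{lem:joint-estimator-derivatives} and \eqref{eq:integrated-weights-l2};
\item $\clipB_{\rm mean}$ from \cref{prop:marginal-score}, using $b_p\lesssim (h^2/\sqrt\theta)(h^3+\theta/h)\sqrt{d+p}$;
\item $\clipB_{\rm RGO}$ from $\varepsilon_0\asymp\sqrt{\theta(d+\mathfrak L)}$;
\item the endpoint levels from the whitening by $\sqrt\tau$ and $\sigma_h$.
\end{itemize}
Then check \eqref{eq:cloud-branching-smallness}. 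This is where the powers of $\mathfrak L$ in $h$ and $\theta$ are spent.

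\textbf{Accuracy.} Apply \cref{lem:adaptive-tv-comparison}. Each slot's replacement error is made at most $\varepsilon/(C T_{\max})$ through the following bounds:
\begin{itemize}
\item the robust fluctuation bound \cref{thm:robust-fluctuation}, whose $e_{\rm val}$ and $e_{\rm mean}$ come from the gradient-only realization;
\item \cref{cor:block-correction};
\item \cref{lem:hidden-RGO}, with uniformly accurate RGO calls from \cref{prop:gradient-RGO};
\item the local and global caps \eqref{eq:cloud-local-cap} and \eqref{eq:cloud-global-cap}.
\end{itemize}
Since every accuracy enters only through logarithms, which are $O(\mathfrak L)$, $T_{\max}$ is only polynomial. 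The result is that the implemented chain is $\varepsilon/2$-close in TV to the ideal chain. The ideal chain applies $K^{[J,K]}_{\eta_n,h}$ and then the Gaussian convolution $\cN(0,\tau\Id)$ to the position.

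For the ideal chain, track $W_{2,M_\kappa}$ against $\Pi_{\eta_n}$. Each phase contracts by $(1-ch/\kappa)$ by \cref{prop:reference-Picard}. The reference defect \eqref{eq:reference-defect} is $\exp(-c\mathfrak L)$ once $J$ and $K$ are as chosen. Convolution maps $\Pi_{\eta_n}$ exactly to the position marginal $\pi_{\eta_{n+1}}$, so it adds no error. Two further pieces enter:
\begin{itemize}
\item Changing the reference from $\eta_n$ to $\eta_{n+1}$ inside the kernel costs a perturbation of order $\tau\sqrt d$ per phase. This comes from the Lipschitz dependence of $g_\eta$ on $\eta$, which I expect to be $O(\sqrt d)$ in a suitable $W_2$ sense. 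Accumulated over $\kappa/h$ phases this gives $\kappa\tau\sqrt d/h\asymp h^2\sqrt d$. That is not small, so instead I would compare against a time-inhomogeneous reference and use contraction with a per-phase defect. Alternatively, pick $\tau$ with an extra $\mathfrak L$-dependent factor $\varepsilon$-small relative to $\sqrt{\eta_{\rm f}}$.
\item The initial distance from $x_{\rm ref}$ is $O(\sqrt{d\kappa})$ by \eqref{eq:reference-point}, and it decays by $\exp(-cNh/\kappa)$.
\end{itemize}
Then \eqref{eq:W2-KL} with variance $\eta_{\rm f}$, together with Pinsker, gives TV closeness of $\Law(Y_{\rm f})$ to $\pi_{2\eta_{\rm f}}$. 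Data processing through $R_{2\eta_{\rm f}}$, using $\pi_{2\eta_{\rm f}}R_{2\eta_{\rm f}}=\pi$, finishes the TV bound \eqref{eq:fifth-core-TV}.

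\textbf{Complexity.} Use \cref{lem:RGO-forest} with $q=N$ and $Q_{\rm loc}=C\mathfrak L^2$. This gives
\[
 \E N_{\rm query}\lesssim N\mathfrak L^2+N\,\mathcal B_{\rm mean}J\,Q_{\rm RGO}+Q_{\rm RGO}\,,
\]
where $Q_{\rm RGO}\lesssim 1+\bigl(\eta^2(d+\mathfrak L)\bigr)^{1/4}\polylog$. Substituting the choices reproduces the balance $h^{-1}+dh^4$ from \cref{ssec:roadmap} at $h\asymp d^{-1/5}$, and the logarithmic bookkeeping should give $\mathfrak L^{11/2}$.

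\textbf{Main obstacle.} I expect the hardest part to be simultaneously satisfying \eqref{eq:cloud-branching-smallness} and the clip-moment conditions with only polylog losses while keeping $h\asymp d^{-1/5}$. The mean-correction constraint $h\ll d^{-3/16}$ has slack, which I would spend on the $\mathfrak L$ factors. Reconciling the increasing smoothing level with the fixed-$\eta$ estimates of \cref{prop:reference-Picard} is the most delicate analytic step.
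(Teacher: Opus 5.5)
Your architecture matches the paper's proof: parameters, the gadget conditions and \eqref{eq:cloud-branching-smallness}, forest counting, slotwise comparison via \cref{lem:adaptive-tv-comparison}, a $W_{2,M_\kappa}$ recursion against $\Pi_{\eta_n}$, then \eqref{eq:W2-KL} and the terminal RGO. However, two steps fail or are missing as written.

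The first is the choice $\eta_0\asymp h^2$ with $h/\sqrt{\eta_0}\le c$, which does not make the reference defect small. In \eqref{eq:reference-defect}, $B_{J,2}=C_0^{J+1}2^J\sqrt{J!}\ge C_0\,(c'\sqrt J)^J$. With $J\asymp\mathfrak L$, the quadrature term $B_{J,2}(h/\sqrt{\eta})^J$ is $\exp(-\Omega(\mathfrak L))$ only if $h/\sqrt\eta\lesssim J^{-1/2}$, and no choice of $J$ rescues a constant ratio. This is why the paper takes $\eta_0=C_\eta\mathfrak L h^2$. The extra $\mathfrak L$ propagates into $\theta_n=C_{\rm cl}\mathfrak L^{9/2}\eta_nh^4\sqrt{d+\mathfrak L}$, the clipping levels, and $Q_{\rm RGO}$, so the logarithmic bookkeeping you defer has to be redone. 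The paper's count reaches $\mathfrak L^{11/2}$ only by also doing two further things. It uses the sharper exponent-$15/4$ form of \cref{thm:fourth-RGO}, which is valid because the expensive RGO regime forces $d+\mathfrak L\ge C\mathfrak L^9$. It also charges the extra factor $C\mathfrak L$ of the gradient-only realization.

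The second is that you do not justify the deterministic inputs that \cref{lem:hidden-RGO} requires. Its hypotheses bound $\norm{y-z}$ and the score sum $\norm{g_{\eta-2\theta}(y)}+\norm{g_{\eta-2\theta}(z)}$ at the hidden iterate $y=X_{t_j}^{[\ell]}$. With the explicit anchor $z=X_{t_j}^{[0]}$, $\norm{y-z}$ is a Picard displacement of size $h^2$ times a score, not a cloud-scale quantity $\sqrt{\theta(d+\mathfrak L)}$. The score bound $L$ is absent from your sketch entirely. Both are random functions of the phase input of a chain started from $x_{\rm ref}$, not from stationarity. The paper obtains them in two steps:
\begin{itemize}
\item \cref{lem:fifth-run-concentration} propagates sub-Gaussian concentration phase by phase, using the contraction and the $C\sqrt h$-Lipschitz dependence on the fresh Gaussians;
\item \cref{lem:fifth-stage-moments} combines that concentration with the $W_2$ bound to give $L^p$ bounds with $p\asymp\mathfrak L$ on scores and displacements.
\end{itemize}
It then works on a per-phase good event $\mathcal G_n$ whose complement is charged $\varepsilon/(256N)$. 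Hence hidden RGO replacement errors are not uniformly below $\varepsilon/(CT_{\max})$ over histories, and you need the integrated form of \cref{lem:adaptive-tv-comparison} along the ideal history.

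By contrast, the smoothing drift you call the most delicate step is immediate. Since $\Pi_{\eta_n}H_\tau=\Pi_{\eta_{n+1}}$ and $H_\tau$ is non-expansive, $W_{2,M_\kappa}(\mu K_nH_\tau,\Pi_{\eta_{n+1}})\le(1-ch/\kappa)\,W_{2,M_\kappa}(\mu,\Pi_{\eta_n})+W_{2,M_\kappa}(\Pi_{\eta_n}K_n,\Pi_{\eta_n})$, which is \eqref{eq:fifth-moving-recursion}. \Cref{prop:reference-Picard} is applied at each $\eta_n$ separately, so kernels at different smoothing levels are never compared and no $\tau\sqrt d$ perturbation arises. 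Your fallback of shrinking $\tau$ would in fact hurt. The endpoint whitening divides by $\sqrt\tau$, so an $\varepsilon$-dependent $\tau$ would destroy the polylogarithmic endpoint clipping levels.
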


The constants below may depend on the fixed bound $\kappa_0$.
We choose the parameters
\begin{equation*}
 h=c\mathfrak L^{-3/2}\,(d+\mathfrak L)^{-1/5}\,,
 \qquad
 \eta_0=C_\eta\mathfrak L h^2\,,
 \qquad
 \tau=c_\tau h^3\,,
\end{equation*}
and, at smoothing level $\eta_n\deq\eta_0+n\tau$, set
\begin{equation}\label{eq:fifth-cloud-scale}
 \theta_n=C_{\rm cl}\mathfrak L^{9/2}\eta_nh^4\sqrt{d+\mathfrak L}\,.
\end{equation}
The universal multiplicative constants $C_\eta,C_{\rm cl}$ are chosen
sufficiently large;
the universal constants $c$, $c_\tau$ are then chosen sufficiently small,
with $c_\tau$ chosen before $c$.
Choose
\begin{equation}\label{eq:fifth-phase-count}
 \begin{gathered}
 J=\lceil C_J\mathfrak L\rceil\,,\qquad
 K=1+\Bigl\lceil\frac{C_K\mathfrak L}{\log\mathfrak L}\Bigr\rceil\,,\qquad
 N=\lceil C_N\mathfrak L/h\rceil\,.
 \end{gathered}
\end{equation}
Choose $C_N$ sufficiently large for the contraction estimate below,
and then $c_\tau$ sufficiently small that
\begin{equation*}
 \frac{N\tau}{\eta_0}
 \le\frac{c_\tau}{C_\eta}\,
 \Bigl(C_N+\frac{h}{\mathfrak L}\Bigr)
 \le\frac14\,,\qquad
 \eta_n\in[\eta_0,5\eta_0/4]\quad(0\le n\le N)\,.
\end{equation*}
Since $\eta_0\le C_\eta c^2\mathfrak L^{-2}$ and
$\theta_n/\eta_n\le C_{\rm cl}c^4\mathfrak L^{-3/2}$, these choices give
$4\eta_0\le1$ and
\begin{equation*}
 2\eta_n\leq1\,,
 \qquad
 \theta_n\leq\eta_n/8\,,
 \qquad
 h\mathfrak L^{3/2}\leq Cc/\kappa\,.
\end{equation*}
In particular, $\eta_n-2\theta_n$ belongs to $(0,1]$ and
$\theta_n\leq (\eta_n-2\theta_n)/4$, and the terminal RGO variance
$2\eta_{\rm f}$ is at most one.

\subsubsection{Contraction and preliminary estimates}

Let $H_\tau$ add an independent $\cN(0,\tau\Id)$ variable to position and leave momentum unchanged.
Then $\Pi_\eta H_\tau=\Pi_{\eta+\tau}$,
and $H_\tau$ is non-expansive in $W_{2,M_\kappa}$. For a phase $n$,
let $K_n$ denote the ideal depth-$K$ numerical phase at smoothing
$\eta_n$, before adding position noise. The following lemma supplies the
contraction and local error estimates.

\begin{lemma}[Wasserstein estimates for the ideal depth-$K$ phase]
\label{lem:fifth-numerical-contraction}
Uniformly over all phases $n$,
\begin{align*}
 W_{2,M_\kappa}(\delta_zK_n,\delta_{z'}K_n)
 &\leq(1-ch/\kappa)\,\norm{z-z'}_{M_\kappa}\,,
 \\
 W_{2,M_\kappa}(\Pi_{\eta_n}K_n,\Pi_{\eta_n})
 &\leq\exp(-C\mathfrak L)\,.
\end{align*}
\end{lemma}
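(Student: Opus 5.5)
The plan is to apply \cref{prop:reference-Picard} with $\eta=\eta_n$, $p=2$, and the parameters of \cref{sec:work}. We identify $K_n$ with $K_{\eta_n,h}^{[J,K]}$. The ideal depth-$K$ phase in \cref{alg:fifth-phase}, with every $\widehat g_{\eta_n}$ replaced by $g_{\eta_n}$ and without the position noise, is exactly the Picard kernel with $J$ Chebyshev--Lobatto nodes and depth $K$. Both use a momentum half-refresh before and after. The interior updates \eqref{eq:fifth-Picard} run for $K-1$ levels, and the endpoint formula \eqref{eq:fifth-endpoint-Z} supplies the last application, giving total depth $K$. We then check the hypotheses of \cref{prop:reference-Picard}. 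We need $0<\eta_n\le1$, which follows from $2\eta_n\le1$. We need $h^2\le1/4$ and $h\le c/\kappa$; both follow from $h=c\mathfrak L^{-3/2}(d+\mathfrak L)^{-1/5}$, $\kappa\le\kappa_0$, and $c$ small. The first inequality of the lemma is then the first estimate of the proposition with $p=2$, since its constant $c$ is universal and uniform in $n$.

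For the defect bound we estimate the two terms of \eqref{eq:reference-defect} separately. First, $h^{2K+1}$. Since $h\le\mathfrak L^{-3/2}$, we have $\log(1/h)\ge\tfrac32\log\mathfrak L$. With $K-1\ge C_K\mathfrak L/\log\mathfrak L$ this gives $h^{2K}\le\exp(-3C_K\mathfrak L)$. Second, the quadrature term. Here $h/\sqrt{\eta_n}\le h/\sqrt{\eta_0}=(C_\eta\mathfrak L)^{-1/2}$, so
\[
 B_{J,2}\Bigl(\frac h{\sqrt{\eta_n}}\Bigr)^J\le C_0^{J+1}2^J\sqrt{J!}\,(C_\eta\mathfrak L)^{-J/2}\,.
\]
Using $\sqrt{J!}\le J^{J/2}$ and $J=\lceil C_J\mathfrak L\rceil$, the right side is at most $\bigl(C\sqrt{C_J/C_\eta}\bigr)^J$ up to a constant. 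We choose $C_\eta$ large relative to $C_J$ so that the base is at most $\e^{-1}$; the term is then at most $\exp(-C_J\mathfrak L)$. Finally, the prefactor satisfies $C\sqrt{d+2}\,h\le\exp(\mathfrak L)$, because $\mathfrak L\ge\log d$. Taking $C_J,C_K$ large therefore yields $\exp(-C\mathfrak L)$ for any prescribed $C$.

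The main obstacle is the order in which constants are chosen. $C_\eta$ must beat $C_J$, while $c_\tau$ and $c$ are chosen after them; this matches the order stated in \cref{sec:work}, so the bounds are uniform in $n$ because $\eta_n\in[\eta_0,5\eta_0/4]$. A secondary point is that $\mathfrak L$ must be at least a large constant, so that $\log\mathfrak L\ge1$ and $K$ is well defined. This holds after enlarging the constants, because $\varepsilon\le1/4$ forces $\mathfrak L\ge\log 4$; for bounded $\mathfrak L$ the claim is absorbed into the constants.
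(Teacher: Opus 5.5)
Your proposal is correct and follows the paper's proof. Both invoke \cref{prop:reference-Picard} at $p=2$ for the contraction. For the defect, both bound $B_{J,2}(h/\sqrt{\eta_n})^J$ using $J\asymp\mathfrak L$ and $h/\sqrt{\eta_n}\le (C_\eta\mathfrak L)^{-1/2}$, with $C_\eta$ fixed after $C_J$, and bound $h^{2K+1}$ via $h\le\mathfrak L^{-3/2}$; the factor $\sqrt{d+2}$ is then absorbed. Your closing caveat about small $\mathfrak L$ is unnecessary: $\varepsilon\le1/4$ already gives $\mathfrak L\ge\log 4>1$, so $\log\mathfrak L>0$ and your estimates apply directly with no separate bounded-$\mathfrak L$ case.
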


\begin{proof}
The contraction holds for every Wasserstein order by
\cref{prop:reference-Picard}. For the second bound, we only need
$p=2$, for which $B_{J,2}\le C_0\,(C\sqrt J)^J$. Since $J\asymp\mathfrak L$,
\begin{equation*}
 B_{J,2}\le C_0\,(C\sqrt{\mathfrak L})^J\,,\qquad
 \frac h{\sqrt{\eta_n}}\le C_\eta^{-1/2}\mathfrak L^{-1/2}\,.
\end{equation*}
For fixed $C_J$, choosing $C_\eta$ sufficiently large gives
$B_{J,2}\,(h/\sqrt{\eta_n})^J\le C_0\exp(-J)$.
Also, $h\le\mathfrak L^{-3/2}$ gives
$h^{2K+1}\le\exp\{-3C_K\mathfrak L\}$.
Choose $C_J,C_K$ sufficiently large and then $C_\eta$ as above.
Equation \eqref{eq:reference-defect} absorbs the factor $\sqrt{d+2}$
and proves the claimed accuracy.
\end{proof}

Consequently,
\begin{equation}\label{eq:fifth-moving-recursion}
 W_{2,M_\kappa}
 \bigl(\mu K_nH_{\tau},\Pi_{\eta_n+\tau}\bigr)
 \leq
 (1-ch/\kappa)\,W_{2,M_\kappa}(\mu,\Pi_{\eta_n})
 +\exp(-C\mathfrak L)\,.
\end{equation}

Let $\mu_0\deq \delta_{x_{\rm ref}}\otimes\cN(0,\Id)$ and
$\mu_{n+1}\deq \mu_nK_nH_\tau$ for $0\le n<N$, so that $\mu_n$
is the ideal phase-space law at smoothing level $\eta_n=\eta_0+n\tau$.
Writing $\rho\deq1-ch/\kappa$, iteration of
\eqref{eq:fifth-moving-recursion} gives
\begin{equation}\label{eq:fifth-global-W2}
 W_{2,M_\kappa}(\mu_n,\Pi_{\eta_n})
 \le\rho^n\, W_{2,M_\kappa}(\mu_0,\Pi_{\eta_0})
      +\frac{\exp(-C\mathfrak L)}{1-\rho}\,,
 \qquad 0\le n\le N\,.
\end{equation}

We also need concentration and moment estimates.

\begin{lemma}[Concentration along the ideal run]
\label{lem:fifth-run-concentration}
Let $\mu_n$ be the ideal phase-space law after $n$ phases,
for $0\le n\le N$. Then,
\begin{equation*}
 \log\E_{\mu_n}\exp\{t\,(f-\E_{\mu_n}f)\}
 \le C\kappa t^2\,,\qquad t\in\R\,,
\end{equation*}
for every $1$-Lipschitz function $f$ in the metric $M_\kappa$.
\end{lemma}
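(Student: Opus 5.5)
We will show that every $\mu_n$ satisfies a Gaussian (sub-Gaussian) concentration inequality with variance proxy $C\kappa$ in the metric $M_\kappa$. The argument is by induction on $n$, propagating a sub-Gaussian constant through the three operations that build $\mu_{n+1}=\mu_nK_nH_\tau$. The natural tool is the standard fact that sub-Gaussian concentration for Lipschitz functions is preserved under Markov kernels that are Wasserstein contractions and whose one-point laws $\delta_zK$ are themselves sub-Gaussian. Concretely, suppose:
\begin{itemize}
\item $\mu$ satisfies $\log\E_\mu\exp\{t(f-\E_\mu f)\}\le a t^2$ for all $1$-Lipschitz $f$;
\item every $\delta_zK$ satisfies the same with constant $b$;
\item $z\mapsto\E_{\delta_zK}f$ is $\rho$-Lipschitz whenever $f$ is $1$-Lipschitz.
\end{itemize}
Then conditioning on the starting point gives
\begin{equation*}
 \log\E_{\mu K}\exp\{t(f-\E_{\mu K}f)\}\le bt^2+\rho^2at^2\,.
\end{equation*}
The Lipschitz property of $z\mapsto\E_{\delta_zK}f$ follows from the $W_1$ (hence $W_2$) contraction of \cref{lem:fifth-numerical-contraction}, with $\rho=1-ch/\kappa$, since $\rho^2\le1-ch/\kappa$.

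\textbf{Step 1 (base case).} The law $\mu_0=\delta_{x_{\rm ref}}\otimes\cN(0,\Id)$ is Gaussian in momentum. Since $\norm{(0,v)}_{M_\kappa}=\norm v$, a $1$-Lipschitz $f$ is $1$-Lipschitz in $v$, so $\mu_0$ has constant $1/2$.

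\textbf{Step 2 (one-step kernels).} We need a bound $b\le Ch$ for $\delta_zK_n$, and then for $H_\tau$. The randomness of $\delta_zK_n$ is only the two OU refresh Gaussians $\zeta_0,\zeta_1$, each with coefficient $\sigma_h\le\sqrt h$. The map from $(\zeta_0,\zeta_1)$ to the output is Lipschitz with constant $C\sqrt h$ in $M_\kappa$. To see this, apply the same Picard contraction estimate, or directly the Lipschitz bounds: the deterministic Picard map with $h\le c/\kappa$ is $(1+Ch)$-Lipschitz in $M_\kappa$, and the metric is uniformly equivalent to the Euclidean one. Gaussian concentration then gives $b\le Ch$. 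The kernel $H_\tau$ adds $\sqrt\tau G$ to position; it is non-expansive with $b\le C\tau\le Ch$ (here $\tau\le h^3$ and the position weight in $M_\kappa$ is $O(1)$).

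\textbf{Step 3 (iteration).} Composing the two kernels gives the recursion
\begin{equation*}
 a_{n+1}\le(1-ch/\kappa)\,a_n+Ch\,.
\end{equation*}
Its fixed point is $C\kappa$, so $a_n\le\max\{a_0,C\kappa\}\le C\kappa$ uniformly in $n\le N$. This is the claim.

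\textbf{Main obstacle.} The main care point is Step 2, because the Picard kernel is a deterministic nonlinear map of the refreshed momentum. We must confirm that its Lipschitz constant in the noise is $O(\sqrt h)$ and not merely $O(1)$. We would get this by writing $\delta_zK_n$ as $\Psi_z(\zeta_0,\zeta_1)$ and bounding the Lipschitz constants of $\Psi_z$ in $\zeta_0$ and in $\zeta_1$ separately, using the Picard-map Lipschitz estimate from the proof of \cref{prop:reference-Picard}.
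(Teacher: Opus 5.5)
Your proposal is correct and follows essentially the same route as the paper. Both arguments use the propagation rule for sub-Gaussian constants under a kernel with $\Lip(Kf)\le\rho\Lip(f)$ and uniformly sub-Gaussian one-point laws, take the contraction factor $\rho=1-ch/\kappa$ from \cref{prop:reference-Picard}, obtain a conditional $C\sqrt h$-Lipschitz dependence of the phase on $(\zeta_0,\zeta_1,G_x)$ because the Picard endpoint is $O(1)$-Lipschitz in $P^-$ while $\sigma_h\le\sqrt h$ and $\tau\le h^3$, and close with a recursion whose fixed point is $C\kappa$. Treating $H_\tau$ as a separate kernel is cosmetic. For the Lipschitz-in-noise step, use the direct Picard recursion estimate, as the paper does via $q_{\ell+1}\le h+h^2q_\ell$ and \eqref{eq:integrated-weights-l1}, rather than the $W_p$ contraction, which by itself gives no pathwise Lipschitz bound.
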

\begin{proof}
We use the following elementary propagation rule. Suppose that a law
$\mu$ has centered log-MGF at most $SL^2t^2$ for every $L$-Lipschitz
function, and that a kernel $K$ satisfies
\begin{equation*}
 \Lip(Kf)\le\alpha\Lip(f)
 \quad\text{and}\quad
 \log\E_{K(x,\cdot)}
 \exp\{t\,(f-Kf(x))\}\le vL^2t^2
\end{equation*}
uniformly in $x$ for every $L$-Lipschitz $f$. Then $\mu K$ has centered
log-MGF at most $(v+\alpha^2S)L^2t^2$. This follows immediately by
conditioning on the input to $K$.

We first record the conditional concentration supplied by one phase.
Fix its input $(X,P)$, and couple two values of the first refresh by
changing only $P^-$. If
\begin{equation*}
 q_\ell\deq
 \sup_{P^-\ne\widetilde P^-}
 \max_{i\in[J]}
 \frac{\norm{X_{t_i}^{[\ell]}(P^-)
             -X_{t_i}^{[\ell]}(\widetilde P^-)}}
      {\norm{P^--\widetilde P^-}}\,,
\end{equation*}
then $q_0\le h$. Since every $g_{\eta_n}$ is $1$-Lipschitz, the Picard
recursion and the estimate
\eqref{eq:integrated-weights-l1}
give
\begin{equation*}
 q_{\ell+1}\le h+h^2q_\ell\,,
 \qquad
 \sup_{\ell\ge0}q_\ell\le\frac{h}{1-h^2}\le\frac43\,h\,.
\end{equation*}
At the endpoint, the same
estimate, together with $\sum_{j\in[J]}\omega_j=h$, yields
\begin{align*}
 \norm{\widetilde X_{(n+1)h}^{[K]}(P^-)
       -\widetilde X_{(n+1)h}^{[K]}(\widetilde P^-)}
 &\le (h+h^2q_{K-1})\,\norm{P^--\widetilde P^-}\,,\\
 \norm{\widetilde P_{(n+1)h}^{[K]}(P^-)
       -\widetilde P_{(n+1)h}^{[K]}(\widetilde P^-)}
 &\le (1+hq_{K-1})\,\norm{P^--\widetilde P^-}\,.
\end{align*}
Thus the endpoint is $C$-Lipschitz in $P^-$, uniformly in $J$ and in
the Picard depth. Since
$P^-=a_hP+\sigma_h\zeta_0$ and
$\sigma_h^2=1-\e^{-h}\le h$, its dependence on the first standard
Gaussian $\zeta_0$ has Lipschitz constant at most $C\sqrt h$ in the
metric $M_\kappa$. The second refresh contributes
$\sigma_h\zeta_1$, and the position noise contributes
$\sqrt\tau\, G_x$. Their Lipschitz constants are at most $\sqrt h$ and
$C\sqrt\tau\le C\sqrt h$, respectively. Consequently, conditional on
$(X,P)$, the entire phase is a $C\sqrt h$-Lipschitz function of the
independent standard Gaussian vector $(\zeta_0,\zeta_1,G_x)$.
Gaussian concentration therefore gives the conditional bound
\begin{equation*}
 \log\E\bigl[\exp\{t\,(F-\E[F\mid X,P])\}\mid X,P\bigr]
 \le Ch t^2
\end{equation*}
whenever $F$ is a $1$-Lipschitz function of the phase output. The
constants above are uniform in the Picard depth.

Thus the propagation rule applies with $(\alpha,v)=(\rho,Ch)$,
where $\rho=1-ch/\kappa$ by \cref{prop:reference-Picard}.
The initial
law, with fixed position and standard Gaussian momentum, has constant
$C$. Iterating the rule gives
\begin{align*}
 \log\E_{\mu_n}\exp\{t\,(f-\E_{\mu_n}f)\}
 &\le Ct^2\,\Bigl\{1+h\sum_{r\ge0}\rho^{2r}\Bigr\}
 \le C\kappa t^2\,.
\end{align*}
Since $n$ was
arbitrary, the estimate holds uniformly.
\end{proof}

Below, we condition on a single phase $n$ and suppress
the time offset $nh$: thus $X_{t_j}^{[\ell]}$ denotes
$X_{nh+t_j}^{[\ell]}$, and $P^-$ denotes $P_{nh}^-$. The stacked vectors
$X^{[\ell]}$ and $\cloud^{[\ell]}$ likewise refer to this fixed phase.

\begin{lemma}[Moment estimates]\label{lem:fifth-stage-moments}
For $2\leq p\leq C\mathfrak L$, uniformly over all phases, Picard depths, and nodes of the ideal
sampler,
\begin{align}
    \norm{g_{\eta_n}(X_{t_j}^{[\ell]})}_{L^p}
 +\norm{g_{\eta_n-2\theta_n}(X_{t_j}^{[\ell]})}_{L^p}
 &\leq C\sqrt{\kappa\, (d+\mathfrak L)}\,,\label{eq:fifth-stage-score}\\
 \norm{X_{t_j}^{[\ell]}-X_{t_j}^{[0]}}_{L^p}
 &\leq Ch^2\sqrt{\kappa\, (d+\mathfrak L)}\,.
 \label{eq:fifth-stage-displacement}
\end{align}
\end{lemma}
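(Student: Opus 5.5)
The plan is to bound the score moments at the phase input first, then propagate them through the Picard recursion using the $1$-Lipschitz property of the smoothed scores together with the quadrature weight bounds.

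\textbf{Score at the phase input.} Fix a phase $n$ and let $(X_{nh},P_{nh})\sim\mu_n$. By \eqref{eq:fifth-global-W2}, $W_2(\mu_n,\Pi_{\eta_n})$ is at most a constant. Indeed, $W_{2,M_\kappa}(\mu_0,\Pi_{\eta_0})\le C\sqrt{\kappa d}$ by \eqref{eq:reference-point}, strong log-concavity of $\pi_{\eta_0}$ (with $\kappa\le\kappa_0$), and the standard bound $\norm{x_{\rm ref}-x_\star}\le\kappa\norm{\nabla V_{\eta_0}(x_{\rm ref})}$. Upgrading this to $L^p$ is done with \cref{lem:fifth-run-concentration}. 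Since $x\mapsto\norm{g_{\eta}(x)}$ is $1$-Lipschitz, sub-Gaussian concentration under $\mu_n$ gives $\norm{\,\norm{g_\eta(X_{nh})}-\E\norm{g_\eta(X_{nh})}\,}_{L^p}\le C\sqrt{\kappa p}$. For the mean, couple $X_{nh}$ optimally with $Y\sim\pi_{\eta_n}$. Then
\[
\E\norm{g_\eta(X_{nh})}\le\E\norm{g_\eta(Y)}+W_2(\mu_n,\Pi_{\eta_n}),
\]
and \eqref{eq:score-moment} bounds $\E\norm{g_\eta(Y)}$ by $C\sqrt d$. Applying this with $\eta=\eta_n$ and with $\eta=\eta_n-2\theta_n$ handles both scores. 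For the second choice, compare $g_{\eta_n-2\theta_n}$ and $g_{\eta_n}$ pointwise, or apply \eqref{eq:score-moment} at level $\eta_n-2\theta_n$ together with $W_2(\pi_{\eta_n},\pi_{\eta_n-2\theta_n})\le C\sqrt{\theta_n d}$.

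\textbf{Propagation along the Picard recursion.} The momentum satisfies $\norm{P_{nh}^-}_{L^p}\le C\sqrt{\kappa(d+p)}$, by the same concentration argument for the $1$-Lipschitz function $\norm{v}$ combined with the Gaussian refresh. Hence
\[
\norm{X_{t_j}^{[0]}-X_{nh}}_{L^p}\le h\,C\sqrt{\kappa(d+p)},
\]
and by the Lipschitz property
\[
\norm{g(X_{t_j}^{[0]})}_{L^p}\le\norm{g(X_{nh})}_{L^p}+Ch\sqrt{\kappa(d+p)}.
\]
For the displacement, define $D_\ell\deq\max_j\norm{X_{t_j}^{[\ell]}-X_{t_j}^{[0]}}_{L^p}$ and $S_0\deq\max_j\norm{g_{\eta_n}(X^{[0]}_{t_j})}_{L^p}$. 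The recursion \eqref{eq:fifth-Picard} with \eqref{eq:integrated-weights-l1} gives
\[
D_{\ell+1}\le h^2\max_j\norm{g_{\eta_n}(X^{[\ell]}_{t_j})}_{L^p}\le h^2(S_0+D_\ell).
\]
Because $h^2\le1/4$, this yields $D_\ell\le\tfrac43h^2S_0$ uniformly in $\ell$, which is \eqref{eq:fifth-stage-displacement}. Then \eqref{eq:fifth-stage-score} at every level follows from $\norm{g(X^{[\ell]})}\le\norm{g(X^{[0]})}+D_\ell$ for either smoothing level, using $p\le C\mathfrak L$ to replace $d+p$ by $d+\mathfrak L$.

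\textbf{Expected obstacle.} The main difficulty is the initial-distance step. One must check that $W_2(\mu_0,\Pi_{\eta_0})$, and hence every $W_2(\mu_n,\Pi_{\eta_n})$, is $O(\sqrt{\kappa d})$ rather than larger. This relies on \eqref{eq:reference-point} and on $\nabla V_{\eta_0}$ being close to $\nabla V$ at $x_{\rm ref}$, which holds since $\norm{g_\eta-\nabla V}\le C\sqrt{\eta d}$. The concentration step also requires the centering at the true mean of $\norm{g}$ under $\mu_n$, which is exactly what the triangle inequality with the coupling provides.
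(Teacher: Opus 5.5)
Your proof is correct. It rests on the same two ingredients as the paper: the uniform bound $W_{2,M_\kappa}(\mu_n,\Pi_{\eta_n})\le C\sqrt{\kappa d}$ from \eqref{eq:fifth-global-W2}, and the sub-Gaussian concentration of \cref{lem:fifth-run-concentration}. You apply them in a different order.

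The paper proceeds as follows.
\begin{enumerate}
\item It proves $L^2$ bounds on the stage scores under a stationary phase input, using the coupling $X=Y+\sqrt{2\theta_n}\,G$ for the level $\eta_n-2\theta_n$.
\item It notes that every Picard stage is a $C$-Lipschitz function of the phase input, and transfers these $L^2$ bounds to $\mu_n$ by coupling.
\item It applies concentration directly to the norm of each stage score, viewed as a Lipschitz function of the phase input and the first refresh.
\item The displacement bound is derived last, from the score bound.
\end{enumerate}

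You instead apply concentration only to the two simplest functionals, $\norm{g(X_{nh})}$ and $\norm{P_{nh}}$. You then propagate $L^p$ norms through the recursion by Minkowski's inequality, getting the displacement first from $D_{\ell+1}\le h^2(S_0+D_\ell)$ and the stage scores afterward.

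Your route never needs Lipschitz dependence of the stages on the phase input or a stationary computation at intermediate stages. It also makes uniformity in the Picard depth explicit through the factor $4/3$. The paper's route is more compact, since one concentration statement covers every stage functional at once.

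Two small slips do not affect the argument.
\begin{enumerate}
\item Equation \eqref{eq:fifth-global-W2} gives $W_2(\mu_n,\Pi_{\eta_n})\le C\sqrt{\kappa d}$, not a constant. This also requires the additive term $\kappa\exp(-C\mathfrak L)/(ch)$ to be at most one, which the paper arranges by enlarging constants. Your next sentence does use the correct order.
\item The bound $\norm{g_\eta-\nabla V}\le C\sqrt{\eta d}$ is false globally: for $V=\norm{x}^2/2$ the difference is $\eta y/(1+\eta)$. The correct estimate, since $R_{\eta,y}$ has mean $y-\eta g_\eta(y)$ and covariance at most $\eta\Id$, is $\norm{g_\eta(y)-\nabla V(y)}\le\eta\,\norm{g_\eta(y)}+\sqrt{\eta d}$. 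At $x_{\rm ref}$ this still gives $\norm{g_{\eta_0}(x_{\rm ref})}\le C\sqrt d$, and hence the initial $W_2$ bound you need.
\end{enumerate}
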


\begin{proof}
Initialization and \eqref{eq:reference-point} give
$W_{2,M_\kappa}(\mu_0,\Pi_{\eta_0})\le C\sqrt{\kappa d}$.
Increasing the constants if necessary, we may assume that
$(\kappa/h)\exp(-C\mathfrak L)\le1$. Hence
\eqref{eq:fifth-global-W2} gives, uniformly in $n$,
$W_{2,M_\kappa}(\mu_n,\Pi_{\eta_n})\le C\sqrt{\kappa d}$.

Under a stationary phase input, \eqref{eq:score-moment},
$1$-Lipschitzness, and $X_{t_j}^{[0]}=X+t_jP^-$ bound the $L^2$ norm
of $g_{\eta_n}(X_{t_j}^{[0]})$ by $C\sqrt d$.
The coupling $X=Y+\sqrt{2\theta_n}\,G$, with
$Y\sim\pi_{\eta_n-2\theta_n}$, gives the same bound for
$g_{\eta_n-2\theta_n}(X_{t_j}^{[0]})$. The Picard bound
\eqref{eq:integrated-weights-l1} propagates these estimates through the
stages. It also makes each stage a $C$-Lipschitz function of its phase
input, uniformly in $J,K$. Coupling with the preceding Wasserstein
bound thus bounds both score norms in $L^2$ by $C\sqrt{\kappa d}$.

By \cref{lem:fifth-run-concentration}, and the independent first
momentum refresh, the norm of either stage score has centered $L^p$
norm at most $C\sqrt{\kappa p}$. Adding its mean proves
\eqref{eq:fifth-stage-score}. Finally, the Picard update rule gives \eqref{eq:fifth-stage-displacement} from
the score bound. In particular, high moments require concentration of
the ideal run, while the quadrature error is used only in $W_2$.
\end{proof}

\subsubsection{Proof of Theorem~\ref{thm:core}}

\begin{proof}[Proof of \cref{thm:core}]
We first verify the local correction bounds for the parameter choices
above.

\medskip\noindent\textbf{Cloud corrections.}
For an interior cloud update, fix a phase $n$ and a level
$\ell\in\{0,\ldots,K-2\}$, and set $\eta=\eta_n$, $\theta=\theta_n$.
Condition on $X^{[\ell]}$ and write
\[
 U_j\deq\frac{\cloud_{t_j}^{[\ell]}-X_{t_j}^{[\ell]}}{\sqrt\theta}\,,
 \qquad \xi_j\deq(U_j,G_j)\sim\cN(0,\Id_{2d})\,,
 \qquad j\in[J]\,.
\]
The blocks $(\xi_j)_{j\in[J]}$ are independent; put
$\xi\deq(\xi_j)_{j\in[J]}\sim\cN(0,\Id_{2Jd})$. Define
\[
 \Phi_j(\xi_j)
 \deq\widehat g_\eta
 \bigl(X_{t_j}^{[\ell]}+\sqrt\theta\,U_j;G_j\bigr)
 =\widehat g_\eta(\cloud_{t_j}^{[\ell]};G_j)\in\R^d\,.
\]
We apply the fluctuation correction to the map
\[
 \Phi : \xi \longmapsto
 \theta^{-1/2}\,(X_{t_i}^{[0]})_{i\in[J]}
       -\theta^{-1/2}\,
        \Bigl(\sum_{j\in[J]}\omega_{i,j}\Phi_j(\xi_j)\Bigr)_{i\in[J]}
 \in\R^{Jd}\,.
\]
Indeed, if $G'\sim\cN(0,\Id_{Jd})$ is the independent output noise,
then the whitened cloud proposal is precisely $\Phi(\xi)+G'$.
This map has the affine block structure
\[
 \Phi(\xi)=b+\sum_{j\in[J]}A_j\Phi_j(\xi_j)\,,
 \qquad b\deq\theta^{-1/2}\,(X_{t_i}^{[0]})_{i\in[J]}\,, \qquad
 A_jv\deq-\theta^{-1/2}\,(\omega_{i,j}v)_{i\in[J]}\,,
 \qquad v\in\R^d\,.
\]
The deterministic offset $b$ does not affect the fluctuation bound, and
$\norm{A_j}_{\op}^2=\theta^{-1}\sum_{i\in[J]}\omega_{i,j}^2$. By
\cref{lem:joint-estimator-derivatives}, the squared Lipschitz constant
of the gradient estimator as a function of that input block is at most
$\eta+\theta$. Let $L_{\rm fluc}$ denote the Lipschitz coefficient
supplied to the fluctuation correction. The
independent block bound following \cref{cor:block-correction} gives
\begin{equation}\label{eq:fifth-alpha}
 L_{\rm fluc}^2
 \leq\frac{\eta+\theta}{\theta}
       \sum_{i,j\in[J]}\omega_{i,j}^2
 \leq C\,\frac{(\eta+\theta)\,h^4}{\theta}\,.
\end{equation}
The last inequality follows by summing the first estimate in
\eqref{eq:integrated-weights-l2} over the $J$ output nodes.
The choice in \eqref{eq:fifth-cloud-scale},
\(\theta_n=C_{\rm cl}\mathfrak L^{9/2}\eta_nh^4
\sqrt{d+\mathfrak L}\), verifies $L_{\rm fluc}^2
 \leq\frac{C}{C_{\rm cl}}\,
 \mathfrak L^{-9/2}\,(d+\mathfrak L)^{-1/2}<\frac18$,
after increasing \(C_{\rm cl}\).
For the fluctuation clipping level, for output dimension
\(Jd\) and moment order \(p\le C\mathfrak L\), the requirements in
\eqref{eq:fixed-node-scale} read
\begin{equation*}
 C L_{\rm fluc}^{2K_{\rm ser}+2}Jd\le\delta_{\rm loc}\,,
 \qquad
 C\,\bigl\{
 L_{\rm fluc}^2\,\bigl(\sqrt{Jdp}+p\bigr)
 +L_{\rm fluc}^4Jd\bigr\}\le \clipB_{\rm fluc}\,.
\end{equation*}
Here \(\delta_{\rm loc}\) denotes any of the local accuracy budgets,
with \(\log(1/\delta_{\rm loc})=O(\mathfrak L)\). The first inequality
holds for \(K_{\rm ser}=C_{\rm ser}\mathfrak L\), after increasing
\(C_{\rm ser}\). For the second, since
\(J,p\le C\mathfrak L\), the preceding bound on \(L_{\rm fluc}\) allows us to take
$\clipB_{\rm fluc}
 \leq c\mathfrak L^{-3}$, where
$c$ can be made arbitrarily small by increasing
\(C_{\rm cl}\).

For the interior mean correction,
\eqref{eq:integrated-weights-l2} and Cauchy--Schwarz give
\begin{equation*}
 \sum_{j\in[J]}\norm{A_j}_{\op}
 \leq\frac{C\sqrt J\,h^2}{\sqrt{\theta_n}}
 \,.
\end{equation*}
The estimate \eqref{eq:marginal-score-size} states that the
mean estimator has \(L^p\) norm at most $C\,\bigl(\eta_n^{3/2}+\frac{\theta_n}{\sqrt{\eta_n}}\bigr)\,
 \sqrt{d+p}$.
Consequently, the whitened displacement \(b_{\rm int}\) satisfies
\begin{align*}
 b_{\rm int}
 &\leq\frac{C\sqrt J\,h^2}{\sqrt{\theta_n}}\,
 \Bigl(\eta_n^{3/2}+\frac{\theta_n}{\sqrt{\eta_n}}\Bigr)\,
 \sqrt{d+\mathfrak L}
 \leq C\,\bigl\{
 \mathfrak L^{-3/4}\,(d+\mathfrak L)^{1/4}\,h^2
 +\mathfrak L^{11/4}\,(d+\mathfrak L)^{3/4}\,h^4\bigr\}
 \le 1\,.
\end{align*}
The mean correction requirement in \eqref{eq:fixed-node-scale} is $C\,(\sqrt p\,b_{\rm int}+b_{\rm int}^2) \le \clipB_{\rm mean}$.
Since \(b_{\rm int}\le1\) and
\(p\le C\mathfrak L\), we may choose
\(\clipB_{\rm mean}\leq C\,\{\mathfrak L^{-1/4}\,(d+\mathfrak L)^{1/4}\,h^2
+\mathfrak L^{13/4}\,(d+\mathfrak L)^{3/4}\,h^4\}
\leq Cc^2\mathfrak L^{-11/4}\).
Decreasing $c$ makes this at most any prescribed small multiple of
$\mathfrak L^{-2}$.

At a hidden RGO call, take $y=X_{t_j}^{[\ell]}$,
$z=X_{t_j}^{[0]}$.
Fix $p_0=C\mathfrak L$ in the moment range of
\cref{lem:fifth-stage-moments}. That lemma and Markov's inequality give
bounds
\begin{equation*}
 \varepsilon_0\le Ch^2\sqrt{\kappa\,(d+\mathfrak L)}\,,\qquad
 L_{\rm score}\le C\sqrt{\kappa\,(d+\mathfrak L)}\,,
\end{equation*}
such that the event $\mathcal G_{\rm RGO}$ defined by
$\norm{y-z}\le\varepsilon_0$ and
$\norm{g_{\eta_n-2\theta_n}(y)}
 +\norm{g_{\eta_n-2\theta_n}(z)}\le L_{\rm score}$ satisfies
$\Pp(\mathcal G_{\rm RGO}^{\comp})\le\exp(-cp_0)$.
Condition on an incoming history in $\mathcal G_{\rm RGO}$, before
drawing the control gradient $g$. In the notation preceding
\cref{prop:symmetric-moments}, the condition in
\cref{lem:hidden-RGO} is $\mathfrak b_p\le c\clipB_{\rm RGO}$.
For \(p\le C\mathfrak L\), the definition of $\mathfrak b_p$ and the
likelihood moment bound
\eqref{eq:symmetric-proposal-moments} give
\begin{align*}
 \mathfrak b_p
 &\leq C\sqrt{\mathfrak L}\,\bigl\{
    \sqrt{\theta_n}\,\varepsilon_0
    +\theta_n^{3/2}L_{\rm score}
    +\sqrt{\eta_n\theta_n(d+\mathfrak L)}\bigr\}\\
 &\leq C\,\bigl\{
    \mathfrak L^{13/4}\sqrt\kappa\,(d+\mathfrak L)^{3/4}\,h^5
    +\mathfrak L^{35/4}\sqrt\kappa\,(d+\mathfrak L)^{5/4}\,h^9
    +\mathfrak L^{15/4}\,(d+\mathfrak L)^{3/4}\,h^4\bigr\}\,.
\end{align*}
The third term dominates, and we can take $\clipB_{\rm RGO}\le Cc^4\mathfrak L^{-9/4}\le1$.
On $\mathcal G_{\rm RGO}$, \cref{lem:hidden-RGO} therefore gives the
prescribed local TV accuracy, uniformly in the incoming history.
The event $\mathcal G_{\rm RGO}$ depends only on the phase
input $(X_{nh},P_{nh}^-)$, including the first momentum refresh, and
the pair $(\ell,j)$, not on the particular hidden RGO invocation.
Let $\mathcal G_n$ be the intersection of these events over the at
most $JK$ pairs in phase $n$. Under the ideal phase law,
\[
 \Pp(\mathcal G_n^\comp)\le JK\exp(-cp_0)
 \le\frac{\varepsilon}{256N}\,,
\]
after enlarging the constant in $p_0=C\mathfrak L$. On
$\mathcal G_n$, every hidden RGO call has the uniform guarantee
above, regardless of how many such calls are made. On its complement,
bound the TV error of the whole phase by $1$. Thus, this exceptional
event is charged once to the per-phase budget below.

It remains to verify the fluctuation and mean corrections for the full
endpoint update. In the whitening \eqref{eq:fifth-whitened-endpoint},
the block map multiplying the gradient estimator at node $j$ is
\[
 \mathsf B_j : v \longmapsto
 \begin{bmatrix}
  -\omega_{J,j}v/\sqrt{\tau}\\[1mm]
  -a_h\omega_jv/\sigma_h
 \end{bmatrix}\,.
\]
Since \(\tau\asymp h^3\), \(\sigma_h^2\asymp h\), and the 
weights satisfy \eqref{eq:integrated-weights-l2}, the maps from
\eqref{eq:fifth-whitened-endpoint} obey
\begin{equation*}
 \sum_{j\in[J]}\norm{\mathsf B_j}_{\op}^2\leq\frac{Ch}{J}\,,
 \qquad
 \sum_{j\in[J]}\norm{\mathsf B_j}_{\op}\leq C\sqrt h\,.
\end{equation*}
It follows that the squared Lipschitz coefficient of the whitened
endpoint proposal satisfies
\begin{equation*}
 L_{{\rm fluc},{\rm end}}^2
 \leq C\,\frac{(\eta_n+\theta_n)\,h}{J}
 \leq C\mathfrak L^{-1}\eta_nh\,.
\end{equation*}
For output dimension \(2d\) and \(p\le C\mathfrak L\), the
fluctuation requirements in \eqref{eq:fixed-node-scale} become $C L_{{\rm fluc},{\rm end}}^{2K_{\rm ser}+2}d
 \le\delta_{\rm loc}$,
which follows as before from the choice of \(K_{\rm ser}\). Similar arguments as before show that we can take $\clipB_{{\rm fluc},{\rm end}}
 \le Cc^3\mathfrak L^{-4}$.

Also, the endpoint mean
displacement \(b_{\rm end}\) satisfies
\begin{equation*}
 b_{\rm end}
 \le C\sqrt h\,
 \Bigl(\eta_n^{3/2}+\frac{\theta_n}{\sqrt{\eta_n}}\Bigr)\,
 \sqrt{d+\mathfrak L}\,.
\end{equation*}
This implies that we can choose
\begin{equation}\label{eq:fifth-endpoint-mean}
 \begin{split}
 \clipB_{{\rm mean},{\rm end}}
 &\le C\,\bigl\{\mathfrak L^{2}\sqrt{d+\mathfrak L}\,h^{7/2}
 +\mathfrak L^{11/2}\,(d+\mathfrak L)\,h^{11/2}\bigr\}
 \le Cc^{7/2}\mathfrak L^{-11/4}\,.
 \end{split}
\end{equation}

It remains to ensure that the recursive cloud requests are
subcritical; i.e., they do not explode the complexity. The required condition \eqref{eq:cloud-branching-smallness}
is $\mathfrak L^2\,(\clipB_{\rm fluc}+\clipB_{\rm mean})
 +\mathfrak L\,
   (\clipB_{{\rm fluc},{\rm end}}+\clipB_{{\rm mean},{\rm end}})
 \le c$.
This is easily verified using the bounds just proved by increasing
\(C_{\rm cl}\) and
then decreasing \(c\). The
endpoint clipping levels are multiplied by only one power of
\(\mathfrak L\) because there is only one endpoint correction along a
mandatory chain.

\medskip\noindent\textbf{Explicit RGO calls.}
Every RGO variance used in a phase belongs to $[\eta_n/2,2\eta_n]$.
The logarithms of the inverse accuracy budgets are
$O(\mathfrak L)$. Let $Q_{\rm RGO}$ denote a uniform upper bound on
the conditional expected number of reference queries made by one
explicit RGO call in this phase. For an RGO variance
\(a\asymp\eta_n\), the direct
sampler condition \eqref{eq:rgo-base-rgo} becomes $a\,\{\sqrt{d\mathfrak L}+\mathfrak L\}\le c$.
Since \(d+\mathfrak L\ge\mathfrak L\), this holds whenever
\(\eta_n\sqrt{(d+\mathfrak L)\,\mathfrak L}<c_0\). Otherwise, the
RGO bound in \cref{thm:fourth-RGO} gives
\begin{equation*}
 Q_{\rm RGO}
 \le C\,\bigl\{1+\mathbf1_{\mathcal A_n}
       \mathfrak L^{17/4}\,
       (d+\mathfrak L)^{1/4}\,h\bigr\}\,,\qquad
 \mathcal A_n\deq
 \{\eta_n\sqrt{(d+\mathfrak L)\,\mathfrak L}\ge c_0\}\,.
\end{equation*}
Here $c_0>0$ is a sufficiently small universal constant. Outside
$\mathcal A_n$, every call therefore makes $O(1)$ reference queries in
conditional expectation. On
$\mathcal A_n$, 
\(\eta_n\asymp\mathfrak L h^2\) and the choice of $h$ imply
\begin{equation*}
 c_0\le C\mathfrak L^{3/2}h^2\sqrt{d+\mathfrak L}
 \le C\mathfrak L^{-3/2}\,(d+\mathfrak L)^{1/10}
 \quad\Longrightarrow\quad d+\mathfrak L\ge c\mathfrak L^{15}\,.
\end{equation*}
After decreasing the step size constant $c$, this ensures
$d+\mathfrak L\ge C\mathfrak L^9$, so the sharper RGO bound applies
for every local accuracy budget.

We now combine the query count for one RGO call with its request
rate. One contracted
cloud vertex uses at most \(C\mathfrak L^2\) expected local reference
queries, as recorded in \eqref{eq:cloud-forest-caps}. The same display
and the RGO request estimate \eqref{eq:RGO-request-mean} bound its
expected number of explicit RGO requests by $C\mathcal B_{\rm mean}J$, where $\mathcal B_{\rm mean}
 \deq \clipB_{\rm mean} K+\clipB_{{\rm mean},{\rm end}}$.
The branching estimate makes the expected number of vertices per
phase universally bounded. Hence, the expected reference query count
in one phase is at most $C\,\{\mathfrak L^2+
\mathcal B_{\rm mean} J Q_{\rm RGO}\}$.
The constant part of \(Q_{\rm RGO}\) is absorbed into
\(C\mathfrak L^2\), because the branching inequality gives
\(\mathcal B_{\rm mean} J\le C\).

It remains to control the non-constant part on \(\mathcal A_n\).
Multiplying the two mean bounds by $KJ\le C\mathfrak L^2$
and $J\le C\mathfrak L$, respectively, gives
\begin{align*}
 \mathcal B_{\rm mean}JQ_{\rm RGO}
 &\le C\mathfrak L^2\,.
\end{align*}
Hence,
\begin{equation}\label{eq:fifth-phase-cost}
 \E[\text{reference queries in one phase}\mid\text{incoming history}]
 \le C\mathfrak L^2\,.
\end{equation}

\medskip\noindent\textbf{Accuracy.}
The recursive implementation is justified by induction from the
explicit level-$0$ cloud: local caps and decreasing cloud levels make
the request tree finite.
Separate recursive calls use independent seeds, with Poisson
counts, node choices, and Brownian evaluation times revealed before
their fresh Gaussian inputs.
Disjoint subtrees for the fluctuation correction, mean estimator,
and hidden RGO sampler ensure that, under exact cloud and RGO laws,
\cref{prop:marginal-score} supplies an independent unbiased mean
displacement; hence \cref{cor:block-correction} applies with the
parameters above.

Consider the ideal algorithm with exact RGO kernels, exact weak derivatives, and
unclipped likelihood tilts, but retain the depth-$K$ numerical phase.
Applying \eqref{eq:fifth-global-W2} with $\kappa\le\kappa_0$ and choosing
$N=\lceil C_N\mathfrak L/h\rceil$ with $C_N$ sufficiently large gives
\begin{equation}\label{eq:fifth-final-W2}
 W_2(\mu_N^X,\pi_{\eta_{\rm f}})
 \leq\frac{\varepsilon}{16}\sqrt{\eta_{\rm f}}\,,
 \qquad
 \eta_{\rm f}=\eta_0+N\tau\in[\eta_0,5\eta_0/4]\,.
\end{equation}
The moment bounds along the complete ideal run follow from
\cref{lem:fifth-run-concentration,lem:fifth-stage-moments}.

Insert first an uncapped reference algorithm which uses the clipped, truncated
cloud corrections above with exact Hessian and proximal queries, and keeps every RGO kernel exact.
Here, \emph{uncapped} refers only to the global slot cap: the
reference algorithm keeps the local stopping rules of
\cref{sec:recursive-cloud}. By \eqref{eq:cloud-local-cap} with
$u=C_{\rm cap}\mathfrak L$, each contracted vertex triggers a local
stop with conditional probability at most $\exp(-C_{\rm cap}\mathfrak L)$,
and the expectation bound in \cref{lem:adaptive-tree} gives at most
$4/3$ expected vertices per phase. After enlarging $C_{\rm cap}$, the
resulting stopping probability, at most $\varepsilon/(256N)$ per phase,
is included in the per-phase budget below.
At a clipping or truncation
step, include the ideal history in the proposal variable of
\cref{lem:clipping}. The ideal stage moments then bound the resulting integrated conditional
error. Choose the clipping ratios,
series cutoffs, and local recursive budgets so that their
combined error is at most $\varepsilon/(128N)$. Together with the
local stopping probability and $\Pp(\mathcal G_n^\comp)$ above, this
puts one complete reference phase within $\varepsilon/(64N)$, in
integrated total variation along the ideal history, of its ideal
counterpart. The corrections use only polylogarithmically many reference queries. The expectation
bound in \cref{lem:adaptive-tree} controls the sum over side branches.
Telescoping the $N$ phase kernels with
\cref{lem:adaptive-tv-comparison} gives
TV at most $\varepsilon/32$ between the exact ideal algorithm and this uncapped reference
algorithm.

Use the global slot cap \eqref{eq:cloud-global-cap} from
\cref{sec:recursive-cloud}.
Choose $C_{\rm cap}$ so that the reference algorithm exceeds this cap
with probability at most $\varepsilon/32$.
Stop the reference and implemented algorithms at this same cap. Give every
implemented RGO call before the terminal lift the TV budget
$\varepsilon/(64T_{\max})$. Then, \cref{lem:adaptive-tv-comparison}, applied with the
uncapped reference algorithm as its ideal algorithm and
$\varepsilon_{\rm cap}=\varepsilon/32$, puts the capped implementation
within $T_{\max}\cdot\varepsilon/(64T_{\max})+\varepsilon/32
=3\varepsilon/64$ of the uncapped algorithm. A final
triangle inequality with the preceding bound $\varepsilon/32$ gives
the total $5\varepsilon/64$, and hence
\begin{equation}\label{eq:fifth-executable-error}
 \TV\bigl(
 \Law(\widehat X_{Nh},\widehat P_{Nh}),
 \Law(X_{Nh},P_{Nh})\bigr)
 \leq\varepsilon/8\,.
\end{equation}
The bound
$\mathcal B_{\rm mean} J Q_{\rm RGO}\le C\mathfrak L^2$, together with
\eqref{eq:RGO-request-mean}, gives at most $CN\mathfrak L^2$ expected
pre-terminal reference queries. It also applies to the capped
implementation: the clipped acceptance probabilities and offspring
bounds hold on every implemented history, and
\cref{thm:fourth-RGO} supplies the required conditional expected query
bound uniformly at every requested center.

Add an independent $\cN(0,\eta_{\rm f}\Id)$ variable to the final position.  By
\eqref{eq:W2-KL} and \eqref{eq:fifth-final-W2},
\[
 \KL\bigl(
 \mu_N^X*\cN(0,\eta_{\rm f}\Id)\bigm\Vert\pi_{2\eta_{\rm f}}
 \bigr)
 \leq\frac{\varepsilon^2}{512}\,.
\]
Apply $R_{2\eta_{\rm f}}$ and use
$\pi_{2\eta_{\rm f}}R_{2\eta_{\rm f}}=\pi$. Implement this single terminal RGO call with TV budget $\varepsilon/32$. Data processing, Pinsker,
this terminal budget, and \eqref{eq:fifth-executable-error} bound the
TV distance to $\pi$ by
$\varepsilon/32+\varepsilon/32+\varepsilon/8=3\varepsilon/16$ when the
local corrections use exact Hessian and proximal queries. The
remaining $13\varepsilon/16$ is reserved for the gradient-only
realization of the local corrections, which is controlled in
\cref{thm:gradient-realization}. This proves
\eqref{eq:fifth-core-TV}.

\medskip\noindent\textbf{Query complexity.}
Finally, $N\leq C\mathfrak L\,h^{-1}
 \leq C\mathfrak L^{5/2}\,(d+\mathfrak L)^{1/5}$.
Equation \eqref{eq:fifth-phase-cost} bounds the expected number of
reference queries by
$CN\mathfrak L^2$.
The terminal RGO is also absorbed: the non-constant part of its
expected query bound, divided by
$N\mathfrak L^2$ is at most
$C\mathfrak L^{5/4}\,(d+\mathfrak L)^{1/4}\,h^2\le C$.
Thus the complete reference realization uses at most
$C\mathfrak L^{9/2}\,
 (d+\mathfrak L)^{1/5}$ expected reference queries.
Finally, \cref{thm:gradient-realization} proves the center caps and
realizes each local reference query using
$C\mathfrak L$ gradients; its separately charged RGO
normalizations use at most $C\mathfrak L$ gradient queries per
requested RGO. Since there are $O(N)$ such requests in expectation,
their expected total is absorbed by the displayed bound times
$C\mathfrak L$.
This proves \eqref{eq:fifth-core-complexity}.
\end{proof}

\subsection{Composition with the proximal sampler}\label{sec:proximal-composition}
In this subsection, we assume that $\nabla V$ is $1$-Lipschitz, as well as \eqref{eq:LSI} and~\eqref{eq:reference-point}.
By combining the proximal sampler with \cref{thm:core}, we obtain linear dependence on the LSI
constant $\kappa$.
The details are standard, so we only sketch the proof.

\begin{algorithm}[Proximal composition]\label{alg:proximal-composition}
Draw $X_0\sim\cN(x_{\rm ref},\frac12\Id)$ and choose
\begin{equation*}
 N_{\rm prox}\deq\Bigl\lceil4\kappa\log\frac{8\sqrt{3\kappa d}}{\varepsilon}\Bigr\rceil\,,
 \qquad \delta\deq\frac{\varepsilon}{8N_{\rm prox}}\,,\qquad
 B_{\rm cap}\deq C\kappa
       \sqrt{\frac{(N_{\rm prox}+1)\,d}{\varepsilon}}\,.
\end{equation*}
For $n=0,\ldots,N_{\rm prox}-1$, draw a fresh standard Gaussian $G_n$
and put $Y_n=X_n+G_n/\sqrt2$. If $\norm{Y_n-x_{\rm ref}}>B_{\rm cap}$,
stop and return $x_{\rm ref}$. Otherwise sample $R_{1/2,Y_n}$ within
TV error $\delta$ using the rescaled sampler of
\cref{thm:core}, with a reference point constructed by gradient descent,
to obtain $X_{n+1}$. If $\norm{X_{n+1}-x_{\rm ref}}>B_{\rm cap}$,
stop and return $x_{\rm ref}$. After all steps, return $X_{N_{\rm prox}}$.
\end{algorithm}

\begin{proof}[Proof of \cref{thm:fifth-root}]
Each $R_{1/2,y}$ has curvature in $[1,3]$. Gradient descent supplies an
admissible reference point in $O(\log(2+\norm{\nabla V(y)}))$ queries.
After rescaling, \cref{thm:core} gives TV error $\delta$ with
total expected gradient query bound
\begin{equation*}
 C\mathfrak L_y^{11/2}\,(d+\mathfrak L_y)^{1/5}\,,\qquad
 \mathfrak L_y\deq\log\frac{12d\,(1+\norm{\nabla V(y)})}{\delta}\,.
\end{equation*}

Write $\mu_n$ for the laws of the uncapped exact proximal chain.
LSI, smoothness, and \eqref{eq:reference-point} give
$\KL(\mu_0\mmid\pi)\leq6\kappa d$. Thus,
\cite[Theorem~3]{Chen+22ProxSampler} and Pinsker's inequality yield
$\TV(\mu_{N_{\rm prox}},\pi)\leq\varepsilon/8$.
The LSI transport inequality \cite{OttVil00LSI} and the triangle
inequality through $\pi$ give, along the uncapped exact chain,
$\E\norm{X_n-x_{\rm ref}}^2+\E\norm{Y_n-x_{\rm ref}}^2
\leq C\kappa^2d$ for all $n\geq0$.
Markov's inequality and a union bound therefore charge at most
$\varepsilon/8$ for the caps. The RGO replacement errors sum to
$N_{\rm prox}\delta=\varepsilon/8$, proving accuracy.

At every implemented RGO center,
$\norm{\nabla V(Y_n)}\leq\sqrt{d/\kappa}+B_{\rm cap}$, so
$\mathfrak L_{Y_n}\leq C\mathfrak L$. Summing the RGO query bound over
$N_{\rm prox}=O(\kappa\mathfrak L)$ calls gives
$C\mathfrak L^{13/2}\kappa\,(d+\mathfrak L)^{1/5}$, as claimed.
\end{proof}

%% file: 5_high_acc_sections/appendix_a_rgo.tex
\section{Improved RGO sampler}\label{sec:rgo}

The hidden RGO sampler (\cref{gadget:hidden-RGO}) also requires explicit RGO calls, and in order for their cost to be acceptable, we must develop an improved RGO sampler.

We build it from the same Gaussian corrections as
the main sampler. The additional observation is that an RGO target has a known quadratic part. We solve the resulting linear term in each Picard
update exactly and apply the cloud correction only to the residual gradient.
In turn, this requires further RGO calls, but each RGO used by this construction is localized
enough for a direct rejection routine. Thus, the construction
uses the local results of \cref{sec:fixed-correction,sec:marginal,sec:recursive},
but does not invoke \cref{thm:core} or the oracle being proved.

We remark that another approach to develop an improved RGO sampler would be to adapt proximal BPS~\cite{ProximalBPS}, with the Picard HMC warm start~\cite{PicardHMC}, to the RGO structure.

\begin{theorem}[Improved RGO sampler]\label{thm:fourth-RGO}
Let $V\in C^2(\R^d)$ satisfy $0\preceq\nabla^2V\preceq\Id$.
For every $0<\eta\leq1$, $y\in\R^d$, and $0<\varepsilon<1/4$, there is
an almost surely terminating routine with output law $\widehat R_{\eta,y}$
such that $\TV(\widehat R_{\eta,y},R_{\eta,y})\leq\varepsilon$. With
$\mathfrak L\deq\log(d/(\eta\varepsilon))$, its expected query count
satisfies
\begin{equation}\label{eq:QE4-cost}
 \sup_{y\in\R^d}\E[N_{\nabla V}+N_{\prox}+N_{\nabla^2V}]
 \le C\,\bigl\{1+\mathfrak L^{17/4}\,
 (\eta^2\,(d+\mathfrak L))^{1/4}\bigr\}\,.
\end{equation}
Here the counts refer to gradient, proximal, and full Hessian queries.
If $d+\mathfrak L\ge\mathfrak L^9$, the exponent $17/4$
in \eqref{eq:QE4-cost} improves to $15/4$.
\end{theorem}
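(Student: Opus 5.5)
Let $\mathfrak L\deq\log(d/(\eta\varepsilon))$. The plan splits into two regimes according to the size of $\eta^2 d$.

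\textbf{Regime 1 (small $\eta$).} Suppose $\eta(\sqrt{d\mathfrak L}+\mathfrak L)\le c$. Here the RGO target $R_{\eta,y}$ is sampled directly by FORS with $O(1)$ expected queries. The proposal is the Gaussian $\cN(\prox_{\eta V}(y),(\eta^{-1}\Id+\nabla^2V(\prox_{\eta V}(y)))^{-1})$, or more simply $\cN(\prox_{\eta V}(y),\eta\Id)$ with a linear correction. The log-density ratio is $-\{V(x)-V(x_*)-\langle\nabla V(x_*),x-x_*\rangle\}$, where $x_*=\prox_{\eta V}(y)$. I would estimate it unbiasedly by the midpoint/segment integral, exactly as in \eqref{eq:hidden-RGO-centered-likelihood}, using a reference draw to cancel the normalizer. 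Its $L^p$ norm is $O(\eta(\sqrt{dp}+p))$ by Gaussian Poincar\'e, since $x-x_*$ has scale $\sqrt\eta$ and $\nabla V$ is $1$-Lipschitz. \Cref{lem:clipping} and \cref{lem:reusable-FORS} then give TV error $\varepsilon$ at constant cost. This is the ``direct sampler condition'' \eqref{eq:rgo-base-rgo} referred to later.

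\textbf{Regime 2 (large $\eta$).} Otherwise, I run a rescaled Gaussian cloud sampler on the strongly log-concave target $R_{\eta,y}$, whose potential is $V+\norm{\cdot-y}^2/(2\eta)$. After multiplying space by $\eta^{-1/2}$, it has condition number at most $2$ and a quadratic part that is known in closed form. Picard HMC is run on the smoothed version of the nonquadratic part. The linear (OU/harmonic) part of each Picard update is solved exactly: we integrate the harmonic oscillator analytically, with Chebyshev weights replaced by the corresponding variation-of-constants kernels. The clouds then only carry the residual gradient $\nabla V$, which in rescaled units has Lipschitz constant $\eta$ rather than $1$. Redoing the scale computation of \cref{ssec:roadmap} with this extra factor $\eta$ multiplying every residual weight gives the effective dimension parameter $\eta^2 d$ in place of $d$. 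The fluctuation condition becomes $\theta\asymp\eta\sqrt d\,h^6$ in rescaled units, and the mean-correction constraint becomes roughly $\clipB_{\rm mean}\asymp \eta^{3/2}d^{3/4}h^4$. The number of phases is $N\asymp h^{-1}$, and I choose $h\asymp(\eta^2 d)^{-1/4}$ up to logs. This is precisely the step size at which the inner RGOs needed by the hidden RGO gadget become localized enough for Regime 1. Their smoothing variance in original units is $\eta_{\rm inner}\asymp h^2\eta$, and one checks $\eta_{\rm inner}\sqrt{d\mathfrak L}\le c$. The total cost is then $N\asymp 1+(\eta^2 d)^{1/4}$ times polylog factors. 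This avoids recursion into \cref{thm:core}, since the inner RGOs are handled by the direct routine.

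\textbf{Accuracy and bookkeeping.} The accounting reuses \cref{sec:recursive} essentially verbatim. This covers the adaptive forest via \cref{lem:adaptive-tree,lem:RGO-forest}, the TV telescoping via \cref{lem:adaptive-tv-comparison}, the warm start from $\prox_{\eta V}(y)$ (which replaces \eqref{eq:reference-point}), and the terminal smoothing removal via \eqref{eq:W2-KL}. The power $\mathfrak L^{17/4}$ comes from tallying the logarithmic factors in $h$, $\theta$, the local caps $C\mathfrak L^2$, and the inner RGO conditions. When $d+\mathfrak L\ge\mathfrak L^9$, the small-$\eta$ alternative in the inner condition is automatically met with slack, which saves $\mathfrak L^{1/2}$.

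\textbf{Main obstacle.} I expect the hardest step to be verifying that, with the exact harmonic integration, the Picard reference estimates of \cref{prop:reference-Picard} still hold with the residual weights gaining the factor $\eta$. This requires a contraction and a quadrature-defect bound for the modified integrator. I would first attempt to obtain it by conjugating with the explicit linear flow and reapplying the estimates from \cite{PicardHMC} to the transformed residual ODE, whose vector field is $\eta$-Lipschitz.
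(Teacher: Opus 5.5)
Your architecture matches the paper's. In the small-$\eta$ regime you use the direct FORS sampler under \eqref{eq:rgo-base-rgo}. Otherwise you normalize about $x_\star=\prox_{\eta V}(y)$ and let the clouds carry only the $(\eta/2)$-Lipschitz residual. You take $h\asymp(\eta^2d)^{-1/4}$ (up to logarithms) so that every inner RGO, of variance $O(\eta\sigma^2)$ in original units, again satisfies the direct condition. Finally, you reuse the forest accounting of \cref{sec:recursive}.

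The gap is at the step you flag as the main obstacle. Replacing the Chebyshev weights by variation-of-constants kernels gives an exponential-collocation scheme whose fixed point is \emph{not} the scheme analyzed in \cref{prop:reference-Picard}. You would therefore need a new contraction and quadrature-defect theorem, and your plan does not supply one. In the interaction picture the residual ODE is time-dependent and not of the Hamiltonian form treated in \cite{PicardHMC}. Moreover, the $1-ch$ contraction of a phase comes from the quadratic confinement together with the OU refresh, not from the $\eta$-Lipschitz residual field. So ``reapplying'' those estimates to the transformed ODE does not yield the phase-kernel contraction.

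The missing idea is to discretize the implicit form \eqref{eq:rgo-implicit-Picard} with the \emph{standard} weights $W=(\omega_{i,j})$. Evaluate the linear term on the new layer and the residual on the old one, and solve $(\Id_J+\lambda_\sigma W)X^{[\ell+1]}=X^{\rm free}-Wr_\sigma(X^{[\ell]})$ exactly. At infinite depth this has the same fixed point as the ordinary iteration, namely the solution of $X=X^{\rm free}-W\nabla\widetilde V_\sigma(X)$. Hence \cref{prop:reference-Picard} with $\kappa=2$, taken at infinite depth, gives the contraction and the $B_{J,2}h(h/\sigma)^J$ defect with no new analysis. The residual iteration contracts by $q=C\eta h^2$ per layer, so the finite-depth phase map is within $O(q^{K-1})$ of that limit (with linear growth in the phase input). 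A $W_p$ recursion using the limiting contraction then transfers accuracy and stage moments.

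Relatedly, the smoothing must be applied to the whole normalized target $\widetilde\pi$, not only to its nonquadratic part, so that the terminal RGO disintegration returns $\widetilde\pi$. Completing the square is what yields $\lambda_\sigma=(2+\sigma^2)^{-1}$ and the residual $r_\sigma$ of \eqref{eq:rgo-residual-gradient}.

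Two smaller corrections. First, the residual estimator's Jacobian carries a factor $\eta$, so $L_{\rm fluc}^2$ carries $\eta^2$. This gives $\theta\asymp\eta^2\sqrt d\,h^6$ rather than $\eta\sqrt d\,h^6$, and $\clipB_{\rm mean}\asymp\eta d^{1/4}h^2+\eta^2d^{3/4}h^4$ up to logarithms, both terms being $\asymp d^{-1/4}$ at your $h$. With your factor $\eta^{3/2}$ the mean clipping level would be $\eta^{-1/2}d^{-1/4}\asymp\mathfrak L^{1/4}$ near $\eta\asymp(d\mathfrak L)^{-1/2}$, violating \eqref{eq:cloud-branching-smallness}.

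Second, the $\mathfrak L^{15/4}$ refinement does not come from slack in the inner direct-RGO condition, which remains tight. It comes from reducing the smoothing to $\sigma_0^2\asymp\mathfrak L h^2$. This is allowed because $B_{J,2}\le C_0(C\sqrt J)^J$ only requires $h/\sigma\lesssim\mathfrak L^{-1/2}$. Under the same direct-RGO constraint this lets $h$ grow by $\mathfrak L^{1/2}$. The hypothesis $d+\mathfrak L\ge\mathfrak L^9$ is what keeps the correspondingly larger clipping levels within \eqref{eq:cloud-branching-smallness}.
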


Using the direct RGO sampler of \cref{lem:rgo-base} for every
RGO call in the Gaussian cloud sampler would yield an overall query
complexity of $\wtO(\kappa d^{1/4})$. Indeed, suppressing logarithmic
factors, its constant-cost guarantee requires $\eta\lesssim d^{-1/2}$,
which forces $h\lesssim d^{-1/4}$ through $\eta\asymp h^2$.
The improved bound $\wtO(1+\sqrt{\eta}\,d^{1/4})$ per RGO call
allows $h\asymp d^{-1/5}$, yielding the overall complexity
$\wtO(\kappa d^{1/5})$; see the cost balance in \cref{ssec:roadmap}.

\subsection{Direct RGO sampler}
We make use of the FORS sampler of
\cite[Theorems~3.1 and~3.3]{Chen+26HighAccDiffusion} as a baseline RGO sampler.

\begin{lemma}[Direct RGO sampler]\label{lem:rgo-base}
Let $V\in C^2(\R^d)$ satisfy $0\preceq\nabla^2V\preceq\Id$, $0<\eta\leq1$, and $0<\varepsilon<1/2$.
If
\begin{equation}\label{eq:rgo-base-rgo}
 \eta\,\bigl\{\sqrt{d\log(1/\varepsilon)}+\log(1/\varepsilon)\bigr\}\leq c\,,
\end{equation}
then $R_{\eta,y}$ can be sampled to TV accuracy $\varepsilon$ using one exact
proximal query and $O(1)$ expected gradient queries, uniformly in $y$.
\end{lemma}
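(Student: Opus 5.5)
The plan is a FORS rejection sampler with the Gaussian proposal $\cN(\prox_{\eta V}(y),\eta\Id)$, using the quadratic lower bound from the proximal point. Put $x^\star\deq\prox_{\eta V}(y)$, so that $\nabla V(x^\star)=(y-x^\star)/\eta$, and write the RGO potential as
\[
 V(x)+\frac{\norm{x-y}^2}{2\eta}
 =\mathrm{const}+\frac{\norm{x-x^\star}^2}{2\eta}+\Bigl\{V(x)-V(x^\star)-\ip{\nabla V(x^\star)}{x-x^\star}\Bigr\}\,.
\]
Therefore $R_{\eta,y}$ is the tilt of $Q\deq\cN(x^\star,\eta\Id)$ by $\exp\{-D(x)\}$, where $D$ is the Bregman divergence of $V$ at $x^\star$. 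This term satisfies $0\le D(x)\le\norm{x-x^\star}^2/2$, so the normalizer $Z=\E_Q\e^{-D}$ lies in $[\e^{-C\eta d},1]$. Under \eqref{eq:rgo-base-rgo} we have $\eta d\le c$, so $Z\ge1/2$.

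The tilt $w=-D$ cannot be evaluated with gradients alone, so I will build an unbiased estimator from the integral representation. Draw $S\sim\mathsf{Unif}[0,1]$ and set
\[
 W\deq-\ip{\nabla V(x^\star+S(x-x^\star))-\nabla V(x^\star)}{x-x^\star}\,.
\]
Then $\E[W\mid x]=-D(x)$, and evaluating $W$ costs one gradient query, since $\nabla V(x^\star)$ is computed once. One could center $W$ further to reduce variance, but a cruder bound suffices here. Because $\nabla V$ is $1$-Lipschitz, $\abs W\le\norm{x-x^\star}^2$. Under $Q$, $\norm{x-x^\star}^2$ has $L^p$ norm at most $C\eta(d+p)$. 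Under $Q_w$, the Radon--Nikodym derivative is at most $1/Z\le2$, so the same moment bound holds with a doubled constant.

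This crude bound of order $\eta d$ is too weak when only $\eta\sqrt d$ is small. I will instead subtract a known mean: replace $W$ by $W+\eta d/2\cdot$ (a constant), or better, use the estimator
\[
 \widetilde W\deq W-\E_Q[\text{linearization}]\,,
\]
so that it concentrates at scale $\eta(\sqrt{dp}+p)$ via the Gaussian Poincaré inequality applied to $x\mapsto D(x)$, whose gradient has norm at most $\norm{x-x^\star}$. Concretely, I will use the symmetric trick of \cref{alg:hidden-RGO}: pair $x$ with an independent reference $x'\sim Q$ and estimate $D(x')-D(x)$. The constant $D(x')$ cancels in normalization, and the fluctuation is governed by $\sqrt p\,\norm{\nabla D}_{L^p}\le C\sqrt{\eta p}\,\sqrt{\eta(d+p)}$, plus the $S$-randomization error. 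That error is controlled in the same way as the odd-remainder term $\mathcal R$ in \cref{prop:symmetric-moments}, by evaluating the gradient at the symmetric points $M\pm S\Delta$.

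This gives $L^p$ norms at most $C\eta(\sqrt{dp}+p)$ with $p=\lceil C\log(1/\varepsilon)\rceil$. Condition \eqref{eq:rgo-base-rgo} then makes this at most $c\clipB$ for a constant $\clipB\le1$. \Cref{lem:clipping}, with $\eps_{\rm bias}=0$, bounds the TV error by $C\clipB(C_0/\clipB)^p\le\varepsilon$. \Cref{lem:reusable-FORS} gives $O(1)$ expected proposals and estimator calls, each costing $O(1)$ gradients, plus the single proximal query to compute $x^\star$.

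The main obstacle is the moment bound under the tilted law. This requires the variance-reduced estimator to have $L^p$ norm of order $\eta\sqrt{dp}$ rather than $\eta d$, which is exactly what the reference-pairing and symmetric-point construction from \cref{prop:symmetric-moments} is meant to deliver.
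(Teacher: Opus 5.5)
Your final construction is sound: the Gaussian proposal $Q=\cN(x^\star,\eta\Id)$ at the proximal point, with a tilt estimator that pairs $x$ with an independent $x'\sim Q$ and evaluates gradients at the symmetric points $M\pm S\Delta$. However, the argument as written has a false step and leaves the decisive estimate unproved. Condition \eqref{eq:rgo-base-rgo} gives only $\eta\sqrt{d\log(1/\varepsilon)}\le c$, not $\eta d\le c$. For $V=\norm{\cdot}^2/2$ one has $Z=\E_Q\e^{-D}=(1+\eta)^{-d/2}$. For admissible $\eta\asymp(d\log(1/\varepsilon))^{-1/2}$ this is exponentially small in $\sqrt{d/\log(1/\varepsilon)}$. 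Since $\dd Q_w/\dd Q=\e^{-D}/Z$ attains $1/Z$ at $x^\star$, neither $Z\ge1/2$ nor $\dd Q_w/\dd Q\le2$ holds in exactly the regime the lemma targets. So the normalization hypothesis of \cref{lem:clipping} and your transfer of $L^p$ bounds to the tilted law are unsupported. For the paired estimator you also leave the tilted-law bound open as ``the main obstacle''. The intermediate idea of recentering the one-sided secant by a constant fails too. For quadratic $V$ with Hessian $H$ that estimator equals $-S\ip{H(x-x^\star)}{x-x^\star}$, whose $S$-randomization alone has spread of order $\eta d$; it must be discarded, not shifted.

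The missing step is precisely the one in the proof of \cref{prop:symmetric-moments}. For the paired estimator, $\E[W\mid x]=w(x)\deq\E_QD-D(x)$, so Jensen gives $Z=\E_Q\e^{w}\ge1$ and $\dd Q_w/\dd Q\le\e^{w}$. Because $\norm{\nabla D(x)}\le\norm{x-x^\star}$, the Gaussian $L^p$ Poincar\'e inequality gives $\norm w_{L^p(Q)}\le C\eta(\sqrt{dp}+p)\le Cc\,p$ under \eqref{eq:rgo-base-rgo}. This linear growth yields $\E_Q\e^{2\abs w}\le C$. H\"older's inequality then gives $\norm W_{L^p(\ol Q_w)}\le C\norm W_{L^{2p}(\ol Q)}\le C\eta(\sqrt{dp}+p)$, where $\ol Q,\ol Q_w$ are the joint laws of $(x,x',S)$. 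With this in place, \cref{lem:clipping} (with $\eps_{\rm bias}=0$, a universal $\clipB$, and $p\asymp\log(1/\varepsilon)$) and \cref{lem:reusable-FORS} complete the argument. The cost is two gradients per estimator and none for $\nabla V(x^\star)=(y-x^\star)/\eta$.

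Repaired this way, your proof is a self-contained alternative to the paper's. The paper simply invokes \cite[Theorems~3.1 and~3.3]{Chen+26HighAccDiffusion} with $s=1$, $\beta_1=1$, $x_+=\prox_{\eta V}(y)$ and zero proximal residual. The citation is shorter and yields a chi-squared guarantee. Your version rebuilds the result from \cref{lem:clipping} and the symmetric-secant device of \cref{alg:hidden-RGO}.
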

\begin{proof}
Apply \cite[Theorem~3.3]{Chen+26HighAccDiffusion} with
smoothness exponent $s=1$, smoothness constant $\beta_1=1$, and
$x_+=\prox_{\eta V}(y)$. The proximal residual in that theorem is
zero, and its chi-squared guarantee implies the required TV bound.
The clipping level is a universal constant, so
\cite[Theorem~3.1]{Chen+26HighAccDiffusion} gives $O(1)$ expected
gradient queries after the single proximal query.
\end{proof}

\subsection{Removing the quadratic part of the Picard update}\label{ssec:remove-quadratic}

The RGO potential contains a known quadratic term. We use this structure to
reduce the part of each Picard update that needs Gaussian correction.
Following the progression in \cite[Section 3.2]{PicardHMC}, we first write
the continuous Hamiltonian dynamics, then introduce Picard iteration
and quadrature, and finally replace the residual gradient by a computable
estimator.

\paragraph{Separating the quadratic and residual potentials.}
Compute $x_\star\deq\prox_{\eta V}(y)$ and write
$x=x_\star+\sqrt{\eta/2}\,z$. The optimality condition
$y=x_\star+\eta\nabla V(x_\star)$ cancels the linear term in the
RGO potential. Up to an additive constant, the potential in the new
coordinates is therefore
\begin{equation*}
 \widetilde V(z)\deq\frac14\,\norm z^2+F(z)\,,\qquad
 F(z)\deq V(x_\star+\sqrt{\eta/2}\,z)-V(x_\star)
      -\sqrt{\eta/2}\,\ip{\nabla V(x_\star)}z\,.
\end{equation*}
Thus $\nabla F(0)=0$, $0\preceq\nabla^2F\preceq(\eta/2)\Id$, and
$\frac12\Id\preceq\nabla^2\widetilde V\preceq\Id$. The normalized
target is well-conditioned, while its residual gradient has Lipschitz
constant of order $\eta$.
Let $\widetilde\pi$ be this target, and write
$\widetilde\pi_\sigma\deq\widetilde\pi*\cN(0,\sigma^2\Id)$,
$\widetilde V_\sigma\deq-\log\widetilde\pi_\sigma$, and
$\widetilde\Pi_\sigma\deq\widetilde\pi_\sigma\otimes\cN(0,\Id)$.
Here, $\eta$ remains the requested RGO variance in the original
coordinates, whereas $\sigma^2$ is the smoothing variance in the
normalized coordinates. The cloud variance $\theta$ below is also
measured in the normalized coordinates.
Completing the quadratic terms in the posterior
$R^{\widetilde V}_{\sigma^2,z}$ gives
$R^F_{2\sigma^2/(2+\sigma^2),\,2z/(2+\sigma^2)}$. Consequently,
\begin{equation}\label{eq:rgo-residual-gradient}
 \nabla\widetilde V_\sigma(z)=\frac{z}{2+\sigma^2}+r_\sigma(z)\,,\qquad
 r_\sigma(z)\deq\frac{2}{2+\sigma^2}\,
 g^F_{2\sigma^2/(2+\sigma^2)}\Bigl(\frac{2z}{2+\sigma^2}\Bigr)\,.
\end{equation}
Here $g^F_b(v)\deq\E_{R^F_{b,v}}\nabla F$ is well-defined even when
$\exp(-F)$ is not integrable.
The posterior covariance identities give
$0\preceq Dg_b^F\preceq(\eta/2)\Id$, so
$\Lip(r_\sigma)\le\eta/2$. These identities require integrability
of the strongly log-concave posterior, which holds for every $b>0$.

\paragraph{Exact integration of the linear dynamics.}
Fix the smoothing level $\sigma$ during one phase and put
$\lambda_\sigma\deq(2+\sigma^2)^{-1}$. Without loss of generality, consider time $nh$ to be time $0$
in the local time subscripts, so that $X_0$ denotes $X_{nh}$, and $P_0^-$ denotes
the momentum $P_{nh}^-$ after the first OU half-refresh. By
\eqref{eq:rgo-residual-gradient}, the Hamiltonian dynamics between
refreshments are
\begin{equation*}
 \dot X_t=P_t\,,\qquad
 \dot P_t=-\lambda_\sigma X_t-r_\sigma(X_t)\,,\qquad 0\le t\le h\,.
\end{equation*}
The initial momentum is $P_0^-$. With the residual omitted, this is a
harmonic oscillator, which can be solved exactly. Define
\begin{equation*}
 c_\sigma(t)\deq\cos(\sqrt{\lambda_\sigma}\,t)\,,\qquad
 s_\sigma(t)\deq\frac{\sin(\sqrt{\lambda_\sigma}\,t)}{\sqrt{\lambda_\sigma}}\,.
\end{equation*}
The linear flow is
\begin{equation}\label{eq:rgo-linear-flow}
 \begin{aligned}
 X_t^{\rm lin}&=c_\sigma(t)\,X_0+s_\sigma(t)\,P_0^-\,, \qquad
 P_t^{\rm lin}=-\lambda_\sigma s_\sigma(t)\,X_0+c_\sigma(t)\,P_0^-\,.
 \end{aligned}
\end{equation}
Variation of constants now expresses the full flow as
\begin{equation}\label{eq:rgo-variation-of-constants}
 \begin{aligned}
 X_t&=X_t^{\rm lin}-\int_0^t s_\sigma(t-s)\,r_\sigma(X_s)\,\dd s\,,\qquad
 P_t=P_t^{\rm lin}-\int_0^t c_\sigma(t-s)\,r_\sigma(X_s)\,\dd s\,.
 \end{aligned}
\end{equation}
All dependence on the unknown trajectory is now contained in $r_\sigma$.

\paragraph{Picard iteration for the residual.}
Start from $\overline X_t^{[0]}\deq X_t^{\rm lin}$ and replace the
residual in \eqref{eq:rgo-variation-of-constants} by its value on the
preceding trajectory:
\begin{equation*}
 \overline X_t^{[\ell+1]}
 \deq X_t^{\rm lin}-\int_0^t s_\sigma(t-s)\,
       r_\sigma(\overline X_s^{[\ell]})\,\dd s\,,
 \qquad 0\le t\le h\,,\quad \ell\ge0\,.
\end{equation*}
The bars distinguish these continuous trajectories from the quadrature
iterates below. Since $\abs{s_\sigma(t-s)}\le t-s$ and
$\Lip(r_\sigma)\le C\eta$, this iteration contracts in the uniform
norm by at most $C\eta h^2$.
Equivalently, each update solves a linear equation with the preceding
residual as its forcing function. Integrating that equation twice gives
\begin{equation}\label{eq:rgo-implicit-Picard}
 \overline X_t^{[\ell+1]}
 =X_0+tP_0^--\int_0^t(t-s)\,
 \bigl\{\lambda_\sigma\overline X_s^{[\ell+1]}
              +r_\sigma(\overline X_s^{[\ell]})\bigr\}\,\dd s\,.
\end{equation}
The linear term uses the \emph{new} trajectory, whereas the residual
uses the preceding trajectory. This is the form we discretize, so that
we can use the same quadrature weights as the main sampler.

\paragraph{Chebyshev--Lobatto quadrature.}
Use the nodes $t_j\in[0,h]$, cardinal polynomials $\ell_j$,
and integrated weights from \cref{sec:preliminaries}. Replace the
expression in braces in \eqref{eq:rgo-implicit-Picard} by its Lagrange
interpolant at these nodes. Writing $X^{[\ell]}$ for the resulting
stacked numerical positions, the node equations become
\begin{equation}\label{eq:rgo-quadrature-equations}
 X_{t_i}^{[\ell+1]}
 =X_0+t_iP_0^--\sum_{j\in[J]}\omega_{i,j}\,
 \bigl\{\lambda_\sigma X_{t_j}^{[\ell+1]}
                   +r_\sigma(X_{t_j}^{[\ell]})\bigr\}\,,
 \qquad i\in[J]\,.
\end{equation}
Put
$W\deq(\omega_{i,j})_{i,j\in[J]}$,
$w\deq(\omega_j)_{j\in[J]}$, and
$X^{\rm free}\deq(X_0+t_jP_0^-)_{j\in[J]}$. Then,
\eqref{eq:rgo-quadrature-equations} is
\begin{equation*}
 (\Id_J+\lambda_\sigma W)X^{[\ell+1]}
 =X^{\rm free}-Wr_\sigma(X^{[\ell]})\,.
\end{equation*}
These deterministic matrices act on node vectors by tensoring with
$\Id_d$, and $r_\sigma$ acts componentwise. Thus, we can solve the
linear part of the quadrature equations exactly by precomputing
\begin{equation*}
 Q_\sigma\deq(\Id_J+\lambda_\sigma W)^{-1}\,,\qquad
 A\deq Q_\sigma X^{\rm free}\,.
\end{equation*}
Initialize $X^{[0]}=A$ and perform the interior updates
\begin{equation}\label{eq:rgo-residual-Picard}
 X^{[\ell+1]}=A-Q_\sigma W r_\sigma(X^{[\ell]})\,,
 \qquad \ell=0,\ldots,K-2\,.
\end{equation}
The anchor $A$ is the quadrature approximation to the linear trajectory
in \eqref{eq:rgo-linear-flow}.

The transformed weights retain the scales $h^2$ and $h$.
For a scalar matrix $B\in\R^{J\times J}$, write
$\norm{B}_{\infty\to\infty}\deq
\max_{i\in[J]}\sum_{j\in[J]}\abs{B_{i,j}}$. If $h\le c$, the
Neumann series, \eqref{eq:Cheb-bounds}, and
\eqref{eq:integrated-weights-l1} give
\begin{equation*}
 \norm{Q_\sigma}_{\infty\to\infty}\le2\,,\qquad
 \norm{Q_\sigma W}_{\infty\to\infty}\le2h^2\,,\qquad
 \norm{w^\T Q_\sigma}_1\le2h\,.
\end{equation*}

The endpoint update counts as the final Picard layer. Its position is
the last component of $A-Q_\sigma W r_\sigma(X^{[K-1]})$.
For momentum, integrate the same linear term evaluated on this new
layer and the residual evaluated on $X^{[K-1]}$, now using the weights
$w$. The identity
$\Id_J-\lambda_\sigma Q_\sigma W=Q_\sigma$ then gives
\begin{align}\label{eq:rgo-residual-endpoint}
 \widetilde X_h^{[K]}
 &=e_J^\T A-e_J^\T Q_\sigma W
       r_\sigma(X^{[K-1]})\,, \qquad
 \widetilde P_h^{[K]}
 =P_0^--\lambda_\sigma w^\T A-w^\T Q_\sigma
       r_\sigma(X^{[K-1]})\,,
\end{align}
where $e_J$ is the last coordinate vector.

\paragraph{Implementing the residual evaluations.}
The preceding equations still use the exact conditional expectation
$r_\sigma$. The computable residual estimator is
\begin{equation*}
 \widehat r_\sigma(z;G)\deq\frac{2}{2+\sigma^2}\,\nabla F\Bigl(
 \prox_{\frac{2\sigma^2}{2+\sigma^2}F}\Bigl(\frac{2z}{2+\sigma^2}\Bigr)
 +\sqrt{\frac{2\sigma^2}{2+\sigma^2}}\,G\Bigr)\,,
 \qquad G\sim\cN(0,\Id)\,.
\end{equation*}
Its Lipschitz constants in $z$ and $G$ are at most $C\eta$ and
$C\eta\sigma$, respectively, by the Hessian bound on $F$ and
non-expansiveness of its proximal map. We replace residual evaluations
by this estimator and apply the fluctuation and mean corrections.
The known anchor and transformed weights require no such correction.
For clarity, all residual oracles reduce to the stated oracles
for $V$. Put $s=\sqrt{\eta/2}$ and $g_\star=\nabla V(x_\star)$.
Then
\begin{equation}\label{eq:rgo-residual-oracles}
 \begin{aligned}
 \nabla F(v)&=s\,\{\nabla V(x_\star+sv)-g_\star\}\,,\qquad
 \nabla^2F(v)=s^2\,\nabla^2V(x_\star+sv)\,,\\
 \prox_{bF}(v)
 &=s^{-1}\,\bigl\{\prox_{s^2bV}
       (x_\star+sv+s^2b g_\star)-x_\star\bigr\}\,.
 \end{aligned}
\end{equation}
The initial proximal query determines both $x_\star$ and
$g_\star=(y-x_\star)/\eta$; each subsequent residual oracle call
uses one corresponding oracle call to $V$.
Use the same final cloud in both endpoint components, then apply the
joint endpoint correction after the second OU half-refresh and position
smoothing of variance $\tau$, as in \cref{alg:fifth-phase}. The smaller
residual Lipschitz constants are what improve the query count; their
consequences are quantified in \cref{lem:rgo-cloud-bounds}.

\subsection{Proof of Theorem~\ref{thm:fourth-RGO}}

The key improvement comes from $\norm{\nabla^2F}_{\op}\le\eta/2$.
The derivative bounds for
the residual estimator therefore carry a factor $\eta$. This gives a
factor $\eta^2$ in the fluctuation bound and in the part of the mean
error caused by replacing an RGO draw with a Gaussian. The cloud
variance can consequently be reduced by a factor $\eta^2$ as well.
We first record the residual versions of the local correction
bounds. We then verify the parameter constraints, compare the residual
iteration with the common collocation limit, and apply the recursive
call and TV error bounds from \cref{sec:recursive-cloud}.

\begin{lemma}[Residual correction bounds]\label{lem:rgo-cloud-bounds}
Fix $0<\sigma^2\le1$, $0<\theta\le\sigma^2/8$, and $0<h\le c$.
For the residual updates
\eqref{eq:rgo-residual-Picard}--\eqref{eq:rgo-residual-endpoint}, with
endpoint position smoothing $\tau\asymp h^3$, the squared Lipschitz
coefficients supplied to the fluctuation correction satisfy
\begin{equation}\label{eq:rgo-residual-fluctuation}
 L_{\rm fluc}^2\le
 C\eta^2\,\frac{(\sigma^2+\theta)\,h^4}{\theta}\,,\qquad
 L_{{\rm fluc},{\rm end}}^2\le
 C\eta^2\,\frac{(\sigma^2+\theta)\,h}{J}\,.
\end{equation}
For independent $\cloud\sim\cN(z,\theta\Id)$ and $G\sim\cN(0,\Id)$,
the mean correction has an unbiased
estimator $\widehat\Delta_{\rm res}(z)$ of
$r_\sigma(z)-\E\widehat r_\sigma(\cloud;G)$ satisfying
\begin{equation}\label{eq:rgo-residual-mean}
 \norm{\widehat\Delta_{\rm res}(z)}_{L^p}
 \le C\,\Bigl(\eta^2\sigma^3+\frac{\eta\theta}{\sigma}\Bigr)\,
 (\sqrt d+\sqrt p)\,,\qquad p\ge2\,.
\end{equation}
Besides the required exact cloud and RGO draws, this estimator uses
$O(1)$ gradient, proximal, and Hessian queries.
\end{lemma}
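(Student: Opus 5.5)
The plan is to transcribe the three estimates of \cref{lem:joint-estimator-derivatives}, the block bound following \cref{cor:block-correction}, and \cref{prop:marginal-score} into the residual setting. The only changes are that the residual estimator carries an extra factor $\eta$ from $0\preceq\nabla^2F\preceq(\eta/2)\Id$, and that the transformed weights $Q_\sigma W$, $w^\T Q_\sigma$ replace $\omega_{i,j}$, $\omega_j$.

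\textbf{Fluctuation bounds.} First compute $D\widehat r_\sigma$ by the chain rule, as in \eqref{eq:joint-forward}. Write $b\deq2\sigma^2/(2+\sigma^2)\asymp\sigma^2$. With $z$ whitened as $z+\sqrt\theta\,u$, the derivative in $(u,G)$ is
\[
\frac{2}{2+\sigma^2}\,\nabla^2F(\cdot)\,\Bigl(\frac{2\sqrt\theta}{2+\sigma^2}\,D\prox_{bF}\,r+\sqrt b\,s\Bigr)\,.
\]
Non-expansiveness of $D\prox_{bF}$ and $\norm{\nabla^2F}_{\op}\le\eta/2$ then give squared Lipschitz constant at most $C\eta^2(\theta+\sigma^2)$ per block.

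For the interior update \eqref{eq:rgo-residual-Picard}, apply the independent-block bound with $A_j=-\theta^{-1/2}(Q_\sigma W)_{\cdot,j}\otimes\Id_d$. This requires an $\ell^2$ analogue of \eqref{eq:integrated-weights-l2} for $Q_\sigma W$, namely $\sum_{i,j}(Q_\sigma W)_{i,j}^2\le Ch^4$. I will obtain it from $Q_\sigma W=W-\lambda_\sigma Q_\sigma W^2$ (Neumann series) together with the bounds $\norm{Q_\sigma}_{\infty\to\infty}\le2$ and $\norm{W}_{\infty\to\infty}\le h^2$. The correction term is then entrywise smaller by a factor $h^2$, so it costs at most $Ch^8J\le Ch^4$ since $J\lesssim h^{-4}$ is harmless here. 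The endpoint bound follows the same way from \eqref{eq:rgo-residual-endpoint}: the whitened block maps are $-e_J^\T Q_\sigma W/\sqrt\tau$ and $-a_hw^\T Q_\sigma/\sigma_h$, and $\sum_j(w^\T Q_\sigma)_j^2\le Ch^2/J$ follows from \eqref{eq:integrated-weights-l2} plus the same perturbation argument. This yields the factor $h/J$ exactly as in the proof of \cref{thm:core}.

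\textbf{Mean estimator.} I will redo \cref{thm:marginal-cloud} for the pair $(F,b)$ with the rescaled center $v=2z/(2+\sigma^2)$. The unbiased estimator is the prefactor $2/(2+\sigma^2)$ times the estimator of \cref{prop:marginal-score} built from $\nabla F,\nabla^2F,\prox_{bF}$. The cloud $2\cloud/(2+\sigma^2)$ has variance $\theta'\asymp\theta$, and the RGO draw is from $R^F_{b,v}$, which is $R^{\widetilde V}_{\sigma^2,z}$ by the completing-squares identity. Each reference oracle reduces to one $V$ oracle by \eqref{eq:rgo-residual-oracles}.

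For the moments, the first term is
\[
\sqrt b\,\widehat A\{\nabla F(X)-\nabla F(\prox_{bF})\}\,,
\]
where the OU estimator \eqref{eq:OU-HVP} now has norm at most $\sqrt b\,(\eta/2)$. The gradient difference is at most $(\eta/2)\norm{X-\prox_{bF}(\cloud)}$, and this displacement has $L^p$ norm $C\sigma(\sqrt d+\sqrt p)$ by strong log-concavity of the posterior and $\theta\le\sigma^2$. Together these give $\eta^2\sigma^3$. The divergence term carries $\theta/\sqrt b$ times $Z_q\norm{\nabla^2F}\norm G$, giving $\eta\theta/\sigma$.

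\textbf{Main obstacle.} The step most likely to need care is the divergence identity \eqref{eq:divergence-formula} for $F$. The issue is that $\exp(-F)$ may not be integrable; I would argue it only uses the posterior $R^F_{b,v}$, which is strongly log-concave, and mollification of $F$, so it transfers verbatim. A second check is that the Stein identity's Gaussian integration by parts in the cloud variable is performed in the rescaled coordinate, so that the prefactor $\theta'/\sqrt b$ is correct.
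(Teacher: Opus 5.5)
Your proposal is correct and follows essentially the same route as the paper. It uses the per-block derivative bound $C\eta\sqrt{\sigma^2+\theta}$ from $\norm{\nabla^2F}_{\op}\le\eta/2$, the whitened block calculation with the transformed weights $Q_\sigma W$ and $w^\T Q_\sigma$, and \cref{prop:marginal-score} applied to $F$ with posterior variance $b\asymp\sigma^2$ and rescaled cloud variance $\theta'\asymp\theta$, relying only on strong log-concavity of the posteriors. The one difference is the weight estimate. Your entrywise perturbation bound picks up a spurious factor $J$, which is why you need $Jh^4\lesssim1$; that holds in the application but is not a hypothesis of the lemma. The paper instead bounds rows directly: each row of $Q_\sigma W$ has norm at most $\norm{Q_\sigma}_{\infty\to\infty}\max_k\norm{e_k^\T W}\le Ch^2/\sqrt J$, and $\norm{w^\T Q_\sigma}\le\norm{Q_\sigma}_{\op}\norm{w}\le Ch/\sqrt J$ via the Neumann bound $\norm{Q_\sigma}_{\op}\le2$, which requires no relation between $J$ and $h$.
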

\begin{proof}
The joint derivative bound in \cref{lem:joint-estimator-derivatives}
is now $C\eta\sqrt{\sigma^2+\theta}$, by the residual estimator in
\cref{ssec:remove-quadratic}. The transformed weights preserve the square estimates used
in \cref{sec:work}: $\norm{Q_\sigma W}_{\HS}\le Ch^2$,
each row of $Q_\sigma W$ has norm at most $Ch^2/\sqrt J$, and
$\norm{w^\T Q_\sigma}\le Ch/\sqrt J$. Indeed,
$\norm W_{\op}\le\norm W_{\HS}\le Ch^2$ and
$\lambda_\sigma\le1/2$, so the Neumann series gives
$\norm{Q_\sigma}_{\op}\le2$ after decreasing $c$. Together with
$\norm{Q_\sigma}_{\infty\to\infty}\le2$ and the row estimates for $W$,
this proves the three displayed bounds. The same whitening and independent block
calculation as in \eqref{eq:fifth-alpha} gives
\eqref{eq:rgo-residual-fluctuation}.

For the mean, apply the proof of \cref{prop:marginal-score} to $F$, with posterior
variance $2\sigma^2/(2+\sigma^2)$ and cloud variance
$4\theta/(2+\sigma^2)^2$. In \eqref{eq:marginal-cloud-identity}, the
OU action has norm $C\eta\sigma$ and the gradient difference has
$L^p$ norm $C\eta\sigma\,(\sqrt d+\sqrt p)$.
The divergence estimator in \eqref{eq:divergence-formula} contains one
Hessian of $F$, so its contribution is $C\eta\theta/\sigma$.
The remaining rescaling factors are bounded by universal constants.
Only convexity of $F$, its Hessian bound, and the resulting
strong log-concavity of its posteriors are used here; integrability
of $\exp(-F)$ is unnecessary.
This proves \eqref{eq:rgo-residual-mean} and the query claim.
\end{proof}

\begin{proof}[Proof of \cref{thm:fourth-RGO}]
Put $D=d+\mathfrak L$. If
$\eta\sqrt{D\mathfrak L}\le c_0$ for a sufficiently small universal
$c_0$, use \cref{lem:rgo-base}. In the remaining case we use the
residual updates above and the local correction and call count
lemmas, without invoking \cref{thm:core}.

\paragraph{Parameters and explicit RGO calls.}
The rescaling $x=x_\star+\sqrt{\eta/2}\,z$ sends $R^F_{b,v}$ to
\[
 R^V_{\eta b/2,\,x_\star+\sqrt{\eta/2}\,v+(\eta b/2)\nabla V(x_\star)}\,.
\]
In a phase with smoothing variance $\sigma^2$, put
$a_\sigma=2/(2+\sigma^2)$ and $b_\sigma=a_\sigma\sigma^2$.
The mean estimator requests $R^F_{b_\sigma,a_\sigma z}$ using
cloud variance $a_\sigma^2\theta$ and anchor $a_\sigma A_j$ at node
$j$. Its hidden RGO implementation therefore makes explicit calls
with variance $b_\sigma-2a_\sigma^2\theta$.
All these variances are $O(\eta\sigma^2)$ after rescaling to $V$.

Take $J,K$ as in \eqref{eq:fifth-phase-count}, and set
\begin{equation}\label{eq:rgo-cloud-scales}
 \begin{gathered}
 h=c\mathfrak L^{-5/4}\,(\eta^2D)^{-1/4}\,,\qquad
 \sigma_0^2=C_\sigma\mathfrak L^2h^2\,,\qquad \tau=c_\tau h^3\,,\\
 N=\lceil C_N\mathfrak L/h\rceil\,,\qquad
 \sigma_n^2=\sigma_0^2+n\tau\,,\qquad
 \theta_n=C_{\rm cl}\mathfrak L^{9/2}\eta^2\sigma_n^2h^4
              \sqrt D\,,\quad 0\le n\le N\,.
 \end{gathered}
\end{equation}
Choose the large constants first, then $c_\tau$, and finally $c$
sufficiently small. Since $\eta\sqrt{D\mathfrak L}>c_0$,
\begin{equation*}
 h\le Cc\mathfrak L^{-1}\,,\qquad
 \sigma_0^2\le C_\sigma c^2/c_0\,,\qquad
 \frac{N\tau}{\sigma_0^2}\le Cc_\tau\mathfrak L^{-1}\,.
\end{equation*}
Thus $\sigma_n^2\in[\sigma_0^2,5\sigma_0^2/4]$ and $\sigma_N^2\le1$.
Moreover,
\begin{equation*}
 \frac{\theta_n}{\sigma_n^2}
 =C_{\rm cl}c^4(\mathfrak L D)^{-1/2}\le\frac18\,,\qquad
 \eta\sigma_n^2\{\sqrt{d\mathfrak L}+\mathfrak L\}\le Cc^2\,.
\end{equation*}
Consequently, every explicit RGO call has constant expected query cost
by \cref{lem:rgo-base}, for local accuracy $\delta_{\rm loc}$ with
$\log(1/\delta_{\rm loc})=O(\mathfrak L)$.
This also covers the terminal call:
$R^{\widetilde V}_{2\sigma_N^2,u}$ is
$R^F_{2\sigma_N^2/(1+\sigma_N^2),\,u/(1+\sigma_N^2)}$, whose variance
in the original coordinates is $\eta\sigma_N^2/(1+\sigma_N^2)$.

\paragraph{Reference accuracy and stage moments.}
Initialize $X_0=0$ and $P_0\sim\cN(0,\Id)$; the origin minimizes
$\widetilde V$. At infinite depth, both the ordinary and residual
iterations solve
\[
 X=X^{\rm free}-W\nabla\widetilde V_\sigma(X)\,.
\]
This equation has a unique solution for $h\le c$. Taking the
depth to infinity in \cref{prop:reference-Picard}, with $\kappa=2$,
therefore gives a common phase kernel with contraction $1-c_1h$
and stationary $W_2$ defect
$C\sqrt d\,B_{J,2}h(h/\sigma)^J$.
Here $h/\sigma_n\le C_\sigma^{-1/2}\mathfrak L^{-1}$ and
$B_{J,2}\le(CJ)^J$, so the defect is at most
$\exp(-C_{\rm err}\mathfrak L)$ after choosing $C_J,C_\sigma$ large
enough. The constant $C_{\rm err}$ can be chosen as large as needed.
The proof of \cref{lem:fifth-run-concentration} is uniform in
Picard depth and hence also applies to this limiting kernel.
Together with the $W_2$ recursion, it gives phase-input $L^p$
moments $C\sqrt{d+p}$ for the infinite-depth run.

Put $q=C\eta h^2<1/2$ and
$\delta_K=Cq^{K-1}$. Let $\Phi_{\rm res}^{[K]}$ and $\Phi_\infty$
denote the finite residual and limiting phase outputs, using common
refresh noises. The residual fixed-point contraction and the endpoint
weight bounds give, conditionally on a phase input $\zeta\in\R^{2d}$,
\[
 \norm{\Phi_{\rm res}^{[K]}(\zeta)-\Phi_\infty(\zeta)}_{L^p(M_2)}
 \le \delta_K\,(\norm\zeta+\sqrt{d+p})\,.
\]
Since $\eta h^2=c^2\mathfrak L^{-5/2}D^{-1/2}$, the chosen depth
makes $\delta_K\le\exp(-C_{\rm err}\mathfrak L)$ after increasing
$C_K$. Let $e_{n,p}$ be the $W_{p,M_2}$ distance between the two
phase-input laws, including the retained position noise. The
limiting kernel contraction and the preceding bound imply
\[
 e_{n+1,p}\le(1-c_1h+C\delta_K)\,e_{n,p}
                 +C\delta_K\sqrt{d+p}\,,\qquad e_{0,p}=0\,.
\]
Taking $C_K$ large enough that $C\delta_K\le c_1h/2$, we obtain
$e_{n,p}\le C\delta_Kh^{-1}\sqrt{d+p}$ uniformly in $n$.
In particular, the finite residual run has phase-input $L^p$
moments $C\sqrt{d+p}$.
Since $\nabla F(0)=0$, the posterior moment bounds and
\eqref{eq:rgo-residual-Picard} now give residual score and
anchor-displacement bounds $C\eta\sqrt{d+p}$ and
$C\eta h^2\sqrt{d+p}$, respectively, uniformly over stages and nodes.

\paragraph{Clipping levels and recursive calls.}
Substitute \cref{lem:rgo-cloud-bounds} into
\eqref{eq:fixed-node-scale}, with $p=O(\mathfrak L)$.
The interior and endpoint output dimensions are $Jd$ and $2d$,
respectively. Using the transformed weight bounds and
$J\asymp\mathfrak L$, we can choose valid clipping levels satisfying
\begin{equation}\label{eq:rgo-clipping-scales}
 \begin{aligned}
 \clipB_{\rm fluc}
 &\le \frac{C}{C_{\rm cl}}\,\mathfrak L^{-7/2}\,,
 & \clipB_{\rm mean}
 &\le Cc^2\mathfrak L^{-7/4}D^{-1/4}\,,\\
 \clipB_{{\rm fluc},{\rm end}}
 &\le Cc^3\eta^{1/2}\mathfrak L^{-9/4}D^{-1/4}\,,
 & \clipB_{{\rm mean},{\rm end}}
 &\le Cc^{7/2}\eta^{1/4}\mathfrak L^{-7/8}D^{-3/8}\,.
 \end{aligned}
\end{equation}
For example, before substituting $h$, the two contributions to
$\clipB_{\rm mean}$ are bounded by
$C\eta\mathfrak L^{3/4}D^{1/4}h^2$ and
$C\eta^2\mathfrak L^{13/4}D^{3/4}h^4$.
Since $D\ge\mathfrak L$ and $\eta\le1$, the displayed bounds give
\[
 \mathfrak L^2(\clipB_{\rm fluc}+\clipB_{\rm mean})
 +\mathfrak L(\clipB_{{\rm fluc},{\rm end}}+\clipB_{{\rm mean},{\rm end}})
 \le C/C_{\rm cl}+C(c^2+c^3+c^{7/2})\,.
\]
This verifies \eqref{eq:cloud-branching-smallness}. The fluctuation
series truncation is negligible with $K_{\rm ser}=C_{\rm ser}\mathfrak L$.

For the hidden RGO calls, write
$b=b_{\sigma_n}$ and $\vartheta=a_{\sigma_n}^2\theta_n$.
The stage moments give, except on an event of probability
$\exp(-C_{\rm cap}\mathfrak L)$, the bound
$\varepsilon_0\le C\eta h^2\sqrt D$ and a residual score bound
$L_{\rm score}\le C\eta\sqrt D$.
Tracking $\Lip(\nabla F)\le\eta/2$ in the proof of
\cref{prop:symmetric-moments} gives the sufficient clipping bound
\[
 C\eta\bigl\{\sqrt{\vartheta p}\,
       (\varepsilon_0+\vartheta L_{\rm score})
       +\sqrt{b\vartheta}\,(\sqrt{dp}+p)\bigr\}
 \le c \clipB_{\rm RGO}\,.
\]
Thus we can choose
\[
 \clipB_{\rm RGO}\le C\eta\sigma_n\sqrt{\theta_n\mathfrak L D}
 \le Cc^4\mathfrak L^{-1/4}D^{-1/4}\le1\,.
\]
All constants here are uniform in the original center $y$.

\paragraph{Accuracy and query count.}
Use the independent recursive calls and local caps of
\cref{sec:recursive-cloud}, starting from the explicit cloud
$\cN(A,\theta_n\Id_{Jd})$. The preceding branching and hidden RGO
bounds, together with \cref{lem:adaptive-tree,lem:RGO-forest},
give $O(N\mathfrak L^2)$ expected local queries and $O(N)$ expected
explicit RGO calls. Each latter call costs $O(1)$, as verified above.
The same lemmas give the global slot cap
$T_{\max}=C\,(N+\mathfrak L^3)\,\mathfrak L^2$ outside probability
$\exp(-C_{\rm cap}\mathfrak L)$.
Allocate a sufficiently small multiple of $\varepsilon/T_{\max}$
to each local clipping, truncation, and RGO error. Their inverse
accuracy budgets have logarithms $O(\mathfrak L)$, as required.
\Cref{lem:adaptive-tv-comparison} and the stage-moment exceptional
bounds then control the total implementation error.

The reference comparison above, with $N=\lceil C_N\mathfrak L/h\rceil$,
gives final position error at most $\varepsilon\sigma_N/16$ in $W_2$
relative to $\widetilde\pi_{\sigma_N}$.
Add an independent $\cN(0,\sigma_N^2\Id)$ variable and apply the
terminal RGO $R^{\widetilde V}_{2\sigma_N^2}$ with TV budget
$\varepsilon/32$. The Gaussian smoothing inequality
\eqref{eq:W2-KL} and RGO disintegration give TV error at most
$\varepsilon$ after all budgets are combined and the output is
rescaled to the original coordinates.
The capped recursion terminates, and each direct FORS call
terminates almost surely. By \eqref{eq:rgo-residual-oracles},
including initialization and the terminal call, the expected total
query count is
\[
 C\,(1+N\mathfrak L^2)
 \le C\,\{1+\mathfrak L^{17/4}\eta^{1/2}\,
                         (d+\mathfrak L)^{1/4}\}\,,
\]
uniformly in $y$.

For the sharper bound when $D\ge\mathfrak L^9$, replace the
first two parameters in \eqref{eq:rgo-cloud-scales} by
\[
 h=c\mathfrak L^{-3/4}\,(\eta^2D)^{-1/4}\,,\qquad
 \sigma_0^2=C_\sigma\mathfrak L h^2\,.
\]
Keep the same formulas for $\tau,N,\sigma_n^2,\theta_n$.
The direct RGO conditions still hold, since
\begin{align*}
 h&\le Cc\mathfrak L^{-1/2}\,,\qquad
 \sigma_0^2\le C_\sigma c^2/c_0\,,\qquad
 N\tau/\sigma_0^2\le Cc_\tau\,,\\
 \theta_n/\sigma_n^2
 &=C_{\rm cl}c^4\mathfrak L^{3/2}D^{-1/2}\le1/8\,,\qquad
 \eta\sigma_n^2\,\{\sqrt{d\mathfrak L}+\mathfrak L\}\le Cc^2\,.
\end{align*}
\cref{prop:reference-Picard} gives
$B_{J,2}\le C_0(C\sqrt J)^J$, so
$h/\sigma_n\le C_\sigma^{-1/2}\mathfrak L^{-1/2}$ suffices for the
same quadrature accuracy.
Also, $\eta h^2=c^2\mathfrak L^{-3/2}D^{-1/2}$ makes the
finite-depth error exponentially small with the same $K$.
The interior fluctuation bound is unchanged, while substitution
in \cref{lem:rgo-cloud-bounds} gives
\begin{align*}
\clipB_{\rm mean}
 &\le Cc^2\mathfrak L^{1/4}D^{-1/4}\le Cc^2\mathfrak L^{-2}\,,\\
 \clipB_{{\rm fluc},{\rm end}}
 &\le Cc^3\eta^{1/2}\mathfrak L^{-7/4}D^{-1/4}\,,\\
 \clipB_{{\rm mean},{\rm end}}
 &\le Cc^{7/2}\eta^{1/4}\mathfrak L^{11/8}D^{-3/8}\,,\\
 \clipB_{\rm RGO}
 &\le Cc^4\mathfrak L^{3/4}D^{-1/4}\le1\,.
\end{align*}
Using $D\ge\mathfrak L^9$, these bounds verify
\eqref{eq:cloud-branching-smallness}.
The same accuracy argument therefore applies, and the total
query count is $C\,(1+N\mathfrak L^2)
\le C\,\{1+\mathfrak L^{15/4}\,(\eta^2D)^{1/4}\}$, as claimed.
\end{proof}

%% file: 5_high_acc_sections/appendix_b_gradient_realization.tex
\section{Realization with gradient queries}\label{sec:gradient-only}
The robust fluctuation theorem, \cref{thm:robust-fluctuation}, takes
approximate Jacobian-vector products and approximate evaluations
as inputs. We now construct these inputs using only $\nabla V$.
The remaining
parts of this appendix implement the mean estimators and RGO sampler with gradients as well.

\subsection{Approximation of a proximal map}

\begin{lemma}[Non-expansive gradient iteration]\label{lem:gradient-prox}
Let $V$ be convex and $1$-smooth, and let $0<\eta\le1$. Define the iteration
\[
 x_0(y)\deq y\,,\qquad
 x_{j+1}(y)\deq \frac{y+\eta\,\{x_j(y)-\nabla V(x_j(y))\}}{1+\eta}
 \quad (0\le j<M)\,.
\]
For a fixed integer $M$, the map $x_M$ is $1$-Lipschitz, and
\begin{equation}\label{eq:gradient-prox-error}
 \begin{aligned}
  \norm{x_M(y)-\prox_{\eta V}(y)}
  &\le \eta\, \Bigl(\frac{\eta}{1+\eta}\Bigr)^M\,
  \norm{\nabla V(y)}\,,\\[0.25em]
  \norm{y-x_M(y)-\eta\nabla V(x_M(y))}
  &\le2\eta\,\Bigl(\frac{\eta}{1+\eta}\Bigr)^M\,
  \norm{\nabla V(y)}\,.
 \end{aligned}
\end{equation}
One evaluation costs $M$ gradient queries.
\end{lemma}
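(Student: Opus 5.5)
The plan is to treat the iteration as a fixed-point iteration for the proximal optimality condition. Write $x_\star\deq\prox_{\eta V}(y)$. It is characterized by $x_\star+\eta\nabla V(x_\star)=y$, or equivalently by $x_\star=T_y(x_\star)$, where
\[
 T_y(x)\deq\frac{y+\eta\,\{x-\nabla V(x)\}}{1+\eta}\,.
\]
Since $V$ is convex with $0\preceq\nabla^2V\preceq\Id$, the map $x\mapsto x-\nabla V(x)$ is non-expansive: by the mean value theorem its Jacobian is $\Id-\nabla^2V$, which lies between $0$ and $\Id$. (For a $C^{1,1}$ potential, use instead the co-coercivity of $\nabla V$.) Consequently $T_y$ is an $\eta/(1+\eta)$-contraction in $x$, uniformly in $y$, and the error bounds will follow by iterating this contraction.

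\textbf{Lipschitz property.} I will argue by induction on $j$. The map $x_0(y)=y$ is $1$-Lipschitz. If $x_j$ is $1$-Lipschitz, then
\[
 \norm{x_{j+1}(y)-x_{j+1}(y')}\le\frac{\norm{y-y'}+\eta\,\norm{x_j(y)-x_j(y')}}{1+\eta}\le\norm{y-y'}\,,
\]
using the non-expansiveness of $\Id-\nabla V$. Hence $x_M$ is $1$-Lipschitz.

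\textbf{First error bound.} Iterating the contraction gives
\[
 \norm{x_M-x_\star}\le\Bigl(\frac{\eta}{1+\eta}\Bigr)^M\,\norm{y-x_\star}\,.
\]
By the optimality condition, $\norm{y-x_\star}=\eta\,\norm{\nabla V(x_\star)}$. It remains to bound $\norm{\nabla V(x_\star)}\le\norm{\nabla V(y)}$. Monotonicity of $\nabla V$ gives $\ip{\nabla V(y)-\nabla V(x_\star)}{y-x_\star}\ge0$. Substituting $y-x_\star=\eta\nabla V(x_\star)$ yields $\ip{\nabla V(y)}{\nabla V(x_\star)}\ge\norm{\nabla V(x_\star)}^2$, and Cauchy--Schwarz then gives the claim. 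This proves the first line of \eqref{eq:gradient-prox-error}. I expect this monotonicity step, which converts $\norm{y-x_\star}$ into $\eta\,\norm{\nabla V(y)}$, to be the only non-routine point.

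\textbf{Residual bound.} Write
\[
 y-x_M-\eta\nabla V(x_M)=(x_\star-x_M)+\eta\,\{\nabla V(x_\star)-\nabla V(x_M)\}\,,
\]
using $y=x_\star+\eta\nabla V(x_\star)$. Since $\nabla V$ is $1$-Lipschitz and $\eta\le1$, the norm of the right-hand side is at most $(1+\eta)\,\norm{x_M-x_\star}\le2\,\norm{x_M-x_\star}$. Combined with the first bound, this gives the second line of \eqref{eq:gradient-prox-error}.

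Finally, each step of the iteration requires exactly one evaluation of $\nabla V$, at $x_j$, so computing $x_M(y)$ costs $M$ gradient queries.
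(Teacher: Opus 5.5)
Your proposal is correct and follows essentially the same route as the paper. Both arguments use four steps: non-expansiveness of $\mathrm{id}-\nabla V$ gives an $\eta/(1+\eta)$ contraction toward the fixed point $\prox_{\eta V}(y)$; monotonicity bounds $\norm{y-\prox_{\eta V}(y)}$ by $\eta\,\norm{\nabla V(y)}$; the $(1+\eta)$-Lipschitz bound on $\mathrm{id}+\eta\nabla V$ gives the residual estimate; and the $1$-Lipschitz property follows by the same induction. You simply spell out the monotonicity computation and the co-coercivity justification, which the paper leaves implicit.
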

\begin{proof}
    The map ${\mathop{\text{id}}}-\nabla V$ is non-expansive. Thus, the iteration contracts
by $\eta/(1+\eta)$. Its fixed point is $\prox_{\eta V}(y)$, and
monotonicity gives
$\norm{y-\prox_{\eta V}(y)}\le \eta\,\norm{\nabla V(y)}$. This proves the
first error bound; the map ${\mathop{\text{id}}} + \eta\nabla V$ is $(1+\eta)$-Lipschitz, giving the
second. If $x_j$ is
non-expansive, the displayed recursion gives
$\Lip(x_{j+1})\le(1+\eta)/(1+\eta)=1$.
\end{proof}

\subsection{Smoothed secants and compositions}

To build gradient-only implementations, we first smooth each elementary map by
Gaussian convolution and approximate the derivative of the smoothed map by a
centered secant. The approximation is accurate in conditional expectation,
while every realized secant remains odd and has the same Lipschitz bound as
the underlying map. We then propagate these value and derivative
approximations through a finite composition.

For $f:\R^m\to\R^n$, write
\[
 (P_s f)(x)\deq\E f(x+s\xi)\,,\qquad \xi\sim\cN(0,\Id_m)\,.
\]
For fixed $r, s>0$, use the same $\xi$ at both endpoints of
\begin{equation}\label{eq:gradient-secant}
 \mathfrak D_{r,s}f(x;v,\xi)
 \deq\frac{f(x+s\xi+rv)-f(x+s\xi-rv)}{2r}\,.
\end{equation}
\begin{lemma}[Secant bounds]\label{lem:gradient-secant}
If $f$ is $L$-Lipschitz, then
\begin{align*}
 \sup_{x\in\R^m}\norm{P_s f(x)-f(x)}&\le Ls\sqrt m\,,
 \qquad \norm{D(P_s f)(x)-D(P_s f)(x')}_{\op}
 \le \sqrt{\frac{2}{\uppi}}\,Ls^{-1}\,\norm{x-x'}\,.
\end{align*}
For each $\xi$, $\mathfrak D_{r,s} f(x;v,\xi)$ is odd and $L$-Lipschitz as a function of $v$,
and
\begin{equation}\label{eq:gradient-secant-bias}
 \norm{\E\mathfrak D_{r,s}f(x;v,\xi)-D(P_s f)(x)v}
 \le \frac{Lr}{\sqrt{2\uppi}\,s}\,\norm v^2\,.
\end{equation}
If $f$ is a gradient of a convex function, $D(P_s f)$ is symmetric
positive semidefinite. This applies both to $f=\nabla V$ and to
$f=\prox_{aV}$.
\end{lemma}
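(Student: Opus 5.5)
The lemma has four claims, and I will prove them in order: the value error of $P_s f$, the Lipschitz bound on $D(P_sf)$, the pathwise properties of the secant together with its bias bound \eqref{eq:gradient-secant-bias}, and finally symmetry and positive semidefiniteness.

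\emph{Value error.} Lipschitzness and Jensen give
\[
\norm{P_sf(x)-f(x)}\le \E\norm{f(x+s\xi)-f(x)}\le Ls\,\E\norm\xi\le Ls\sqrt m\,.
\]

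\emph{Derivative of $P_sf$.} By Rademacher's theorem $f$ is differentiable almost everywhere with $\norm{Df}_{\op}\le L$. Gaussian integration by parts then gives the representation
\[
D(P_sf)(x)=s^{-1}\,\E\bigl[f(x+s\xi)\,\xi^\T\bigr]\,,
\]
and equivalently $D(P_sf)(x)=\E\,Df(x+s\xi)$.

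\emph{Lipschitz bound on $D(P_sf)$.} Fix unit vectors $a$ and $b$ and write $x'=x+\delta$. Differentiate once more using the second form and integrate by parts in the direction $\delta$. Projecting onto the one-dimensional Gaussian $\langle \xi,\delta/\norm\delta\rangle$, the difference $a^\T\{D(P_sf)(x)-D(P_sf)(x')\}b$ equals an integral over the segment of the quantity
\[
s^{-1}\,\E\bigl[a^\T Df(\cdot)\,b\,\langle\xi,u\rangle\bigr]\,,\qquad u\deq\delta/\norm\delta\,.
\]
The entry $a^\T Df\,b$ is bounded by $L$, and $\E\abs{\langle\xi,u\rangle}=\sqrt{2/\uppi}$. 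Hence the difference is at most $\sqrt{2/\uppi}\,Ls^{-1}\norm\delta$, which is the claimed constant. The main point to check carefully is that this route really produces $\sqrt{2/\uppi}$: the bound must pass through the expectation of the absolute value of a one-dimensional Gaussian, not through $\E\norm\xi$.

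\emph{Pathwise properties of the secant.} Oddness in $v$ is immediate from the symmetric form of \eqref{eq:gradient-secant}. For Lipschitzness, the difference of the secant at $v$ and at $v'$ is the difference quotient of two $L$-Lipschitz increments, which has norm at most $(2rL\norm{v-v'})/(2r)=L\norm{v-v'}$.

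\emph{Bias bound.} Averaging over $\xi$ turns the secant into the centered difference of the smooth map $g\deq P_sf$:
\[
\E\,\mathfrak D_{r,s}f(x;v,\xi)=\frac{g(x+rv)-g(x-rv)}{2r}\,.
\]
Writing each term as an integral of $Dg$ along the segment, $g(x\pm rv)=g(x)\pm\int_0^r Dg(x\pm tv)v\,\dd t$. Subtracting $Dg(x)v$ leaves
\[
\frac1{2r}\int_0^r\Bigl\{\bigl(Dg(x+tv)-Dg(x)\bigr)+\bigl(Dg(x-tv)-Dg(x)\bigr)\Bigr\}v\,\dd t\,.
\]
Applying the Lipschitz bound on $Dg$ to each bracket gives at most
\[
\frac1{2r}\cdot 2\int_0^r \sqrt{2/\uppi}\,Ls^{-1}t\norm v^2\,\dd t=\frac{\sqrt{2/\uppi}}{2}\,\frac{Lr}{s}\norm v^2=\frac{Lr}{\sqrt{2\uppi}\,s}\norm v^2\,,
\]
which is exactly \eqref{eq:gradient-secant-bias}.

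\emph{Symmetry and positive semidefiniteness.} If $f=\nabla\phi$ with $\phi$ convex, then $P_sf=\nabla(P_s\phi)$, and $P_s\phi$ is convex and $C^2$. Hence $D(P_sf)=\nabla^2 P_s\phi$ is symmetric positive semidefinite. This applies to $f=\nabla V$ directly. It also applies to $f=\prox_{aV}$, since the proximal map is the gradient of the convex Moreau-type function $\frac12\norm y^2-a\,e_{aV}(y)$, where $e_{aV}$ is the Moreau envelope.
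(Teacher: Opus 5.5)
Your proposal is correct and follows the paper's route: the bias bound comes from writing $\E\,\mathfrak D_{r,s}f(x;v,\xi)$ as the average of $D(P_sf)(x+urv)v$ over $-1\le u\le1$ and applying the $\sqrt{2/\uppi}\,Ls^{-1}$ Lipschitz bound on $D(P_sf)$, exactly as in the paper. The remaining assertions, which the paper simply calls standard, are filled in correctly by your Gaussian integration by parts computation (which indeed yields the constant $\sqrt{2/\uppi}$) and by your Moreau-envelope representation of $\prox_{aV}$ as the gradient of a convex function.
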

\begin{proof}
    The first two assertions are standard.
    Also, $\E \mathfrak D_{r,s} f(x; v,\xi)$ is the average of
$D(P_s f)(x+urv)v$ over $-1\le u\le1$, proving
\eqref{eq:gradient-secant-bias}. The other assertions are also standard.
\end{proof}

Evaluate $x_M$ from \cref{lem:gradient-prox} in place
of $\prox_{\eta V}$ to generate a proximal secant. For $\norm{x-x_{\rm ref}}\le R$,
Gaussian integration by parts and \eqref{eq:gradient-prox-error} give
\begin{equation}\label{eq:gradient-prox-smoothed-derivative}
 \norm{D P_s(x_M-\prox_{\eta V})(x)}_{\op}
 \le \frac{C\eta}{s}\,
 \Bigl(\frac{\eta}{1+\eta}\Bigr)^M\,
 \{\norm{\nabla V(x_{\rm ref})}+R+s\sqrt m\}\,.
\end{equation}
Thus, for fixed $r, s > 0$, a sufficiently large deterministic $M$ suffices to make the error
arbitrarily small. No estimate for
$Dx_M-D\prox_{\eta V}$ is needed.

\begin{lemma}[Gradient implementation of a finite composition]\label{lem:gradient-graph}
Let $\Phi:\R^m\to\R^n$ be evaluated by the following $S\ge1$ steps:
\[
 z_0\deq g\,,\qquad
 y_i\deq c_i+\sum_{j=0}^{i-1}A_{i,j}z_j\,,\qquad
 z_i\deq f_i(y_i)\quad(1\le i\le S)\,,\qquad \Phi(g)\deq z_S\,.
\]
Here, the matrices $A_{i,j}$ and vectors $c_i$ are known.
More generally, it suffices that the vectors $c_i+A_{i,0}g$ be
computable at every input $g$ where the routines below are called,
and that known upper bounds on $W$ and $R$ below be available;
$c_i$ and $g$ need not be available separately.
Each $f_i$ is
an identity map, a gradient $\nabla V_i$, or a proximal map
$\prox_{\eta_iV_i}$ with $0<\eta_i\le1$. Each $V_i$ is convex with
$1$-Lipschitz gradient, and evaluations of $\nabla V_i$ are available.
Thus, every $f_i$ is $1$-Lipschitz.

Assume the following explicit bound on the coefficients, for some $L>0$.
For every choice
of square matrices $B_i$ of the appropriate dimensions with
$\norm{B_i}_{\op}\le1$, the matrices
\begin{equation}\label{eq:gradient-composition-bound}
 T_0\deq\Id_m\,,\qquad
 T_i\deq B_i\sum_{j=0}^{i-1}A_{i,j}T_j\quad(1\le i\le S)
 \qquad\text{satisfy}\qquad \norm{T_S}_{\op}\le L\,.
\end{equation}
In particular, $\Phi$ is $L$-Lipschitz.

Let $m_0$ bound all vector dimensions. Let $W\ge2$ bound
$\norm{c_i}$, $\sum_{j=0}^{i-1}\norm{A_{i,j}}_{\op}$, and
$\norm{\nabla V_i(0)}$ whenever $V_i$ occurs. Fix a
radius $R\ge1$, an accuracy $0<\epsilon<1/2$, and put
\[
 \mathfrak L_*\deq\log\frac{SWm_0\,(1+R)\,(L+L^{-1})}{\epsilon}\,.
\]
There are a smooth function $\Phi^\circ$ and a deterministic approximation
$\widehat\Phi$, both $L$-Lipschitz, such that
\[
 \sup_{g\in\R^m:\,\norm g\le R}\,
 \bigl\{\norm{\Phi^\circ(g)-\Phi(g)}
       +\norm{\widehat\Phi(g)-\Phi(g)}\bigr\}\le\epsilon\,.
\]
The function $\Phi^\circ$ is defined by replacing each $f_i$ in the
recursion by a Gaussian convolution; it is used only for comparison.
The approximation $\widehat\Phi$ uses only gradients.

There are also gradient routines $\mathfrak J_g(v)$ and
$\mathfrak J_g^\T(w)$ approximating a Jacobian-vector product and a
transposed Jacobian-vector product, respectively. For
$g,v\in\R^m$ and $w\in\R^n$ with norms at most $R$,
\begin{align*}
 \norm{\E\mathfrak J_g(v)-D\Phi^\circ(g)v}
 &\le\epsilon L\,\norm v\,, \qquad
 \norm{\E\mathfrak J_g^\T(w)-D\Phi^\circ(g)^\T w}
 \le\epsilon L\,\norm w\,.
\end{align*}
The expectations are over the routines' auxiliary Gaussians, with $g$
and the direction fixed. For every fixed choice of these Gaussians,
both routines are odd and $L$-Lipschitz in the direction on the whole
space. Each evaluation of $\widehat\Phi$, $\mathfrak J_g$, or
$\mathfrak J_g^\T$ costs at most $CS^3\mathfrak L_*$ gradient queries.
\end{lemma}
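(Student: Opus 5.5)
I would build everything by replacing each elementary map $f_i$ with a Gaussian convolution $P_{s}f_i$ at a single small scale $s$, and then propagating values and derivatives through the recursion. Concretely, define the comparison map $\Phi^\circ$ by running the same recursion with $z_i^\circ\deq (P_s f_i)(y_i^\circ)$ when $f_i$ is a gradient or identity (identity is unchanged). For proximal steps I take $P_s\prox_{\eta_iV_i}$. Each $P_sf_i$ is $1$-Lipschitz, smooth, and has a symmetric positive semidefinite Jacobian of norm at most one by \cref{lem:gradient-secant}. The chain rule then writes $D\Phi^\circ(g)$ in the form $T_S$ of \eqref{eq:gradient-composition-bound} with $B_i=D(P_sf_i)(y_i^\circ)$, so $\Lip(\Phi^\circ)\le L$. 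The value error $\norm{\Phi^\circ(g)-\Phi(g)}$ follows by induction over the steps. Each step adds at most $s\sqrt{m_0}$ by \cref{lem:gradient-secant} and amplifies earlier errors by at most $W$, giving a bound of order $(W+1)^S s\sqrt{m_0}$. Choosing $s=\epsilon\,(2W)^{-S}m_0^{-1/2}$ makes this at most $\epsilon/2$, and $\log(1/s)=O(S\mathfrak L_*)$.

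For the implementable map $\widehat\Phi$, I would use the same recursion but with $\prox_{\eta_iV_i}$ replaced by the gradient iteration $x_M$ of \cref{lem:gradient-prox}, which is also $1$-Lipschitz. Then $\widehat\Phi$ is a composition of $1$-Lipschitz maps. I plan to get its Lipschitz bound $L$ from \eqref{eq:gradient-composition-bound} via a Rademacher argument: at almost every point the Jacobian has the form $T_S$ with $\norm{B_i}_{\op}\le1$. This requires nonsymmetric $B_i$, which the hypothesis allows since it quantifies over all contractions. To control the value error on $\norm g\le R$, I first bound all intermediate $\norm{y_i}\le (W+1)^S(R+1)$ inductively, using $\norm{\nabla V_i(y)}\le W+\norm y$. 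Then \eqref{eq:gradient-prox-error} gives proximal error $(\eta/(1+\eta))^M$ times a quantity $e^{O(S\mathfrak L_*)}$. Since $\eta\le1$ the ratio is at most $1/2$, so $M=CS\mathfrak L_*$ suffices, and the errors again propagate with factor $W$ per step.

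For the Jacobian-vector routines, $\mathfrak J_g(v)$ propagates a tangent forward. At step $i$ it forms $u_i=\sum_j A_{i,j}w_j$ (with $w_0=v$) and sets $w_i=\mathfrak D_{r,s}f_i(\widehat y_i;u_i,\xi_i)$ with a fresh $\xi_i$. Proximal steps evaluate $x_M$ at the two secant points. Each realized secant is odd and $1$-Lipschitz in the direction, so the composite is odd and $L$-Lipschitz in $v$. The justification is that a realized secant of a $1$-Lipschitz map is a linear action of an averaged Jacobian of norm $\le1$ only along the segment. To settle this, I would use the fact that along the direction the realized secant map $u\mapsto \mathfrak D f(u)$ is $1$-Lipschitz, and apply the hypothesis \eqref{eq:gradient-composition-bound} to the composite via Rademacher once more. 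The transposed routine runs the recursion backward. Here the steps are again secants, because $D(P_sf_i)$ is symmetric for gradients and proximal maps, so the forward secant also approximates the transposed action, and oddness is preserved.

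The main obstacle is the bias bound. The expectation of a composition of random nonlinear secants is not the composition of expectations. I would instead expand $w_i$ around the exact tangent $D(P_sf_i)(y_i^\circ)u_i^\circ$ and bound three error sources in $L^1$:
\begin{itemize}
\item the secant bias \eqref{eq:gradient-secant-bias}, which is of order $r s^{-1}\norm{u}^2$;
\item the base-point error $\widehat y_i-y_i^\circ$, which enters through the second bound of \cref{lem:gradient-secant} as $s^{-1}\norm{\widehat y_i-y_i^\circ}$;
\item the proximal derivative error \eqref{eq:gradient-prox-smoothed-derivative}.
\end{itemize}
One obstacle remains: the expectation of the secant at a random direction $u_i$ equals $D(P_sf_i)u_i$ only up to the nonlinearity bias. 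Conditioning on earlier randomness handles this, because $\xi_i$ is fresh. Summing over $S$ steps with amplification $W^S$ and taking $r=s\,\epsilon\,e^{-CS\mathfrak L_*}$ gives bias $\le\epsilon L\norm v$. The factor $L^{-1}$ inside $\mathfrak L_*$ pays for normalizing by $L$. The cost per call is $S$ steps, each using at most $2M=O(S\mathfrak L_*)$ gradients, plus bookkeeping. The dependence of the logs on $S$ comes to $O(S^3\mathfrak L_*)$ after accounting for the radius growth.
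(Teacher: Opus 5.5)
There is a genuine gap in your bias estimate, and it comes from using a single smoothing radius $s$ at every step. Your secant at step $i$ is taken at the stored point $\widehat y_i$ of the implementable recursion. The target $D\Phi^\circ(g)$, however, involves $D(P_sf_i)(y_i^\circ)$, evaluated at the point of the smoothed recursion. Since $\Phi^\circ$ replaces every earlier $f_j$ by $P_sf_j$, the mismatch $\norm{\widehat y_i-y_i^\circ}$ is genuinely of order $s$; you bound it yourself by about $(W+1)^Ss\sqrt{m_0}$. Meanwhile $D(P_sf_i)$ can change by order one over distances of order $s$. So your second error term $s^{-1}\norm{\widehat y_i-y_i^\circ}$ is of order one, and no choice of $r$ or $M$ makes it small.

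A concrete failure: in one dimension take $S=2$, $f_1=f_2=\max\{\cdot,0\}$ (the gradient of $\frac12\max\{x,0\}^2$), $y_1=g$, $y_2=z_1$, and $g=0$. Then $\widehat y_2=0$ but $y_2^\circ=s/\sqrt{2\uppi}$. Hence $D(P_sf_2)$ equals $1/2$ at $\widehat y_2$ but about $0.655$ at $y_2^\circ$, these being the standard Gaussian distribution function at $0$ and at $1/\sqrt{2\uppi}$. Thus $D\Phi^\circ(0)\approx0.33$, while the mean of your $\mathfrak J_0(v)$ tends to $v/4$ as $r/s\to0$. That is an error of about $0.08\,\abs v$ with $L=1$, for every $s$.

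The missing idea is to use geometrically increasing radii $s_1\le\cdots\le s_S$. The paper takes $s_i=t^{S-i+1}$ and $r_i=ts_i$, with $t=\epsilon\min\{1,L\}/(CB^2)$, where $B=e^{O(S\mathfrak L_*)}$ bounds all stored inputs and directions. The value error reaching step $i$ comes only from earlier steps, so $\norm{\widehat y_i-y_i^\circ}\le B(s_{i-1}+2^{-M})$. The base-point term then becomes $B(s_{i-1}+2^{-M})/s_i\approx Bt$, which is small. The secant-bias and proximal-derivative terms are then handled as you describe, by conditioning on the incoming direction since each $\xi_i$ is fresh.

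The price is $\log(1/s_1)=S\log(1/t)=O(S^2\mathfrak L_*)$. The proximal error must be pushed below $s_1$, which requires $M=O(S^2\mathfrak L_*)$ iterations, and this is exactly where the $CS^3\mathfrak L_*$ cost comes from. Your single-scale count would give only $S^2\mathfrak L_*$, which is a symptom of the omission.

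A smaller point concerns your Lipschitz argument. Rademacher plus an a.e.\ chain rule is shaky for compositions of Lipschitz maps. Instead, write each step difference as $f(a)-f(b)=\Gamma_{a,b}(a-b)$, where $\Gamma_{a,b}=(f(a)-f(b))(a-b)^\T/\norm{a-b}^2$ has operator norm at most one, and apply \eqref{eq:gradient-composition-bound} directly. This gives the bound $L$ for $\Phi$, $\widehat\Phi$, $\Phi^\circ$, and both realized routines at once.
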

\begin{proof}
We first describe the three routines, then choose their accuracies.
All proximal evaluations use the same fixed number $M$ of iterations
from \cref{lem:gradient-prox}. Write $\widehat f_i$ for the resulting
approximation to $f_i$; identities and gradients are evaluated exactly.
Every $\widehat f_i$ remains $1$-Lipschitz.

\emph{Values and the comparison function.}
Evaluate $\widehat\Phi(g)$ by substituting $\widehat f_i$ for $f_i$ in
the displayed recursion. Under the generalized access
assumption, compute each stored input as
$\widehat y_i=(c_i+A_{i,0}g)+\sum_{j=1}^{i-1}A_{i,j}\widehat z_j$.
The forward and reverse routines below use only these stored inputs
and the known matrices, so they do not require $c_i$ or $g$
separately. Store the computed vectors $\widehat y_i$.
For positive radii $s_i$, define
\[
 f_i^\circ\deq P_{s_i}f_i\,,\qquad
 z_0^\circ\deq g\,,\qquad
 y_i^\circ\deq c_i+\sum_{j=0}^{i-1}A_{i,j}z_j^\circ\,,\qquad
 z_i^\circ\deq f_i^\circ(y_i^\circ)\quad(1\le i\le S)\,.
\]
Set $\Phi^\circ(g)\deq z_S^\circ$. Each $f_i^\circ$ is smooth and
$1$-Lipschitz, and its derivative is symmetric by
\cref{lem:gradient-secant}. No Gaussian expectation in this definition
needs to be computed.

\emph{Jacobian-vector products.}
At the stored input $\widehat y_i$, use the randomized map
\[
 \mathcal D_i(u)\deq
 \mathfrak D_{r_i,s_i}\widehat f_i(\widehat y_i;u,\xi_i)\,,
 \qquad \xi_i\sim\cN(0,\Id)\,.
\]
For an identity map, simply take $\mathcal D_i(u)=u$.
For a forward product, compute
\[
 v_0\deq v\,,\qquad
 v_i\deq \mathcal D_i\Bigl(\sum_{j=0}^{i-1}A_{i,j}v_j\Bigr)
 \quad(1\le i\le S)\,,\qquad \mathfrak J_g(v)\deq v_S\,.
\]
For a transposed product, initialize $w_S\deq w$ and $w_j\deq 0$ for
$0\le j<S$. For $i=S,S-1,\ldots,1$, compute $u_i\deq \mathcal D_i(w_i)$
and update
\[
 w_j\gets w_j+A_{i,j}^\T u_i\quad(0\le j<i)\,.
\]
Return $\mathfrak J_g^\T(w)\deq w_0$. Draw a fresh $\xi_i$ when its step
is reached, independently of the incoming direction. In the reverse
calculation, all contributions to $w_i$ are therefore added before
$\xi_i$ is drawn. Replacing $\mathcal D_i$ by
$Df_i^\circ(y_i^\circ)$ gives exactly the two chain rule calculations;
symmetry of this derivative is what permits the reverse calculation
to use the same kind of secant.

Every $\mathcal D_i$ is odd and $1$-Lipschitz for fixed $\xi_i$.
To check the Lipschitz bound for the full forward routine, compare two
directions. At each step, its output difference is some matrix of norm
at most one applied to its input difference. Thus,
\eqref{eq:gradient-composition-bound} gives the bound $L$.
The reverse differences obey the transposed calculation with matrices
of norm at most one, so the same assumption gives the same bound.
Oddness follows directly from the recursions. The identical argument
for differences of values proves that $\Phi$, $\widehat\Phi$, and
$\Phi^\circ$ are $L$-Lipschitz.

\emph{Accuracy and cost.}
We give the parameter choices to explain why no modulus of continuity of
the Hessians is needed. Let $B\deq(8W)^{4(S+1)}\,m_0\,(1+R)$.
Repeated triangle inequalities in the finite recursions give the
following bounds. All stored inputs and all intermediate directions
have norm at most $B$ when the initial inputs and directions have norm
at most $R$. A value error at any step is amplified by at most
$(2W)^{S+1}$ on its way to the output. Since
$\norm{P_{s_i}f_i-f_i}_{\sup}\le s_i\sqrt{m_0}$ and a proximal
error is at most $2^{-M}\,\norm{\nabla V_i(y)}$, increasing radii
$s_1\le\cdots\le s_S\le1$ give
\begin{align*}
 \norm{\Phi^\circ(g)-\Phi(g)}+
 \norm{\widehat\Phi(g)-\Phi(g)}
 &\le B\,(s_S+2^{-M})\,,\qquad
 \norm{\widehat y_i-y_i^\circ}
 \le B\,(s_{i-1}+2^{-M})\,,
\end{align*}
for $1 \le i \le S$,
where $s_0\deq0$. The factor $B$ includes the sum of the errors from
all earlier steps and the bound
$\norm{\nabla V_i(y)}\le W+\norm y$.

There are three errors in the mean of $\mathcal D_i(u)$: the secant
error, the replacement of a proximal map by its $M$-step approximation,
and the use of $\widehat y_i$ in place of $y_i^\circ$.
By \cref{lem:gradient-secant} and
\eqref{eq:gradient-prox-smoothed-derivative}, for $\norm u\le B$ their sum
is bounded by
\[
 \norm{\E\mathcal D_i(u)-Df_i^\circ(y_i^\circ)u}
 \le CB\,\Bigl(\frac{r_i}{s_i}
       +\frac{2^{-M}}{s_i}
       +\frac{s_{i-1}}{s_i}\Bigr)\,\norm u\,.
\]
This explains the increasing smoothing radii: an earlier value error
must be small compared with the smoothing radius at the step where it
is differentiated.

Choose
\[
 t\deq\frac{\epsilon\min\{1,L\}}{CB^2}\,,\qquad
 s_i\deq t^{S-i+1}\,,\qquad r_i\deq t s_i
 \quad(1\le i\le S)\,,\qquad
 M\deq\lceil(S+1)\log_2(1/t)\rceil\,.
\]
For a sufficiently large universal $C$, the value error is at most
$\epsilon$, and the error of each mean derivative is at most
$(\epsilon L/B)\,\norm u$. To propagate this last error, condition on
the incoming direction at each step. The exact matrix
$Df_i^\circ(y_i^\circ)$ is deterministic, so it commutes with taking
expectations. Subtracting the exact forward or reverse recursion and
applying the triangle inequality shows that errors of size
$e\,\norm u$ at each step give output error at most $Be$ times the
initial direction norm. Indeed, the direction entering a step and an
error leaving that step are each amplified by at most $(2W)^{S+1}$,
and $S\,(2W)^{2S+2}\le B$. Taking $e=\epsilon L/B$ proves both mean
bounds.

Finally, $\log B\le CS\mathfrak L_*$ and $\log(1/t)\le CS\mathfrak L_*$, so
$M\le CS^2\mathfrak L_*$. A value calculation uses at most $S$ proximal or
gradient evaluations. Either derivative calculation first stores the
values and then uses at most two such evaluations per step to form the
secants. Its cost is therefore at most $CSM\le CS^3\mathfrak L_*$.
The iteration count is fixed before any input or direction is drawn,
and is the same at both endpoints of every proximal secant.
\end{proof}

For the application in this paper, the input $g$ collects the Gaussian
vectors used in one correction block. For example, a gradient evaluated
at a Gaussian perturbation of a proximal point has the form $\Phi(G,G')
 =\nabla V(\prox_{\eta V}(c+\sqrt\theta\,G)
                    +\sqrt\eta\,G')$.
It consists of two steps. The matrices in
\eqref{eq:gradient-composition-bound} give $\norm{B_2\,\begin{bmatrix}\sqrt\theta\,B_1& \sqrt\eta\,\Id\end{bmatrix}}_{\op}
 \le\sqrt{\theta+\eta}$.
Thus the lemma preserves the joint Gaussian Lipschitz bound used by the
fluctuation correction.
For a cloud block, the center $c$ is hidden, but
$c+\sqrt\theta\,G$ is the observed cloud point, or its surrogate from
\cref{lem:hidden-batch} at a derivative evaluation point, so the
computability condition of \cref{lem:gradient-graph} holds. Suppose
that the observed points and Gaussian labels have norm at most
$\exp(C\mathfrak L)$ relative to a fixed reference point, and that the
hidden center $c$ satisfies the same bound. For $\theta>0$, these
bounds imply that the hidden
standardized cloud input has norm at most
$2\exp(C\mathfrak L)/\sqrt\theta$. Thus known bounds
$W,R\le\exp(C'\mathfrak L)/\sqrt\theta$ suffice. The stated cloud
variances have $\log(1/\theta)=O(\mathfrak L)$, giving
$\mathfrak L_*=O(\mathfrak L)$.
For longer Picard calculations, the integrated
quadrature bounds \eqref{eq:Cheb-bounds} and
\eqref{eq:integrated-weights-l1} supply the corresponding coefficient bounds.

\subsection{Gradient implementation of the fluctuation correction}

Use the functions $\Phi$, $\Phi^\circ$, and $\widehat\Phi$ from
\cref{lem:gradient-graph}. The forward and reverse secant routines are the
inputs of \cref{lem:approximate-derivatives}. Their mean errors are
relative to $D\Phi^\circ$ and $(D\Phi^\circ)^\T$, while their
pathwise oddness and Lipschitz bounds hold for every direction.

We use the following integrated bias variant of
\cref{thm:robust-fluctuation}. When derivative accuracy is proved only
on prescribed input and direction regions, retain the pathwise oddness
and Lipschitz bounds, but replace \eqref{eq:robust-action-bias} by
\begin{equation}\label{eq:robust-integrated-bias}
 \sum_{P\in\{Q,P_K\}}
 \E_P\abs{\E[\widetilde W_K\mid B,Z]-w_K(B,Z)}
 \le\eps_{\rm bias}\,.
\end{equation}
The proof of \cref{thm:robust-fluctuation} then replaces
$CnL^2\varepsilon$ by $C\eps_{\rm bias}$ in
\eqref{eq:robust-fluctuation-error}. Outside the prescribed regions,
the omitted bias must be bounded in expectation using the pathwise
linear growth of the actions. The value errors in
\eqref{eq:robust-value-errors} similarly include their tails. By coupling,
a stopping event of probability $q$ and replacements of conditional Gaussian
draws by kernels with TV errors $\delta_j$ add at most
$q+\sum_j\delta_j$ to the output TV error.

\begin{proposition}[Gradient realization of the fluctuation correction]
\label{prop:gradient-fluctuation}
Let $\Phi(G)=\sum_{j=1}^JA_j\Phi_j(G_j)$ have the block form
of \cref{sec:reusable-proofs}, where each block map $\Phi_j$ satisfies
\cref{lem:gradient-graph} with at most $S$ steps and precision
parameter at most $\mathfrak L_*$; a single map is the case $J=1$.
Assume the
smallness, clipping, and finite series conditions of
\cref{thm:robust-fluctuation} at the prescribed local
TV accuracy. Then, its fluctuation correction has a gradient
implementation with that accuracy and the same fluctuation moment
scale. Each value or derivative evaluation of a single block
costs at most $CS^3\mathfrak L_*$ gradient queries.
\end{proposition}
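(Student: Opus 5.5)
The plan is to apply \cref{thm:robust-fluctuation} in its integrated-bias form \eqref{eq:robust-integrated-bias}. The comparison map is the block sum $\Phi^\circ\deq\sum_jA_j\Phi_j^\circ$ and the implemented map is $\widehat\Phi\deq\sum_jA_j\widehat\Phi_j$, where $\Phi_j^\circ$, $\widehat\Phi_j$ and the secant routines $\mathfrak J,\mathfrak J^\T$ come from \cref{lem:gradient-graph} applied blockwise. Each $\Phi_j^\circ$ is $L_j$-Lipschitz, so Cauchy--Schwarz gives $\Lip(\Phi^\circ)\le L$. The covariance action is the subsampled block estimator of \cref{sec:reusable-proofs}: select $\mathcal J$ with probability proportional to $L_j^2\norm{A_j}_{\op}^2$, then apply \cref{lem:approximate-derivatives} with the forward and transposed secants of block $\mathcal J$, evaluated at the Brownian-bridge points. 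For clouds these points are realized by \cref{lem:hidden-batch}.

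Pathwise oddness and the Lipschitz bound $L^2$ of $\widehat\Sigma$ hold on all of space, because the secant routines are odd and $L_j$-Lipschitz for every direction. Only the bias \eqref{eq:robust-action-bias} is local. I will therefore fix a radius $R=\exp(C\mathfrak L)$ and split the expectation in \eqref{eq:robust-integrated-bias} into two regions.

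\emph{Good region.} Here all bridge points, $Z$, $Z'$ and intermediate directions have norm at most $R$. In this region \cref{lem:gradient-graph}, taken with accuracy $\epsilon$, gives the mean bias $\varepsilon\le2\epsilon$ per action. The iterated bound \eqref{eq:robust-power-bias} then yields a contribution $CL^2n\epsilon$.

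\emph{Bad region.} Here some norm exceeds $R$. The pathwise linear growth $\norm{\widehat{\Sigma^k}(v)}\le L^{2k}\norm v$ bounds $\abs{\widetilde W_K}+\abs{w_K}$ by $C(n+\norm Z^2+\norm{Z'}^2)$. Gaussian tails under both $Q$ and $P_K$ (recall $C_K\preceq\Id_n$) together with Cauchy--Schwarz make this contribution at most $\exp(-cR^2)\poly(n)$. The value errors $e_{\rm val}$ and $e_{\rm mean}$ are handled the same way: they are at most $\epsilon$ inside the ball, and outside it the global $L$-Lipschitz bounds of $\widehat\Phi$, $\Phi^\circ$ and $\Phi$ plus Gaussian tails control them.

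Choosing $\epsilon=\delta_{\rm loc}/\poly(n,L^{-1})$ puts all these terms within the budget of \eqref{eq:robust-fluctuation-error}, and then $\mathfrak L_*=O(\log(1/\delta_{\rm loc}))$. The moment scale $e_p$ is unchanged, since the proof of \cref{thm:robust-fluctuation} used only oddness and the Lipschitz constants of $\widehat\Sigma$. Cost per value or derivative evaluation is $CS^3\mathfrak L_*$ by \cref{lem:gradient-graph}, since each action touches only the selected block.

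The main obstacle is the hidden cloud center. The radius bounds $W,R$ in \cref{lem:gradient-graph} must hold for standardized inputs $(\cloud-y)/\sqrt\theta$ that cannot be computed. I will handle this as in the remark following \cref{lem:gradient-graph}. Only the computable combination $c+\sqrt\theta\,G$ enters the routines, and the radius assumption is imposed as a stopping rule on observed points relative to the reference point. This stopping event has probability $q$ that is exponentially small in $\mathfrak L$ under the exact laws, and by the coupling remark it is charged as an additive $q$ to the TV error.
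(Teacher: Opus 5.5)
Your proposal is correct and follows essentially the same route as the paper's proof. You apply \cref{lem:gradient-graph} blockwise inside the subsampled $\widehat\Sigma$ of \cref{sec:reusable-proofs}, use the integrated-bias variant \eqref{eq:robust-integrated-bias} of \cref{thm:robust-fluctuation} with secant accuracy on a bounded region and pathwise linear growth plus Gaussian tails under $Q$ and $P_K$ outside it, note that the moment scale $e_p$ is unchanged because only oddness and Lipschitz constants enter, and charge $CS^3\mathfrak L_*$ per block. One small fix: under $P_K$, the tails of the bridge evaluation points come from the bounded Brownian path density $\mathcal Z_K/\E_{Q_B}\mathcal Z_K\in[4/9,9/4]$ established in the proof of \cref{thm:robust-fluctuation}, not from $C_K\preceq\Id_n$, which only controls $Z$.
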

\begin{proof}
Combine \cref{lem:gradient-graph,lem:approximate-derivatives}, using
independent seeds for the inner and outer routines and shared seeds for
the paired directions in \eqref{eq:robust-fluctuation-tilt-estimator}.
The resulting fluctuation actions are odd and $L^2$-Lipschitz, so they
have the moment bound in \cref{thm:robust-fluctuation}.
Build the approximations block by block, and use the block
construction of $\widehat\Sigma$ from \cref{sec:reusable-proofs}: a
covariance action calls only the selected block $I$, whereas a full
proposal value evaluates all $J$ blocks. Apply
\cref{lem:approximate-derivatives} to block $I$, using independent
seeds for its two derivative routines. For each retained block, put
$p_j=L_j^2\norm{A_j}_{\op}^2/L^2$, where
$L^2=\sum_{j=1}^J L_j^2\norm{A_j}_{\op}^2$.
On the prescribed input and direction regions, the bias after
reweighting is bounded by
\[
 p_j^{-1}\norm{A_j}_{\op}^2\,
       2\varepsilon L_j^2\,\norm v
 =2\varepsilon L^2\,\norm v\,.
\]
Its pathwise Lipschitz bound is
$p_j^{-1}\norm{A_j}_{\op}^2L_j^2=L^2$.
Thus block selection introduces no additional factor in either
bound. The complementary regions are handled by the integrated
bias estimate below. Choose the block value tolerances so that
their sum weighted by $\norm{A_j}_{\op}$ fits the total value error
budget in \eqref{eq:robust-value-errors}.

It remains to account for the bounded regions in
\cref{lem:gradient-graph}. Cap the number of calls and the Gaussian input
and direction radii, and let $\mathcal C$ be the event that all caps hold.
Choose the caps so that the stopping probability, the expected linear value
tails, and the expected quadratic tails of $Z$ and $Z'$ on $\mathcal C^c$
under both $Q$ and $P_K$ fit their accuracy budgets. The global Lipschitz bounds justify these
choices under $Q$. Under $P_K$, use the bounded Brownian path density,
conditional input covariance at most $\Id_m$, and conditional direction
covariance at most $\Id_n$ established in the proof of
\cref{thm:robust-fluctuation}.
On the chosen regions, \cref{lem:gradient-graph} and
\eqref{eq:robust-power-bias} control the value and derivative errors.
Thus, taking the tolerance in \cref{lem:gradient-graph} sufficiently small
gives the required bounds in \eqref{eq:robust-value-errors} and
\eqref{eq:robust-integrated-bias},
with only logarithmic dependence on the dimensions, orders, and radii
in the precision parameter.

The integrated bias version of \cref{thm:robust-fluctuation} now gives
the prescribed TV accuracy. The cost bound in
\cref{lem:gradient-graph} applies to each block evaluation;
a full proposal sums the value costs over all $J$ blocks.
\end{proof}

\subsection{Gradient implementation of the mean estimator}
\label{sec:gradient-marginal}\label{sec:gradient-relative-Stein}
We now implement the mean estimator from \cref{sec:gaussian-stein}.
Let $0\le\theta\le\eta\le1$, with $\eta>0$, and suppose that independent
samples $\cloud\sim\cN(y,\theta\Id)$ and $X\sim R_{\eta,y}$ are supplied.
The point $y$ is unknown. Our goal is to estimate
$g_\eta(y)-\E\overline g_\eta(\cloud)$ using gradients, with an arbitrarily
small prescribed bias and the moment bound of \cref{prop:marginal-score}.
We follow the two terms of \eqref{eq:marginal-cloud-identity}, using the
notation $Z_{\cloud},F_{\mathsf c},A_{\mathsf c},B_{\mathsf c},W_t,q$, and $Z_q$ introduced in
\cref{sec:gaussian-stein}.

To use the secant bounds from \cref{lem:gradient-secant}, first smooth the
test function. For $s_{\nabla}>0$, put
\[
\begin{gathered}
 F_{\mathsf c,s_{\nabla}}(u)\deq (P_{s_{\nabla}}\nabla V)
   (\prox_{\eta V}(\mathsf c)+\sqrt\eta\,u)\,, \qquad
 A_{\mathsf c,s_{\nabla}}\deq D(-\mathcal L_{\mathsf{OU}})^{-1}
 (F_{\mathsf c,s_{\nabla}}-\E_{\cN(0,\Id)}F_{\mathsf c,s_{\nabla}})\,.
\end{gathered}
\]
Apply the proof of \cref{thm:marginal-cloud} with this test function,
keeping the original RGO law and its relative score. For an independent
$G\sim\cN(0,\Id)$, it gives
\begin{equation}\label{eq:gradient-mean-identity}
 \begin{aligned}
 &\E (P_{s_{\nabla}}\nabla V)(X)
   -\E (P_{s_{\nabla}}\nabla V)(\prox_{\eta V}(\cloud)+\sqrt\eta\,G)\\
 &\qquad=-\E\Bigl[\sqrt\eta\,A_{\cloud,s_{\nabla}}(Z_{\cloud})\,
       \{\nabla V(X)-\nabla V(\prox_{\eta V}(\cloud))\}
       +\frac{\theta}{\sqrt\eta}\divg_{\cloud} A_{\cloud,s_{\nabla}}(Z_{\cloud})\Bigr]\,.
 \end{aligned}
\end{equation}
The left-hand side differs from the desired mean by at most $2s_{\nabla}\sqrt d$.

For the first term, replace the Hessian-vector product in
\eqref{eq:OU-HVP} by a gradient secant. Draw independent
$T\sim\mathsf{Exp}(1)$ and $G,\xi_{\nabla}\sim\cN(0,\Id)$, using $W_t$ from
\eqref{eq:divergence-formula} with $\mathsf c=\cloud$ and $x=X$.
For $r_{\nabla}>0$, set
\begin{equation}\label{eq:gradient-OU-action}
 \widehat A_{\cloud}^{\rm sec}(Z_{\cloud},v)
 \deq\sqrt\eta\,\mathfrak D_{r_{\nabla},s_{\nabla}}\nabla V(W_T;v,\xi_{\nabla})\,.
\end{equation}
Conditionally on $\cloud,X$ and the direction $v$, \cref{lem:gradient-secant}
and the OU representation of $A_{\cloud,s_{\nabla}}$ give
\begin{align*}
 \norm{\E[\widehat A_{\cloud}^{\rm sec}(Z_{\cloud},v)\mid \cloud,X,v]
             -A_{\cloud,s_{\nabla}}(Z_{\cloud})v}
 &\le C\sqrt\eta\,\frac{r_{\nabla}}{s_{\nabla}}\,\norm v^2\,,\qquad
 \norm{\widehat A_{\cloud}^{\rm sec}(Z_{\cloud},v)}
 \le\sqrt\eta\,\norm v\,.
\end{align*}
For every realization, the map is odd and $\sqrt\eta$-Lipschitz in $v$.
Thus, with direction
$v=\sqrt\eta\,\{\nabla V(X)-\nabla V(\prox_{\eta V}(\cloud))\}$,
the first term has the same size as in
\cref{prop:marginal-score}, and its bias tends to zero with $r_{\nabla}/s_{\nabla}$.
The auxiliary draws are independent of the direction.

For the second term, use the divergence formula from
\cref{sec:gaussian-stein} with the same smoothed test function:
\begin{equation}\label{eq:gradient-divergence-target}
 \divg_{\cloud} A_{\cloud,s_{\nabla}}(Z_{\cloud})
 =\int_0^\infty q(t)\,
 \E_G[D(P_{s_{\nabla}}\nabla V)(W_t)\,B_{\cloud}G]\,\dd t\,.
\end{equation}
Here, the expectation fixes $\cloud,X$.
This identity has already removed the unknown $y$ by integration by parts.
We approximate its two matrix-vector products by secants, using fresh
auxiliary draws for this term. Draw $T$ with density $q/Z_q$ and
independent standard Gaussians $G,\xi_{\mathrm{prox}},\xi_{\nabla}$. For $s_{\mathrm{prox}},r_{\mathrm{prox}}>0$, form
\begin{equation}\label{eq:gradient-divergence-estimator}
 \widehat{\mathcal D}_{\cloud}^{\rm sec}
 \deq Z_q\,\mathfrak D_{r_{\nabla},s_{\nabla}}\nabla V\bigl(
 W_T;\mathfrak D_{r_{\mathrm{prox}},s_{\mathrm{prox}}}\prox_{\eta V}(\cloud;G,\xi_{\mathrm{prox}}),\xi_{\nabla}\bigr)\,.
\end{equation}
Since both $\prox_{\eta V}$ and $\nabla V$ are non-expansive,
$\norm{\widehat{\mathcal D}_{\cloud}^{\rm sec}}\le Z_q\,\norm G$ for every realization.

In both terms, every occurrence of $\prox_{\eta V}$, including those defining
$W_T$, is evaluated by $x_M$ from \cref{lem:gradient-prox}, using the same
deterministic iteration count at both endpoints of every secant. The map
$x_M$ is non-expansive, so the preceding norm bounds remain valid. The remaining issue is the bias of
the divergence estimator: we must control the error after averaging over
$\cloud$, since $B_{\cloud}=D\prox_{\eta V}(\cloud)$ need not have a known modulus of
continuity.

\begin{lemma}[Averaged secant divergence]\label{lem:gradient-divergence}
Suppose $\cloud\sim\cN(y,\theta\Id)$, with $0<\theta\le\eta$, and $X$ is
independent of $\cloud$. The mean of \eqref{eq:gradient-divergence-estimator}
differs from the expectation of \eqref{eq:gradient-divergence-target} by at
most
\begin{equation}\label{eq:gradient-divergence-error}
 Cd\,\Bigl\{
 s_{\mathrm{prox}}\,(\theta^{-1/2}+\eta^{-1/2})
 +\frac{r_{\mathrm{prox}}}{s_{\mathrm{prox}}}+\frac{r_{\nabla}}{s_{\nabla}}
 +\varepsilon_{\rm prox}\,(s_{\mathrm{prox}}^{-1}+s_{\nabla}^{-1})
 \Bigr\}\,.
\end{equation}
Here, $\varepsilon_{\rm prox}$ uniformly bounds
$\E[(1+\norm\xi)\,\norm{x_M(U)-\prox_{\eta V}(U)}]$ over every proximal input
$U$ and associated smoothing Gaussian $\xi$ used in the estimator, taking
$\xi=0$ for unweighted calls. The bound is uniform in $y$ and in the law of $X$.
\end{lemma}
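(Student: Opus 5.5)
We compare the estimator with the target through a chain of intermediate quantities, each differing from the previous by one of the error sources in \eqref{eq:gradient-divergence-error}. Throughout we condition on $X$, $T$, $G$, and integrate over $\cloud$ last. Three facts drive the bookkeeping. First, $D(P_{s_\nabla}\nabla V)$ is symmetric with operator norm at most one. Second, $B_{\cloud}$ is a contraction. Third, the prefactor $Z_q$ and $\E\norm G\le\sqrt d$ turn each operator-norm error into a factor of order $\sqrt d$; the cruder $d$ in the statement absorbs the second-moment factors $\E\norm G^2$ that appear once errors are quadratic in the direction $G$.

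\emph{Step 1: remove the proximal approximation.} Every call to $\prox_{\eta V}$, including those inside $W_T$ and inside both secants, is evaluated by $x_M$. Replacing each $x_M$ by the exact proximal map changes the secant quotients by at most $\norm{x_M-\prox_{\eta V}}/r$. The weighted definition of $\varepsilon_{\rm prox}$ suggests a better route than this direct bound: keep the smoothing $\xi$ and integrate by parts. Writing the mean of a secant as an average of $D(P_s f)$ along the segment, a perturbation of $f$ of size $e$ enters only through $D P_s(e)$, and $\norm{DP_s e}\lesssim s^{-1}\E[\norm\xi\,\norm e]$. This produces the terms $\varepsilon_{\rm prox}(s_{\rm prox}^{-1}+s_\nabla^{-1})$. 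The change of the base point $W_T$ is handled the same way, because $W_T$ enters only through the smoothed outer map.

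\emph{Step 2: the outer secant.} Conditionally on its direction $u\deq\mathfrak D_{r_{\rm prox},s_{\rm prox}}\prox_{\eta V}(\cloud;G,\xi_{\rm prox})$, the bias bound \eqref{eq:gradient-secant-bias} gives an error of at most $C(r_\nabla/s_\nabla)\norm u^2$. Since $\norm u\le\norm G$, this is at most $C(r_\nabla/s_\nabla)d$ in expectation. Because $\xi_\nabla$ is fresh, we may then replace the outer secant by the linear map $D(P_{s_\nabla}\nabla V)(W_T)$. That linearity lets us pass the expectation over $\xi_{\rm prox}$ inside the matrix.

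\emph{Step 3: the inner secant.} By \eqref{eq:gradient-secant-bias} applied to the proximal map, $\E_{\xi_{\rm prox}}u$ equals $D(P_{s_{\rm prox}}\prox_{\eta V})(\cloud)G$ up to an error of $C(r_{\rm prox}/s_{\rm prox})\norm G^2$. The matrix $D(P_{s_{\rm prox}}\prox)$ is, however, not the $B_{\cloud}$ appearing in \eqref{eq:gradient-divergence-target}.

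\emph{Step 4: the main obstacle.} We must replace $D(P_{s_{\rm prox}}\prox_{\eta V})(\cloud)=\E_\xi B_{\cloud+s_{\rm prox}\xi}$ by $B_{\cloud}$. No pointwise bound is available, since $B$ has no modulus of continuity. Instead we use the Gaussian law of $\cloud$: the quantity $\E_{\cloud}[H(\cloud)\,B_{\cloud+s\xi}]$ is the same as an expectation of $H(\cloud'-s\xi)B_{\cloud'}$ with $\cloud'\sim\cN(y+s\xi,\theta\Id)$. So the shift can be moved onto the remaining $\cloud$-dependence at a cost controlled by total variation. The density change of $\cloud$ costs $\TV(\cN(y,\theta),\cN(y+s\xi,\theta))\lesssim s\norm\xi/\sqrt\theta$. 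The remaining $\cloud$-dependence sits in $W_T$ through $\prox_{\eta V}(\cloud)$, with coefficient $1-\e^{-T}$, and in the outer map $D(P_{s_\nabla}\nabla V)$. I would handle it by rewriting that dependence through the $\sqrt\eta$-scaled Gaussian $G$ in $W_T$, which gives a TV cost of order $s/\sqrt\eta$ after tracking the factor $q(T)$. I expect this step to need the most care: the shift must be absorbed into a Gaussian with variance bounded below, without differentiating $B$. Together with the moments of $\norm G$ and $\norm\xi$, this yields $Cd\,s_{\rm prox}(\theta^{-1/2}+\eta^{-1/2})$.

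Summing the four contributions gives \eqref{eq:gradient-divergence-error}. Nothing in the argument depended on $y$ or on the law of $X$, since all bounds were taken conditionally on $X$; this gives the stated uniformity.
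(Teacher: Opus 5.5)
Your proposal is correct and follows the paper's proof essentially step for step. You remove the secant biases and the $x_M$ errors through the smoothed derivatives, at cost $\varepsilon_{\rm prox}/s$ rather than $\varepsilon_{\rm prox}/r$, which is also what the paper does. You then trade $\E_\xi B_{\cloud+s_{\rm prox}\xi}$ for $B_{\cloud}$ via the change of variables $\cloud'=\cloud+s_{\rm prox}\xi$. This costs $Cs_{\rm prox}\norm\xi/\sqrt\theta$ for the Gaussian density of $\cloud$, and the $(1-\e^{-t})\,s_{\rm prox}\norm\xi$ shift of $W_t$ is absorbed into its Gaussian noise of scale $\sigma=\sqrt{\eta(1-\e^{-2t})}$.

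One point in your Step~4 needs care. Since $G$ enters both $W_t$ and the direction $B_{\cloud'}G$, a plain TV bound on the law of $W_t$ does not suffice. The paper freezes $B=B_{\cloud'}$ and uses Gaussian integration by parts to obtain $\Lip_w\E_G[H(w+\sigma G)\,BG]\le C\sqrt d/\sigma$. This is the infinitesimal form of your shift of $G$, and it also accounts for the induced change in $BG$. Finally, the bound stays finite because the ratio $(1-\e^{-t})/\sqrt{1-\e^{-2t}}\le1$, not because the variance is bounded below; it is not, since $\sigma\to0$ as $t\to0$.
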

\begin{proof}
Condition first on $(T,\cloud,X,G,\xi_{\mathrm{prox}})$ and average $\xi_{\nabla}$. Then average $\xi_{\mathrm{prox}}$.
\Cref{lem:gradient-secant} and the pathwise bound on the inner secant show
that the mean is
\[
 Z_q\,\E[D(P_{s_{\nabla}}\nabla V)(W_T)\,
 D(P_{s_{\mathrm{prox}}}\prox_{\eta V})(\cloud)G]
\]
up to $Cd\,(r_{\nabla}/s_{\nabla}+r_{\mathrm{prox}}/s_{\mathrm{prox}})$. The gradient iteration errors add the last term
of \eqref{eq:gradient-divergence-error}, by
\eqref{eq:gradient-prox-smoothed-derivative} and
$\Lip(D(P_{s_{\nabla}}\nabla V))\le C/s_{\nabla}$.

It remains to replace
$D(P_{s_{\mathrm{prox}}}\prox_{\eta V})(\cloud)
=\E_\xi D\prox_{\eta V}(\cloud+s_{\mathrm{prox}}\xi)$
by $B_{\cloud}$ after
averaging $\cloud$. We give the weak estimate explicitly. For any matrix $B$ with
$\norm B_{\op}\le1$ and any matrix-valued $H$ with $\norm H_{\op}\le1$,
Gaussian integration by parts gives
\[
 \Lip_w\bigl(\E_G[H(w+\sigma G)\,BG]\bigr)\le C\sqrt d/\sigma
 \qquad(\sigma>0)\,.
\]
To see this, its derivative in direction $v$ is the expectation of
$\sigma^{-1}H(w+\sigma G)\,\{\ip Gv BG-Bv\}$; Cauchy--Schwarz proves the bound.
It remains true for bounded measurable $H$ by convolution.

Fix a shift $e$. In the integral over $\cloud$, make the change of variables
$\mathsf c'=\mathsf c+s_{\mathrm{prox}}e$. The Gaussian density changes in $L^1$ by at most
$Cs_{\mathrm{prox}}\,\norm e/\sqrt\theta$, and the integrand has expected norm at most
$\sqrt d$. The remaining change in the coefficient is a translation of
the mean of $W_t$ by at most $(1-\e^{-t})\,s_{\mathrm{prox}}\,\norm e$, because $\prox_{\eta V}$ is
non-expansive. Apply the preceding bound with Gaussian scale
$\sqrt{\eta\,(1-\e^{-2t})}$, $H=D(P_{s_{\nabla}}\nabla V)$, and
$B=D\prox_{\eta V}(\mathsf c')$.
After averaging $e\sim\cN(0,\Id)$, the two errors are at most
$Cds_{\mathrm{prox}}/\sqrt\theta$ and
$Cds_{\mathrm{prox}}\,(1-\e^{-t})/(\sqrt\eta\sqrt{1-\e^{-2t}})$.
Both are integrable against $q(t)\,\dd t$, with respective integrals
$Cds_{\mathrm{prox}}/\sqrt\theta$ and $Cds_{\mathrm{prox}}/\sqrt\eta$.
This proves \eqref{eq:gradient-divergence-error}.
\end{proof}

\begin{corollary}[Mean estimator using gradients]\label{cor:gradient-marginal}
The estimator in \cref{prop:marginal-score} has a gradient implementation
with arbitrarily small prescribed mean error and the same $L^p$ bound,
up to a universal factor and the prescribed error. Apart from
its RGO draws, for a prescribed mean error $\delta$ with
$\log(1/\delta)=O(\mathfrak L)$, it uses $C\mathfrak L$ gradient
queries whenever, for some point $\bar x$ with
$\norm{\nabla V(\bar x)}\le\exp(C_0\mathfrak L)$, the vectors
$\cloud-\bar x$, $X-\bar x$, and its auxiliary Gaussian vectors have norm
at most $\exp(C_0\mathfrak L)$; here $C_0$ is a fixed constant and
$C$ depends on $C_0$.
\end{corollary}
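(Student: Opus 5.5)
The plan is to assemble the gradient-only estimator from the two secant pieces just constructed, namely \eqref{eq:gradient-OU-action} for the first term of \eqref{eq:gradient-mean-identity} and \eqref{eq:gradient-divergence-estimator} for the divergence term. Every proximal evaluation is replaced by $x_M$ from \cref{lem:gradient-prox}, and fresh auxiliary seeds are used for each term. Concretely, I would set
\[
 \widehat\Delta^{\rm sec}\deq-\widehat A_{\cloud}^{\rm sec}\bigl(Z_{\cloud},\sqrt\eta\,\{\nabla V(X)-\nabla V(x_M(\cloud))\}\bigr)-\frac{\theta}{\sqrt\eta}\,\widehat{\mathcal D}_{\cloud}^{\rm sec}\,.
\]
I would then bound its bias against $g_\eta(y)-\E\overline g_\eta(\cloud)$ by splitting it into four parts.

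\textbf{Bias.} (i) The smoothing of the test function costs at most $2s_\nabla\sqrt d$, as noted after \eqref{eq:gradient-mean-identity}. (ii) The first-term secant bias is at most $C\sqrt\eta\,(r_\nabla/s_\nabla)\,\E\norm v^2\le C\eta^{3/2}(r_\nabla/s_\nabla)\,d$; here the $L^2$ bound on the direction comes from the proof of \cref{prop:marginal-score}. (iii) The divergence bias is multiplied by $\theta/\sqrt\eta$ and controlled by \cref{lem:gradient-divergence}. (iv) The $x_M$ errors appear inside $W_T$ and in the direction $v$. Both enter through Lipschitz maps, so \eqref{eq:gradient-prox-error} bounds them by $\eta\,(\eta/(1+\eta))^M\norm{\nabla V(\cdot)}$ up to the secant scales $s^{-1}$, and the same mechanism gives $\varepsilon_{\rm prox}$.

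\textbf{Parameter choice.} I would pick the parameters in this order: $s_\nabla,s_{\rm prox}=\delta\,\theta^{1/2}/(Cd)$, then $r_\nabla,r_{\rm prox}=\delta s/(Cd)$, and finally $M$ large enough that $\varepsilon_{\rm prox}\le\delta s/(Cd)$. The $L^p$ bound follows pathwise from two facts: the first secant is $\sqrt\eta$-Lipschitz and odd, and $\norm{\widehat{\mathcal D}^{\rm sec}}\le Z_q\norm G$. Non-expansiveness of $x_M$ then reproduces \eqref{eq:marginal-score-size} exactly as in the proof of \cref{prop:marginal-score}, plus an additive $\delta$ from the approximate prox.

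\textbf{Query count and main obstacle.} The hardest step is making $M=O(\mathfrak L)$, which requires a deterministic control of $\norm{\nabla V(U)}$ at every proximal input $U$. This is exactly where the norm hypothesis is used. Each input is one of $\cloud$, $\cloud+s\xi$, or a point of the form $W_T\pm r v$. By the hypothesis together with $1$-Lipschitzness of $\nabla V$, each has $\norm{\nabla V(U)}\le\norm{\nabla V(\bar x)}+\norm{U-\bar x}\le C\exp(C_0\mathfrak L)$; the $W_T$ points qualify because they are convex combinations of $X$ and a prox point plus a Gaussian term. By \eqref{eq:gradient-prox-error}, the error is therefore at most $2^{-M}\exp(C\mathfrak L)$, and this is at most $\delta s/(Cd)$ once $M\ge C\mathfrak L$, using $\log(1/s),\log d\le C\mathfrak L$ and $\log(1/\theta)=O(\mathfrak L)$. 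The weighted expectation in $\varepsilon_{\rm prox}$ is handled the same way because the Gaussian moments of $\xi$ are finite. Counting queries, there are a bounded number of secants and prox evaluations, each costing $M$ or $O(1)$ gradients, which gives the stated total of $C\mathfrak L$ gradient queries beyond the RGO draw.
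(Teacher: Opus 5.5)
Your estimator, bias decomposition, $L^p$ argument and cost count follow the paper's proof. Your parameter choice, however, does not give the statement as written. You take $s_\nabla=s_{\rm prox}=\delta\theta^{1/2}/(Cd)$ and require $\varepsilon_{\rm prox}\le\delta s/(Cd)$. That forces $M\gtrsim\tfrac12\log(1/\theta)$, and you then close the count by invoking $\log(1/\theta)=O(\mathfrak L)$.

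That is not a hypothesis of the corollary. The claim is $C\mathfrak L$ gradient queries uniformly over $0\le\theta\le\eta$, with $C$ depending only on $C_0$. Once $s_\nabla\propto\theta^{1/2}$, the $\theta$-dependence is also more than bookkeeping. The $x_M$ error inside $W_T$ in the first term is not weighted by $\theta/\sqrt\eta$, and it costs $\sqrt\eta\,(C/s_\nabla)\,\E[\norm{x_M(\cloud)-\prox_{\eta V}(\cloud)}\,\norm v]$. Your choice also breaks at $\theta=0$, which \cref{prop:marginal-score} allows: then $s_\nabla=0$ and the secant bias bound for \eqref{eq:gradient-OU-action} is vacuous.

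The missing observation concerns where inverse powers of $\theta$ and $\eta$ enter the budget. They appear only in the first term of \eqref{eq:gradient-divergence-error}, and the divergence estimator carries the prefactor $\theta/\sqrt\eta$:
\[
 \frac{\theta}{\sqrt\eta}\,\bigl(\theta^{-1/2}+\eta^{-1/2}\bigr)=\sqrt{\theta/\eta}+\theta/\eta\le2\,,\qquad\frac{\theta}{\sqrt\eta}\le1\,.
\]
So $s_{\rm prox}\asymp\delta/d$ suffices, and $s_\nabla$ only needs $2s_\nabla\sqrt d\lesssim\delta$; neither should scale with $\theta$. All inverse radii and tolerances are then polynomial in $d$, $1/\delta$ and the cap radius. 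Hence $M=O(\mathfrak L)$ with no assumption on $\theta$, and for $\theta=0$ you simply omit the divergence term.

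A small slip: the proximal inputs that control $M$ are $\cloud$ and $\cloud+s_{\rm prox}\xi_{\rm prox}\pm r_{\rm prox}G$. The points $W_T\pm r_\nabla v$ are gradient inputs.

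One further difference is not a gap. The paper radially clips the first-term direction at a radius $R_{\rm sec}$ and charges the tail to the budget. Together with capped auxiliary Gaussians, this makes every bias term a deterministic polynomial in $R_{\rm sec}$. It also keeps the estimator bounded, so that TV-approximate RGO draws still control its mean. Your second-moment bound on $v$ is enough for the exact draws assumed here.
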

\begin{proof}
With the proximal substitutions just described, use
\eqref{eq:gradient-OU-action} with its direction
$\sqrt\eta\,\{\nabla V(X)-\nabla V(\prox_{\eta V}(\cloud))\}$ radially clipped
to a radius $R_{\rm sec}$, and subtract both that output and
$\theta\widehat{\mathcal D}_{\cloud}^{\rm sec}/\sqrt\eta$. Use independent
auxiliary draws for the two terms.
Clipping changes the mean of the first term by at most
$\eta\,\E\bigl[\norm{\nabla V(X)-\nabla V(\prox_{\eta V}(\cloud))}\,
\mathbf 1_{\{\sqrt\eta\,\norm{\nabla V(X)-\nabla V(\prox_{\eta V}(\cloud))}>R_{\rm sec}\}}\bigr]$.
Choose $R_{\rm sec}$ so this fits the prescribed budget.

By \eqref{eq:gradient-mean-identity}, it remains to approximate the two
smoothed terms. Choose $s_{\nabla}$ first to make $2s_{\nabla}\sqrt d$ small enough,
then $s_{\mathrm{prox}}$ to control the first term of
\eqref{eq:gradient-divergence-error}. Next choose $r_{\mathrm{prox}},r_{\nabla}$ to control
the secant errors in \eqref{eq:gradient-OU-action} and
\eqref{eq:gradient-divergence-estimator},
and finally the proximal value tolerance $\varepsilon_{\rm prox}$.
The proximal errors in the first term tend to zero as well, by
\eqref{eq:gradient-prox-error} and
$\Lip(D(P_{s_{\nabla}}\nabla V))\le C/s_{\nabla}$.
When $\theta=0$, omit the divergence term and its parameters.

The first term has norm at most
$\eta\,\norm{\nabla V(X)-\nabla V(\prox_{\eta V}(\cloud))}$, and the second
has norm at most $(\theta/\sqrt\eta)\,Z_q\,\norm G$.
The same moment estimates as in \cref{prop:marginal-score} therefore give
\[
 \norm{\widehat\Delta(y)}_{L^p}
 \le C\,\Bigl(\eta^{3/2}+\frac{\theta}{\sqrt\eta}\Bigr)\,
 (\sqrt d+\sqrt p)\,,
\]
up to the allocated proximal error. For an approximate RGO, also cap
the auxiliary Gaussian norms. The estimator is then bounded, so TV
approximation controls its mean error; the Gaussian and score tails
control the clipping errors and preserve the moment bound up to the
prescribed error. Keep the mean cloud and RGO cloud independent, as in
the main construction. Every evaluation uses only $\cloud,X$, gradients,
and fixed proximal iterations; it never evaluates $y$.

For the cost, first include the multiplier
$\theta/\sqrt\eta$ of the divergence estimator. For
$0<\theta\le\eta\le1$,
\[
 \frac{\theta}{\sqrt\eta}\,
    (\theta^{-1/2}+\eta^{-1/2})
 =\sqrt{\theta/\eta}+\theta/\eta\le2\,,
 \qquad \theta/\sqrt\eta\le1\,.
\]
Consequently, the weighted error in
\eqref{eq:gradient-divergence-error}, together with the first-term
and smoothing errors above, can be made at most $\delta$ using
inverse smoothing radii, secant radii, and proximal tolerances
bounded by a fixed polynomial in $d$, $1/\delta$, $R_{\rm sec}$,
and the cap radius. No inverse power of $\theta$ is needed;
when $\theta=0$, the divergence term is omitted. Under the radius
hypothesis of the corollary, the cap radius is $\exp(C_0\mathfrak L)$,
and every proximal or gradient input $u$ satisfies
$\norm{\nabla V(u)}\le\norm{\nabla V(\bar x)}+\norm{u-\bar x}
\le\exp(C\mathfrak L)$, since $\nabla V$ is $1$-Lipschitz. These
logarithms are therefore $O(\mathfrak L)$, and so are those of the
inverse tolerances when $\log(1/\delta)=O(\mathfrak L)$.
By \cref{lem:gradient-prox}, a fixed $O(\mathfrak L)$ proximal
iteration count then suffices. The estimator uses a bounded number
of secants, giving $C\mathfrak L$ gradient queries.
\end{proof}

\subsection{Cost of the local gradient operations}\label{sec:gradient-cost}
We now bound the gradient cost of the local constructions above.
The following estimate is used both for the RGO implementation and for
the full sampler.

\begin{lemma}[Logarithmic cost of the local gradient operations]
\label{lem:gradient-log-cost}
For the Gaussian cloud and improved RGO constructions in \cref{sec:work,sec:rgo}, use their
stated parameters and local accuracy budgets whose logarithms of inverse
accuracies are $O(\mathfrak L)$. Let $\mathcal C$ be the intersection, over
the ideal run, of the local cap events from the proof of
\cref{prop:gradient-fluctuation}. All inputs and directions can be capped at
radius $\exp(C\mathfrak L)$, with total stopping probability and the tail
contributions on $\mathcal C^c$ below the allocated error budget.
Input radii are measured relative to $x_{\rm ref}$ for the
main sampler, and in the normalized coordinates of \cref{sec:rgo}
for an RGO call.
On these caps, each block value or derivative action, and each mean
replica apart from its RGO draws, uses $C\mathfrak L$ gradients.
\end{lemma}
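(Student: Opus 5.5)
The plan has two parts. First, show that every quantity fed into \cref{lem:gradient-graph} and \cref{cor:gradient-marginal} lies in a ball of radius $\exp(C\mathfrak L)$ around the reference point, except on an event of negligible probability. Second, read off the cost $CS^3\mathfrak L_*$ with $S=O(1)$ and $\mathfrak L_*=O(\mathfrak L)$. The second part is mechanical once the first is in hand, because each block map is a composition of at most two or three steps (a proximal map, then a gradient, possibly a secant on top). In the RGO construction, the residual oracles \eqref{eq:rgo-residual-oracles} add only a bounded number of affine rescalings.

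\textbf{Step 1: caps on inputs.} Along the ideal run, \cref{lem:fifth-run-concentration} and the $W_2$ bound \eqref{eq:fifth-global-W2} give sub-Gaussian tails for the phase inputs around $x_{\rm ref}$, at scale $C\sqrt{\kappa d}$. By \cref{lem:fifth-stage-moments}, the hidden Picard centers stay within $Ch^2\sqrt{\kappa(d+\mathfrak L)}$ of explicit anchors. The same two results imply $\norm{\nabla V(\bar x)}\le\exp(C\mathfrak L)$ at the relevant base points. Cloud draws, Gaussian labels, Brownian bridge values, the batch surrogates of \cref{lem:hidden-batch}, RGO outputs (strongly log-concave, hence concentrated), and the directions $Z,Z'$ are all Gaussian or Lipschitz images of Gaussians with $\poly(d)$ scale. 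So each one exceeds radius $\exp(C\mathfrak L)$ with probability at most $\exp(-\exp(c\mathfrak L))$. For the derivative directions, I will use the pathwise bound: $\widehat{\Sigma^k}$ is $L^{2k}$-Lipschitz with $L\le1$, so directions never grow.

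\textbf{Step 2: union bound over slots.} The slot count is deterministically capped by $T_{\max}$ in \eqref{eq:cloud-global-cap}, so a union bound over $T_{\max}$ slots keeps the total stopping probability at most $T_{\max}\exp(-\exp(c\mathfrak L))$, which is far below the budget. For the tail contributions on $\mathcal C^c$ (expected linear value tails and quadratic tails of $Z,Z'$), I will use Cauchy--Schwarz. This gives (second moment)$^{1/2}$ times $\Pp(\mathcal C^c)^{1/2}$. Under $P_K$ the second-moment factor is controlled by the bounded path density $[4/9,9/4]$ established in the proof of \cref{thm:robust-fluctuation}, and under $Q$ by the global Lipschitz bounds. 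For the RGO construction, I will carry out the same argument in normalized coordinates, using the phase-input moments $C\sqrt{d+p}$ obtained there.

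\textbf{Step 3: parameters of \cref{lem:gradient-graph}.} Here the quantities to bound are $W$, $R$, $m_0$, $L^{\pm1}$, and the hidden-center issue. The standardized hidden input has norm at most $2\exp(C\mathfrak L)/\sqrt\theta$, and $\log(1/\theta)=O(\mathfrak L)$ because $\theta_n$, $h$, and $\eta$ are all polynomial in $d$ and $\mathfrak L$; in the RGO case this also uses $\eta\ge\varepsilon\exp(-\mathfrak L)$-type bounds coming from the definition of $\mathfrak L$. The Lipschitz constants $L$ are likewise polynomially bounded above and below. Together these give $\mathfrak L_*=O(\mathfrak L)$, so \cref{lem:gradient-graph} yields $C\mathfrak L$ gradients per block action, and \cref{cor:gradient-marginal} yields the same for each mean replica.

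I expect the main obstacle to be the lower bound needed for $L^{-1}$ and $1/\theta$ in the RGO setting when $\eta$ is tiny. There I would either fall back to the direct sampler of \cref{lem:rgo-base}, which is used whenever $\eta\sqrt{D\mathfrak L}\le c_0$, or note that $\log(1/\eta)\le\mathfrak L$ by the definition $\mathfrak L=\log(d/(\eta\varepsilon))$.
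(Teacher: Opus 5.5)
Your proposal is correct and follows essentially the same route as the paper. You cap the phase inputs, fresh Gaussian and RGO draws, derived inputs and directions at radius $\exp(C\mathfrak L)$ using the run concentration and stage-moment bounds, take a union bound over a deterministically capped set of operation slots, control the tails on $\mathcal C^c$ by H\"older/Cauchy--Schwarz under both $Q$ and $P_K$, and then read off $S=O(1)$ and $\mathfrak L_*=O(\mathfrak L)$ in \cref{lem:gradient-graph}. The differences are cosmetic: you union bound over the global slot cap $T_{\max}$ where the paper imposes a separate Markov cap of $\exp(C_1\mathfrak L)$ on local operations, you use sub-Gaussian tails rather than high moments, and you spell out the $\log(1/\theta),\log(L+L^{-1})=O(\mathfrak L)$ check that the paper leaves to the remarks after \cref{lem:gradient-graph} (under $P_K$ you also need the conditional covariance bound $C_K\preceq\Id$, not only the bounded path density).
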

\begin{proof}
First cap the total number of local reference operations
at $\exp(C_1\mathfrak L)$. Their expected counts in
\cref{sec:work,sec:rgo} are $\exp(O(\mathfrak L))$, so Markov's
inequality makes the added stopping probability smaller than any
allocated $\exp(-C_2\mathfrak L)$ budget by increasing $C_1$.

Apply \cref{lem:fifth-run-concentration,lem:fifth-stage-moments}
and their normalized RGO versions to the fixed collection of ideal
phase inputs and stages. Outside an event of the allocated small
probability, their radii are at most $\exp(C_0\mathfrak L)$.
For these inputs, fresh Gaussian draws have uniform conditional
moment bounds. So do exact RGO draws about their proximal centers:
for $Z\sim R_{b,v}$ and $p\ge2$, strong log-concavity gives
\[
 \bigl(\E[\norm{Z-\prox_{bV}(v)}^p\mid v]\bigr)^{1/p}
 \le C\sqrt b\,(\sqrt d+\sqrt p)\,.
\]
Non-expansiveness and the fixed local compositions then control the
derived inputs. Equation \eqref{eq:fixed-random-S-properties} and
$L_{\rm fluc}^2\le1/8$ give the same control for derivative directions.
Apply these bounds conditionally at each of the deterministically
many possible operation slots, and then take a union bound. Choosing
the radius $\exp(C\mathfrak L)$ with $C$ sufficiently large gives
the claimed stopping probability. H\"older's inequality with the
same higher moments controls the linear and quadratic tail terms;
the bounds under the tilted law are those established in
\cref{prop:gradient-fluctuation}.

Each evaluation uses $S=O(1)$ maps in \cref{lem:gradient-graph}, with
$\mathfrak L_*=O(\mathfrak L)$; the $J$ terms of a proposal are counted
separately. Its cost is therefore $C\mathfrak L$ gradients.
The mean estimator uses a fixed number of secants, whose error bounds in
\cref{lem:gradient-divergence,cor:gradient-marginal} require the same
logarithmic precision. \Cref{lem:gradient-prox} gives the same cost for
these evaluations, excluding RGO draws.
\end{proof}

\subsection{Gradient implementation of the RGO}
We now adapt the RGO implementation in \cref{thm:fourth-RGO} using gradient queries. Put
\[
 \mathfrak L\deq\log\frac{d}{\eta\varepsilon}\,,\qquad
 \mathfrak L_y\deq\log\frac{d\,(1+\eta\,\norm{\nabla V(y)})}{\eta\varepsilon}\,.
\]
\begin{proposition}[Gradient RGO]\label{prop:gradient-RGO}
Let $V\in C^2(\R^d)$ satisfy $0\preceq\nabla^2V\preceq\Id$.
For $0<\eta\le1$ and $0<\varepsilon<1/4$, the construction of
\cref{sec:rgo} can sample within $\varepsilon$ in TV of $R_{\eta,y}$ using an
expected number of gradient queries bounded by
\begin{equation}\label{eq:gradient-RGO-cost}
 C\mathfrak L_y+C\mathfrak L^{21/4}\,
 \{\eta^2\,(d+\mathfrak L)\}^{1/4}\,.
\end{equation}
If $d+\mathfrak L\ge\mathfrak L^9$, the exponent $21/4$
in \eqref{eq:gradient-RGO-cost} improves to $19/4$.
\end{proposition}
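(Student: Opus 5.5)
The plan is to take the proof of \cref{thm:fourth-RGO} and swap each exact Hessian and proximal query for its gradient-only counterpart, keeping the error budget and the call-counting structure unchanged. The proof then splits into the same two cases.

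\textbf{Direct regime.} When $\eta\sqrt{D\mathfrak L}\le c_0$, with $D=d+\mathfrak L$, \cref{lem:rgo-base} uses one exact proximal query. We replace it by the iterate $x_M(y)$ of \cref{lem:gradient-prox}. The error in \eqref{eq:gradient-prox-error} is $\eta(\eta/(1+\eta))^M\norm{\nabla V(y)}$, so taking $M=O(\mathfrak L_y)$ makes the proximal residual at most $\exp(-C\mathfrak L)$. Theorem~3.3 of \cite{Chen+26HighAccDiffusion} tolerates a small proximal residual at the supplied center $x_+$. Hence the direct sampler still has TV error $\varepsilon$ and costs $O(1)$ gradients beyond the proximal iterations, which gives the $C\mathfrak L_y$ term. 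This same routine also serves every explicit RGO call inside the residual construction. There, the centers are within $\exp(C\mathfrak L)$ of $x_\star$ on the capped event, so the inner calls cost only $O(\mathfrak L)$ each.

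\textbf{Residual regime.} First compute $x_\star\approx\prox_{\eta V}(y)$ once by \cref{lem:gradient-prox}; this costs $C\mathfrak L_y$. Every later residual oracle is expressed through \eqref{eq:rgo-residual-oracles} at inputs bounded in normalized coordinates.
\begin{enumerate}[label=\textup{(\roman*)},leftmargin=2.4em]
\item Replace the fluctuation corrections by \cref{prop:gradient-fluctuation}. The block maps are compositions of $S=O(1)$ proximal and gradient steps, so \cref{lem:gradient-graph} applies with $\mathfrak L_*=O(\mathfrak L)$.
\item Replace the mean estimators by \cref{cor:gradient-marginal}, applied to $F$ as in \cref{lem:rgo-cloud-bounds}.
\end{enumerate}
\cref{lem:gradient-log-cost} then says that, on caps of radius $\exp(C\mathfrak L)$, each local reference query costs $C\mathfrak L$ gradients. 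The stopping probabilities and tail terms of these caps fit the $\exp(-C\mathfrak L)$ budgets.

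\textbf{Accuracy and cost.} We use the integrated-bias variant of \cref{thm:robust-fluctuation} and \cref{lem:adaptive-tv-comparison} with the same $T_{\max}$ as before. The accuracy argument of \cref{thm:fourth-RGO} then goes through once every local budget is shrunk by a constant factor. For the cost, the expected number of reference queries $C(1+N\mathfrak L^2)\le C\{1+\mathfrak L^{17/4}(\eta^2 D)^{1/4}\}$ is multiplied by $C\mathfrak L$, giving the exponent $21/4$. In the sharper regime $D\ge\mathfrak L^9$, the base exponent is $15/4$ and the same factor gives $19/4$. The $O(N)$ explicit RGO calls each cost $O(\mathfrak L)$, which is dominated by this bound. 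The initial proximal computation contributes $C\mathfrak L_y$.

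\textbf{Main obstacle.} The hardest step is justifying the caps uniformly in $y$. The recursion lives in normalized coordinates around $x_\star$, whereas $\norm{\nabla V(y)}$ may be huge; that is exactly why $\mathfrak L_y$ enters only additively, through the one-time computation of $x_\star$. I would handle this with two checks:
\begin{enumerate}[label=\textup{(\alph*)},leftmargin=2.4em]
\item The approximate $x_\star$ perturbs $\nabla F(0)=0$ by at most $\exp(-C\mathfrak L)$. This leaves the residual moment bounds and the initialization $X_0=0$ essentially unchanged, and the resulting TV cost is absorbed.
\item All later proximal inputs satisfy $\norm{\nabla F(u)}\le\exp(C\mathfrak L)$, by the stage moments of the residual run. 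This keeps each proximal iteration count at $O(\mathfrak L)$ rather than $O(\mathfrak L_y)$.
\end{enumerate}
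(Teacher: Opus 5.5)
Your proposal is correct and is essentially the paper's proof. It has the same ingredients: a one-time gradient normalization costing $C\mathfrak L_y$, gradient realizations of the corrections via \cref{prop:gradient-fluctuation,cor:gradient-marginal}, caps from \cref{lem:gradient-log-cost} so that each reference query costs $C\mathfrak L$ gradients (turning $17/4$ into $21/4$ and $15/4$ into $19/4$), and \cref{lem:adaptive-tv-comparison} for the error budget.

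The only difference is how the inexact proximal point is handled. You rely on robustness of the cited FORS theorem to a proximal residual and carry a small $\nabla F(0)\neq0$ through the analysis. The paper instead observes that $x_M(v)$ is the exact proximal point of the tilted potential $V-\langle r,\cdot\rangle$, with $r=\nabla V(x_M(v))+(x_M(v)-v)/b$, and pays $\sqrt b\,\norm r/2$ in TV by LSI and Pinsker. It then normalizes this tilted target, so that $\nabla F(0)=0$ holds exactly and \cref{thm:fourth-RGO} applies verbatim.
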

\begin{proof}
For $R^V_{b,v}$ with $0<b\le1$, compute $x_M(v)$ by
\cref{lem:gradient-prox} with $\eta=b$ and set
\[
 r\deq\nabla V(x_M(v))+(x_M(v)-v)/b\,,\qquad
 \widetilde V(x)\deq V(x)-\ip r{x}\,.
\]
Then $x_M(v)=\prox_{b\widetilde V}(v)$, and the logarithmic Sobolev
inequality and Pinsker's inequality give
\begin{equation}\label{eq:gradient-tilted-RGO}
 \TV(R^{\widetilde V}_{b,v},R^V_{b,v})\le\sqrt b\,\norm r/2\,.
\end{equation}
Whenever \cref{lem:rgo-base} applies, it can use $x_M(v)$ as its exact
proximal point, with $\nabla\widetilde V=\nabla V-r$. Taking $M$ large
enough makes the additional TV error fit any prescribed local budget.

For the initial normalization, apply the preceding construction with
$(b,v)=(\eta,y)$ and choose $x_M(y)$ so that
the error in \eqref{eq:gradient-tilted-RGO} is at most $\varepsilon/4$.
This costs $C\mathfrak L_y$ gradients. Normalize the tilted target about
$x_M(y)$; its residual satisfies $\nabla F(0)=0$.
Implement the corrections of \cref{sec:rgo} by
\cref{prop:gradient-fluctuation,cor:gradient-marginal}, retaining
$\Lip(\nabla F)\le\eta/2$ and hence the mean bound
\eqref{eq:rgo-residual-mean}. Every further RGO call uses the direct
routine above.

Run \cref{thm:fourth-RGO} at accuracy $\varepsilon/4$ and allocate the
remaining $\varepsilon/2$ to replacements and stopping events using
\cref{lem:adaptive-tv-comparison}. This gives the required TV accuracy.
By \cref{lem:gradient-prox,lem:gradient-log-cost}, the gradient implementation
in normalized coordinates multiplies the query bound \eqref{eq:QE4-cost}
by $C\mathfrak L$.
Adding the normalization cost gives \eqref{eq:gradient-RGO-cost}, since
$\mathfrak L\le\mathfrak L_y$.
The large dimension improvement follows by applying the same
argument to the sharper bound in \cref{thm:fourth-RGO}.
\end{proof}

\subsection{Gradient implementation of the Gaussian cloud sampler}
We finally give the full gradient-only implementation of the sampler in
\cref{thm:core}.

\begin{theorem}[Gradient realization of the Gaussian cloud sampler]
\label{thm:gradient-realization}
Under the hypotheses of \cref{thm:core}, \cref{alg:fifth-phase} has a
gradient implementation that returns a law within $\varepsilon$ in TV of $\pi$
using at most $C\mathfrak L^{11/2}\,(d+\mathfrak L)^{1/5}$
expected gradient queries.
\end{theorem}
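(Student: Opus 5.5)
The plan is to start from the reference realization analyzed in the proof of \cref{thm:core}, which uses exact Hessian and proximal queries and exact-RGO-based hidden RGO calls, and replace each local operation by its gradient-only counterpart from this appendix. The TV cost of each replacement is charged through \cref{lem:adaptive-tv-comparison}. The proof of \cref{thm:core} already reserved $13\varepsilon/16$ of the total budget for this step, so it suffices to spend a prescribed slice of that reserve on each replacement type and to show that the query count grows by only one factor of $C\mathfrak L$.

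The key steps, in order, are as follows.

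\textbf{First,} impose the caps of \cref{lem:gradient-log-cost}. These bound all cloud points, Gaussian labels, derivative directions, and RGO centers by $\exp(C\mathfrak L)$ relative to $x_{\rm ref}$, and bound the number of local operations by $\exp(C_1\mathfrak L)$. Their stopping probability is charged as $\varepsilon_{\rm cap}$. The moment bounds of \cref{lem:fifth-run-concentration,lem:fifth-stage-moments} make the relevant tails exponentially small in $\mathfrak L$.

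\textbf{Second,} for each interior and endpoint fluctuation correction, write the block maps $\Phi_j$ (one step from $\prox_{\eta V}$ followed by one $\nabla V$ step) in the composition form of \cref{lem:gradient-graph} with $S=O(1)$. Note that the hidden center is harmless, because only $c+\sqrt\theta\,G$ (the observed cloud, or its surrogate from \cref{lem:hidden-batch}) is needed. Then apply \cref{prop:gradient-fluctuation} at local accuracy $\delta_{\rm loc}=\varepsilon/(CT_{\max})$. This gives the same $L_{\rm fluc}$ and clipping levels as before, at cost $C\mathfrak L$ gradients per block evaluation.

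\textbf{Third,} replace each mean estimator by \cref{cor:gradient-marginal}. Its moment bound is unchanged up to constants, so the clipping levels $\clipB_{\rm mean},\clipB_{{\rm mean},{\rm end}}$ and the branching condition \eqref{eq:cloud-branching-smallness} still hold. Its mean bias, if of order $\delta_{\rm loc}$, enters \cref{lem:clipping} as $\eps_{\rm bias}$.

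\textbf{Fourth,} implement every explicit RGO call, including the hidden RGO's, and the terminal call, by \cref{prop:gradient-RGO} with budget $\delta_{\rm loc}$. On the caps, the normalization cost $C\mathfrak L_y$ is $O(\mathfrak L)$. The remaining cost is the bound of \cref{thm:fourth-RGO} times $C\mathfrak L$, which applies with exponent $19/4$ since $d+\mathfrak L\ge C\mathfrak L^9$ on $\mathcal A_n$.

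For the \textbf{accounting}, the forest lemmas \cref{lem:adaptive-tree,lem:RGO-forest} carry over verbatim. The clipped acceptance probabilities and offspring counts hold on every implemented history, because the gradient routines preserve oddness, Lipschitz bounds, and the clipping levels. The expected number of reference-query equivalents is therefore still $CN\mathfrak L^2$, and each equivalent now costs $C\mathfrak L$ gradients. The RGO part gives $\mathcal B_{\rm mean}J\,Q_{\rm RGO}\cdot C\mathfrak L\le C\mathfrak L^3$ per phase. The total is then $C N\mathfrak L^3\le C\mathfrak L^{11/2}(d+\mathfrak L)^{1/5}$ using $N\le C\mathfrak L^{5/2}(d+\mathfrak L)^{1/5}$. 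The terminal RGO, of cost $C\mathfrak L^{19/4}(\eta_{\rm f}^2 d)^{1/4}\lesssim \mathfrak L^{21/4}d^{1/4}h$, is absorbed as in the proof of \cref{thm:core}. For accuracy, $T_{\max}\cdot\varepsilon/(CT_{\max})$ plus the cap probability fits within $13\varepsilon/16$.

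The main obstacle is the interaction between the bias of the gradient routines and the tilted law $P_K$ in the fluctuation correction. \Cref{lem:gradient-graph} gives derivative accuracy only on bounded regions, so \eqref{eq:robust-integrated-bias} must be verified under both $Q$ and $P_K$, including the heavy cloud-dependent inputs. I would first handle this by using the bounded Brownian-path density and the covariance bounds from the proof of \cref{thm:robust-fluctuation}, as sketched in \cref{prop:gradient-fluctuation}, together with the pathwise linear growth to bound the tails outside the caps. A second, milder point is to check that the global stopping rule composes correctly with the adaptive slot ordering, which \cref{lem:adaptive-tv-comparison} handles.
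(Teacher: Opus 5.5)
Your proposal is correct and follows essentially the same route as the paper. You impose the caps of \cref{lem:gradient-log-cost} to get $\mathfrak L_y=O(\mathfrak L)$, replace the local routines via \cref{prop:gradient-fluctuation,cor:gradient-marginal,prop:gradient-RGO} so that the $CN\mathfrak L^2$ reference count becomes $CN\mathfrak L^3$, and charge the TV errors through \cref{lem:adaptive-tv-comparison} against the reserved $13\varepsilon/16$. The one minor variation is in the mean correction: you absorb the gradient estimator's bias as $\eps_{\rm bias}$ in \cref{lem:clipping}, whereas the paper lets the gadget target $\cN(\mu+\E\widehat\Delta,\Id)$ and pays the Gaussian shift bound $e_\Delta/2$. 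Both versions work, and your explicit check that the sharper $19/4$ RGO exponent is needed on $\mathcal A_n$ is a detail the paper leaves implicit.
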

\begin{proof}
Use \cref{prop:gradient-fluctuation} for fluctuation corrections,
\cref{cor:gradient-marginal} for mean estimators, and
\cref{prop:gradient-RGO} for the explicit RGO draws in the hidden RGO
sampler and terminal lift. The hidden RGO likelihood already uses only
gradients and these draws.
By \cref{lem:gradient-graph}, the joint Lipschitz bound
$\sqrt{\theta+\eta}$ from \cref{lem:joint-estimator-derivatives} is preserved,
so the fluctuation parameter in \eqref{eq:fifth-alpha} is unchanged.
These implementations retain the moment bounds of \cref{sec:work} up to
constants.

Translate $x_{\rm ref}$ to the origin and impose the caps from
\cref{lem:gradient-log-cost}, returning $x_{\rm ref}$
if a cap is exceeded. By $1$-Lipschitzness of $\nabla V$ and
\eqref{eq:reference-point}, on the cap event, every RGO center $y$ satisfies
$\log(1+\eta\,\norm{\nabla V(y)})\le C\mathfrak L$.
The local variance and accuracy choices therefore give
$\mathfrak L_y=O(\mathfrak L)$ in \cref{prop:gradient-RGO}.
Together with \cref{lem:gradient-log-cost}, this multiplies the reference
query count by $C\mathfrak L$, including RGO normalizations.
The bound $CN\mathfrak L^2$ from \cref{sec:work} becomes
$CN\mathfrak L^3$, which gives the stated gradient query cost.

For the local accuracy comparison, first supply exact cloud
and RGO draws. Proposition~\ref{prop:gradient-fluctuation} converts
the value and derivative errors into TV error of the corrected
Gaussian output. For a mean correction in whitened coordinates,
let $\mu$ be the proposal mean, and write $\widehat\Delta$ for the
gradient estimator and $\Delta$ for the exact estimator, with
expectations conditional on the fixed
phase input. Corollary~\ref{cor:gradient-marginal} makes
$e_\Delta\deq\norm{\E\widehat\Delta-\E\Delta}$ arbitrarily small.
Combining \cref{prop:reusable-mean} with the Gaussian shift bound
\[
 \TV\bigl(\cN(\mu+\E\widehat\Delta,\Id),
          \cN(\mu+\E\Delta,\Id)\bigr)\le e_\Delta/2
\]
therefore gives the required local output error. The recursive TV
comparison is applied to these corrected sampling outputs.

The reference estimates in the proof of \cref{thm:core},
which do not use the present theorem, account for at most
$3\varepsilon/16$ of the TV budget. Use \cref{prop:gradient-RGO}
with the same pre-terminal and terminal RGO budgets fixed there;
these RGO errors are already included in $3\varepsilon/16$.
Allocate at most $\varepsilon/2$ to the additional value, derivative,
and mean errors, and at most $\varepsilon/4$ to the additional caps
and tail terms. Distribute these tolerances over the potential
slots counted in \cref{sec:recursive-cloud}; their inverse
accuracy logarithms remain $O(\mathfrak L)$.
The same recursive comparison, using
\cref{lem:adaptive-tv-comparison}, now gives total TV error at most
$3\varepsilon/16+\varepsilon/2+\varepsilon/4
=15\varepsilon/16<\varepsilon$.
The gradient-only proximal composition in \cref{sec:proximal-composition}
completes the implementation of \cref{thm:fifth-root}.
\end{proof}